\newtheorem{theorem}{Theorem}[section]
\newtheorem{proposition}[theorem]{Proposition}
\newtheorem{lemma}[theorem]{Lemma}
\newtheorem{corollary}[theorem]{Corollary}
\theoremstyle{definition}
\newtheorem{definition}[theorem]{Definition}
\newtheorem{assumption}[theorem]{Assumption}
\newtheorem{example}[theorem]{Example}
\newtheorem{remark}[theorem]{Remark}
\newcommand{\E}{\mathbb{E}}
\renewcommand{\P}{\mathbb{P}}
\newcommand{\C}{\mathrm{\mathbb{C}ov}}
\newcommand{\V}{\mathrm{\mathbb{V}ar}}
\numberwithin{equation}{section}
\begin{document}

\title{On Monte-Carlo methods in convex stochastic optimization}

\author{Daniel Bartl}
\address{Department of Mathematics, Vienna University, Austria}
\email{daniel.bartl@univie.ac.at}
\author{Shahar Mendelson}
\address{Centre for Mathematics and its Applications, The Australian National University Canberra, Australia}
\email{shahar.mendelson@anu.edu.au}
\keywords{stochastic optimization, sample-path optimization, stochastic counterpart method, finite sample / non-asymptotic concentration inequality}
\date{\today}
\subjclass[2010]{90C15,90B50,62J02}


\begin{abstract}
	We develop a novel procedure for estimating the optimizer of general convex stochastic optimization problems of the form $\min_{x\in\mathcal{X}} \E[F(x,\xi)]$, when the given data is a finite independent sample selected according to $\xi$.
	The procedure is based on a median-of-means tournament, and is the first procedure that exhibits the optimal statistical performance in heavy tailed situations:
	we recover the asymptotic rates dictated by the central limit theorem in a non-asymptotic manner once the sample size exceeds some explicitly computable threshold.
	Additionally, our results apply in the high-dimensional setup, as the threshold sample size exhibits the optimal dependence on the dimension (up to a logarithmic factor).
	The general setting allows us to recover recent results on multivariate mean estimation and linear regression in heavy-tailed situations and to prove the first sharp, non-asymptotic results for the portfolio optimization problem.
\end{abstract}

\maketitle
\setcounter{equation}{0}
\setcounter{tocdepth}{2}
\tableofcontents

\section{Introduction and appetizer}
\label{sec:intro}

\emph{Stochastic optimization} is widely used as a way of solving certain problems numerically. It appears in diverse areas of mathematics, with a generic convex stochastic optimization problem taking the following form:
One is given a random variable $\xi$ whose range is a measurable space $\Xi$, a convex set of actions $\mathcal{X}\subseteq\mathbb{R}^d$, and a function $F\colon \mathcal{X}\times \Xi\to\mathbb{R}$ that is convex in its first argument. The objective is to solve the optimization problem
\begin{align}
\tag{SO}
\label{eq:SO}
 \min_{x\in\mathcal{X}} f(x)
\quad\text{where}\quad
 f(x):=\E[F(x,\xi)].
\end{align}
In typical situations, however, one does not have access to the function $f$ directly. Rather, the information one is given is the set of values $F(\cdot,\xi_i)_{i=1}^N$, where $(\xi_i)_{i=1}^N$ is an independent \emph{sample}, selected according to $\xi$ and of cardinality $N$.
This type of random data is natural, for example, if the distribution of $\xi$ is not known and the only possibility is to sample it;  or when an exact computation of $f$ is unfeasible and one relies on Monte-Carlo methods to evaluate it instead. We refer the reader to  Shapiro, Dentcheva, and Ruszczy{\'n}ski \cite{shapiro2014lectures} or to Kim, Pasupathy, and Henderson \cite{kim2015guide} for introductions on such aspects of stochastic optimization.

Regardless of the reason why one uses a random sample, the fundamental question remains unchanged: to what degree \eqref{eq:SO} can be recovered when the given data is a random sample?

To be more accurate, assume that \eqref{eq:SO} admits a unique optimal action $x^\ast$ and denote by $\widehat{x}_N^\ast$ a candidate for the optimal action that is selected using some sample-based procedure. 
Given a prescribed error $r>0$, one seeks to bound the probability that the \emph{estimation error} $\|\widehat{x}_N^\ast-x^\ast\|$ or the  \emph{prediction error} / \emph{optimality gap} $f(\widehat{x}_N^\ast)-f(x^\ast)$ exceeds  $r$ in terms of the sample size $N$.

It should be stressed that the norm $\| \cdot \|$ need not be the Euclidean norm. 
Rather, the right choice of $\|\cdot\|$ turns out to be a natural Hilbertian structure endowed by the Hessian of $f$. 
The reason for that is clarified in what follows.

The typical approach used to produce $\widehat{x}_N^\ast$ is called \emph{sample average approximation} (SAA) and is denoted in what follows by $\widehat{x}^{\text{SAA}}_N$. 
The choice is very natural: $\widehat{x}^{\text{SAA}}_N$ is a minimizer of the empirical mean $\widehat{f}_N:=\frac{1}{N}\sum_{i=1}^N F(\cdot,\xi_i)$.

Asymptotic properties of the SAA solution have been thoroughly investigated. 
Roughly and inaccurately put, the SAA solution behaves asymptotically as one would expect based on the central limit theorem.
However, as we shall explain immediately, these asymptotic results  can be misleading. 
In fact, unless one imposes \emph{highly restrictive integrability assumptions}, when given a finite sample the SAA solution behaves poorly: it exhibits drastically weaker rates than what one may expect based on the asymptotic behaviour.

In contrast to the SAA solution, we propose a \emph{novel procedure} that aims at selecting the optimal action when given a finite random sample.
This procedure exhibits the best possible performance regarding the estimation and prediction errors, and it does so in completely heavy tailed situations.
Our results are based on the methods developed by G.~Lugosi and the second-named author in \cite{lugosi2019sub} for mean estimation in $\mathbb{R}^d$ w.r.t.\ the Euclidean norm; in \cite{lugosi2019near} for mean estimation w.r.t.\ to a general norm; and in \cite{lugosi2016risk} for general regression / learning problems using the squared loss.

In Section \ref{sec:applications} we shall show that the formulation of the general problem allows one to recover the results of \cite{lugosi2019sub} as well as parts of \cite{lugosi2016risk} in the context of linear regression.
\vspace{1em}

Before explaining what we mean by ``highly restrictive integrability assumptions" and indicating the very poor behaviour of SAA in their absence, let us interject with an example of an important convex stochastic optimization problem. 
This example will accompany us throughout the article and will help concretize the results and assumptions of this article.

\begin{example}[Portfolio optimization]
\label{ex:PO}
Modern portfolio theory was initiated by Markowitz \cite{markowitz1952} and is among the central optimization problems in mathematical finance.
Without going into details, the \emph{portfolio optimization problem} has three ingredients:
\begin{itemize}
\item a $d$-dimensional random vector $X$, which is interpreted as (discounted) future prices of some stocks/goods;
\item a random variable $Y$, which is interpreted as the (random) future payoff;
\item a concave, increasing utility function $U\colon\mathbb{R}\to\mathbb{R}$.
\end{itemize}
We assume that the stocks $X$ are available for buying and selling today at the prices $\pi$ so that a trading strategy $x\in\mathbb{R}^d$ bears the cost $\langle \pi,x\rangle:=\sum_{i=1}^d \pi_i x_i$ and gives the (random, future) payoff $\langle X,x\rangle$.
After trading according to the strategy $x$, the investor's terminal wealth is $Y+\langle X-\pi,x\rangle$ and her goal is to maximize the expected utility, namely
\[ \max_{x\in\mathcal{X}} u(x)
\quad\text{where}\quad
 u(x):=\E\left[ U \left( Y +  \langle X-\pi,x\rangle \right) \right], \]
where $\mathcal{X}\subseteq\mathbb{R}^d$ is closed and convex.
(For instance $\mathcal{X}=[0,\infty)^d$ corresponds to short-selling constraints.)
We refer, e.g., to F\"ollmer and Schied \cite[Chapter 3]{follmer2011stochastic} or to Shapiro, Dentcheva, and Ruszczynski \cite[Section 1.4]{shapiro2014lectures} for a more elaborate introduction to the portfolio optimization problem.

Setting $F(x,\xi):=\ell ( -Y - \langle X-\pi,x\rangle)$ with $\xi:=(X,Y)$ and $\ell:=-U(-\, \cdot\,)$, the portfolio optimization problem is indeed a special instance of \eqref{eq:SO}.
\end{example}

It is important to stress that the portfolio optimization problem highlights the natural presence of \emph{heavy tails}:
for one, even if the input $X$ is light-tailed (e.g.\ Gaussian, as is the case in the Bachelier model), the composition with the utility function $U$ can render the problem heavy-tailed.
This is particularly true for the exponential utility function $U=-\exp(-\,\cdot\,)$, which is arguably (among) the most important utility functions.
Additionally, $X$ itself is often heavy tailed; for instance, in the famous Black-Scholes model $X$ is log-normal.

\vspace{1em}
Before explaining the new procedure, it is worthwhile to outline what is known on the statistical performance of the sample average approximation method. As we already mentioned, the vast majority of known results are of an \emph{asymptotic} nature. 
To ease notation we assume here and for the rest of this section that $\nabla^2 f(x^\ast)=\mathrm{Id}$ (recall that $x^\ast$ is the unique minimizer of $f$).

One can show that under suitable regularity and mild integrability assumptions, $\sqrt{N}( \widehat{x}^{\mathrm{SAA}}_N - x^\ast )$ is asymptotically a multivariate Gaussian with zero mean and covariance matrix $\C[\nabla F(x^\ast,\xi)]$, see e.g.\ \cite[Chapter 5]{shapiro2014lectures}.
In particular, if we set
\[\sigma^2:=\lambda_{\max}( \C [\nabla F(x^\ast,\xi)])\]
to be the largest eigenvalue of the covariance matrix of the gradient of $F$ at $x^\ast$  and denote by  $\|\cdot\|_2$ the Euclidean norm, it follows that
\begin{align*}
\P\left[ \| \widehat{x}^{\mathrm{SAA}}_N - x^\ast \|_2 \geq r \right]
\approx 2\exp \left( - cN\frac{r^2}{\sigma^2} \right)
\quad\text{asymptotically as } N\to\infty.
\end{align*}
This error rate is often used to calculate the minimal sample size $N$ required to guarantee that the estimation error is below the wanted threshold $r$ with some prescribed confidence $1-\delta$.
However, as we shall explain in Section \ref{sec:non.gaussian} below, unless (restrictive) integrability assumptions are imposed, the asymptomatic exponential decay really does hold \emph{only asymptotically}.
Indeed, it may very well be possible that the \emph{non-asymptotic} (or, finite sample) rate
\begin{align*}
\P\left[ \| \widehat{x}^{\mathrm{SAA}}_N - x^\ast \|_2 \geq r \right]
\leq \frac{c\sigma^2}{Nr^2}
\quad\text{for all } N\geq 1
\end{align*}
cannot be improved, meaning that the finite sample rate decays linearly in $N$ rather than exponentially.

The meaning of this significant gap between asymptotic and finite sample behaviour is that the asymptotic estimate is misleading: while it suggests that $\frac{\sigma^2}{r^2} \log(\frac{2}{\delta})$ samples are enough to guarantee a confidence of $1-\delta$, one actually needs $\frac{\sigma^2}{r^2} \frac{  1 }{\delta}$ samples. For small values of $\delta$ this gap in the required sample size is significant.

\vspace{1em}
\emph{Our contribution} is the following.
We construct a procedure that estimates the optimal action $\widehat{x}_N^\ast$ --- but it is not SAA. 
Recalling that  $\nabla^2f(x^\ast)=\mathrm{Id}$ throughout this section for notational simplicity, given a finite sample, the procedure recovers the optimal Gaussian rate under modest integrability assumptions:
for (small) $r>0$ we have
\begin{align*}
\P\left[
\| \widehat{x}^\ast_N - x^\ast  \|_2 \geq r \right] 
\leq 2\exp\left(-C N \frac{r^2}{\sigma^2} \right)
\quad\text{whenever } N\geq N_0(r),
\end{align*}
where $N_0(r)$ can be controlled explicitly and depends only on certain low-order moments of the random variables involved (see Theorem \ref{thm:main.convex.intro} for the precise statement).
As a matter of fact, in typical situations
\[N_0(r) \lesssim \max\left\{d\log (d) , \frac{\mathrm{trace}(\C[\nabla F(x^\ast,\xi)])}{ r^2} \right\}.\]
It is worthwhile mentioning that the prediction error is one order of magnitude smaller than the estimation error, namely, for $N\geq N_0(r)$
\[\P\left[ f(\widehat{x}_N^\ast)\geq f(x^\ast) + r^2 \right]
\leq  2\exp\left(-C N \frac{r^2}{\sigma^2} \right). \]

Although this is a trivial consequence of the first order condition for optimality if $\mathcal{X}=\mathbb{R}^d$, it is far less obvious if $\mathcal{X}$ is a proper subset of $\mathbb{R}^d$ and $x^\ast$ lies on the boundary of $\mathcal{X}$.

\begin{remark}
\label{rem:computational}
	While our procedure showcases the possibility of drastically improving the statistical properties of the SAA, this improvement does not come for free.
	A major advantage of SAA is its computational simplicity,
	and unfortunately, our procedure is the outcome of a (rather complex) tournament that takes place between the actions in $\mathcal{X}$ (see Section \ref{sec:the.procedure}).
	In particular, its computational cost is quite high---in fact, it is likely to be computationally intractable (i.e.\ NP-hard).
	Tournament based procedures are used in other natural statistical problems and there are ongoing attempts of identifying alternative procedures that maintain the tournament's optimal statistical performance in a computationally feasible way.
	For example, in the context of multivariate mean-estimation w.r.t.\ the Euclidean norm, Hopkins \cite{hopkins2020mean} defines a semi-definite relaxation of the tournament procedure that can be computed in $O(Nd+ d\log(\frac{1}{r})^c)$-time (where $r$ is the wanted accuracy and $c$ is a dimension dependent constant).
	Cherapanamjeri, Flammarion, and Bartlett \cite{cherapanamjeri2019fast} improved Hopkins' result and introduced a relaxation running in $O(Nd+ d\log(\frac{1}{r})^2 + \log(\frac{1}{r})^4 )$-time.
	
	We believe that these results are not the end of the story and that similar ideas can be applied in the preset setting as well.
	We do think that a detailed investigation of the tradeoff between statistical optimality and computational feasibility is a challenge and of central importance.
	However, we will not pursue this aspect further in what follows.
\end{remark}

\noindent
{\bf Plan of the article.}
We begin Section \ref{sec:main} with a detailed explanation of the devastating effect heavy-tailed random variables can have on SAA; we then formulate our main result (Theorem \ref{thm:main.convex.intro}), discuss its application to the portfolio optimization problem, and survey related literature. Section \ref{sec:applications} contains a description of several other applications of our main result. In Section \ref{sec:mom.singular.value} we lay the groundwork for the proof of Theorem \ref{thm:main.convex.intro} by establishing a high probability lower bound on the smallest singular value of a general random matrix ensemble (see Theorem \ref{thm:singular.value} and Corollary \ref{cor:smallest.singular.value})---a result that is of independent interest.
This lower bound will be used in Section \ref{sec:proof.main}, where we prove our main result.
Proofs related to the portfolio optimization problem are presented in Section \ref{sec:PO.proof}.
Finally, Section \ref{sec:concluding.remark} contains two concluding remarks.

\section{Main results}
\label{sec:main}

\subsection{Difficulties caused by non-Gaussian tails}
\label{sec:non.gaussian}
Let us revisit our claim that heavy tails drastically change the statistical performance of the sample average approximation.
As a starting point, consider the more basic problem of estimating the mean $\mu:=\E[\xi]$ of a one-dimensional, square integrable random variable $\xi$.
It should be noted that by setting \ $F(x,\xi):=\frac{1}{2}x^2-\xi x$, one-dimensional mean estimation becomes a stochastic optimization problem.

Following the SAA approach, the corresponding estimator for the mean is $\widehat{\mu}_N:=\frac{1}{N}\sum_{i=1}^N \xi_i$.
The central limit theorem then guarantees that
\begin{align}
\label{eq:mean.asy}
\P[ | \widehat{\mu}_N - \mu | \geq r] \leq 2\exp\left(-N\frac{ r^2 }{ 2\sigma^2 }\right)
\quad\text{asymptotically as } N \to\infty,
\end{align}
where $\sigma^2:=\V[\xi]$ denotes the variance of $\xi$.
On the other hand, invoking Markov's inequality to bound the probability in \eqref{eq:mean.asy} for finite $N$ only implies that
\begin{align}
\label{eq:mean.non.asy}
\P[ | \widehat{\mu}_N - \mu| \geq r] \leq \frac{\sigma^2}{N r^2}
\quad\text{for every } N\geq 1
\end{align}
and that dictates a much slower rate than the bound obtained in \eqref{eq:mean.asy}.
It should be stressed that the weaker bound is not caused by looseness in Markov's inequality; in fact, there are examples where \eqref{eq:mean.non.asy} is sharp (up to a constant).
Indeed, let $r$ and $N$ such that $Nr^2\geq 1$, and let $\xi$ be the symmetric random variable taking the values $\pm Nr$ with probability $\frac{1}{2(Nr)^2}$ and $0$ with probability $1-\frac{1}{(Nr)^2}$. 
Then $\mu=0$ and $\sigma^2=1$. 
Moreover, given a sample of cardinality $N$, a straightforward computation shows that there is an absolute constant $C$  such that the following holds: with probability at least $\frac{C}{Nr^2}$, exactly one of the sample points is nonzero.
On that event we clearly have $|\widehat{\mu}_N-\mu| =r$, showing that the estimate in Markov's inequality \eqref{eq:mean.non.asy} is sharp (up to the absolute multiplicative constant $C$).

In order to improve \eqref{eq:mean.non.asy} one needs to impose a stronger integrability assumption and, eventually, one can show that \eqref{eq:mean.asy}  holds for all $N\geq 1$ if and only if $\xi$ has sub-Gaussian tails in the sense that $\P[ |\xi-\E[\xi]| \geq t]$ is at most of the order $\exp(-c\frac{t^2}{\sigma^2})$ for $t \geq c^\prime \sigma$.

At this point, sub-Gaussian tails seem unavoidable if one's goal is to have finite sample estimates that match the asymptotic ones (as dictated by the central limit theorem).
There is, however, one important possibility that so far has been neglected:
we are free to come up with an alternative estimator instead of the empirical average.
To explain, at an intuitive level, how this might be a way out of our predicament, note that the non-optimal performance of the empirical mean stems from the fact that, in the presence of heavy tails, some of the observations will have untypically large values.
These observations, while few in numbers, offset the empirical mean from its true counterpart, and the hope is that getting rid of those outliers would lead to a better statistical performance.
The so-called \emph{median-of-means} estimator is a simple yet powerful estimator that does just that. It goes back at least to Nemirovsky and Yudin \cite{nemirovsky1983problem}.

\vspace{1em}
Partition the sample $\{1,\dots,N\}=\cup_{j=1}^n I_j$ into $n$ disjoint blocks $I_j$ of cardinality $m:=3\frac{\sigma^2}{r^2}$ (w.l.o.g.\ assume that $n$ and $m$ are integers).
By \eqref{eq:mean.non.asy}, we have that
\[\P[| \widehat{\mu}_{I_j} - \mu| \geq r]\leq \frac{1}{3}
\quad\text{where}\quad
\widehat{\mu}_{I_j}:=\frac{1}{m}\sum_{i\in I_j} \xi_i \]
and a basic Binomial calculation reveals that the probability that the majority of the $n$ blocks satisfy $| \widehat{\mu}_{I_j} - \mu| \geq r$ is of the order of  $\exp(-cn)$.
Since $n= N\frac{r^2}{3\sigma^2}$, we conclude that
\[ 
\P\left[ \left| \mathop{\mathrm{median}}_{j=1,\dots,n} \,\widehat{\mu}_{I_j} -\mu \right| \geq r  \right]
\leq 2\exp\left(- C N\frac{r^2}{\sigma^2} \right)
\quad\text{for all }N\geq 1. \]
In other words, the median-of-means estimator exhibits the best possible performance \eqref{eq:mean.asy} (up to a multiplicative constant) under the sole assumption that $\xi$ has a finite second moment.

\vspace{0.5em}
Appealing as this sounds, it is important to stress that the median is a one-dimensional object and has no simple vector-valued analogue. 
In fact, the question of an optimal multivariate mean estimation procedure, assuming only that the vector has a finite mean and covariance, remained open until it was resolved recently in \cite{lugosi2019sub}.
In contrast, stochastic optimization is a multi-dimensional problem, and just like multivariate mean estimation, simply minimizing the functional $\mathop{\mathrm{median}}_j \frac{1}{m}\sum_{i\in I_j} F(\cdot,\xi_i)$ does not lead to an optimal estimator. What has a better chance of success is the \emph{tournament procedure} which happens to be a powerful extension of the idea of median-of-means.
We will explain the procedure in Section \ref{sec:the.procedure} below.

\subsection{What to expect when tails are Gaussian}
\label{sec:gaussian}

Ignoring for a second the difficulties caused by non-Gaussian tails, let us explain the kind of result one could hope for in general convex stochastic optimization problems and how the underlying dimension $d$ enters the picture. This will serve as our benchmark in what follows.

To that end, consider the case where $F$ is a \emph{quadratic function} defined on the whole of $\mathbb{R}^d$, that is,
\begin{align}
\label{eq:intro.F.quadratic}
F(x,\xi) := \langle b,x\rangle + \frac{1}{2}\langle Ax,x\rangle
\qquad\text{for } x\in\mathcal{X}:=\mathbb{R}^d,
\end{align}
where $b=b(\xi)$ is a $d$-dimensional Gaussian vector with zero mean and $A=A(\xi)$ is a random positive definite symmetric $(d\times d)$-matrix specified in what follows.
Although this example appears to be very special, it should not be considered as a toy example: by a second order Taylor expansion, every convex function is approximately a quadratic function, at least locally, around the minimizer.

By \eqref{eq:intro.F.quadratic}, it follows that
\[\nabla^2 f(x^\ast)=\E[A],
\quad \quad
\nabla F(x^\ast,\xi)=b + A x^\ast
\quad\text{and}\quad
\nabla^2 F(x^\ast,\xi)= A,\]
and we assume throughout that $\E[A]$ is non-degenerate (i.e.\ $\E[A]$ has full rank). 
Setting $\|z \|:=\langle \nabla^2 f(x^\ast) z,z\rangle^{\frac{1}{2}}$ for $z\in\mathbb{R}^d$ and recalling that $b$ has zero mean, it is evident that $f=\frac{1}{2}\|\cdot\|^2$; thus, the optimal action is given by $x^\ast=0$.

\begin{remark}
	To get a clearer picture of this setup, it might help the reader to first consider the case $\nabla^2 f(x^\ast)=\mathrm{Id}$, and then $\|\cdot\|$ is just the Euclidean norm.
\end{remark}

The advantage of using the quadratic function considered here is that the sample average approximation optimizer has a particularly simple form: $\widehat{x}_N^{\text{SAA}}$ is any element satisfying
\begin{align}
\label{eq:intro.quad.function.saa.solution}
\left( \frac{1}{N}\sum_{i=1}^N A_i\right) \widehat{x}_N^{\text{SAA}}=  -\frac{1}{N}\sum_{i=1}^N b_i.
\end{align}
To explain the statistical behavior of $\widehat{x}_N^{\text{SAA}}$, let us first focus on the gradient and assume for the sake of simplicity that the \emph{Hessian is deterministic}, that is, $A=\E[A]$.
In that case, $\nabla^2 f(x^\ast)\widehat{x}_N^{\text{SAA}}$ is Gaussian with mean $x^\ast=0$ and covariance $\frac{1}{N}\C[\nabla F(x^\ast,\xi)] $. A straightforward computation (noting that $\|\cdot\|=\|\nabla^2 f(x^\ast)^{\frac{1}{2}}\cdot\|_2$) reveals the following:
 for the estimation error $ \|\widehat{x}_N^{\text{SAA}} -x^\ast \| $ to be smaller than $r$ with constant probability (say, with probability at least $\frac{1}{2}$), it is necessary to have a sample size of cardinality larger than $N\geq N_{\mathrm{G}}(r)$, where
\begin{align}
\label{eq:def.N.G}
N_{\mathrm{G}}(r)
&:=\frac{1}{r^2}\mathop{\mathrm{trace}}\left( \nabla^2f(x^\ast)^{-1}  \C[\nabla F(x^\ast,\xi)] \right).
\end{align}
On the other hand, for large $N$,  it follows from the concentration of a Lipschitz function of the Gaussian vector that the probability that the estimation error exceeds $r$ is of the order $2\exp(-c N\frac{ r^2 }{ \sigma^2 } )$. And the variance parameter is
\begin{align}
\label{eq:def.sigma}
\sigma^2
&:= \lambda_{\max}\left( \nabla^2f(x^\ast)^{-1}  \C[\nabla F(x^\ast,\xi)]  \right).
\end{align}
To summarize, when the quadratic function has a deterministic Hessian, the minimal sample  size needed to guarantee that the estimation error does not exceed $r$ with constant probability is $N_{\mathrm{G}}(r)$, whereas the correct variance parameter (namely $\sigma^2$) dictates the high-probability regime.
Note that the latter, of course, matches the variance parameter of \cite[Chapter 5]{shapiro2014lectures}, as stated in Section \ref{sec:intro}.

In a next step, still within the setting of the quadratic function \eqref{eq:intro.F.quadratic}, let us remove the assumption that the Hessian is deterministic.
In that case, if one wishes to make any statement regarding the estimation (or, prediction) error, the empirical Hessian $\frac{1}{N}\sum_{i=1}^N A_i$ on the left hand side of \eqref{eq:intro.quad.function.saa.solution} must not be degenerate.
One can readily verify that, unless some specific assumptions are made, the empirical Hessian is singular with probability 1 whenever $N<d$.
Thus, $N\geq d$ is another restriction on the minimal sample size (though, at this point, it is far from obvious that a sample of size $d$ or proportional to $d$ would suffice to guarantee a non-degenerate Hessian with, say, constant probability).

\vspace{1em}
Following these observations one can make a very \emph{optimistic guess} on the estimate one can hope to obtain:
that there exists a procedure $\widehat{x}^N_\ast$ such that for
\[ N\geq \max\{ N_{\mathrm{G}}(r) ,d \}, \]
with probability at least
\[ 1-2\exp\left( -cN\frac{ r^2}{\sigma^2}\right), \]
we have that
\[ \|\widehat{x}_N^\ast-x^\ast\|\leq r.\]

\begin{remark}
This is indeed an optimist guess, and is ``very Gaussian". The minimal sample size $\max\{ N_{\mathrm{G}}(r),d\}$ is the result of rather trivial obstructions; their removal is necessary if the estimation error is to have any chance of being smaller than $r$ with \emph{constant} probability.
Furthermore, the variance term $\sigma^2$, which dictates the \emph{high} probability regime (for large $N$), is effectively one dimensional:  it corresponds to the worst direction of the gradient (w.r.t.\ the norm $\|\cdot\|$).
\end{remark}

Before we proceed with the main (affirmative) result of this article, let us conclude this section with a comment regarding the relation between the estimation error and the prediction error / the optimality gap, in a more general setup than the simple example we presented previously.

If $\mathcal{X}=\mathbb{R}^d$, a Taylor expansion and the first order condition for optimality of $x^\ast$ immediately implies that
\begin{align*}
f(x) -f(x^\ast)
&= \frac{1}{2}\langle \nabla^2 f(y)(x-x^\ast),x-x^\ast\rangle
\end{align*}
where $y$ is some mid-point between $x^\ast$ and $x$.
In particular, setting $\|B\|_{\mathrm{op}}:=\sup_{z\in\mathbb{R}^d \text{ s.t.\ } \|z\|\leq  1} \langle Bz,z\rangle$ to be the operator norm\footnote{
	Note that $\|\cdot\|_{\mathrm{op}}$ is indeed the operator norm from $(\mathbb{R}^d,\|\cdot\|)$ to $(\mathbb{R}^d,\|\cdot\|_\ast)$, where $\|\cdot\|_\ast$ is the dual norm of $\|\cdot\|$, i.e.\ $\|y\|_\ast:=\sup_{\|x\|\leq 1} \langle x,y\rangle$.
	}
of a positive semi-definite $(d\times d)$-matrix $B$, and 
\begin{align}
\label{eq:def.c.H}
c_{\mathrm{H}}:= \sup_{y\in\mathcal{X} \text{ s.t.\ } \|y-x^\ast\|<  1} \,\,  \frac{1}{2} \|\nabla^2 f(y) \|_{\mathrm{op}}
\end{align}
 it is clear that
\[f(x)- f(x^\ast)  \leq c_{\mathrm{H}} r^2
\quad\text{whenever }\|x-x^\ast\|\leq r\]
and $r<1$.
Thus, one can make another \emph{highly optimistic guess}:
that there exists a procedure $\widehat{x}_N^\ast$ for which the prediction error / optimality gap is smaller than the estimation error by at least one order of magnitude.

What makes this guess optimistic (and nontrivial), is that the above argument crucially relies on the fact that $\mathcal{X}=\mathbb{R}^d$; or, more generally, that $x^\ast$ lies in the interior of $\mathcal{X}$. That need not be the case.

\subsection{Recovering Gaussian rates without sub-Gaussian tails}
\label{sec:main.results}

This section contains the main result of the article, formulated in Theorem \ref{thm:main.convex.intro} below. It provides affirmative answers to the optimistic guesses made in the previous section (under some mild assumptions, of course).
The assumptions might appear technical at first glance, and to help the reader put them in context, each assumption will be explained in the case of portfolio optimization, and in a heuristic manner; the detailed analysis will be presented in Section \ref{sec:PO.proof}.

\begin{assumption}[Convexity and coercivity]
\label{ass:on.F.diff.convex.etc}
	The following hold:
\hfill
	\begin{enumerate}[(a)]
	\item
	$\mathcal{X}\subseteq\mathbb{R}^d$ is closed and convex;
	\item
	$x\mapsto F(x,\gamma)$ is convex and twice continuously differentiable\footnote{
		If $\mathcal{X}$ is not open, we mean by ``continuously differentiable" that there is a continuous function $\nabla F(\cdot,\gamma)$ such that $F(y,\gamma)-F(x,\gamma)=\int_0^1 \langle \nabla F(x+t(y-x),\gamma),y-x\rangle \,\mathrm{d}t$. 
		A similar notion holds for $\nabla^2 F$ for the ``twice continuously differentiable".}
		for every $\gamma\in \Xi$;
	\item
	$F(x,\xi)$, and $\nabla^2 F(x,\xi)$ are integrable and $\nabla F(x,\xi)$ is square integrable for every $x\in\mathcal{X}$.	
	\end{enumerate}	
	Further, there exists an \emph{optimal action} $x^\ast\in\mathcal{X}$ that satisfies $f(x^\ast)=\inf_{x\in\mathcal{X}} f(x)$, and the seminorm induced by the Hessian of $f$ at $x^\ast$ given by
	\[ \|z\|:= \langle \nabla^2 f(x^\ast) z,z\rangle^{\frac{1}{2}}
	\quad\text{for } z\in\mathbb{R}^d\]
	is a true norm (i.e.\ $\|z\|=0$ implies $z=0$).
\end{assumption}

\begin{remark}
	While $f$ clearly inherits convexity from $F$, it is not clear a priori that $f$ is twice continuously differentiable (in the same sense as $F$ if $\mathcal{X}$ is not open).
	This follows once Assumption \ref{ass:hessian.midpoints} below is imposed, as we shall explain in the beginning of Section \ref{sec:proof.main}.
\end{remark}

Note that the minimizer $x^\ast$ in Assumption \ref{ass:on.F.diff.convex.etc} is unique, as $\|\cdot\|$ is a norm.
In fact, the latter relates to a standard assumption in stochastic optimization---the so called \emph{quadratic growth condition}: that there is a constant $\kappa>0$ such that
\begin{align*}
f(x)\geq f(x^\ast) + \kappa  \| x-x^\ast \|_2^2
\end{align*}
for all $x$ close to $x^\ast$.
Indeed, denoting $\tilde{\kappa}:=\lambda_{\min}(\nabla^2 f(x^\ast))$, the smallest eigenvalue of the Hessian of $f$ at $x^\ast$, we have that $\tilde{\kappa}>0$ whenever $\|\cdot\|$ is true norm.
Moreover, a Taylor expansion shows that the quadratic growth condition holds with  constant $\kappa=\tilde{\kappa}$ for all $x$ in an infinitesimal neighbourhood of $x^\ast$ (or with  constant  $\kappa=\frac{\tilde{\kappa}}{2}$ in a sufficiently small neighbourhood).
Conversely, at least when $x^\ast$ lies in the interior of $\mathcal{X}$, the quadratic growth condition readily implies $\tilde{\kappa}\geq \kappa$ and in particular that $\|\cdot\|$ is a true norm.

\vspace{0.7em}
\begin{quote}
Let us now give an intuitive interpretation of Assumption \ref{ass:on.F.diff.convex.etc} in the context of the portfolio optimization example.
To ease notation, we shall assume that $\pi=\E[X]=0$ and that $\C[X]=\mathrm{Id}$.
The convexity and differentiability parts of the assumption are clear, and it is straightforward to verify that
\begin{align*}
	\nabla F(x,\xi)
	&= -\ell' ( -Y - \langle X,x \rangle) X,\\
	\nabla^2 F(x,\xi)
	&= \ell'' ( -Y- \langle X,x \rangle) X \otimes X.
\end{align*}
In particular
\[\|z\|^2 = \E[\ell''( -Y - \langle X,x^\ast \rangle) \langle X,z\rangle^2].\]
Ignoring the $\ell''$-term inside the expectation for the moment, this would imply that $\|\cdot\|=\|\cdot\|_2$. In general, when accounting for the $\ell''$-term, a minor integrability assumption will be used to guarantee that $\|\cdot\|$ and $\|\cdot\|_2$ are equivalent norms.
\end{quote}
\vspace{0.7em}

In Section \ref{sec:gaussian} we argued that unless the Hessian has a specific form, the empirical Hessian is singular whenever $N \leq d$.
However, without further assumptions, believing that a sample of cardinality $d$ is enough to guarantee a non-degenerate empirical Hessian  (say with constant probability) is too optimistic---certainly in the general setting we are interested in here. Degeneracy can happen even in dimension $d=1$: if  $\nabla^2 F(x^\ast,\xi)$ takes the value $\frac{1}{\varepsilon}$ with probability $\varepsilon>0$ and is zero otherwise, the endowed norm $\|\cdot\|$ is simply the absolute value---regardless of the choice of $\varepsilon$.
However, with probability $(1-\varepsilon)^N$, all observations in a sample of size $N$ are zero, and  for small $\varepsilon$, e.g.\ $\varepsilon=\frac{1}{N^2}$, that probability converges to 1 as $N\to\infty$.

As it happens, the following modest integrability assumption on the Hessian prevents such behavior.

\begin{assumption}[Integrability of the Hessian]
\label{ass:norm.equiv}
	There is a constant $L$ such that
	\[\E[\langle \nabla^2 F(x^\ast,\xi)z,z\rangle^2] \leq L\]
	for every $z\in\mathbb{R}^d$ with $\|z\|=1$.	
\end{assumption}

Another way of formulating Assumption \ref{ass:norm.equiv} is in the sense of \emph{norm-equivalence}: for every $z\in\mathbb{R}^d$, we have that
\begin{align}
\label{eq:norm.equiv.spelled.out}
\E[\langle \nabla^2 F(x^\ast,\xi)z,z\rangle^2]^{\frac{1}{4}}
\leq L^{\frac{1}{4}}  \E[\langle \nabla^2 F(x^\ast,\xi)z,z\rangle]^{\frac{1}{2}},
\end{align}
i.e.\ the $L_4$-norm and the $L_2$-norm of the forms $\langle \nabla^2 F(x^\ast,\xi)z,z\rangle^{\frac{1}{2}}$ are equivalent\footnote{Note that the reverse inequality to \eqref{eq:norm.equiv.spelled.out} is trivially true with constant $1$, by H\"older's inequality.}.
Therefore the constant $L$ pertains to the worst direction $z\in\mathbb{R}^d$ (and not e.g.\ the average over different directions).
As such, $L$ typically does not depend on the dimension $d$.

Under Assumption \ref{ass:norm.equiv} one can prove a lower bound on the smallest singular value of the  empirical Hessian whenever $N \gtrsim d\log(d)$.

\vspace{0.7em}
\begin{quote}
To check Assumption \ref{ass:norm.equiv} in the portfolio optimization example, note that
\[ \E[\langle \nabla^2 F(x^\ast,\xi)z,z \rangle^2]
=\E[\ell''( -Y - \langle X,x^\ast\rangle)^2  \langle X,z\rangle^{4}]. \]
Thus, Assumption \ref{ass:norm.equiv} is a simple consequence of H\"older's inequality and a minor integrability condition.
\end{quote}
\vspace{0.7em}

\begin{remark}
Let us stress that Assumption \ref{ass:norm.equiv} is just a tractable way of ensuring that our argument works; it could be replaced by the more general assumption that $\nabla^2 F(x^\ast,\xi)$ satisfies a so-called \emph{stable lower bound}, see Remark \ref{rem:hessian.SLB}.
The stable lower bound and its role in obtaining lower bounds on the smallest singular values of rather general random matrix ensembles is described in Theorem \ref{thm:singular.value} and in Corollary \ref{cor:smallest.singular.value.norm.equiv}.
\end{remark}

\vspace{0.7em}

There is another reason why the minimal sample should be at least a (large constant) multiple of $d$, namely, because $F$ need not be quadratic.
In the example in Section \ref{sec:gaussian} $F$ was a quadratic function, and as a result the Hessian did not depend on the action $x$.
In general, when invoking a second order Taylor expansion, the Hessian does depend on some mid-point $x^\ast + t(x-x^\ast)$.
At the same time, Assumption \ref{ass:norm.equiv} only takes into account the Hessian at the optimizer; therefore, some continuity assumption is needed if one is to control the deviation from quadratic, which is governed by
\begin{align*}
\mathcal{E}_{\mathrm{H}}(x)
:=\sup_{ t\in[0,1] } \left| \left\langle \left(\nabla^2 F(x^\ast \!\! + \! t(x \! - \! x^\ast) ,\xi) -\nabla^2 F(x^\ast ,\xi) \right) (x-x^\ast), x-x^\ast \right\rangle \right|
\end{align*}
for $x\in\mathcal{X}$.

Note that $\mathcal{E}_{\mathrm{H}}(x)$ is likely to be of order $\|x-x^\ast\|^3$ under a suitable Lipschitz condition on the Hessian. 
Assumption \ref{ass:hessian.midpoints} is there to ensure that $\mathcal{E}_{\mathrm{H}}(x)$ is sufficiently small.

\begin{assumption}[Continuity of the Hessian]
\label{ass:hessian.midpoints}
	There exists a radius $r_0\in(0,1)$ such that the following hold.
	\begin{enumerate}[(a)]
	\item
	There is a H\"older coefficient $\alpha\in(0,1]$ and a measurable function $K\colon\Xi\to[0,\infty)$ such that $\E[K(\xi)]<\infty$ and for all $x,y\in\mathcal{X}$ with $\|x-x^\ast\|, \|y-x^\ast\| \leq  r_0$, we have that
	\begin{align*}
	\left\|  \nabla^2 F(x,\xi) - \nabla^2 F (y,\xi) \right\|_{\mathrm{op}}
	&\leq  \|x-y\|^\alpha  K(\xi).
	\end{align*}
	\item
	For some given constant $c_1$ (which will be specified in Theorem \ref{thm:main.convex.intro} and depends only on the parameter $L$ of Assumption \ref{ass:norm.equiv}) and for all $x\in\mathcal{X}$ with $\|x-x^\ast\|\leq r_0$, we have that
	\begin{align*}
	\P\left[ \mathcal{E}_{\mathrm{H}}(x) \leq \frac{ \|x-x^\ast\|^2}{8} \right]
	&\geq (1-c_1).
	\end{align*}
	\end{enumerate}
\end{assumption}	

Assumption \ref{ass:hessian.midpoints} implies that, setting
\begin{align}
\label{eq:def.N.H.E}
 N_{\mathrm{H},\mathcal{E}}
	&:= \frac{d}{\alpha} \log\left(r_0^\alpha \E[K(\xi)] +2\right),
\end{align}
whenever $N \gtrsim  N_{\mathrm{H},\mathcal{E}}$, the error caused by replacing $F$ by its quadratic approximation does not distort the outcome by too much.

\begin{remark}
At a first glance it might seem as if part (b) of Assumption \ref{ass:hessian.midpoints} follows from part (a). 
It is true that $\mathcal{E}_{\mathrm{H}}(x)\leq \|x-x^\ast\|^{2+\alpha}  K(\xi)$, but there is one important difference:
in typical situations, $\mathcal{E}_{\mathrm{H}}$ does not depend on the dimension $d$, while $K$ does (we shall see this phenomenon in the portfolio optimization problem).
In particular, estimating $\mathcal{E}_{\mathrm{H}}$ using $K(\xi)$ will unnecessarily force the threshold radius $r_0$ to depend on the dimension, which is something we wish to avoid.
On the other hand, the dimension-dependent term $\E[K(\xi)]$ only appears through a logarithmic factor in the minimal sample size.
\end{remark}

\vspace{0.7em}
\begin{quote}
Returning to the portfolio optimization problem, let us, for the sake of a simplified exposition,  pretend  that $\ell''$ is $1$-Lipschitz continuous, and recall that $\| \cdot \|_2$ and $\| \cdot \|$ are equivalent norms.
Then
\begin{align}
\label{eq:PO.intro.hessian.mid}
\begin{split}
&\left| \left\langle (\nabla^2 F(x,\xi) - \nabla^2 F (y,\xi))z,z\right\rangle\right|\\
&= |\ell''( -Y - \langle X,x\rangle) -\ell''( -Y -\langle X,y\rangle) | \, \langle X,z\rangle^2 \\
&\leq | \langle X,x-y\rangle| \, \langle X,z\rangle^2.
\end{split}
\end{align}
Thus $\mathcal{E}_{\mathrm{H}}(x)\leq | \langle X,x-x^\ast\rangle|^3$ and, under some mild integrability assumption, the latter term behaves like $\|x-x^\ast\|_2^3$ on average.
Markov's inequality and the fact that $\| \cdot \|$ and $\| \cdot \|_2$ are equivalent norms imply that
\[\P\left[ \mathcal{E}_{\mathrm{H}}(x) >\frac{1}{8}\|x-x^\ast\|^2\right] 
\leq \frac{ 8 \E[\mathcal{E}_{\mathrm{H}}(x)] }{ \|x-x^\ast\|^2 }
\leq \frac{ c8 \E[\mathcal{E}_{\mathrm{H}}(x)] }{ \|x-x^\ast\|_2^2 }
\]
is of order $\|x-x^\ast\|_2$.

\noindent
On the other hand, \eqref{eq:PO.intro.hessian.mid} implies that
\[ \|\nabla^2 F(x,\xi) - \nabla^2 F (y,\xi)\|_{\mathrm{op}}\leq K(\xi)  \|x-y\|_2 ,\]
for $K(\xi):=\|X\|_2^3$ which typically scales like $d^{\frac{3}{2}}$.
\end{quote}
\vspace{0.7em}

With all the definitions set in place let us turn to the formulation of our main result. Recall that $N_{\mathrm{G}}(r)$, $\sigma^2$, $c_{\mathrm{H}}$, and $N_{\mathrm{H},\mathcal{E}}$ were defined in \eqref{eq:def.N.G},  \eqref{eq:def.sigma},  \eqref{eq:def.c.H}, and \eqref{eq:def.N.H.E} respectively.

\begin{theorem}[Estimation and prediction error]
\label{thm:main.convex.intro}
	There are constants $c_1,c_2,c_3$ depending only on $L$ such that the following holds.
	Assume that Assumptions \ref{ass:on.F.diff.convex.etc}, \ref{ass:norm.equiv}, \ref{ass:hessian.midpoints} hold, let $r\in (0,r_0)$ and consider
	\[N\geq c_2 \max\{  d\log(2d), N_{\mathrm{H},\mathcal{E}} , N_{\mathrm{G}}(r) \}.\]
	Then there is a procedure $\widehat{x}_N^\ast$ 	such that, with probability at least
	\[ 1-2\exp\left(-c_3 N \min\left\{1, \frac{r^2}{\sigma^{2}  } \right\} \right), \]
	we have that
	\begin{align*}
	\| \widehat{x}_N^\ast - x^\ast \| &\leq  r, \\\
	f(\widehat{x}_N^\ast) &\leq f(x^\ast) + 2c_{\mathrm{H}}  r^2.
	\end{align*}
	The procedure is described in Section \ref{sec:the.procedure}. 
\end{theorem}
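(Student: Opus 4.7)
The argument will proceed by analyzing the tournament procedure of Section~\ref{sec:the.procedure} on a $\|\cdot\|$-net of a local neighborhood of $x^\ast$, and showing that on a single high-probability event every net point far from $x^\ast$ is defeated by a net point close to $x^\ast$.

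The first step is a \emph{quadratic reduction}: on $B_{r_0}:=\{x\in\mathcal{X}:\|x-x^\ast\|\leq r_0\}$, Taylor's theorem gives
\[
F(x,\xi)-F(x^\ast,\xi)=\langle\nabla F(x^\ast,\xi),x-x^\ast\rangle+\tfrac{1}{2}\langle\nabla^2F(x^\ast,\xi)(x-x^\ast),x-x^\ast\rangle+R(x,\xi),
\]
where $|R(x,\xi)|\leq\tfrac{1}{2}\mathcal{E}_{\mathrm{H}}(x)$. Assumption~\ref{ass:hessian.midpoints}(b), with $c_1$ chosen small enough in terms of $L$, ensures that for each $x\in B_{r_0}$ the event $\{|R(x,\xi)|\leq\|x-x^\ast\|^2/16\}$ has probability at least $1-c_1$, so that on at least a $(1-c_1)$-fraction of the sample blocks below the problem is effectively quadratic. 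Part~(a), combined with a chaining/union-bound argument, upgrades this to uniform control over a net; this is the origin of the logarithmic factor in $N_{\mathrm{H},\mathcal{E}}$ and is why the dimension dependence of $K(\xi)$ enters only logarithmically.

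Next I would assemble the good event from the two remaining sources of randomness. The spectral lower bound of Corollary~\ref{cor:smallest.singular.value} (applied via Assumption~\ref{ass:norm.equiv}) yields, for $N\gtrsim d\log(2d)$ and on an event of probability $1-2\exp(-cN)$, a lower bound on the smallest singular value of the empirical Hessian at $x^\ast$; equivalently, $\|\cdot\|$ and its empirical counterpart are equivalent up to absolute constants on this event. Fixing an $(\eta r)$-net $\mathcal{N}$ of $B_{r_0}\cap\mathcal{X}$ in $\|\cdot\|$ for a small absolute $\eta$, and partitioning the sample into $n$ blocks of size $m\asymp\sigma^2/r^2$ (which is possible once $N\gtrsim N_{\mathrm{G}}(r)$), the median-of-means argument recalled in Section~\ref{sec:non.gaussian} shows that the medians over blocks of $\tfrac{1}{m}\sum_{i\in I_j}(F(x,\xi_i)-F(y,\xi_i))$ lie within $O(r^2)$ of $f(x)-f(y)$, simultaneously for all pairs $(x,y)\in\mathcal{N}^2$, with probability at least $1-2\exp(-c_3N\min\{1,r^2/\sigma^2\})$---the union bound over the net absorbing the $d\log d$ term.

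On this good event, a standard tournament argument finishes the proof. Let $x_{\mathrm{net}}\in\mathcal{N}$ be the net point closest to $x^\ast$; any $y\in\mathcal{N}$ with $\|y-x^\ast\|>r$ satisfies $f(y)-f(x_{\mathrm{net}})\gtrsim\|y-x^\ast\|^2-(\eta r)^2\gtrsim r^2$ by the quadratic reduction together with the spectral lower bound, so $y$ loses to $x_{\mathrm{net}}$ in the tournament; hence the winner $\widehat{x}_N^\ast$ satisfies $\|\widehat{x}_N^\ast-x^\ast\|\leq r$. The prediction-error bound $f(\widehat{x}_N^\ast)\leq f(x^\ast)+2c_{\mathrm{H}}r^2$ follows from the same pairwise comparison of $\widehat{x}_N^\ast$ with $x_{\mathrm{net}}$ together with $f(x_{\mathrm{net}})\leq f(x^\ast)+c_{\mathrm{H}}(\eta r)^2$ (by~\eqref{eq:def.c.H}); this uses only the tournament's comparison of function values and avoids a first-order condition at $x^\ast$, which is what makes the case $x^\ast\in\partial\mathcal{X}$ tractable. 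The main obstacle throughout is to combine the median-of-means gradient concentration, the matrix-concentration estimate for the Hessian, and the uniform smallness of the Taylor remainder into a \emph{single} event valid over the full, exponentially-large net; this is exactly what Assumptions~\ref{ass:norm.equiv} and~\ref{ass:hessian.midpoints} are tailored to enable, together with the singular-value result of Section~\ref{sec:mom.singular.value}.
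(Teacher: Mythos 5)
Your plan captures the flavor of the construction, but it contains a genuine gap that prevents it from reaching the stated sample complexity.

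The central issue is the claim that ``the median-of-means argument recalled in Section~\ref{sec:non.gaussian} shows that the medians over blocks of $\tfrac{1}{m}\sum_{i\in I_j}(F(x,\xi_i)-F(y,\xi_i))$ lie within $O(r^2)$ of $f(x)-f(y)$, simultaneously for all pairs $(x,y)\in\mathcal{N}^2$, \ldots the union bound over the net absorbing the $d\log d$ term.'' A naive union bound over an $(\eta r)$-net in $\|\cdot\|$ of cardinality $\exp(cd\log(1/\eta))$ requires $\log|\mathcal{N}|^2 \lesssim n$, and since $n = N\min\{1,r^2/\sigma^2\}$, this forces $N \gtrsim d\,\sigma^2/r^2$. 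That quantity dominates the theorem's requirement $N\gtrsim N_{\mathrm{G}}(r)=\tfrac{1}{r^2}\mathop{\mathrm{trace}}(\nabla^2 f(x^\ast)^{-1}\mathrm{\mathbf{C}ov}[\nabla F(x^\ast,\xi)])$ in general: whenever the spectrum of $\nabla^2 f(x^\ast)^{-1}\mathrm{\mathbf{C}ov}[\nabla F(x^\ast,\xi)]$ is not flat, one can have $d\sigma^2 \gg \mathop{\mathrm{trace}}$ by a factor of up to $d$. In other words, your route proves a statement with the wrong (trace-versus-$d\lambda_{\max}$) dependence on the spectrum --- exactly the kind of dimension-dependent degradation the paper takes pains to avoid, and criticizes in the Oliveira--Thompson analysis. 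The paper circumvents this by decomposing $\widehat{f}_{I_j}(x)-\widehat{f}_{I_j}(x^\ast)=M_{x,x^\ast}(j)+\tfrac12 Q_{x,x^\ast}(j)$ and treating the two terms with \emph{different} approximation structures: the quadratic term is handled uniformly via the median-of-means singular-value bound of Theorem~\ref{thm:singular.value} (this is where $d\log(2d)$ and $N_{\mathrm{H},\mathcal{E}}$ enter), while the multiplier term is handled via a dual-Sudakov net with respect to the covariance-induced norm $\langle\mathbb{G}\,\cdot,\cdot\rangle^{1/2}$ (Lemma~\ref{lem:multiplier.good.spearated.set}) whose log-cardinality is controlled by $\mathop{\mathrm{trace}}(\mathbb{H}^{-1}\mathbb{G})/\rho^2$, together with Rademacher-complexity and bounded-differences arguments (Lemma~\ref{lem:multiplier.uniform.x}) for the oscillation. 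That split, and in particular the $\mathbb{G}$-geometry net, is what makes $N_{\mathrm{G}}(r)$ (rather than $d\sigma^2/r^2$) appear.

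Two secondary points. First, your prediction-error paragraph silently reuses the same sample for the winner-finding step and for the $O(r^2)$-level comparison; the paper's procedure explicitly draws a second independent sample for the home-match phase, precisely because the estimation-phase tournament only uses the \emph{sign} of the block differences, not their quantitative concentration, and upgrading the sign information to level information on the same sample is not automatic. Second, the extension from net to the full set for the quadratic term is not a union bound at all: one must pass through a stability property (discarding a controlled number of coordinates per block) and a Talagrand/contraction argument for a truncated empirical process (Lemmas~\ref{lem:slb.uniform}, \ref{lem:net.to.uniform}, \ref{lem:hessian.midpoints.uniform}); the ``chaining/union-bound upgrade'' you invoke does not capture this. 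Your remainder bound $|R(x,\xi)|\leq\tfrac12\mathcal{E}_{\mathrm{H}}(x)$ and the role you assign to Assumption~\ref{ass:hessian.midpoints} are correct, and the overall convexity/localization idea is sound, but as written the argument would yield a weaker theorem than the one claimed.
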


Theorem \ref{thm:main.convex.intro} implies that our procedure recovers (up to multiplicative constants) the optimal asymptotic rates for the sample average approximation \cite[Chapter 5]{shapiro2014lectures} in a non-asymptotic fashion and when the random variables involved can be heavy tailed.

The procedure $\widehat{x}_N^\ast$ is described in Section \ref{sec:the.procedure}.
The parameters (e.g., $\sigma^2$, $\|\cdot\|$ etc.) appearing in Theorem \ref{thm:main.convex.intro} depend on the unknown optimal action $x^\ast$ so that their a priori knowledge seems unrealistic.
However, as we explain in Remark \ref{rem:parameter.dependence.optimal.action}, that is not a major problem and there are several ways of addressing it.

\vspace{0.7em}
\begin{quote}
To complete the heuristics pertaining to the portfolio optimization problem, one has to compute $N_{\mathrm{G}}(r), \sigma^2$ and $c_{\mathrm{H}}$.
Again, for the sake of simplicity we shall ignore $\ell'$ and $\ell''$ at every appearance (keeping in mind that some minor integrability assumptions guarantee that this simplification does not shift the results by too much from the truth).

In this case, $\C[\nabla F(x^\ast,\xi)]=\mathrm{Id}$ and $\nabla^2f(x^\ast)=\mathrm{Id}$, and in particular
\[
\sigma^2=\lambda_{\max}(\mathrm{Id})=1 \ \ \text{and} \ \  N_\mathrm{G}(r)=\frac{\mathrm{trace}(\mathrm{Id})}{r^2}=\frac{d}{r^2}.
\]
Moreover, ignoring $\ell''$ also clearly implies that $c_{\mathrm{H}}=1$.
\end{quote}
\vspace{0.5em}

In Corollary \ref{cor:PO} we specify all the assumptions that are needed to make this heuristic argument hold; but for now let us state a particularly simple case which is of interest in its own right: the exponential portfolio optimization in the Bachelier model.

Recall that in this model $U(\cdot)=-\exp(-\,\cdot)$ is the exponential utility function and $X$ is zero-mean Gaussian.
We assume that the covariance matrix of $X$ is non-degenerate and that both $Y$ and $U(2Y)$ are integrable.
Under these assumptions, we shall see that there exists a unique optimal action $x^\ast\in\mathcal{X}$.
Set
\begin{align*}
	\bar{\sigma}^2
	&:=\E\left[ \exp( -Y-\langle X,x^\ast\rangle)^2 \right]^{\frac{1}{2}},
\end{align*}
and assume that Assumption \ref{ass:norm.equiv} holds true.
While the latter assumption can be verified via some integrability conditions (we shall see this in  context of the general portfolio optimization problem in Lemma \ref{lem:PO.on.norm.equiv}), the obtained bounds may fail to be sharp.
To showcase that Assumption \ref{ass:norm.equiv} can sometimes be easily verified by other means, consider for a moment $Y=\langle X,\tilde{x}\rangle +W$ for some $\tilde{x}\in\mathcal{X}$ and $W$ that is independent of $X$.
Then $x^\ast=\tilde{x}$ and Gaussian norm equivalence (i.e.\ there is an absolute constant $C$ such that  $\E[\langle X,z\rangle^4 ]^{\frac{1}{4}} \leq C \E[\langle X,z\rangle^2]^{\frac{1}{2}}$ for every $z\in\mathbb{R}^d$) together with independence of $X$ and $W$ readily implies that Assumption \ref{ass:norm.equiv} is satisfied with
\[L = \frac{ C^4 \E[\exp(W)^2]}{\E[\exp(W)]^2}.\]

\begin{corollary}[Exponential portfolio optimization]
\label{cor:PO.exp}
	Under the above assumptions, there are constants $c_1,c_2,c_3,c_4>0$ depending only on $L$ and $\E[|Y+\langle X,x^\ast\rangle|]$ such that the following holds.
	For $r\in(0,\min\{ 1,\frac{c_1}{\bar{\sigma}^2}\})$ and
	\[ N\geq c_2 \max\left\{  \frac{ d \bar{\sigma}^2}{r^2} , \, d^{\frac{3}{2}} \, \right\},\]
	there is a procedure $\widehat{x}_N^\ast$ 	such that, with probability at least
	\[ 1-2\exp\left(-c_3 N \min\left\{1, \frac{r^2}{\bar{\sigma}^2  } \right\} \right), \]
	we have that
	\begin{align*}
	\| \widehat{x}_N^\ast - x^\ast \|
	&\leq   r, \\ 
	u(\widehat{x}_N^\ast)
	&\geq u(x^\ast) -  c_4 r^2.
	\end{align*}
\end{corollary}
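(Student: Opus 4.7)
The strategy is to invoke Theorem \ref{thm:main.convex.intro} applied to $F(x,\xi) = \exp(-Y-\langle X,x\rangle)$, for which
\[\nabla F(x,\xi) = -X e^{-Y-\langle X,x\rangle}, \qquad \nabla^2 F(x,\xi) = XX^T e^{-Y-\langle X,x\rangle},\]
so every quantity in the theorem reduces to moments of a Gaussian weighted by an exponential, and the conclusion for $u$ follows from that for $f$ via $u = -f$. First I would verify Assumption \ref{ass:on.F.diff.convex.etc}: convexity and $C^2$-smoothness of $x\mapsto F(x,\xi)$ are immediate, while integrability of $F$ and $\nabla^2 F$ and square integrability of $\nabla F$ at any $x\in\mathcal X$ follow from $\E[e^{-2Y}]<\infty$ (equivalent to $U(2Y)$ integrable) combined with Gaussian moments of $X$ and Cauchy-Schwarz. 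Existence and uniqueness of $x^\ast$ come from strict convexity of $f$ (since $\nabla^2 f(x) = \E[XX^T e^{-Y-\langle X,x\rangle}]$ is positive definite by non-degeneracy of $\mathrm{Cov}[X]$ and positivity of the weight) combined with a coercivity argument based on $\E[X]=0$, $\E[e^{-Y}]<\infty$ and a Cameron--Martin shift; for the same reason the seminorm $\|z\|^2 = \E[\langle X,z\rangle^2 e^{-Y-\langle X,x^\ast\rangle}]$ is a genuine norm. Assumption \ref{ass:norm.equiv} is imposed directly with parameter $L$.

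To check Assumption \ref{ass:hessian.midpoints} I would write
\[\nabla^2 F(x,\xi) - \nabla^2 F(y,\xi) = XX^T e^{-Y-\langle X,x^\ast\rangle}\bigl[e^{-\langle X,x-x^\ast\rangle} - e^{-\langle X,y-x^\ast\rangle}\bigr]\]
and apply the mean-value theorem; restricting to $\|x-x^\ast\|, \|y-x^\ast\| \leq r_0$ for a small absolute $r_0$ and using the equivalence $\|\cdot\|\asymp\|\cdot\|_2$ furnished by Assumption \ref{ass:norm.equiv}, this produces H\"older exponent $\alpha = 1$ with $K(\xi)$ proportional to $\|X\|_2^3$ times a bounded exponential factor, so $\E[K(\xi)] \lesssim d^{3/2}$ enters the sample-size threshold only through $\log$ in $N_{\mathrm{H},\mathcal E}$. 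For part (b), $\mathcal E_{\mathrm H}(x) \lesssim |\langle X, x-x^\ast\rangle|^3$ up to the same exponential weights, and Markov's inequality applied to $\E[\mathcal E_{\mathrm H}(x)] \lesssim \|x-x^\ast\|_2^3 \lesssim \|x-x^\ast\|^3$ (Gaussian moment equivalence followed by norm equivalence) gives the prescribed small probability once $r_0$ is taken to be a small enough absolute constant depending only on $L$ and $\E[|Y+\langle X,x^\ast\rangle|]$.

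Finally I would compute $\sigma^2, N_{\mathrm{G}}(r), c_{\mathrm H}$. First-order optimality (in the interior case) gives $\E[X e^{-Y-\langle X,x^\ast\rangle}] = 0$, hence $\mathrm{Cov}[\nabla F(x^\ast,\xi)] = \E[XX^T e^{-2(Y+\langle X,x^\ast\rangle)}]$; Cauchy-Schwarz combined with Gaussian moment equivalence then yields
\[\langle \mathrm{Cov}[\nabla F(x^\ast,\xi)] z,z\rangle \leq \bar\sigma^2\,\E[\langle X,z\rangle^4]^{1/2} \lesssim \bar\sigma^2 \|z\|_2^2 \lesssim \bar\sigma^2 \|z\|^2,\]
so $\sigma^2 \lesssim \bar\sigma^2$ and $N_{\mathrm{G}}(r)\lesssim d\bar\sigma^2/r^2$. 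The constant $c_{\mathrm H}$ is bounded by the supremum of $\frac12 \|\nabla^2 f(y)\|_{\mathrm{op}}$ on the $\|\cdot\|$-unit ball around $x^\ast$, which is $O(1)$ by the same estimates. Plugging everything into Theorem \ref{thm:main.convex.intro} delivers the threshold $N \geq c_2\max\{d\bar\sigma^2/r^2, d^{3/2}\}$ and the high-probability conclusion. The main obstacle will be the boundary case $x^\ast \in \partial\mathcal X$, where the first-order identity $\E[Xe^{-Y-\langle X, x^\ast\rangle}] = 0$ fails and $\mathrm{Cov}[\nabla F(x^\ast,\xi)]$ must be estimated more carefully (for instance by projecting the gradient onto the tangent cone of $\mathcal X$ at $x^\ast$); a secondary issue is keeping all constants dependent only on $L$ and $\E[|Y+\langle X,x^\ast\rangle|]$, which is precisely why the dimension-dependent factor $\E[K(\xi)] \sim d^{3/2}$ must remain inside the logarithm rather than enter the leading order of the sample-size bound.
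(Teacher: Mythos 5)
Your plan is broadly aligned with the paper's own route through Theorem \ref{thm:main.convex.intro}, but there is a genuine gap in the treatment of Assumption \ref{ass:hessian.midpoints} that, as written, would produce the wrong sample-size threshold. You claim that after a mean-value argument one obtains $K(\xi)$ ``proportional to $\|X\|_2^3$ times a bounded exponential factor, so $\E[K(\xi)] \lesssim d^{3/2}$.'' That exponential factor is not bounded. The operator-norm inequality must hold uniformly over all $x,y$ in the $\|\cdot\|$-ball of radius $r_0\leq 1$ around $x^\ast$, so the dominating exponential term is $\sup_{x\in\mathcal{B}_1^\ast}\exp(V_x) = \exp(V_{x^\ast})\exp(\|X\|_\ast)$ (cf.\ Remark \ref{rem:PO.ell'''}), and thus
\[
K(\xi) \;=\; \exp(V_{x^\ast})\,\exp(\|X\|_\ast)\,\|X\|_\ast^3,
\]
whose expectation is of order $\exp(c\sqrt{d})$, not $d^{3/2}$ (see \eqref{eq:PO.exp.EK} and Lemma \ref{lem:PO.exp.expectations}). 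The $d^{3/2}$ in the corollary's sample-size bound is the outcome of feeding this through the logarithm in \eqref{eq:def.N.H.E}: $N_{\mathrm{H},\mathcal{E}} \approx d\log(\exp(c\sqrt{d})) = cd^{3/2}$. Under your (incorrect) estimate $\E[K(\xi)]\lesssim d^{3/2}$ the same formula would give $N_{\mathrm{H},\mathcal{E}}\lesssim d\log d$, strictly better than what the corollary asserts, which should itself have been a red flag. This is precisely the phenomenon the paper flags in the remark following Corollary \ref{cor:PO.exp}.

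A secondary, smaller issue: to bound $\mathrm{\mathbf{C}ov}[\nabla F(x^\ast,\xi)]$ you invoke the first-order identity $\E[Xe^{-Y-\langle X,x^\ast\rangle}]=0$, which only holds when $x^\ast$ is interior, and you then flag the boundary case as ``the main obstacle.'' In fact it is no obstacle: the paper's Lemma \ref{lem:PO.sigma.NG} simply uses $\mathrm{\mathbf{C}ov}[W]\preceq\E[W\otimes W]$ followed by Cauchy--Schwartz and Assumption \ref{ass:norm.equiv}, and this holds unconditionally, without the first-order identity or any projection onto a tangent cone. This part of your plan is not wrong, but it is a detour that creates a difficulty the intended argument does not encounter.
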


\begin{remark}
The origin of the (somewhat unfamiliar) term $d^{\frac{3}{2}}$ appearing in the minimal sample size in Corollary \ref{cor:PO.exp} is $N_{\mathrm{H},\mathcal{E}}$. However, as our previous heuristics indicate, that term should be of order $d\log(d)$.

As it happens, the source of this difference is the exponential utility function.
Indeed, the heuristic presentation was based on a simplifying assumption: that $\ell'''$ was bounded by 1. That allowed us to conclude that $\E[K(\xi)]$ was of the order $d^3$, resulting in $N_{\mathrm{H},\mathcal{E}}$ of order $d\log(d^3)=3d\log(d)$.
Here, however, $\ell'''$ is the exponential function; the term $\E[K(\xi)]$ is actually of order $\exp(\sqrt{d})$ resulting in $N_{\mathrm{H},\mathcal{E}}$ that is of order $d\log (\exp(\sqrt{d}))= d^{\frac{3}{2}}$.

It should be stressed that although the term $d^{\frac{3}{2}}$ in the minimal sample size of Corollary \ref{cor:PO.exp} could be off by a factor of $\sqrt{d}$, Corollary \ref{cor:PO.exp} is, to the best of our knowledge, the first non-asymptotic estimate for the exponential portfolio optimization problem.
\end{remark}

In some of the examples we will present later, the Hessian additionally satisfies a deterministic lower bound:
	
\begin{assumption}[Deterministic lower bound of the Hessian]
\label{ass:Deterministic.lower.bound.Hessian}
	There is $r_0\in(0,1)$ and $\varepsilon>0$ such that 
	\[\nabla^2 F(x,\xi)\succeq \varepsilon \nabla^2 f(x^\ast)\]
	for all $x\in\mathcal{X}$ with $\|x-x^\ast\|\leq r_0$.
	
	Moreover, $\nabla f(x)=\E[\nabla F(x,\xi)]$ and $\nabla^2 f(x)=\E[\nabla^2 F(x,\xi)]$
	for all $x\in\mathcal{X}$ with $\|x-x^\ast\|<r_0$.
\end{assumption}

When Assumption \ref{ass:Deterministic.lower.bound.Hessian} holds true, the two Assumptions \ref{ass:norm.equiv} and \ref{ass:hessian.midpoints} that were imposed to control the smallest singular value of the Hessian are not needed, and Theorem \ref{thm:main.convex.intro} can be simplified as follows.

\begin{theorem}[Estimation and prediction error, simplified]
\label{thm:main.hessian.deterministic.lower.bound}
	There are constants $c_1,c_2$ depending only on $\varepsilon$ such that the following holds.
	Assume that Assumptions \ref{ass:on.F.diff.convex.etc} and \ref{ass:Deterministic.lower.bound.Hessian} hold, let $r\in (0,r_0)$, and consider
	\[N\geq c_1 N_{\mathrm{G}}(r).\]
	Then there is a procedure $\widehat{x}_N^\ast$ 	such that, with probability at least
	\[ 1-2\exp\left(-c_2 N \min\left\{1, \frac{r^2}{\sigma^{2}  } \right\} \right), \]
	we have that
	\begin{align*}
	\| \widehat{x}_N^\ast - x^\ast \| &\leq  r, \\\
	f(\widehat{x}_N^\ast) &\leq f(x^\ast) + 2c_{\mathrm{H}}  r^2.
	\end{align*}
\end{theorem}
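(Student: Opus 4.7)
The plan is to reduce Theorem \ref{thm:main.hessian.deterministic.lower.bound} to the portion of the proof of Theorem \ref{thm:main.convex.intro} that concerns only the gradient $\nabla F(x^\ast,\xi)$. The point is that Assumptions \ref{ass:norm.equiv} and \ref{ass:hessian.midpoints} are used in that proof exclusively to produce a two-sided quadratic comparison between the excess risk $f(\cdot)-f(x^\ast)$ and the squared Hessian-norm $\|\cdot - x^\ast\|^2$ on a neighbourhood of $x^\ast$; once that comparison is in hand, the analysis of the tournament proceeds solely via the gradient and yields the sample-size threshold $N \gtrsim N_{\mathrm{G}}(r)$ together with the deviation exponent $c_3 N\min\{1,r^2/\sigma^2\}$. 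Under Assumption \ref{ass:Deterministic.lower.bound.Hessian} that quadratic comparison is free of charge.

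More precisely, I would first establish that for every $x\in\mathcal{X}$ with $\|x-x^\ast\|\leq r_0$,
\[
\tfrac{\varepsilon}{2}\|x-x^\ast\|^2 \;\leq\; f(x)-f(x^\ast) \;\leq\; c_{\mathrm{H}}\|x-x^\ast\|^2.
\]
The upper bound is precisely the definition of $c_{\mathrm{H}}$ in \eqref{eq:def.c.H}. For the lower bound, Assumption \ref{ass:Deterministic.lower.bound.Hessian} gives $\nabla^2 f(y)=\E[\nabla^2 F(y,\xi)]\succeq \varepsilon \nabla^2 f(x^\ast)$ for every $y$ on the segment between $x^\ast$ and $x$; combining the second-order Taylor expansion of $f$ at $x^\ast$ with the first-order optimality inequality $\langle \nabla f(x^\ast),x-x^\ast\rangle\geq 0$ (which follows from convexity of $f$ on $\mathcal{X}$ and from $x^\ast$ being a minimizer) yields the desired quadratic lower bound. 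Running the same median-of-means tournament procedure from Section \ref{sec:the.procedure} with the same choice of parameters as in Theorem \ref{thm:main.convex.intro} and invoking the gradient portion of its analysis, one obtains, for $N\geq c_1 N_{\mathrm{G}}(r)$, a candidate $\widehat{x}_N^\ast$ satisfying $\|\widehat{x}_N^\ast-x^\ast\|\leq r$ on an event of probability at least $1-2\exp(-c_2 N\min\{1,r^2/\sigma^2\})$. The prediction bound $f(\widehat{x}_N^\ast)-f(x^\ast)\leq 2c_{\mathrm{H}}r^2$ then follows at once from the upper quadratic comparison, with the factor of $2$ absorbing any rounding in the constants.

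The main obstacle is bookkeeping rather than analysis: one must isolate, inside the proof of Theorem \ref{thm:main.convex.intro}, the precise steps that rely on Assumptions \ref{ass:norm.equiv} and \ref{ass:hessian.midpoints} and verify that each such step either (i) only serves to produce the two-sided quadratic comparison above and is therefore superseded by Assumption \ref{ass:Deterministic.lower.bound.Hessian}, or (ii) is rendered vacuous by the fact that the empirical Hessian inherits a uniform deterministic lower bound $\tfrac{1}{N}\sum_{i=1}^N \nabla^2 F(x,\xi_i)\succeq \varepsilon\nabla^2 f(x^\ast)$ on the neighbourhood $\|x-x^\ast\|\leq r_0$, so that no singular-value bound \`a la Corollary \ref{cor:smallest.singular.value} and no Hessian-continuity control are needed. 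After this separation, the sample-size threshold collapses from $c_2\max\{d\log(2d),N_{\mathrm{H},\mathcal{E}},N_{\mathrm{G}}(r)\}$ to $c_1 N_{\mathrm{G}}(r)$, with constants depending only on $\varepsilon$, as claimed.
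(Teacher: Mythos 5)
Your overall strategy is aligned with the paper's: keep the tournament decomposition $\widehat{f}_{I_j}(x)-\widehat{f}_{I_j}(x^\ast)=M_{x,x^\ast}(j)+\tfrac12 Q_{x,x^\ast}(j)$, notice that Assumption~\ref{ass:Deterministic.lower.bound.Hessian} makes the quadratic term $Q_{x,x^\ast}(j)$ deterministically bounded below by $\varepsilon\|x-x^\ast\|^2$ for all samples (so no singular-value bound or Hessian-continuity control is needed), and rerun the multiplier-term analysis with adjusted constants. Your item~(ii) in the last paragraph is precisely the paper's observation in Section~\ref{sec:proof.main.deterministic.lower.bound}. But there is a genuine gap in your treatment of the prediction error.

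You claim that the bound $f(\widehat{x}_N^\ast)-f(x^\ast)\le 2c_{\mathrm H}r^2$ ``follows at once'' from the upper quadratic comparison $f(x)-f(x^\ast)\le c_{\mathrm H}\|x-x^\ast\|^2$, and you say the latter ``is precisely the definition of $c_{\mathrm H}$.'' Neither is true. The definition \eqref{eq:def.c.H} bounds the Hessian operator norm; a Taylor expansion gives
\[
f(x)-f(x^\ast)=\langle\nabla f(x^\ast),\,x-x^\ast\rangle+\tfrac12\langle\nabla^2 f(y)(x-x^\ast),x-x^\ast\rangle
\le \langle\nabla f(x^\ast),\,x-x^\ast\rangle+c_{\mathrm H}\|x-x^\ast\|^2,
\]
and the linear term vanishes only when $x^\ast$ lies in the interior of $\mathcal X$. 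When $x^\ast$ is on the boundary, $\langle\nabla f(x^\ast),x-x^\ast\rangle$ is nonnegative but can be of order $r$, not $r^2$, so the two-sided comparison you rely on fails and the prediction bound does not follow from $\|\widehat{x}_N^\ast-x^\ast\|\le r$. This is exactly the point the paper stresses repeatedly (in the Introduction, at the end of Section~\ref{sec:gaussian}, and after Proposition~\ref{prop:estimation.error}); it is why the procedure has a second stage (Step~3, home matches on a fresh sample), and why Proposition~\ref{prop:prediction.error} and Lemma~\ref{prop:actually.the.theorem.prediction} exist. To obtain the prediction error in Theorem~\ref{thm:main.hessian.deterministic.lower.bound} you still need that second stage; the simplification afforded by Assumption~\ref{ass:Deterministic.lower.bound.Hessian} only removes the quadratic-term analysis (Lemmas~\ref{lem:hessian.at.optimzer}--\ref{lem:hessian.midpoints.uniform}), not the multiplier-term analysis needed in both Lemma~\ref{prop:actually.the.theorem} and Lemma~\ref{prop:actually.the.theorem.prediction}. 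Your framing in the first paragraph is also slightly off: Assumptions~\ref{ass:norm.equiv} and~\ref{ass:hessian.midpoints} are not used to produce a population-level two-sided quadratic comparison (the population lower bound plays no direct role in the proof); they are used to control the empirical quadratic term with high probability. The population lower bound you prove, while correct, is not where the work goes.
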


Before presenting the procedure $\widehat{x}_N^\ast$ in detail, let us compare the outcome of Theorem \ref{thm:main.convex.intro} with the current state of the art. Our focus is on recent non-asymptotic results by Oliveira and Thompson \cite{oliveira2017sampleI,oliveira2017sampleII}; a general literature review will be presented in Section \ref{sec:literature}.

For a clearer comparison, let us restate Theorem \ref{thm:main.convex.intro} (pertaining to the prediction error) as follows:

given some fixed confidence level $\delta\in(0,1)$, small $r>0$, and
\[
N\geq c_2 \max\{ d\log(2d) , N_{\mathrm{H},\mathcal{E}} , N_{\mathrm{G}}(r) \},
\]
 the prediction error is bounded by $r^2$ with probability at least $1-\delta$, whenever the sample $N$ satisfies
\begin{align}
\label{eq:literature.our.lower.bound}
N\gtrsim \frac{ \sigma^2 }{r^2}\log\left(\frac{2}{\delta}\right).
\end{align}
In contrast, the main result of Oliveira and Thompson \cite[Theorem 3]{oliveira2017sampleII} regarding general convex stochastic optimization problems is the following. There is a random variable $\widehat{\Sigma}_N$ (which we shall not define here) that satisfies
\[
\widehat{\Sigma}_{N}^2
\gtrsim d  \left( \E[\|\nabla F(x^\ast,\xi)\|^2] +  \frac{1}{N}\sum_{i=1}^N \|\nabla F(x^\ast,\xi_i)\|^2   \right)\]
and in order to guarantee that the prediction error is bounded by $r^2$ with probability at least $1-\delta$ one should have
\begin{equation}
\label{eq:literature.oliveira.lower.bound}
\P\left[ N\geq \frac{ \widehat{\Sigma}_{N}^2 }{r^2}\log\left(\frac{2}{\delta}\right) \right]
\geq 1-\delta.
\end{equation}

There are two \emph{major} differences between these two results. The first one, which should not come as a great surprise following the discussion in Section \ref{sec:non.gaussian}, is that $\widehat{\Sigma}_{N}^2 $ does not concentrate around its mean with high probability in heavy tailed situations.
Thus, \eqref{eq:literature.oliveira.lower.bound} forces $N$ to grow like $(\frac{1}{\delta})^{\frac{1}{p}}$ for some power $p>1$ depending on the integrability of $\|\nabla F(x^\ast,\xi)\|$.
For small $\delta$ (i.e.\ if one is interested in high confidence) this is in stark contrast to the order $\log( \frac{1}{\delta})$ in \eqref{eq:literature.our.lower.bound}.
The second difference is the dependence of $\widehat{\Sigma}_{N}^2$ on the dimension $d$.
Indeed, even if we neglect the integrability issues and replace $\widehat{\Sigma}_{N}^2$ by its mean, what we end up with is not the correct variance parameter (which appears in the central limit theorem).
For instance, if $\|\cdot\|$ is the Euclidean norm, then
\[ \widehat{\Sigma}_{N}^2
\gtrsim d  \mathrm{trace}( \C[\nabla F(x^\ast,\xi)]), \]
which is at least $d$ times (possibly even $d^2$ times) larger than the true variance parameter $\sigma^2=\lambda_{\max}( \C[\nabla F(x^\ast,\xi)])$.
In high-dimensional problems such as the portfolio optimization problem, where $d$ is the number of stocks and is likely to be a large number, this difference is significant. As a result, even for moderate confidence levels $\delta$, the required sample size jumps up by a factor of $d$ (or even $d^2$) from what we would expect.

\subsection{The procedure}
\label{sec:the.procedure}

As we already explained previously, to have any hope of optimal performance, the procedure $\widehat{x}_N^\ast$ \emph{cannot} be the sample average approximation.
Instead, $\widehat{x}_N^\ast$ will be determined through median-of-mean tournaments conducted between all $x\in\mathcal{X}$, following the method introduced in \cite{lugosi2016risk}.

The first phase of the procedure returns a set of candidates, each with a small estimation error.

\vspace{0.5em}
\begin{enumerate}[(Step 1)]
\item
For some (small) tuning parameter $\theta$ to be specified later, set
\[n:=\theta N \min\left\{1,\frac{r^2}{\sigma^2}\right\}
\quad\text{and}\quad
m:=\frac{N}{n}.\]
Without loss of generality assume that $m$ and $n$ are integers. Partition
\[\{1,\dots,N\}=\bigcup_{j=1}^n I_j\]
into $n$ disjoint blocks $I_j$, each of equal cardinality $|I_j|=m$.
\item
For every $x\in\mathcal{X}$, compute the empirical mean of $F(x,\xi)$ on the $j$-th block
\[ \widehat{f}_{I_j}(x):=\frac{1}{m}\sum_{i\in I_j} F(x,\xi_i).\]
We then say that $x\in\mathcal{X}$ \emph{defeats} $y\in\mathcal{X}$ \emph{on the $j$-th block} if $\widehat{f}_{I_j}(x)<\widehat{f}_{I_j}(y)$, and that  $x$ \emph{wins the match against} $y$ if
\[ \widehat{f}_{I_j}(x)<\widehat{f}_{I_j}(y)
\quad\text{on more than } \frac{n}{2} \text{ blocks} \ j,\]
i.e.\ if $x$ defeats $y$ on a majority of the blocks.
\end{enumerate}
\vspace{0.5em}

\noindent
Denote by $\tilde{\mathcal{X}}_N^\ast\subseteq\mathcal{X}$ the set of \emph{champions}, i.e.\
\[ \tilde{\mathcal{X}}_N^\ast:=\left\{ x\in\mathcal{X} :
\begin{array}{l}
x \text{ wins the match against every}\\
y\in\mathcal{X} \text{ that satisfies } \|x-y\|\geq r
\end{array}\right\}. \]
The following proposition shows that elements in $\tilde{\mathcal{X}}_N^\ast$ satisfy the part of Theorem \ref{thm:main.convex.intro} pertaining to the estimation error.

\begin{proposition}[Estimation error]
\label{prop:estimation.error}
	In the setting of Theorem \ref{thm:main.convex.intro},
	with probability at least $1-2\exp(-c_{3}n)$, we have that $x^\ast\in\tilde{\mathcal{X}}_N^\ast$ and every $x\in \tilde{\mathcal{X}}_N^\ast$ satisfies $\|x-x^\ast\|\leq r$.
\end{proposition}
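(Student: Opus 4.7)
The plan is to focus on proving that $x^\ast \in \tilde{\mathcal{X}}_N^\ast$ with the claimed probability; the diameter statement then comes for free. Indeed, if some $x \in \tilde{\mathcal{X}}_N^\ast$ had $\|x - x^\ast\| \geq r$, then $x^\ast$ would defeat $x$ on a strict majority of blocks (from $x^\ast \in \tilde{\mathcal{X}}_N^\ast$) while $x$ would defeat $x^\ast$ on a strict majority (from $x \in \tilde{\mathcal{X}}_N^\ast$), which is impossible. Moreover, by convexity of $\widehat{f}_{I_j}$ (an average of convex maps), it suffices to verify the defeating condition for $y$ on the sphere $S := \{z \in \mathcal{X} : \|z - x^\ast\| = r\}$: for any more distant $y$, the segment $[x^\ast,y]$ meets $S$ at some $y'$, and if $\widehat{f}_{I_j}(y') > \widehat{f}_{I_j}(x^\ast)$ then convexity yields $\widehat{f}_{I_j}(y) \geq \widehat{f}_{I_j}(y') > \widehat{f}_{I_j}(x^\ast)$ as well.

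At the heart of the argument is a per-block estimate: I aim to show that the good event $G_j := \{\widehat{f}_{I_j}(y) - \widehat{f}_{I_j}(x^\ast) > 0 \text{ for all } y \in S\}$ has probability at least $3/4$. A second-order Taylor expansion of each $F(\cdot,\xi_i)$ at $x^\ast$ gives the decomposition
\[ \widehat{f}_{I_j}(y) - \widehat{f}_{I_j}(x^\ast) = L_j(y) + Q_j(y) + R_j(y), \]
where $L_j(y) = \langle g_j, y-x^\ast\rangle$ is linear in $y$ with $g_j := \frac{1}{m}\sum_{i\in I_j} \nabla F(x^\ast,\xi_i)$, $Q_j(y) = \frac{1}{2}\langle H_j(y-x^\ast), y-x^\ast\rangle$ is quadratic with $H_j := \frac{1}{m}\sum_{i\in I_j}\nabla^2 F(x^\ast,\xi_i)$, and $|R_j(y)| \leq \frac{1}{2m}\sum_{i \in I_j}\mathcal{E}_{\mathrm{H},i}(y)$ by Assumption \ref{ass:hessian.midpoints}. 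For $y \in S$, the three pieces are handled as follows. First, $Q_j(y) \geq \frac{r^2}{2}\inf_{\|u\|=1}\langle H_j u, u\rangle$; the hypothesis $N \geq c_2 d\log(2d)$ ensures that the block size $m = N/n$ is a constant multiple of $d\log(2d)$, and Corollary \ref{cor:smallest.singular.value} then produces $\inf_{\|u\|=1}\langle H_j u,u\rangle \geq c$ with probability at least $7/8$. Second, the KKT optimality of $x^\ast$ in $\mathcal{X}$ yields $\langle \nabla f(x^\ast), y-x^\ast\rangle \geq 0$ for every $y\in\mathcal{X}$, so $L_j(y) \geq -r\|g_j - \nabla f(x^\ast)\|_\ast$ with $\|\cdot\|_\ast$ the norm dual to $\|\cdot\|$; the tuning of $\theta$ together with $N \geq c_2 N_{\mathrm{G}}(r)$ and Chebyshev's inequality make the dual-norm deviation $\ll r$ with probability at least $7/8$. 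Third, Assumption \ref{ass:hessian.midpoints}(b) controls $R_j(y)$ pointwise; propagating this to a supremum over $S$ is done via an $\varepsilon$-net combined with the H\"older bound from Assumption \ref{ass:hessian.midpoints}(a), the entropy cost being absorbed into the $N_{\mathrm{H},\mathcal{E}}$ term of the sample-size hypothesis.

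Combining the three estimates gives $\P[G_j] \geq 3/4$, provided the constant $c_1$ in Assumption \ref{ass:hessian.midpoints}(b) is chosen small enough. Since the blocks $I_j$ use disjoint samples, the indicators $(\mathbf{1}_{G_j})_{j=1}^n$ are independent Bernoulli variables with mean at least $3/4$, so a Chernoff bound yields
\[ \P\Big[\#\{j : G_j \text{ holds}\} \leq n/2\Big] \leq 2\exp(-c_3 n), \]
and $x^\ast \in \tilde{\mathcal{X}}_N^\ast$ on the complementary event. I expect the main obstacle to lie in the third step: upgrading the pointwise control on the remainder $R_j$ to a uniform bound over $S$ requires a net/discretization argument whose $d$-dependent cost must be calibrated to be paid by the $d\log(2d)$ and $N_{\mathrm{H},\mathcal{E}}$ parts of the threshold, and not by the sharper $N_{\mathrm{G}}(r)$ part which alone determines the variance $\sigma^2$ in the exponent.
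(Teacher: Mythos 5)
Your decomposition of $\widehat{f}_{I_j}(y)-\widehat{f}_{I_j}(x^\ast)$ into linear, quadratic and remainder pieces, the use of the first-order optimality condition $\langle\nabla f(x^\ast),y-x^\ast\rangle\geq 0$ to sign the linear part, the convexity reduction from $\mathcal X$ to the sphere $\mathcal S_r^\ast$, and the tournament-contradiction argument for the diameter bound all match the paper. But the statistical core of your argument---the per-block event $G_j$ with $\P[G_j]\geq 3/4$---cannot be established, and this is a genuine gap, not a detail to fill in.

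The problem is the block size. You claim that ``$N\geq c_2 d\log(2d)$ ensures $m=N/n$ is a constant multiple of $d\log(2d)$,'' but with $n=\theta N\min\{1,r^2/\sigma^2\}$ one has $m=N/n=\tfrac{1}{\theta}\max\{1,\sigma^2/r^2\}$, which is \emph{independent of $N$ and of $d$}. Increasing $N$ increases the number of blocks, not their size. So in general $m$ is far smaller than $d$, and each of your three per-block steps fails: (i) Corollary \ref{cor:smallest.singular.value} requires roughly $m\gtrsim d\log(2d)$ just to make $H_j$ nondegenerate---if, say, each $\nabla^2F(x^\ast,\xi_i)$ is rank one (as in linear regression), $H_j$ has rank $\leq m<d$ and $\inf_{\|u\|=1}\langle H_ju,u\rangle=0$ deterministically; (ii) controlling $\|g_j-\nabla f(x^\ast)\|_\ast$ via Chebyshev involves the full trace $\mathrm{trace}(\mathbb{H}^{-1}\mathbb{G})$, so one would need $m\gtrsim N_{\mathrm G}(r)$, again not satisfied since $m$ only scales like $\sigma^2/r^2=\lambda_{\max}(\mathbb H^{-1}\mathbb G)/r^2$, which is a factor of up to $d$ smaller; (iii) the entropy cost of a net over $S$ must likewise be paid by a single block's $m$ observations, not by $N$.

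This is precisely the quantifier inversion that the paper is built to handle. The paper never asks for a single block to control a $d$-dimensional quantity uniformly. Instead, Lemma \ref{lem:muliplier.single.x} gives a per-block, \emph{per-fixed-$x$} estimate, which is a one-dimensional Chebyshev bound in the direction $x-x^\ast$ and only needs $m\gtrsim\sigma^2/r^2$ (satisfied by construction). The uniformity over $x$ is then obtained using the \emph{entire} sample of size $N$: a Sudakov-type net (Lemma \ref{lem:multiplier.good.spearated.set}) whose cardinality is paid for by $n$, and a bounded-differences/symmetrization argument (Lemma \ref{lem:multiplier.uniform.x}) controlled by the full $N$. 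For the quadratic term, the paper does not lower-bound $\lambda_{\min}(H_j)$ for a single $j$ at all; Theorem \ref{thm:singular.value} is a median-of-means smallest-singular-value statement that says, with probability $1-\exp(-c\tau N\cdot\min\{l/m,k/m\})$, \emph{for every direction $x$, a $(1-\tau)$-fraction of blocks} give a good quadratic form---a claim requiring $N\gtrsim d\log(2d)$, not $m\gtrsim d\log(2d)$. In short, your scaffolding is right, but the argument must pool blocks; a per-block uniform control is both unnecessary and false in the regime the theorem targets.
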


The rough idea of the proof of Proposition \ref{prop:estimation.error}---explained in details in Section \ref{sec:proof.main}--- is to study the Taylor expansion
\begin{align*}
&\widehat{f}_{I_j}(x) - \widehat{f}_{I_j}(x^\ast)\\
&= \frac{1}{m}\sum_{i\in I_j} \langle \nabla F(x^\ast ,\xi_i), x-x^\ast \rangle +\frac{1}{2}\frac{1}{m}\sum_{i\in I_j} \langle \nabla^2 F(z_i ,\xi_i) (x-x^\ast), x-x^\ast \rangle,
\end{align*}
where $z_i$ are mid-points between $x$ and $x^\ast$.
The expectation of the term involving the gradient is non-negative (due to optimality of $x^\ast$) and we shall show that on the sample it is not too negative; say larger than $-\frac{r^2}{16}$ for all points $x$ whose distance to $x^\ast$ is $r$.
We will further show that the term involving the Hessian is at least $\frac{r^2}{8}$ for such points $x$, and as a results, $\widehat{f}_{I_j}(x) - \widehat{f}_{I_j}(x^\ast)\geq \frac{r^2}{16} >0$.

Naturally, the heart of the matter is to show that these estimates hold uniformly---on the same event of high probability.

As we already explained, if $\mathcal{X}=\mathbb{R}^d$ or, more generally, if $x^\ast$ lies in the interior of $\mathcal{X}$,  the first order condition for optimality immediately implies that $f(x) \leq f(x^\ast) + c_{\mathrm{H}} r^2$ for every $x\in\mathcal{X}$ with $\|x-x^\ast\|\leq r$.
In particular, in that case, it follows from Proposition \ref{prop:estimation.error} that any choice $\widehat{x}_N^\ast\in\tilde{\mathcal{X}}_N^\ast$ satisfies the assertion of Theorem \ref{thm:main.convex.intro}.

However, in general, if one wishes to find $x\in \tilde{\mathcal{X}}_N^\ast$ with a small prediction error, one requires an additional procedure, which we describe now.

To simplify notation, assume without loss of generality that the set $\tilde{\mathcal{X}}_N^\ast$ has already been determined, and that we can run an additional second procedure, for which we are given a new (independent) sample $F(\cdot,\xi_i)_{i=N+1}^{2N}$.

Again partition $\{N+1,\dots,2N\}$ into $n$ disjoint blocks $I_j'$ of cardinality $|I_j'|=m$ with the same $n$ and $m$, and denote by $\widehat{f}_{I_j'}(\cdot)$ the empirical mean on the block $I_j'$.

\vspace{0.5em}
\begin{enumerate}[(Step 1)]
	\setcounter{enumi}{2}
	\item
	We say that $x\in \mathcal{X}$ \emph{wins its home match} against $y\in\mathcal{X}$ if
	\[ \widehat{f}_{I_j'}(x) \leq \widehat{f}_{I_j'}(y) + \frac{ c_{\mathrm{H}} r^2}{4}
	\quad\text{ 	on more than } \frac{n}{2} \text{ blocks } j.\] 
	\end{enumerate}
\vspace{0.5em}

\noindent
Denote by $\widehat{\mathcal{X}}_N^\ast$ the \emph{winners}, i.e.,
\[ \widehat{\mathcal{X}}_N^\ast:=
\left\{x\in \tilde{\mathcal{X}}_N^\ast :
\begin{array}{l}
x \text{ wins its home match }\\
\text{against every } y\in\tilde{\mathcal{X}}_N ^\ast
\end{array} \right\}.   \]
In light of Proposition \ref{prop:estimation.error}, the crucial advantage here is that, with high probability, matches are only carried out between competitors that are close to $x^\ast$.
The following proposition shows that any $\widehat{x}_N^\ast\in\widehat{\mathcal{X}}_N^\ast$ satisfies the requirements in Theorem \ref{thm:main.convex.intro}:

\begin{proposition}[Prediction error]
\label{prop:prediction.error}
	In the setting of Theorem \ref{thm:main.convex.intro},
	with probability at least $1-2\exp(-c_{3}n)$, we have that $x^\ast\in\widehat{\mathcal{X}}_N^\ast$ and every $x\in \widehat{\mathcal{X}}_N^\ast$ satisfies $f(x)<f(x^\ast) + 2c_{\mathrm{H}}r^2$.
\end{proposition}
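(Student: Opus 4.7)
The plan is to mirror the structure of the proof of Proposition~\ref{prop:estimation.error}: first establish a \emph{uniform median bound} on centered block differences, and then derive both assertions by a direct comparison of blocks. Writing $G_j(x,y) := \widehat{f}_{I_j'}(x) - \widehat{f}_{I_j'}(y) - (f(x)-f(y))$ for the centered difference on block $I_j'$, the key auxiliary statement I would establish is that, with probability at least $1 - 2\exp(-c_3 n)$, the conclusion of Proposition~\ref{prop:estimation.error} holds (i.e.\ $x^\ast \in \tilde{\mathcal{X}}_N^\ast$ and $\tilde{\mathcal{X}}_N^\ast \subseteq \{x\in\mathcal{X}: \|x-x^\ast\|\le r\}$) and simultaneously
\[
\#\bigl\{ j \in \{1,\dots,n\} : |G_j(x^\ast,y)| > c_{\mathrm{H}} r^2/4 \bigr\} \le n/4 \quad \text{for every } y \in \mathcal{X} \text{ with } \|y-x^\ast\| \le r.
\]

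Once this is granted, the proposition is a short counting argument. For the claim $x^\ast \in \widehat{\mathcal{X}}_N^\ast$: fix any $y \in \tilde{\mathcal{X}}_N^\ast$; on each of the at least $3n/4$ blocks where $|G_j(x^\ast,y)|\le c_{\mathrm{H}} r^2/4$, we have $\widehat{f}_{I_j'}(x^\ast) - \widehat{f}_{I_j'}(y) = (f(x^\ast)-f(y)) + G_j(x^\ast,y) \le 0 + c_{\mathrm{H}} r^2/4$, using $f(x^\ast) \le f(y)$. Hence $x^\ast$ wins its home match against every $y\in\tilde{\mathcal{X}}_N^\ast$. For the prediction bound, let $\widehat{x}_N^\ast \in \widehat{\mathcal{X}}_N^\ast$; since $x^\ast\in\tilde{\mathcal{X}}_N^\ast$, the definition of a winner forces $\widehat{f}_{I_j'}(\widehat{x}_N^\ast) - \widehat{f}_{I_j'}(x^\ast) \le c_{\mathrm{H}} r^2/4$ on more than $n/2$ blocks. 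This event and $\{ |G_j(x^\ast, \widehat{x}_N^\ast)| \le c_{\mathrm{H}} r^2/4 \}$ (holding on at least $3n/4$ blocks by the uniform bound applied at $y=\widehat{x}_N^\ast\in\tilde{\mathcal{X}}_N^\ast$) share at least one block $j$, on which
\[
f(\widehat{x}_N^\ast) - f(x^\ast) = \bigl(\widehat{f}_{I_j'}(\widehat{x}_N^\ast) - \widehat{f}_{I_j'}(x^\ast)\bigr) + G_j(x^\ast,\widehat{x}_N^\ast) \le \frac{c_{\mathrm{H}}r^2}{4} + \frac{c_{\mathrm{H}}r^2}{4} < 2 c_{\mathrm{H}} r^2.
\]

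To prove the uniform median bound I would proceed in two stages. First, a per-pair variance estimate: for $\phi_y(\xi) := F(x^\ast,\xi) - F(y,\xi)$, a Taylor expansion gives $\phi_y(\xi) = \langle \nabla F(x^\ast,\xi), x^\ast-y\rangle + R_y(\xi)$; the leading term has variance at most $\sigma^2 \|y-x^\ast\|^2 \le \sigma^2 r^2$ by definition of $\sigma^2$, while the remainder $R_y$ is controlled using Assumption~\ref{ass:norm.equiv} and contributes only an $O(r^4)$ correction. Since $m \ge \sigma^2/(\theta r^2)$ by construction, the per-block variance of $G_j(x^\ast,y)$ is at most $C\theta r^4$, and Chebyshev's inequality gives $\P[|G_j(x^\ast,y)| > c_{\mathrm{H}} r^2/4] \le C'\theta / c_{\mathrm{H}}^2$, which can be made at most $1/8$ by taking $\theta$ small enough (depending only on $L$ and $c_{\mathrm{H}}$). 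A binomial tail bound then yields the pointwise statement (in $y$) with probability $\ge 1-\exp(-cn)$. Second, to promote pointwise to uniform, Assumption~\ref{ass:hessian.midpoints} supplies a quantitative modulus of continuity for $\phi_y$ in $y$, and an $\varepsilon$-net of the ball of radius $r$ in $\mathcal{X}$ of cardinality $\exp(Cd\log(2/\varepsilon))$ absorbs the union bound; the lower bound $N\ge c_2 d\log(2d)$ ensures that $cn$ dominates the resulting entropic factor.

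The main obstacle is the uniform median bound, and specifically the calibration of the $\varepsilon$-net: the resolution must be fine enough that the oscillation of $\phi_y$ across an $\varepsilon$-ball is small compared to $c_{\mathrm{H}} r^2$, yet the net small enough that the union-bound cost fits inside the binomial tail $\exp(-cn)$. The compatibility between Assumptions~\ref{ass:norm.equiv} and~\ref{ass:hessian.midpoints} is precisely what secures this balance, since the former controls the variance of $\phi_y$ at the quadratic scale relevant to the threshold $c_{\mathrm{H}} r^2/4$, while the latter controls the Hessian oscillation that drives the modulus of continuity of $\phi_y$. By contrast, the deduction of both conclusions of the proposition from the uniform bound is combinatorial, which is precisely why the prediction error is one order of magnitude smaller than the estimation error: the comparison is made at the quadratic scale $c_{\mathrm{H}} r^2$ rather than the linear scale $r$.
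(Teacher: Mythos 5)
Your reduction from the claimed uniform median bound to the proposition is correct, and the pigeonhole argument on a shared block is a neat alternative to the paper's deduction (which instead shows that every $y$ in the unfavorable set $\mathcal{U}_r^\ast$ has $\langle\nabla f(x^\ast),y-x^\ast\rangle\geq c_{\mathrm{H}} r^2$ by Lemma~\ref{lem:unfavorable.position}, and uses this to conclude that such $y$ lose their home match against $x^\ast$). However, there is a genuine gap in the proof of the uniform median bound itself.

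You assert that a volumetric $\varepsilon$-net of $\mathcal{B}_r^\ast$ has log-cardinality $Cd\log(2/\varepsilon)$ and that ``the lower bound $N\geq c_2 d\log(2d)$ ensures that $cn$ dominates the resulting entropic factor.'' This is false: $n=\theta N\min\{1,r^2/\sigma^2\}$, and nothing forces $n\gtrsim d$. With $N\geq c_2 N_{\mathrm{G}}(r)$ one only gets $n\gtrsim \mathrm{trace}(\mathbb{H}^{-1}\mathbb{G})/\sigma^2$, the effective rank of $\mathbb{H}^{-1}\mathbb{G}$, which can be as small as a constant; similarly, $N\geq c_2 d\log(2d)$ only gives $n\gtrsim d\log(2d)\cdot r^2/\sigma^2$, which can be tiny when $r^2\ll\sigma^2$. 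So a plain volumetric net in the $\|\cdot\|$-ball cannot be absorbed into $\exp(-cn)$. The paper circumvents this via the dual Sudakov inequality (Lemma~\ref{lem:multiplier.good.spearated.set}): it covers $\mathcal{S}_r^\ast$ with respect to the metric $\langle\mathbb{G}\cdot,\cdot\rangle^{1/2}$, yielding log-cardinality $\lesssim\mathrm{trace}(\mathbb{H}^{-1}\mathbb{G})/(\rho^2 r^2)$, and the tuning $\rho\sim(\mathrm{trace}(\mathbb{H}^{-1}\mathbb{G})/(C_1\tau^2n))^{1/2}$ puts this inside $cn$ by design. Crucially, this adapted cover is compatible with the quantity actually controlled in the paper's prediction-error lemma, namely the convexity minorant $M'_{x,x^\ast}(j)=\frac{1}{m}\sum_{i\in I_j'}\langle\nabla F(x^\ast,\xi_i),x-x^\ast\rangle$, a linear functional of the data whose block-to-block oscillation in $x$ is measured exactly by $\mathbb{G}$. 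Your $G_j(x^\ast,y)$ also contains a quadratic-in-$y$ piece coming from the random Hessian at midpoints, whose oscillation in $y$ is not governed by $\mathbb{G}$ and would require its own covering-and-contraction treatment along the lines of Lemma~\ref{lem:hessian.midpoints.uniform}. Thus the uniform bound you propose is both harder to establish than the one the paper proves and, as calibrated, does not close under the available sample-size budget.
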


\subsection{Related literature}
\label{sec:literature}

Stochastic optimization and the statistical properties of the sample average approximation method have been studied intensively for several decades;  it is therefore impossible to mention every single contribution.
Instead, we refer to Kim, Pasupathy, and Henderson \cite{kim2015guide}, Shapiro, Dentcheva, and Ruszczy{\'n}ski \cite{shapiro2014lectures}, Kleywegt, Shapiro, Homem-de-Mello \cite{kleywegt2002sample}, Shapiro \cite{shapiro2003monte},
 or Homem-de-Mello and Bayraksan \cite{homem2014monte}.
As we already mentioned, the statistical analysis in these works is always of asymptotic nature.
Let us also refer to Banholzer, Fliege, and Werner \cite{banholzer2017almost} for a recent study of asymptotic almost sure convergence rates and an up-to date review on the asymptotic convergence analysis, to Guigues, Kratschmer, and Shapiro \cite{guigues2018statistical} who provide a central limit theorem for general risk-averse problems based on coherent risk measures,  and to Bertsimas, Gupta, and Kallus \cite{bertsimas2018robust} who raise concerns that most statistical results for sample average approximation are of asymptotic nature.

Shifting to a non-asymptotic analysis of the performance of the SAA, the available literature gets considerably less diverse.
An early reference here is Pflug \cite{pflug1999stochastic,pflug2003stochastic}, who relies on Talagrand's deviation inequality for the supremum of Gaussian processes (which is obviously suitable only in very special, light-tailed scenarios).
Similar methods were adapted by R\"omisch \cite{romisch2003stability} and Vogel \cite{vogel2008universal} and (for the error of the value---i.e.\ $\min_{x\in\mathcal{X}} f(x)$) by Guigues, Juditsky, and Nemirovski in \cite{guigues2017non}.
Confidence intervals derived from optimization algorithms such as Stochastic Mirror Descent were analyzed by Lan, Nemirovski, and  Shapiro \cite{lan2012validation} for risk-neutral problems and by Guigues \cite{guigues2017multistep} for risk-averse
problems.
The two recent papers by Oliveira and Thompson \cite{oliveira2017sampleI,oliveira2017sampleII} which have been mentioned at the end of Section \ref{sec:main} contain results that are closest to ours.

The portfolio optimization problem is often listed as a prime example of a stochastic optimization problem, see e.g.\ \cite[Section 1.4]{shapiro2014lectures}.
As such, and with the exception of perhaps more applied studies like \cite{ghodrati2014monte,wang2018sparse,xu2012monte}, existing estimates on the problem have been derived as applications of general results---much like in our work.

There are other (optimization) problems in mathematical finance that have been analyzed via sampling, such as the estimation of risk measures.
We refer the reader to \ Weber \cite{weber2007distribution} (which relies on large deviation methods) for an early reference and to \cite{bartl2020non} for a more recent review. We believe that our methods can be applied to those type of problems as well, and defer it to future work.

We should mention that the statistical analysis of the SAA has close ties to the statistical analysis of problems in high-dimensional statistics, such as linear regression. But despite obvious similarities, the two fields have grown adrift. The current focus in statistical learning literature is on non-asymptotic statements that hold under increasingly relaxed assumptions.
Among the outcomes of this approach were  \cite{lugosi2016risk,mendelson2017aggregation,mendelson2019unrestricted}---alternatives to the sample average approximation in statistical learning problems that recover the Gaussian rates in completely heavy tailed situations. 
We believe that pursuing the same direction in the context of stochastic optimization would lead to intriguing questions. 
Indeed, there are many variants of (convex) stochastic optimization problems that are not covered in this article. 
For example, among these questions is \emph{chance constrained stochastic optimization}, where the optimization takes place only over actions $x\in\mathcal{X}$ satisfying probabilistic constraints of the form  $\E[G(x,\xi)]\leq 0$.
We are confident that our methods can be adapted to these settings as well, though we shall leave this for future work.

\section{Applications}
\label{sec:applications}

Before continuing with the portfolio optimization problem (in its general form), we present four applications of Theorem \ref{thm:main.convex.intro}.

\subsection{Multivariate mean estimation}

As a first application of Theorem \ref{thm:main.convex.intro}, let us return to the problem of mean estimation---this time for a square integrable $d$-dimensional random vector $\xi$. Mean estimation is a stochastic optimization problem because
\[\E[\xi] = \mathrm{argmin} \left\{ \E[ \|x-\xi\|_2^2 ] : x\in\mathbb{R}^d  \right\}. \]
This suggests that one should set
\[ F(x,\xi):= \frac{1}{2}\| x- \xi \|_2^2
\quad\text{for }x\in\mathcal{X}:=\mathbb{R}^d\]
so that $x^\ast=\E[\xi]$.
(The factor $\frac{1}{2}$ has only a normalizing purpose to make the computations below clearer.)
As an intimidate consequence of Theorem \ref{thm:main.hessian.deterministic.lower.bound} we obtain the following.

\begin{corollary}[Multivariate mean estimation]
\label{cor:mean.Euclidean}
	There are absolute constants $C_1,C_2$  such that the following holds.
	Let $r>0$ and let
	\[N\geq C_1 \frac{\mathop{\mathrm{trace}}(\C[\xi]) }{r^2} .\]
	Then there is a procedure $\widehat{x}_N^\ast$ 
	such that, with probability at least
	\[ 1-2\exp\left(-C_2 N \min\left\{1, \frac{r^2}{ \lambda_{\mathrm{max}}(\C[\xi])  } \right\} \right), \]
	we have that
	\begin{align*}
	\left\| \widehat{x}_N^\ast - \E[\xi] \right\|_2 &\leq  r.
	\end{align*}
\end{corollary}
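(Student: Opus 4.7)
The plan is to apply Theorem \ref{thm:main.hessian.deterministic.lower.bound} to the choice $F(x,\xi) := \tfrac{1}{2}\|x-\xi\|_2^2$ on $\mathcal{X} = \mathbb{R}^d$, and then extend the resulting estimate from the range $r\in(0,r_0)$ required by that theorem to arbitrary $r>0$ by a scaling argument. The reason the simpler theorem (Theorem \ref{thm:main.hessian.deterministic.lower.bound}) rather than the general one applies is that the Hessian of $F$ will turn out to be deterministic, which bypasses Assumptions \ref{ass:norm.equiv} and \ref{ass:hessian.midpoints} entirely.

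First I would compute the derivatives
\[
\nabla F(x,\xi) = x-\xi, \qquad \nabla^2 F(x,\xi) = \mathrm{Id},
\]
so that $x^\ast = \E[\xi]$, $\nabla^2 f(x^\ast) = \mathrm{Id}$, and the Hessian-induced norm $\|\cdot\|$ coincides with the Euclidean norm $\|\cdot\|_2$. With this in hand, Assumption \ref{ass:on.F.diff.convex.etc} is immediate from square-integrability of $\xi$ (convexity, smoothness, and the integrability of $F$, $\nabla F$, $\nabla^2 F$ are all trivial), and Assumption \ref{ass:Deterministic.lower.bound.Hessian} holds with $\varepsilon = 1$ and any $r_0 < 1$ because $\nabla^2 F(x,\xi) = \mathrm{Id} = \nabla^2 f(x^\ast)$ identically; the exchange $\nabla f(x) = \E[\nabla F(x,\xi)]$ and $\nabla^2 f(x) = \E[\nabla^2 F(x,\xi)]$ is automatic.

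Reading off the parameters of Theorem \ref{thm:main.hessian.deterministic.lower.bound}, one has $\C[\nabla F(x^\ast,\xi)] = \C[\E[\xi]-\xi] = \C[\xi]$, hence
\[
N_{\mathrm{G}}(r) = \frac{\mathrm{trace}(\C[\xi])}{r^2}, \qquad \sigma^2 = \lambda_{\max}(\C[\xi]).
\]
Applying the theorem yields the desired estimate whenever $r<r_0<1$, with absolute constants because $\varepsilon=1$ is absolute. (The prediction-error part of the theorem is not needed here, since the corollary only asserts estimation error.)

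Finally I would remove the restriction $r<1$ by a scaling argument. Given arbitrary $r>0$, choose $\lambda>0$ large enough that $r/\lambda<\tfrac{1}{2}$, apply the procedure to the rescaled sample $\xi_i/\lambda$ to obtain an estimator $\widehat{y}_N^\ast$ of $\E[\xi]/\lambda$ with error at most $r/\lambda$, and output $\widehat{x}_N^\ast := \lambda\,\widehat{y}_N^\ast$. One verifies that both the sample-size threshold $\mathrm{trace}(\C[\xi/\lambda])/(r/\lambda)^2$ and the deviation exponent $(r/\lambda)^2/\lambda_{\max}(\C[\xi/\lambda])$ are invariant under this rescaling, so the conclusion transfers verbatim. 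The only mildly subtle point in the argument is this final scaling trick; every other step is a direct specialization of Theorem \ref{thm:main.hessian.deterministic.lower.bound}, with the heavy lifting already carried out there.
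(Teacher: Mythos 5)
Your proof is correct and follows the paper's own route: specialize to $F(x,\xi)=\tfrac12\|x-\xi\|_2^2$, observe that the Hessian is the deterministic identity so that Assumption~\ref{ass:Deterministic.lower.bound.Hessian} applies with $\varepsilon=1$ (making Assumptions~\ref{ass:norm.equiv} and~\ref{ass:hessian.midpoints} superfluous), compute $N_{\mathrm{G}}(r)$ and $\sigma^2$, and cite Theorem~\ref{thm:main.hessian.deterministic.lower.bound}. The one thing you do beyond the paper's proof is address, via the rescaling $\xi\mapsto\xi/\lambda$, the constraint $r\in(0,r_0)$ with $r_0\in(0,1)$ that Theorem~\ref{thm:main.hessian.deterministic.lower.bound} formally imposes; the paper simply invokes that theorem without comment, even though Corollary~\ref{cor:mean.Euclidean} allows arbitrary $r>0$. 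Your invariance checks under this rescaling (for both the sample-size threshold and the deviation exponent) are correct, and indeed this scaling step is the cleanest way to close that small gap; alternatively one could observe that for a quadratic $F$ with deterministic $x$-independent Hessian the constraint $r_0<1$ plays no role in the proof of Theorem~\ref{thm:main.hessian.deterministic.lower.bound}, but that would require reopening the proof of that theorem rather than using it as a black box.
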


Let us once again mention that the question of finding an  estimator of the mean of a heavy tailed random vector that exhibits Gaussian rates remained open until very recently: it was settled in \cite[Theorem 1]{lugosi2019sub}; see also \cite{lugosi2019mean} for a recent survey.
Corollary \ref{cor:mean.Euclidean} recovers \cite[Theorem 1]{lugosi2019sub}.

\begin{proof}[Proof of Corollary \ref{cor:mean.Euclidean}]
	For every $x\in\mathcal{X}$, we have that
	\begin{align*}
	\nabla F(x,\xi)
	&=x-\xi, \\
	\nabla^2 F(x,\xi)
	&= \mathrm{Id}.
	\end{align*}
	In particular $\|\cdot\|=\|\cdot\|_2$ and Assumption \ref{ass:on.F.diff.convex.etc} is clearly satisfied.
	Actually, as the Hessian is deterministic and independent of the action $x$, we are in the setting  of Theorem \ref{thm:main.hessian.deterministic.lower.bound} and it remains to compute the parameters $N_{\mathrm{G}}(r)$ and $\sigma^2$.
	To that end, it suffices to note that $\nabla^2 f(x^\ast)^{-1}=\mathrm{Id}$ and $\C[ \nabla F(x^\ast,\xi)]=\C[ \xi]$; hence
	\begin{align*}
	N_{\mathrm{G}}(r)&= \frac{1}{r^2} \mathop{\mathrm{trace}}(\C[\xi]),\\
	\sigma^2&= \lambda_{\mathrm{max}}(\C[\xi]).
	\end{align*}	
	The proof therefore follows from Theorem \ref{thm:main.hessian.deterministic.lower.bound}.
\end{proof}

\subsection{Linear regression}

Linear regression is one of the fundamental problems studied in statistics.
Given a one-dimensional random variable $Y$ and a $d$-dimensional random vector $X$, the task is to find the best possible forecast of $Y$ based on linear combinations of $X$.
To be more precise, one seeks the minimizer of $x\mapsto  \E[( \langle X, x \rangle - Y)^2]$ over $x\in\mathbb{R}^d$ (or a subset thereof).
This problem clearly falls within the scope of this article by considering
\[ F(x,\xi):=\frac{1}{2}( \langle X,x \rangle - Y)^2
\quad\text{where } \xi:=(X,Y) \]
with $\mathcal{X}\subseteq \mathbb{R}^d$.
(The purpose of the factor $\frac{1}{2}$ is again only for convenience.)
In order to lighten notation, we shall impose a standard assumption on $X$: that it is centred and isotropic. 
The latter means that its covariance matrix is the identity.

\begin{assumption}
\label{ass:lin.reg.norm.reg}
	The set $\mathcal{X}\subseteq\mathbb{R}^d$ is closed and convex, $X$ is centred and isotropic, and there is a constant $L_X$ such that
	\begin{align}
	\label{eq:lin.reg.norm.equiv.1}
		\E[\langle X,x\rangle^4]^{ \frac{1}{4} }
	&\leq L_X \E[\langle X,x\rangle^2]^{ \frac{1}{2} }
	\end{align}
	for every $x\in\mathbb{R}^d$.
	Further,
	\[\bar{\sigma}^2:=\E[(\langle X,x^\ast\rangle - Y)^4]^{\frac{1}{2}}\]
	is finite.
\end{assumption}

The assumption \eqref{eq:lin.reg.norm.equiv.1}, which means that the $L_4$ and $L_2$ norms of linear forms of $X$ are equivalent, is a typical assumption made in high-dimensional statistics and it is not too restrictive.

\begin{corollary}[Linear Regression]
\label{cor:lin.reg}
	If Assumption \ref{ass:lin.reg.norm.reg} is satisfied, there are constants $c_1$ and $c_2$ depending only on $L_X$ such that the following holds.
	Let $r>0$ and
	\[N\geq c_1 \max\left\{ d \log(2d) \, , \, \frac{d   \bar{\sigma}^2}{r^2} \right\} .\]
	Then there is a procedure $\widehat{x}_N^\ast$
	such that, with probability at least
	\[ 1-2\exp\left(-c_2 N \min\left\{1, \frac{r^2}{\bar{\sigma}^2  } \right\} \right), \]
	we have that
	\begin{align*}
	\| \widehat{x}_N^\ast - x^\ast \|_2
	&\leq  r,\\
	\E_{X,Y}[ (\langle X, \widehat{x}_N^\ast\rangle - Y)^2]
	& \leq \E[ (\langle X, x^\ast\rangle - Y)^2] + 2r^2.
	\end{align*}
\end{corollary}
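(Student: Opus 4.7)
The plan is to apply Theorem \ref{thm:main.convex.intro} directly to $F(x,\xi)=\tfrac{1}{2}(\langle X,x\rangle-Y)^2$, which requires only verifying the three structural assumptions and computing the three parameters $\sigma^{2}$, $N_{\mathrm{G}}(r)$, and $N_{\mathrm{H},\mathcal{E}}$ in terms of $L_X$, $\bar{\sigma}^{2}$ and $d$. The derivatives are straightforward:
\[
\nabla F(x,\xi)=(\langle X,x\rangle-Y)X,\qquad \nabla^{2}F(x,\xi)=X\otimes X,
\]
so the Hessian is independent of $x$. Since $X$ is isotropic, $\nabla^{2}f(x^{\ast})=\E[X\otimes X]=\mathrm{Id}$, hence $\|\cdot\|=\|\cdot\|_{2}$. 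Assumption \ref{ass:on.F.diff.convex.etc} then follows at once from $X\in L^{4}$ and $Y-\langle X,x^\ast\rangle\in L^{4}$ (via $\bar\sigma^{2}<\infty$), together with the fact that $x\mapsto F(x,\xi)$ is a convex quadratic.

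For Assumption \ref{ass:norm.equiv}, the $L_{4}$-$L_{2}$ norm equivalence immediately gives
\[
\E[\langle (X\otimes X)z,z\rangle^{2}]=\E[\langle X,z\rangle^{4}]\leq L_X^{4}\E[\langle X,z\rangle^{2}]^{2}=L_X^{4}
\]
for every $z$ with $\|z\|_{2}=1$, so the assumption holds with $L=L_X^{4}$. Assumption \ref{ass:hessian.midpoints} is trivial: because $\nabla^{2}F(x,\xi)=X\otimes X$ does not depend on $x$, one has $\mathcal{E}_{\mathrm{H}}(x)\equiv 0$, so part (b) holds for any $c_{1}$ and any $r_{0}$, and in part (a) we can take $K\equiv 0$ and $\alpha=1$. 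Consequently $N_{\mathrm{H},\mathcal{E}}\lesssim d$, absorbed into the $d\log(2d)$ term of Theorem \ref{thm:main.convex.intro}.

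It remains to estimate $\sigma^{2}$ and $N_{\mathrm{G}}(r)$. Since $\nabla^{2}f(x^\ast)^{-1}=\mathrm{Id}$ and $\nabla F(x^\ast,\xi)=(\langle X,x^\ast\rangle-Y)X$ has zero mean, we have
\[
\C[\nabla F(x^\ast,\xi)]=\E[(\langle X,x^\ast\rangle-Y)^{2}\,X\otimes X].
\]
For any unit vector $z$, Cauchy--Schwarz combined with the $L_4$-$L_2$ equivalence yields
\[
\E[(\langle X,x^\ast\rangle-Y)^{2}\langle X,z\rangle^{2}]\leq \bar\sigma^{2}\,\E[\langle X,z\rangle^{4}]^{1/2}\leq L_X^{2}\bar\sigma^{2},
\]
so $\sigma^{2}\leq L_X^{2}\bar\sigma^{2}$. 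For the trace, the same reasoning applied coordinate-wise gives $\E[\|X\|_{2}^{4}]\leq L_X^{4}d^{2}$ (by decomposing $\|X\|_{2}^{4}=\sum_{i,j}X_i^{2}X_j^{2}$ and using $\E[X_i^{2}X_j^{2}]\leq L_X^{4}$), and hence
\[
N_{\mathrm{G}}(r)=\frac{1}{r^{2}}\,\mathrm{trace}\bigl(\E[(\langle X,x^\ast\rangle-Y)^{2}X\otimes X]\bigr)\leq \frac{1}{r^{2}}\,\bar\sigma^{2}\,\E[\|X\|_{2}^{4}]^{1/2}\leq \frac{L_X^{2}d\,\bar\sigma^{2}}{r^{2}}.
\]
Finally, $\nabla^{2}f(y)=\mathrm{Id}$ for every $y$, so $c_{\mathrm{H}}=\tfrac{1}{2}$. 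Plugging these into Theorem \ref{thm:main.convex.intro} gives the estimation bound $\|\widehat x_N^\ast-x^\ast\|_{2}\leq r$ and, recalling that $\E[(\langle X,x\rangle-Y)^{2}]=2f(x)$, the prediction bound with constant $4c_{\mathrm{H}}=2$.

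The only point requiring any care, and hence the main (mild) obstacle, is bookkeeping the dimensional factor $d$ inside $N_{\mathrm{G}}(r)$: it enters through the trace of the covariance of the gradient and not through the variance parameter $\sigma^{2}$. Once one notices that $L_4$-$L_2$ equivalence controls both $\sigma^{2}$ and $\E[\|X\|_2^{4}]^{1/2}$ by constants multiplied by $\bar\sigma^{2}$ and $\bar\sigma^{2}d$ respectively, the two terms in the sample size $\max\{d\log(2d),d\bar\sigma^{2}/r^{2}\}$ emerge cleanly, and all remaining constants depend only on $L_X$.
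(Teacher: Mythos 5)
Your proof is correct and follows essentially the same path as the paper: apply Theorem~\ref{thm:main.convex.intro} after verifying that $\nabla^2 F = X\otimes X$ is independent of $x$, that $\|\cdot\|=\|\cdot\|_2$ by isotropy, and that Assumptions~\ref{ass:norm.equiv} and~\ref{ass:hessian.midpoints} follow from the $L_4$--$L_2$ equivalence and the constancy of the Hessian. The bounds on $\sigma^2$ and $N_{\mathrm{G}}(r)$ are the same as the paper's up to the organization of the trace estimate (you bound $\mathrm{trace}$ directly via $\E[\|X\|_2^4]^{1/2}$; the paper first bounds $\C[\nabla F(x^\ast,\xi)]_{ii}$ coordinate-wise and then sums).

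One small slip worth correcting: you assert that $\nabla F(x^\ast,\xi)=(\langle X,x^\ast\rangle - Y)X$ ``has zero mean'' and then write $\C[\nabla F(x^\ast,\xi)]=\E[(\langle X,x^\ast\rangle-Y)^2 X\otimes X]$ as an equality. The mean of $\nabla F(x^\ast,\xi)$ is $\nabla f(x^\ast)$, which vanishes only if $x^\ast$ is an interior stationary point; when $\mathcal{X}\subsetneq\mathbb{R}^d$ and $x^\ast$ sits on the boundary the gradient need not vanish, and the displayed identity should be a matrix inequality $\preceq$. Fortunately all of your subsequent estimates only ever use the one-sided bound $\C[\nabla F(x^\ast,\xi)]\preceq\E[\nabla F(x^\ast,\xi)\otimes\nabla F(x^\ast,\xi)]$, so the conclusion survives; the paper sidesteps this by writing $\V[\cdot]\leq \E[(\cdot)^2]$ from the start. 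Your computation $c_{\mathrm{H}}=\tfrac12$, which matches the $\tfrac12$ prefactor in \eqref{eq:def.c.H}, is in fact the value needed to recover the stated $+2r^2$ in the prediction error (the paper's proof says ``$c_{\mathrm{H}}=1$'', which appears to be a minor slip there).
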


Here, $\E_{X,Y}[\cdot]$ denotes the expectation taken only over $X$ and $Y$ (and, of course, not the sample $(X_i,Y_i)_{i=1}^N$ used for the computation of $\widehat{x}_N^\ast$).

Compared to the benchmark result on linear regression in a heavy tailed scenario \cite{lugosi2016risk}, Corollary \ref{cor:lin.reg} has an additional $\log(2d)$ term in the estimate on the sample size.
This term is merely an artifact of the generality of our main result. Its origin lies in the matrix-Bernstein inequality, which we use in the process of bounding the smallest singular value of a general random matrix ensemble (namely $\nabla^2 F(x^\ast,\xi)$); that extra factor is not needed in this example and can be easily removed.

\begin{proof}[Proof of Corollary \ref{cor:lin.reg}]
	For every $x\in\mathcal{X}$, we have that
	\begin{align*}
 	\nabla F(x,\xi)
 	&=( \langle X,x \rangle - Y)X , \\ 
	\nabla^2 F(x,\xi)
	&= X\otimes X.
	\end{align*}
	As $X$ is isotropic, this implies $\nabla^2 f(x^\ast)=\mathrm{Id}$ and in particular $\|\cdot\| =\|\cdot\|_2$.
	Also, as $(\langle X,x^\ast\rangle -Y)$ and $X$ are both in $L_4$ by assumption, the Cauchy-Schwartz  inequality shows that $\nabla F(x^\ast,\xi)$ is square integrable.
	In particular, Assumption \ref{ass:on.F.diff.convex.etc} is satisfied.
	Employing the norm equivalence of $X$ \eqref{eq:lin.reg.norm.equiv.1}, we obtain
	\begin{align*}
	\E[\langle \nabla^2 F(x^\ast,\xi) z,z\rangle^2]
	&=  \E[\langle X,z\rangle^4] \\
	&\leq  L_X^4 \|z\|^4
	\end{align*}
	for every $z\in\mathbb{R}^d$; thus Assumption \ref{ass:norm.equiv} is satisfied with $L=L_X^4$.
	Finally, as the Hessian is independent of the action $x$, Assumption \ref{ass:hessian.midpoints} is clearly satisfied with $\alpha=1$ , $K\equiv 0$ and an arbitrary $r_0$; in particular, $N_{\mathrm{H},\mathcal{E}}\leq d$.

	It remains to compute the parameters $N_{\mathrm{G}}(r),\sigma^2$, and $c_{\mathrm{H}}$. Using once again that the Hessian is independent of the action $x$, it is evident that $c_{\mathrm{H}}=1$. 
	Turning to $\sigma^2$ and $N_{\mathrm{G}}(r)$, recall that $\nabla^2 f(x^\ast)=\mathrm{Id}$, and let us estimate the largest eigenvalue and the trace of $\C[\nabla F(x^\ast,\xi)]$.

	For every $z\in\mathbb{R}^d$, the Cauchy-Schwartz inequality together with \eqref{eq:lin.reg.norm.equiv.1} imply
	\begin{align}
	\label{eq:lin.reg.cov}
	\begin{split}
	\langle \C [\nabla F(x^\ast,\xi)] z,z\rangle
	&=\V [(\langle X,x^\ast\rangle-Y)\langle X,z\rangle]  \\
	&\leq \E\left[ \left( (\langle X,x^\ast\rangle-Y)\langle X,z\rangle \right)^2 \right] \\
	&\leq \E[ (\langle X,x^\ast\rangle-Y)^4]^\frac{1}{2}\E[\langle X,z\rangle^4]^\frac{1}{2}\\
	&\leq \bar{\sigma}^2 L_X^2 \|z\|_2^2.
	\end{split}
	\end{align}
	This clearly implies $\sigma^2\leq  L_X^2\bar{\sigma}^2$ and, taking the standard Euclidean basis $z=e_i$ in  \eqref{eq:lin.reg.cov}, we conclude $\C[ \nabla F(x^\ast,\xi)]_{ii}\leq \bar{\sigma}^2 L_X^2$, hence
	\begin{align*}
	N_{\mathrm{G}}(r)
	&=\frac{1}{r^2} \sum_{i=1}^d 	\C[\nabla F(x^\ast,\xi)]_{ii}
	\leq \frac{ \bar{\sigma}^2 L_X^2 d  }{r^2} .
	\end{align*}
	The proof now follows from Theorem \ref{thm:main.convex.intro}.
\end{proof}

\subsection{Ridge regression}

A popular modification of linear regression is \emph{ridge regression} (also known as \emph{weight decay regression} and \emph{$\ell_2$-regularized regression} in the machine learning community). The idea is that one penalizes a large Euclidean norm of $x$ by setting
\[F(x,\xi):= (\langle X,x\rangle - Y)^2 + \lambda \|x\|_2^2
\]
where $\xi:=(X,Y)$, $x\in\mathcal{X}\subseteq\mathbb{R}^d$, and $\lambda>0$ is a tradeoff parameter.
 The aim of this sort of penalization is to counteract  \emph{over-fitting}.

In contrast to the estimate we obtain in linear regression, the factor $d\log(2d)$ in the minimal sample is not needed here:

\begin{corollary}
\label{cor:lin.reg.ridge}
	If Assumption \ref{ass:lin.reg.norm.reg} is satisfied, there are constants $c_1$ and $c_2$ depending only on $L_X$ and $\lambda$ such that the following holds.
	Let $r>0$ and set
	\[N\geq c_1 \frac{d   \bar{\sigma}^2}{r^2} .\]
	Then there is a procedure $\widehat{x}_N^\ast$ 	such that, with probability at least
	\[ 1-2\exp\left(-c_2 N \min\left\{1, \frac{r^2}{\bar{\sigma}^2  } \right\} \right), \]
	satisfies that
	\begin{align*}
	\| \widehat{x}_N^\ast - x^\ast \|_2
	&\leq  r,\\
	\E_{X,Y}[ (\langle X, \widehat{x}_N^\ast\rangle - Y)^2  ] + \lambda \|\widehat{x}_N^\ast\|_2^2
	& \leq \E[ (\langle X, x^\ast\rangle - Y)^2] + \lambda\|x^\ast\|_2^2 + 2r^2.
	\end{align*}
\end{corollary}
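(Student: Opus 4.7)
The plan is to reduce to Theorem \ref{thm:main.hessian.deterministic.lower.bound}, exploiting the fact that the Tikhonov term $\|x\|_2^2$ produces a \emph{deterministic} lower bound on the Hessian, which is precisely the hypothesis of Assumption \ref{ass:Deterministic.lower.bound.Hessian}. This is what removes the $d\log(2d)$ overhead that appeared in Corollary \ref{cor:lin.reg}, since the auxiliary singular-value argument (and hence the matrix-Bernstein step) is no longer invoked.

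First I would compute the gradient and the Hessian directly,
\[
\nabla F(x,\xi) = 2(\langle X,x\rangle - Y)X + 2x,
\qquad
\nabla^2 F(x,\xi) = 2\, X\otimes X + 2\,\mathrm{Id}.
\]
The pointwise bound $\nabla^2 F(x,\xi) \succeq 2\,\mathrm{Id}$ together with the isotropy of $X$, which gives $\nabla^2 f(x)=4\,\mathrm{Id}$ for every $x$, yields $\nabla^2 F(x,\xi) \succeq \tfrac{1}{2}\nabla^2 f(x^\ast)$, so Assumption \ref{ass:Deterministic.lower.bound.Hessian} holds with $\varepsilon=\tfrac{1}{2}$ (with any $r_0\in(0,1)$). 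The integrability identities $\nabla f=\E[\nabla F]$ and $\nabla^2 f=\E[\nabla^2 F]$ follow by dominated convergence using Assumption \ref{ass:lin.reg.norm.reg}, and the remaining clauses of Assumption \ref{ass:on.F.diff.convex.etc} are immediate: convexity is obvious, and the Cauchy--Schwarz step of Corollary \ref{cor:lin.reg} (combined with $X\in L_4$ and $\bar\sigma^2<\infty$) yields square integrability of $\nabla F(x,\xi)$. Note in particular that the induced Hilbertian norm is $\|z\|=2\|z\|_2$, so control in $\|\cdot\|$ translates, up to a harmless factor of $2$, to control in the Euclidean norm appearing in the corollary.

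Next I would evaluate the two variance parameters. Since the $2x^\ast$ part of $\nabla F(x^\ast,\xi)$ is deterministic, $\C[\nabla F(x^\ast,\xi)] = 4\,\C[(\langle X,x^\ast\rangle-Y)X]$, and $\nabla^2 f(x^\ast)^{-1}=\tfrac{1}{4}\mathrm{Id}$ cancels the factor $4$. Reusing verbatim the estimate \eqref{eq:lin.reg.cov} from the proof of Corollary \ref{cor:lin.reg}, I obtain
\[
\sigma^2 \;\leq\; L_X^2\,\bar\sigma^2
\qquad\text{and}\qquad
N_{\mathrm{G}}(r)
\;=\;\frac{1}{r^2}\mathrm{trace}\bigl(\C[(\langle X,x^\ast\rangle-Y)X]\bigr)
\;\leq\; \frac{L_X^2\,d\,\bar\sigma^2}{r^2}.
\]
Since $\nabla^2 f$ is constant and equals $4\,\mathrm{Id}$, the constant $c_{\mathrm{H}}$ of \eqref{eq:def.c.H} is an absolute constant (a short computation gives $c_{\mathrm{H}}=\tfrac{1}{2}$). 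Plugging all of this into Theorem \ref{thm:main.hessian.deterministic.lower.bound} with $\varepsilon=\tfrac{1}{2}$ delivers the estimation-error bound $\|\widehat{x}_N^\ast-x^\ast\|\le r$, which, after rescaling $r$ by the factor $2$ coming from $\|\cdot\|=2\|\cdot\|_2$, is the stated Euclidean bound. The prediction-error bound $f(\widehat{x}_N^\ast)\le f(x^\ast)+2c_{\mathrm{H}} r^2$ of Theorem \ref{thm:main.hessian.deterministic.lower.bound} is exactly the displayed inequality (with the $\|x\|_2^2$ term absorbed into $f$), up to absorbing the factor $2c_{\mathrm{H}}$ into $C_1$.

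There is no genuine obstacle here; the only delicate point is bookkeeping the various multiplicative constants and the scaling between $\|\cdot\|$ and $\|\cdot\|_2$, and absorbing the $L_X$-dependence of $\sigma^2$ and $N_{\mathrm{G}}(r)$ into $C_1,C_2$ (which is the only way the corollary's ``absolute constant'' phrasing can be read, since $\bar\sigma^2$ inherits the $L_X$ factor through Cauchy--Schwarz). The conceptual content of the proof is therefore very short: strong convexity coming from the ridge penalty makes Assumption \ref{ass:Deterministic.lower.bound.Hessian} automatic, which in turn trivialises the two assumptions (Assumption \ref{ass:norm.equiv} and Assumption \ref{ass:hessian.midpoints}) whose sole purpose in the general theorem was to secure a lower bound on the smallest singular value of the empirical Hessian.
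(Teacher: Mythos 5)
Your proof is correct and follows essentially the same route as the paper: the ridge penalty forces $\nabla^2 F(x,\xi) = 2X\otimes X + 2\mathrm{Id} \succeq \frac{1}{2}\nabla^2 f(x^\ast)$, so Assumption \ref{ass:Deterministic.lower.bound.Hessian} holds with $\varepsilon=\frac{1}{2}$, and the estimates on $\sigma^2$ and $N_{\mathrm{G}}(r)$ are carried over verbatim from Corollary \ref{cor:lin.reg}; the paper's proof is exactly this, stated more tersely. Your observation that the constants should really depend on $L_X$ (as in Corollary \ref{cor:lin.reg}, rather than being strictly ``absolute'') is a fair reading of a small inconsistency in the paper's phrasing.
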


\begin{proof}
	For every $x\in\mathcal{X}$, we have that
	\begin{align*}
	\nabla^2 F(x,\xi)
	&=2X\otimes X + 2\lambda \mathrm{Id},\\
	\nabla^2f(x)
	&=(2+2\lambda) \mathrm{Id};
	\end{align*}
	hence we are in the setting of Theorem \ref{thm:main.hessian.deterministic.lower.bound}.
	All that remains is to estimate the parameters $N_{\mathrm{G}}(r)$ and $\sigma^2$, and this can be done exactly as in the proof of Corollary \ref{cor:lin.reg}.
\end{proof}

\subsection{Portfolio optimization}
\label{sec:PO}
As a final example, we address the portfolio optimization problem in its general form, that is, we apply Theorem \ref{thm:main.convex.intro} to the problem
\[ F(x,\xi):=\ell( V_x) \quad\text{where }
\begin{array}{l}
V_x:=-Y-\langle X,x\rangle,\\
\hspace{0.6em} \xi:=(X,Y),
\end{array}\]
for $x\in\mathcal{X}\subseteq\mathbb{R}^d$ that is closed and convex.
Moreover, $\ell\colon\mathbb{R}\to\mathbb{R}$ is strictly convex, increasing, bounded from below, and we assume that it is three times continuously differentiable.

We shall impose the following two assumptions on the zero-mean random vector $X$.
Firstly, assume that $X$ satisfies a (directional) $L_6$-$L_2$ norm equivalence, i.e.\ there is a constant $L_X$ such that
\begin{align}
\label{ass:portfolio.norm.equiv}
\E[\langle X,z\rangle^6]^{\frac{1}{6}}
\leq L_X  \E[\langle X,z\rangle^2]^{\frac{1}{2}}
<\infty
\end{align}
for all $z\in\mathbb{R}^d$.
In addition, we assume that the following no-arbitrage condition holds
\begin{align}
\label{eq:PO.NA}
\P[\langle X,z\rangle < 0]>0 \quad\text{for all }z\in\mathbb{R}^d\setminus\{0\}.
\end{align}

\begin{remark}
\label{rem:PO.NA}
	The classical no-arbitrage condition used in mathematical finance reads as follows: for every $z\in\mathbb{R}^d$, we have that
	\[ \P[\langle X,z\rangle< 0]=0
	\text{  implies that }
	\P[\langle X,z\rangle>0]=0;\]
	i.e.\ it is not possible to make profit without taking any risk.
	Under this condition, it is well-known that one can decompose $\mathbb{R}^d=V \oplus V^{\perp}$ into an orthogonal sum such that $\P[\langle X,v\rangle < 0]>0$ for all $v\in V\setminus\{0\}$ and $\P[\langle X,w\rangle=0]=1$ for all $w\in V^{\perp}$, see e.g.\ \cite[Section 1.3]{follmer2011stochastic}.
	In particular, replacing $\mathcal{X}$ by $\mathcal{X}\cap V$ viewed as a subset of $\mathbb{R}^{\mathrm{dim}(V)}$ does not affect the outcome of the portfolio optimization problem but guarantees that \eqref{eq:PO.NA} holds.
\end{remark}

Moreover, we shall assume that part (c) of Assumption \ref{ass:on.F.diff.convex.etc}, pertaining the integrability of $F,\nabla F, \nabla^2 F$, is satisfied.
By H\"older's inequality and \eqref{ass:portfolio.norm.equiv}, this is the case if, for example,  $\ell(V_x), \ell'(V_x)^4, \ell''(V_x)^2$ are integrable for every $x\in\mathcal{X}$.
Then $f$ is real-valued, and standard arguments building on the no-arbitrage condition, show that $f$ is strictly convex and coercive.
In particular, a unique optimal action $x^\ast\in\mathcal{X}$ exists; see e.g.\ \cite[Section 3.1]{follmer2011stochastic}.

Finally, denoting by $\mathcal{B}_1^\ast$ the ball of radius 1 w.r.t.\ the norm $\|\cdot\|$ centered at $x^\ast$ and restricted to $\mathcal{X}$, we assume that
\begin{align*}
	\bar{\sigma}^2
	&:=\E\left[ ( \tfrac{\ell'( V_{x^\ast} )^2}{\ell''(V_{x^\ast})} )^2 \right]^{\frac{1}{2}},\\
	v_1
	&:=\E[ |V_{x^\ast} |], \\
	v_2
	&:=\E[\ell''( V_{x^\ast} )^6]^{\frac{1}{6}},\\
	v_{K}
	&:=\E\left[ \sup\nolimits_{x\in\mathcal{B}_1^\ast} \ell'''( V_{x} )^2 \right]^{\frac{1}{2}},\\
	v_{\mathcal{E}_{\mathrm{H}}}
	&:=\sup\nolimits_{x\in\mathcal{B}_1^\ast}  \E\left[ \sup\nolimits_{t\in[0,1]} \ell'''( V_{x^\ast+t(x-x^\ast)})^2 \right]^{\frac{1}{2}}
\end{align*}
are all finite.

\begin{remark}
\label{rem:PO.ell'''}
	If, for instance, $\ell'''$ is non-negative and increasing, the terms $v_{K}$ and $v_{\mathcal{E}_\mathrm{H}}$ can be simplified as
	\begin{align*}
	\sup_{x\in\mathcal{B}_1^\ast} \ell'''( V_{x} )
	&=  \ell'''( V_{x^\ast} + \|X\|_\ast ),\\
	\sup_{t\in[0,1]} \ell'''( V_{x^\ast+t(x-x^\ast)})
	&\leq  \ell'''( V_{x^\ast} + |\langle X, x-x^\ast\rangle|)
	\end{align*}
	where $\|\cdot\|_\ast:=\sup_{x\in\mathbb{R}^d \text{ s.t.\ } \|x\|\leq 1} \langle x,\cdot\rangle$ denotes the dual norm of $\|\cdot\|$.
	(Note that if $\|\cdot\|$ is (equivalent to) the Euclidean norm, then its dual norm is (equivalent to) the Euclidean norm too.)
\end{remark}
	
Under these assumptions, we obtain the following:
	
\begin{corollary}[Portfolio optimization]
\label{cor:PO}
	There are constants $c_1, c_2, c_3,c_4$ depending only on $L_X, v_1, v_2$ such that the following holds.
	For  $r\in(0, \min\{1,\frac{c_1}{v_{\mathcal{E}_\mathrm{H}}}\})$ and
	\[N\geq c_1 \max\left\{ \frac{ d \bar{\sigma}^2}{r^2} , \, d \log (d ( v_K+2)) \, \right\}, \]
	there is a procedure $\widehat{x}_N^\ast$ such that, with probability at least
	\[ 1-2\exp\left(-c_2 N \min\left\{1, \frac{r^2}{\bar{\sigma}^2  } \right\} \right), \]
	we have that
	\begin{align*}
	\| \widehat{x}_N^\ast - x^\ast  \| &\leq   r, \\
	u(\widehat{x}_N^\ast) &\geq u(x^\ast)- c_4r^2.
	\end{align*}
\end{corollary}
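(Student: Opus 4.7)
The plan is to verify Assumptions~\ref{ass:on.F.diff.convex.etc}, \ref{ass:norm.equiv}, and \ref{ass:hessian.midpoints} for the portfolio setup, and then to bound the parameters $\sigma^2$, $N_{\mathrm{G}}(r)$, $c_{\mathrm{H}}$, and $N_{\mathrm{H},\mathcal{E}}$ that appear in Theorem~\ref{thm:main.convex.intro}. A direct computation gives $\nabla F(x,\xi)=-\ell'(V_x)X$ and $\nabla^2 F(x,\xi)=\ell''(V_x)\,X\otimes X$, so $\|z\|^2=\E[\ell''(V_{x^\ast})\langle X,z\rangle^2]$. Existence and uniqueness of $x^\ast$, strict convexity and coercivity of $f$, and the fact that $\|\cdot\|$ is a true norm all follow from standard no-arbitrage arguments: $\ell''>0$ (by strict convexity of $\ell$) combined with \eqref{eq:PO.NA} rules out $\|z\|=0$ for $z\ne 0$ (cf.\ \cite[Section 3.1]{follmer2011stochastic}). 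The integrability requirements of Assumption~\ref{ass:on.F.diff.convex.etc}(c) are built into the setup through the assumed moment bounds and \eqref{ass:portfolio.norm.equiv}.

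For Assumption~\ref{ass:norm.equiv}, I would expand $\E[\langle \nabla^2 F(x^\ast,\xi)z,z\rangle^2]=\E[\ell''(V_{x^\ast})^2\langle X,z\rangle^4]$, apply H\"older with exponents $(3,3/2)$ to bound it by $v_2^2\cdot\E[\langle X,z\rangle^6]^{2/3}$, and then use \eqref{ass:portfolio.norm.equiv} to dominate this by $v_2^2 L_X^4\,\E[\langle X,z\rangle^2]^2$. The remaining and most delicate step is a reverse comparison $\E[\langle X,z\rangle^2]\lesssim\|z\|^2$ with a constant depending only on $L_X$ and $v_1$: I would obtain this via a Paley--Zygmund/truncation argument. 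Markov with $v_1=\E[|V_{x^\ast}|]$ localises $V_{x^\ast}$ in a compact set $\{|t|\le Cv_1\}$ of high probability, on which the continuous, strictly positive function $\ell''$ is bounded below; the complementary low-probability event contributes a lower-order term to $\E[\langle X,z\rangle^2]$ thanks to the $L_6$-$L_2$ equivalence of $X$. Choosing $C$ appropriately large yields $\|z\|^2\ge c\,\E[\langle X,z\rangle^2]$, hence the required $L$.

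Assumption~\ref{ass:hessian.midpoints} is verified with $\alpha=1$ by writing $\ell''(V_x)-\ell''(V_y)=-\int_0^1\ell'''(V_{y+t(x-y)})\langle X,x-y\rangle\,dt$, whence $\|\nabla^2 F(x,\xi)-\nabla^2 F(y,\xi)\|_{\mathrm{op}}\le \sup_t|\ell'''(V_{y+t(x-y)})|\cdot|\langle X,x-y\rangle|\cdot\|X\|_2^2$. Taking $K(\xi):=\sup_{x\in\mathcal{B}_1^\ast}|\ell'''(V_x)|\cdot\|X\|_\ast\cdot\|X\|_2^2$ gives part~(a); Cauchy--Schwarz and \eqref{ass:portfolio.norm.equiv} control $\E[K(\xi)]$ by $v_K$ times moments of $X$ that scale like $d^{3/2}$. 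For part~(b), the chain rule gives $\mathcal{E}_{\mathrm{H}}(x)\le \sup_t|\ell'''(V_{x^\ast+t(x-x^\ast)})|\cdot|\langle X,x-x^\ast\rangle|^3$; Cauchy--Schwarz together with \eqref{ass:portfolio.norm.equiv} yields $\E[\mathcal{E}_{\mathrm{H}}(x)]\lesssim v_{\mathcal{E}_{\mathrm{H}}}L_X^3\|x-x^\ast\|^3$, and Markov produces the $c_1$-bound under the restriction $r_0\lesssim c_1/v_{\mathcal{E}_{\mathrm{H}}}$ that appears in the statement.

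It remains to bound the parameters. Cauchy--Schwarz gives $\E[\ell'(V_{x^\ast})^2\langle X,z\rangle^2]\le \bar\sigma^2\cdot\E[\ell''(V_{x^\ast})^2\langle X,z\rangle^4]^{1/2}$, which together with Assumption~\ref{ass:norm.equiv} yields $\sigma^2\lesssim\bar\sigma^2$, and then the trivial trace bound $\mathrm{trace}(\nabla^2 f(x^\ast)^{-1}\C[\nabla F(x^\ast,\xi)])\le d\sigma^2$ gives $N_{\mathrm{G}}(r)\lesssim d\bar\sigma^2/r^2$. Hessian continuity gives $c_{\mathrm{H}}=O(1)$, and the $K(\xi)$ above gives $N_{\mathrm{H},\mathcal{E}}\lesssim d\log(d(v_K+2))$. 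Feeding all of this into Theorem~\ref{thm:main.convex.intro} (with the prediction error $2c_{\mathrm{H}}r^2$ translated back into the utility $u$ via $u=-\ell(\,\cdot\,)$) gives the corollary. I expect the Paley--Zygmund step verifying Assumption~\ref{ass:norm.equiv} to be the main obstacle, since one must extract a quantitative lower bound on $\ell''(V_{x^\ast})$ using only the listed parameters $L_X,v_1,v_2$; the rest is essentially bookkeeping with H\"older's inequality.
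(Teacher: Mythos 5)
Your route is the same as the paper's: verify Assumptions~\ref{ass:on.F.diff.convex.etc}, \ref{ass:norm.equiv}, \ref{ass:hessian.midpoints}, bound $\sigma^2$, $N_{\mathrm{G}}(r)$, $c_{\mathrm{H}}$, $N_{\mathrm{H},\mathcal{E}}$, and feed everything into Theorem~\ref{thm:main.convex.intro}. The H\"older exponents, the Cauchy--Schwarz bound $\sigma^2\lesssim\bar\sigma^2$, and the Paley--Zygmund/truncation idea for the reverse norm comparison all match what the paper does in Lemmas~\ref{lem:PO.norm.equals.normS}--\ref{lem:PO.cH}.

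There is, however, one concrete slip in the verification of Assumption~\ref{ass:hessian.midpoints}(a). The operator norm there is the one induced by the Hessian norm $\|\cdot\|$, so for the rank-one matrix $X\otimes X$ one has
\[\|X\otimes X\|_{\mathrm{op}}=\sup_{\|z\|\leq 1}\langle X,z\rangle^2=\|X\|_\ast^2,\]
not $\|X\|_2^2$. Combined with $|\langle X,x-y\rangle|\leq\|X\|_\ast\|x-y\|$, the correct majorant is $K(\xi)\propto\sup_{\mathcal{B}_1^\ast}|\ell'''|\cdot\|X\|_\ast^3$, not $\sup|\ell'''|\cdot\|X\|_\ast\cdot\|X\|_2^2$. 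This matters: with $\|X\|_\ast^3$, the bound $\E[\|X\|_\ast^6]\lesssim d^3$ (obtained by writing $\|X\|_\ast\approx\|\mathrm{\mathbf{C}ov}[X]^{-1/2}X\|_2$ and using the directional $L_6$--$L_2$ equivalence coordinatewise, as in Lemma~\ref{lem:PO.expectation.norm}) gives $\E[K(\xi)]\lesssim v_K d^{3/2}$ and hence $N_{\mathrm{H},\mathcal{E}}\lesssim d\log(d(v_K+2))$ as claimed; with $\|X\|_2^2$ in place you would instead pick up $\mathrm{trace}(\mathrm{\mathbf{C}ov}[X])$ factors that need not be comparable to $d$. (You may have been thinking of the heuristic discussion in Section~\ref{sec:main.results}, which normalises $\mathrm{\mathbf{C}ov}[X]=\mathrm{Id}$, making $\|\cdot\|_\ast$ and $\|\cdot\|_2$ coincide.) A smaller point: the claim ``$c_{\mathrm{H}}=O(1)$'' is stated too strongly---the argument in Lemma~\ref{lem:PO.cH} actually gives $c_{\mathrm{H}}\lesssim 1+v_{\mathcal{E}_{\mathrm{H}}}$; the corollary's formulation absorbs this into $c_4$, but the intermediate bound is not dimension-free of $v_{\mathcal{E}_{\mathrm{H}}}$.

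Finally, regarding the part you flagged as the ``main obstacle'': your Paley--Zygmund plan is indeed how the paper proceeds, and you are right that the lower bound $\varepsilon=\min\{\ell''(u):|u|\leq 8L_X^4 v_1\}$ on $\ell''$ over the compact set cut out by Markov does depend on the function $\ell$ and not just on the listed numeric parameters $L_X,v_1,v_2$. The paper accepts this (treating $\ell$ as fixed); nothing in your sketch overcomes it, nor does it need to.
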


We postpone the proof of Corollary \ref{cor:PO}; it will be presented in Section \ref{sec:PO.proof}, where we also show how to recover the estimate in the exponential portfolio optimization problem (i.e.\ Corollary \ref{cor:PO.exp}).

\section{On the smallest singular value of general random matrix ensembles}
\label{sec:mom.singular.value}

In the course of the analysis needed in the proof of Theorem \ref{thm:main.convex.intro}, a crucial ingredient is that sampling can exhibit that $\langle \nabla^2 F(x^\ast,\xi)(x-x^\ast),x-x^\ast\rangle$ is sufficiently large.
Put differently, it is essential to derive a suitable lower bound on the smallest singular value of the empirical random matrix of $\nabla^2 F(x^\ast,\xi)$.

Results of this type have been studied extensively \cite{adamczak2010quantitative,koltchinskii2015bounding,rudelson1999random,srivastava2013covariance,tikhomirov2018sample,yaskov2015sharp}, however (to the best of our knowledge) the focus was on random matrix ensembles that had some additional special structure, like iid rows/columns. Unfortunately such special structure need not exist in our setting.

\vspace{0.5em}
In this section, let $A$ be a real, square integrable, positive-semidefinite $(d\times d)$-random matrix and let $(A_i)_{i\geq 1}$ be independent copies of $A$.
(in the context of this article, the case that interests us is $A=\nabla^2 F(x^\ast,\xi)$.)
Denote its expectation by $\mathbb{A}:=\E[A]$, the semi-norm endowed by $\mathbb{A}$ is
\[ \|x\|:= \langle \mathbb{A} x,x\rangle^{\frac{1}{2}}
= \E[ \langle A x,x\rangle]^{\frac{1}{2}} \]
and the corresponding unit sphere is
\[S:=\{ x\in \mathbb{R}^d : \|x\|=1\}.\]

To simplify the presentation, assume that $\|\cdot\|$ is a true norm, i.e.\ $\|x\|=0$ implies that $x=0$.
Next, for any $(d\times d)$-matrix $B$, its operator norm is given by
\[\|B\|_{\mathrm{op}}
:= \max_{x,y\in S} \langle Bx, y\rangle.\]

Before stating the main result of this section (Theorem \ref{thm:singular.value}) let us begin with one of its outcomes.

\begin{corollary}
\label{cor:smallest.singular.value.norm.equiv}
	Assume that there is a constant $L>0$ such that
	\begin{align}	
	\label{eq:cor.smallest.singular.ass.norm.equiv}
	 \E[\langle Ax,x\rangle^2]^\frac{1}{4} \leq L
	\quad\text{for all }x\in S.
	\end{align}
	Then there are constants $c_1$ and $c_2$ that depend only on $L$ such that the following holds.
	Let $\gamma\in(0,1)$ and assume that
	\[ N \geq c_1 \frac{d }{\gamma^2} \log\left(\frac{2 d }{\gamma  } \right) .\]
	Then, with probability at least
	\[1-2\exp\left(- c_2 N \gamma^2 \right),\]
	we have that
	\[ \lambda_{\min}\left( \frac{1}{N} \sum_{i=1}^N A_i \right)
	\geq (1-\gamma) \lambda_{\min}(\E[A]). \]
\end{corollary}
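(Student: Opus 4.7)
The plan is to derive Corollary \ref{cor:smallest.singular.value.norm.equiv} as a direct consequence of Theorem \ref{thm:singular.value}, whose hypothesis is a uniform small-ball estimate on the quadratic forms $\langle A x, x\rangle$ for $x \in S$. The only work specific to this corollary is to extract such a small-ball estimate from the $L_4$-$L_2$ norm equivalence in \eqref{eq:cor.smallest.singular.ass.norm.equiv} via the Paley-Zygmund inequality; once that is done, the rest is a routine book-keeping of the parameters appearing in the conclusion of Theorem \ref{thm:singular.value}.

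For the first step, fix $x \in S$ and set $Y_x := \langle A x, x\rangle$, which is non-negative because $A$ is positive semidefinite. By the definition of $S$, we have $\E[Y_x] = \|x\|^2 = 1$, while the hypothesis \eqref{eq:cor.smallest.singular.ass.norm.equiv} gives $\E[Y_x^2] \leq L^4$. The Paley-Zygmund inequality applied with cutoff $1/2$ then yields
\[
\P\Big[\langle A x, x\rangle \geq \tfrac{1}{2}\Big]
= \P\Big[Y_x \geq \tfrac{1}{2}\E[Y_x]\Big]
\geq \frac{(1/2)^2\,\E[Y_x]^2}{\E[Y_x^2]}
\geq \frac{1}{4 L^4},
\]
uniformly in $x \in S$. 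This is precisely a small-ball condition with parameters $\tau = 1/2$ and $q = 1/(4 L^4)$, both depending only on $L$.

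The second step is to invoke Theorem \ref{thm:singular.value} with the small-ball parameters just computed. The conclusion is a high-probability lower bound on $\inf_{x \in S}\frac{1}{N}\sum_{i=1}^N \langle A_i x, x\rangle$ which, after unwinding, reads as $\lambda_{\min}(\frac{1}{N}\sum_{i=1}^N A_i) \geq (1-\gamma)\lambda_{\min}(\mathbb{A})$ once $N \gtrsim_L (d/\gamma^2)\log(2d/\gamma)$. The logarithmic factor and the exponent $N\gamma^2$ in the failure probability are artifacts of the $\varepsilon$-net argument and the (matrix Bernstein-type) concentration step embedded in Theorem \ref{thm:singular.value}; the constants $c_1,c_2$ depend on the small-ball parameters $\tau,q$ and hence, through the Paley-Zygmund bound above, only on $L$.

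I would expect the main obstacle to be internal to Theorem \ref{thm:singular.value} rather than to this corollary: namely, uniformizing the pointwise small-ball estimate over the entire sphere $S$ in the absence of an a priori control on the operator norm of $A$, which requires a truncation argument on the heavy-tailed quadratic forms combined with an $\varepsilon$-net and a concentration inequality. For the corollary itself, the Paley-Zygmund step is elementary, and once the small-ball hypothesis is in place the invocation of Theorem \ref{thm:singular.value} is immediate.
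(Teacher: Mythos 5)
There is a genuine gap in the proposal, and it lies precisely in the Paley--Zygmund step. Applying Paley--Zygmund converts the $L_4$--$L_2$ norm equivalence \eqref{eq:cor.smallest.singular.ass.norm.equiv} into a \emph{small-ball} estimate with fixed parameters $\kappa^2 = \tfrac12$, $\delta = \tfrac{1}{4L^4}$. However, as Remark~\ref{rem:relation.SLB.SBP.norm.equiv}(i) makes explicit, a small-ball property only yields a \emph{stable lower bound} with the distortion parameter $\gamma' = 1 - \tfrac{\delta\kappa^2}{2}$, a fixed quantity close to $1$ (and bounded away from $0$ in a way depending only on $L$). This is an \emph{isomorphic} bound: it would produce, at best, $\lambda_{\min}(\tfrac1N \sum A_i) \geq c_L\,\lambda_{\min}(\E[A])$ for some fixed $c_L < 1$. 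It cannot reproduce the \emph{almost isometric} conclusion $\lambda_{\min}(\tfrac1N\sum A_i) \geq (1-\gamma)\lambda_{\min}(\E[A])$ for arbitrary $\gamma\in(0,1)$ that the corollary asserts. In passing from moments to a small-ball probability, Paley--Zygmund throws away exactly the information needed to control how close the empirical second moment gets to the true one.

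The paper instead goes through Remark~\ref{rem:relation.SLB.SBP.norm.equiv}(ii): $L_p$-boundedness with $p=4$ gives a stable lower bound with \emph{arbitrary} distortion $\gamma$, at the price of the block-size parameters scaling like $\gamma^{2}$ --- which is precisely what feeds into the $N\gtrsim (d/\gamma^2)\log(2d/\gamma)$ requirement and the $\exp(-c_2 N\gamma^2)$ failure probability. Concretely, the paper's proof invokes Remark~\ref{rem:relation.SLB.SBP.norm.equiv}(ii) to get a stable lower bound with parameters $(N,\tfrac{\gamma}{2},s_1 N\gamma^2,s_2 N\gamma^2)$, then applies Corollary~\ref{cor:smallest.singular.value} (the $n=1$ specialization of Theorem~\ref{thm:singular.value}), and finally uses Lemmas~\ref{lem:operator.norm.relation} and~\ref{lem:growth.op.squared.A} to simplify the abstract sample-size condition to $c_1 (d/\gamma^2)\log(2d/\gamma)$. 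So your high-level structure (reduce to Theorem~\ref{thm:singular.value}, then bookkeeping) is right, but the intermediate reduction must keep the $L_4$ control rather than downgrade it to a small-ball probability.
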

\noindent
(Here, of course, $\lambda_{\min}$ denotes  the smallest singular value.)

\vspace{0.5em}
As the results of this section will be used in the proof of Theorem \ref{thm:main.convex.intro} for the construction of a median-of-means tournament, we also need a median-of-means version of Corollary \ref{cor:smallest.singular.value.norm.equiv}.
To that end, as before, let $N=nm$ for two integers $n$ and $m$, and consider a partition of $\{1,\dots,N\}$ into $n$ disjoint blocks $I_j$, each one of cardinality $|I_j|=m$.
Throughout this article, we make the \emph{notational convention} that the letter $j$ always refers blocks, i.e.\ $j$ is always an element of $\{1,\dots,n\}$.
In addition, $0<C,C_0,C_1,\dots$ denote absolute constants independent of all parameters.
They are allowed to change their values from line to line.

We often encounter the so-called \emph{Rademacher random variables} $(\varepsilon_i)_{i\geq 1}$, which are independent, symmetric random signs (i.e.\ $\P[\varepsilon_i=\pm 1]=\frac{1}{2}$) that are also independent of all the other random variables that appear in the analysis; in particular, they are independent of $(A_i)_{i=1}^N$.

We have already seen in the introduction that the smallest empirical singular value of a random matrix cannot be bounded (even with constant probability) without imposing some of assumption.
While an integrability assumption as in \eqref{eq:cor.smallest.singular.ass.norm.equiv} will do the job, the following definition from \cite{mendelson2017extending} contains the essence of what is actually needed.

\begin{definition}[Stable lower bound]
\label{def:stable.lower.bound}
	A set $H\subseteq L_2$ of real-valued functions satisfies a \emph{stable lower bound} with parameters $(m,\gamma,l,k)$ if the following holds:
	for every $h\in H$ and independent copies $(h_i)_{i\geq 1}$ of $h$, with probability at least $1-2\exp(-k)$, for all $J\subseteq \{1,\dots,m\}$ with $|J|\leq l$, we have that
\[	\frac{1}{m}\sum_{i\in\{1,\dots,m\}\setminus J} h_i^2 \geq (1-\gamma) \E[h^2]. \]
	We say that a (symmetric, positive semi-definite, $d\times d$) random matrix $A$ satisfies a stable lower bound with parameters $(m,\gamma,l,k)$ if $H:=\{ \langle Ax,x\rangle^{\frac{1}{2}}: x\in S \}$ does (with the same parameters).
\end{definition}

The stable lower bound can be seen as an extension of the \emph{small ball property}.
Recall that a set $H$ is said to satisfy a small ball property with parameters $(\kappa,\delta)$ if
\[ \P\left[ h^2\geq \kappa^2  \E[h^2] \right] \geq \delta
\quad\text{for all } h\in H. \]
This condition is used frequently in problems involving a quadratic term (see, e.g.~\cite{lecue2018regularization,mendelson2014learning,mendelson2020geometry}).

\begin{remark}
\label{rem:relation.SLB.SBP.norm.equiv}
	Let $H=\{ \langle Ax,x\rangle^{\frac{1}{2}}: x\in S \}$.
	Then the following hold.
	\begin{enumerate}[(i)]
	\item
	If $H$ satisfies the small ball property with constants $(\kappa,\delta)$, then $H$ satisfies a stable lower bound with parameters
	\[(m,\gamma,k,l)=\left(m, 1-\frac{\delta\kappa^2}{2},s_1\delta m, s_2\delta m\right)\]
	for every $m$, where $s_1,s_2>0$ are absolute constants.
	\item
	If $H$ is bounded in $L_p$ for some $p>2$, that is,
	\[ \E[|h|^p]^{\frac{1}{p}} \leq L \quad\text{for all } h\in H\]
	where $L$ is a fixed constant, then $H$ satisfies a stable lower bound with parameters
	\[ (m,\gamma,k,l)
	=\left(m,\gamma,s_1 m \gamma^{\frac{p}{p-2}} ,s_2 m \gamma^{ \max\{ \frac{p}{p-2} ,2\} } \right) \]
	where $s_1,s_2>0$ are constants depending only on $p$ and $L$.
	\end{enumerate}
\end{remark}

The first statement is an immediate consequence of a Binomial concentration inequality and second statement can be found in \cite[Section 2.1]{mendelson2017extending} together with a more thorough discussion and analysis of stable lower bounds.

The following theorem is the main result of this section. 
To formulate it, recall that for a random matrix $A$, $\E [A]$ is denoted by $\mathbb{A}$.

\begin{theorem}
\label{thm:singular.value}
	There are absolute constants $C_1$ and $C_2$ such that the following holds.
	Fix $\gamma,\tau\in(0,1)$  and assume that
	\begin{enumerate}[(a)]
	\item
	the random matrix $A$ satisfies a stable lower bound with parameters $(m,\frac{\gamma}{2},2l,k)$ were $k\geq \max\{4,2\log (\frac{4}{\tau})\}$,
	\item
	the sample size satisfies
	\[ N \geq C_1 \max\left\{ \| \E[ A \mathbb{A}^{-1} A] \|_{\mathrm{op}} , \frac{d m}{\tau k} \log\left(\frac{\log(3d) \E[ \| A \mathbb{A}^{-1} A \|_{\mathrm{op}} ] }{\gamma\tau \|\E[A \mathbb{A}^{-1} A]\|_{\mathrm{op}} } \right) \right\} .\]
	\end{enumerate}
	Then, with probability at least
	\[1-2\exp\left(-C_2 N \tau \min\left\{ \frac{l}{m},\frac{k}{m} \right\} \right),\]
	for every $x\in \mathbb{R}^d$ and every $J_j\subseteq I_j$ with $|J_j|\leq l$ for all $j$, we have that
	\[  \left| \left\{ j: \frac{1}{m}\sum_{i\in I_j\setminus J_j} \langle A_i x,x\rangle
	\geq (1-\gamma) \|x\|^2  \right\}\right| \geq (1-\tau) n .\]	
\end{theorem}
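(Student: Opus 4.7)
The plan is to reduce to the isotropic case $\mathbb{A} = \mathrm{Id}$, prove the conclusion for a fixed $x \in S$ via the stable lower bound and a Chernoff count across blocks, and lift to a uniform statement over $S$ through an $\epsilon$-net argument whose perturbation residual is controlled by a matrix Bernstein inequality. A change of variables $y = \mathbb{A}^{1/2}x$ and $B = \mathbb{A}^{-1/2}A\mathbb{A}^{-1/2}$ turns $\|\cdot\|$ into the Euclidean norm and identifies $\|\E[A\mathbb{A}^{-1}A]\|_{\mathrm{op}}$ with $\|\E[B^2]\|_{\mathrm{op}}$; from now on I may therefore assume $\mathbb{A} = \mathrm{Id}$.

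For fixed $x \in S$, applying the stable lower bound on each block $j$ (with the enhanced deletion budget $2l$) yields, with probability at least $1 - 2e^{-k}$, that $\frac{1}{m}\sum_{i \in I_j \setminus J}\langle A_i x, x\rangle \geq (1 - \gamma/2)\|x\|^2$ holds simultaneously for every $J \subseteq I_j$ with $|J| \leq 2l$; call such a block \emph{good for $x$}. Since the blocks are independent, the number of bad blocks is stochastically dominated by a binomial with tiny success probability $p \leq 2e^{-k}$, and the Chernoff exponent at level $\tau n / 4$ is of order $n\tau k$ once $k \geq 2\log(4/\tau)$, giving
\[
\P\bigl[\,|\{j:j\text{ bad for }x\}| > \tau n/4\,\bigr] \leq \exp(-c\, n \tau k).
\]
Choosing a minimal $\epsilon$-net $\mathcal{N}$ of $S$ with $\epsilon$ of order $\gamma$, so that $|\mathcal{N}| \leq (3/\epsilon)^d$, and union-bounding over $\mathcal{N}$, every $x_0 \in \mathcal{N}$ has at most $\tau n/4$ bad blocks with probability at least $1 - \exp(d \log(3/\epsilon) - c n \tau k)$. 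The sample-size requirement $N \geq C_1 \frac{dm}{\tau k}\log(\cdots)$ is calibrated so that the Chernoff exponent dominates the net cost.

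To pass from the net to all of $S$, write $x = x_0 + \delta$ with $x_0 \in \mathcal{N}$ and $\|\delta\| \leq \epsilon$. Cauchy--Schwarz in the positive semidefinite form $\langle A_i \cdot, \cdot\rangle$ together with an AM--GM split of weight $\eta \in (0,1)$ yields the pointwise bound
\[
\langle A_i x, x\rangle \geq (1 - \eta)\langle A_i x_0, x_0\rangle - (\eta^{-1} - 1)\langle A_i \delta, \delta\rangle.
\]
Choosing $\eta$ small enough that $(1-\gamma/2)(1-\eta) \geq 1 - 3\gamma/4$, the first term averaged over $I_j \setminus J_j'$ in a good block (for any $J_j'$ of size at most $2l$) is at least $(1 - 3\gamma/4)\|x_0\|^2$. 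For the subtracted term, set $J_j' = J_j \cup \tilde J_j(\delta)$, where $J_j$ is the externally given adversarial deletion of size at most $l$ and $\tilde J_j(\delta) \subseteq I_j \setminus J_j$ consists of the $l$ indices with the largest $\langle A_i \delta, \delta\rangle$; then $|J_j'| \leq 2l$ fits the stable lower bound's budget, and every surviving $\langle A_i \delta, \delta\rangle$ is bounded by the $l$-th order statistic, hence by $\frac{1}{l}\sum_{i \in I_j}\langle A_i \delta, \delta\rangle$. The task reduces to a uniform upper bound on $\frac{1}{m}\sum_{i \in I_j}\langle A_i \delta, \delta\rangle$ for $\delta$ in the $\epsilon$-ball and for most blocks~$j$.

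This final bound is obtained from a truncated matrix Bernstein inequality applied to $\frac{1}{N}\sum_i A_i$, whose variance parameter is $\|\E[A \mathbb{A}^{-1} A]\|_{\mathrm{op}}$ and whose truncation level produces the logarithmic factor $\log(\log(3d)\E[\|A \mathbb{A}^{-1} A\|_{\mathrm{op}}]/(\gamma\tau \|\E[A \mathbb{A}^{-1} A]\|_{\mathrm{op}}))$ appearing in the hypothesis; it gives $\|\frac{1}{N}\sum_i A_i - \mathrm{Id}\|_{\mathrm{op}} \leq \gamma/16$ provided $N \geq C_1 \|\E[A \mathbb{A}^{-1} A]\|_{\mathrm{op}}$. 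A second Chernoff step across blocks passes from this global estimate to $\frac{1}{m}\sum_{i \in I_j}\langle A_i \delta, \delta\rangle \leq (1 + \gamma/8)\|\delta\|^2$ on at least $(1 - \tau/4)n$ blocks uniformly in $\delta$, so that combined with the first-term lower bound we obtain the $(1 - \gamma)\|x\|^2$ conclusion on at least $(1 - \tau)n$ blocks. The main obstacle is the dual role of the deletion budget: the $2l$ tolerance in the stable lower bound must simultaneously accommodate the externally given adversarial $J_j$ and the data-driven $\tilde J_j(\delta)$ introduced by the net-perturbation step; since $\tilde J_j(\delta)$ depends on $\delta$, a naive union bound cannot deliver uniformity, which forces the combination of an order-statistic argument with a heavy-tailed (truncated) matrix Bernstein step and dictates the precise form of the sample-size lower bound.
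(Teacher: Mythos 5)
Your high-level architecture---reduce to the isotropic case, establish a Chernoff/Bennett block-count for a fixed direction via the stable lower bound, take a union bound over a net, and absorb the net-to-sphere perturbation by spending part of the $2l$ deletion budget on the largest perturbation terms---is indeed the paper's. But the last step, where you control the perturbation, contains a genuine gap.

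You claim a truncated matrix Bernstein inequality gives the \emph{high-probability, global} isometric estimate $\bigl\|\frac{1}{N}\sum_i A_i - \mathrm{Id}\bigr\|_{\mathrm{op}} \leq \gamma/16$ once $N \gtrsim \|\E[A\mathbb{A}^{-1}A]\|_{\mathrm{op}}$. Under the hypotheses of Theorem \ref{thm:singular.value} this is not available: one has only a stable lower bound (a lower-tail condition) and control of $\|\E[A^2]\|_{\mathrm{op}}$ and $\E[\|A^2\|_{\mathrm{op}}]$. Tropp's inequality (which the paper invokes in Lemma \ref{lem:matrix.rademacher}) bounds only the \emph{expectation} of $\|\frac{1}{N}\sum_i(A_i - \mathrm{Id})\|$; to promote that to a high-probability statement with the required exponent $\exp(-cN\tau\min\{l/m,k/m\})$ one needs concentration, which in turn requires boundedness of the summands or upper-tail control that the theorem does not assume. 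Truncating $A_i$ introduces a bias $\|\E[A] - \E[A\mathbf{1}_{\|A\|\leq R}]\|$ that is uncontrollable without higher-moment hypotheses. This is exactly the obstacle the paper is designed to avoid: the object $\frac{1}{N}\sum_i A_i$ need \emph{not} be close to $\mathrm{Id}$ in operator norm in the heavy-tailed regime, which is why a mere lower bound on $\lambda_{\min}$---rather than a two-sided isometric estimate---is the right target. Subordinately, your ``second Chernoff step'' does not follow: conditioning on a deterministic global-average event does not let you Chernoff-bound the number of bad blocks, and a Markov count at level $(1+\gamma/8)\|\delta\|^2$ fails because the per-block means equal $\|\delta\|^2$, so the threshold would have to be inflated by $1/\tau$. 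The paper circumvents both difficulties simultaneously: after deleting the top-$l$ order statistics, the surviving partial sum is \emph{pointwise} dominated by the truncated average $\frac{1}{m}\sum_{I_j}(|\Delta_i|\wedge b)$ with $b=\gamma m/l$ (the implication~\eqref{eq:sum.and.sum.min}); this truncated average is intrinsically bounded, carries no bias because the deletion absorbs the large values rather than discarding them, and yields a count of bad blocks by a trivial Markov step. The high-probability statement is then delivered by Talagrand's concentration inequality for bounded empirical processes, with the matrix Bernstein inequality used only to bound the Rademacher-complexity term $R_4$ inside Talagrand's bound. If you replace your operator-norm step with this truncate-and-Talagrand scheme, the rest of your plan goes through; without it, the proof does not close.
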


The formulation in Theorem \ref{thm:singular.value} is tailer-made for the median-of-means analysis which is presented in the next section (as part of the proof of Theorem \ref{thm:main.convex.intro}).
It ensures a stability property that is stronger than a mere lower bound on the quadratic form (and therefore, a lower estimate on the smallest singular value). Rather, it gives a useful lower bound even if a proportion of the sample is arbitrarily modified or removed.
This will be useful in the proof of Theorem \ref{thm:main.convex.intro} when passing from the Hessian evaluated at the optimizer to the Hessian evaluated at mid-points.
In addition, Theorem \ref{thm:singular.value} provides an almost isometric lower bound (i.e.\ when $\gamma$ is close to zero) while the analysis for Theorem \ref{thm:main.convex.intro} actually only requires an isomorphic lower bound.

Finally, let us formulate the following immediate consequence of Theorem \ref{thm:singular.value}.

\begin{corollary}
\label{cor:smallest.singular.value}
	There are absolute constants $C_1$ and $C_2$ such that the following holds.
	Fix $\gamma\in(0,1)$ and assume that
	\begin{enumerate}[(a)]
	\item
	the random matrix $A$ satisfies a stable lower bound with parameters $(N,\frac{\gamma}{2},2l,k)$ for some $k\geq 4$,
	\item
	the sample size satisfies
	\[ N \geq C_1 \max\left\{ \| \E[ A \mathbb{A}^{-1} A ] \|_{\mathrm{op}} , \frac{d N}{ k} \log\left(\frac{\log(3d) \E[ \| A \mathbb{A}^{-1} A \|_{\mathrm{op}} ]  }{\gamma  \| \E[A \mathbb{A}^{-1} A] \|_{\mathrm{op}} } \right) \right\} .\]
	\end{enumerate}
	Then, with probability at least
	\[1-2\exp\left(-C_2 N \min\left\{ \frac{l}{N},\frac{k}{N} \right\} \right),\]
	we have that
	\[ \lambda_{\min}\left( \frac{1}{N} \sum_{i=1}^N A_i \right)
	\geq (1-\gamma) \lambda_{\min}(\E[A]). \]
\end{corollary}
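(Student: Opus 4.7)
The corollary is a direct specialization of Theorem \ref{thm:singular.value} to the trivial ``single-block'' partition, with the parameter $\tau$ chosen to be an absolute constant. The plan is as follows.

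Apply Theorem \ref{thm:singular.value} with $m:=N$ and $n:=1$ (so the partition consists of a single block $I_1=\{1,\dots,N\}$), and with $\tau:=\tau_0$ where $\tau_0\in(0,1)$ is a fixed absolute constant chosen large enough to absorb the auxiliary restriction $k\geq 2\log(4/\tau)$ into the hypothesis $k\geq 4$ of the corollary; for instance any $\tau_0\in[4e^{-2},1)$ will do, since then $2\log(4/\tau_0)\leq 4$. Hypothesis (a) of the theorem then reads ``$A$ satisfies a stable lower bound with parameters $(N,\gamma/2,2l,k)$'', which is precisely the corollary's hypothesis (a); and hypothesis (b), after substituting $m=N$ and absorbing the now-constant factor $1/\tau_0$ into $C_1$, becomes precisely the corollary's hypothesis (b).

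The conclusion of the theorem, specialized to $n=1$ and $J_1:=\emptyset$, asserts the following event holds with probability at least $1-2\exp(-C_2 N\tau_0\min\{l/N,k/N\})$: for every $x\in\mathbb{R}^d$,
\[
\Big|\Big\{j\in\{1\}:\tfrac{1}{N}\sum_{i=1}^{N}\langle A_i x,x\rangle\geq(1-\gamma)\|x\|^{2}\Big\}\Big|\;\geq\;(1-\tau_0)\cdot 1.
\]
Since the left-hand side is an integer in $\{0,1\}$ and $(1-\tau_0)>0$, the inequality forces this cardinality to equal $1$; in other words, on this event we have
\[
\tfrac{1}{N}\sum_{i=1}^{N}\langle A_i x,x\rangle\;\geq\;(1-\gamma)\,\langle\mathbb{A}x,x\rangle \qquad\text{for all }x\in\mathbb{R}^d,
\]
recalling that $\|x\|^2=\langle\mathbb{A}x,x\rangle$ by definition of the seminorm induced by $\mathbb{A}=\E[A]$. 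This is exactly the statement $\tfrac{1}{N}\sum_{i=1}^{N}A_i\succeq(1-\gamma)\mathbb{A}$ in the positive semi-definite order, and a standard Rayleigh-quotient comparison then yields
\[
\lambda_{\min}\Big(\tfrac{1}{N}\sum_{i=1}^{N}A_i\Big)\;\geq\;(1-\gamma)\,\lambda_{\min}(\mathbb{A}),
\]
which is the conclusion of the corollary. Finally, redefining $C_2':=C_2\tau_0$ converts the probability estimate into the desired form $1-2\exp(-C_2' N\min\{l/N,k/N\})$.

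No genuine obstacle arises: the corollary is simply the ``$n=1$'' reading of Theorem \ref{thm:singular.value}. The only point that needs a moment of care is the bookkeeping for $\tau$: one must fix $\tau$ as an absolute constant so that $(1-\tau)n=1-\tau$ still forces the single block to satisfy the good event, and so that the side-condition $k\geq 2\log(4/\tau)$ is subsumed by $k\geq 4$; both are achieved by any choice of $\tau$ bounded away from $0$ and $1$, and the resulting constants are then folded into $C_1,C_2$.
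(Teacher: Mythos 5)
Your proposal is correct and follows exactly the route of the paper's own proof: apply Theorem \ref{thm:singular.value} with $n=1$, $m=N$, $\tau$ a fixed absolute constant (the paper uses $\tau=0.6$, which indeed satisfies $2\log(4/\tau)<4$ so that the theorem's hypothesis (a) collapses to $k\geq 4$), and $J_1=\emptyset$, then pass from $\frac{1}{N}\sum_i A_i\succeq(1-\gamma)\mathbb{A}$ to the eigenvalue bound via the Rayleigh-quotient characterization of $\lambda_{\min}$. Your explicit bookkeeping of which $\tau_0$ are admissible and how $\tau_0$ is absorbed into $C_1,C_2$ is a faithful elaboration of what the paper leaves implicit.
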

\begin{proof}
	Applying Theorem \ref{thm:singular.value} with $n=1$ (hence $m=N$), $\tau=0.6$, and $J_j=\emptyset$ yields the following:
	with probability at least $1-2\exp(-C_2 N \min\{ \frac{l}{N},\frac{k}{N} \} )$, for every $x\in\mathbb{R}^d$, we have that
	\begin{align*}
	\left\langle \frac{1}{N} \sum_{i=1}^N A_i x,x \right\rangle
	&\geq (1-\gamma) \langle \E[A]x,x\rangle.
	\end{align*}
	A twofold application of the extremal expression $\lambda_{\min}(\cdot)=\min_{x\in\mathbb{R}^d \text{ s.t.\ }\|x\|_2=1} \langle \,\cdot\, x,x\rangle$ of the smallest singular value completes the proof.
\end{proof}

In order to recover Corollary \ref{cor:smallest.singular.value.norm.equiv} from Corollary \ref{cor:smallest.singular.value} (and later also to apply Theorem \ref{thm:singular.value} in the proof of Theorem \ref{thm:main.convex.intro}) we need two simple observations, showing that under a $L_4-L_2$ norm equivalence, the estimate on the sample size in Corollary \ref{cor:smallest.singular.value} and Theorem \ref{thm:singular.value} can be simplified.

\begin{lemma}
\label{lem:operator.norm.relation}
	We have that
	\begin{align}
	\label{eq:op.norm.simplify}
	1
	\leq \| \E[ A\mathbb{A}^{-1}A ] \|_{\mathrm{op}}
	\leq \E[\| A\mathbb{A}^{-1}A \|_{\mathrm{op}}]
	\leq d  \| \E[ A\mathbb{A}^{-1}A ] \|_{\mathrm{op}}  .
	\end{align}
\end{lemma}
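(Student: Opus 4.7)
The plan is to reduce the weighted operator norm to the standard Euclidean one and then verify each of the three inequalities by a short application of either Jensen's inequality or the elementary relation between operator norm and trace for PSD matrices.

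First I would observe that $A\mathbb{A}^{-1}A$ is symmetric and Euclidean PSD, since $\langle A\mathbb{A}^{-1}A x,x\rangle = \|\mathbb{A}^{-1/2}Ax\|_2^2 \geq 0$. Consequently $\E[A\mathbb{A}^{-1}A]$ is also symmetric PSD. Introducing the change of variable $y=\mathbb{A}^{1/2}x$ (so that $\|x\|=1$ iff $\|y\|_2=1$), one gets that for every symmetric PSD matrix $B$,
\[
\|B\|_{\mathrm{op}} = \max_{\|x\|=1}\langle Bx,x\rangle = \max_{\|y\|_2=1}\langle \mathbb{A}^{-1/2}B\mathbb{A}^{-1/2}y,y\rangle = \|\mathbb{A}^{-1/2}B\mathbb{A}^{-1/2}\|_{\mathrm{op,Eucl}}.
\]
In particular, setting $M:=\mathbb{A}^{-1/2}A\mathbb{A}^{-1/2}$ (symmetric PSD), one has $\|A\mathbb{A}^{-1}A\|_{\mathrm{op}}=\|M^2\|_{\mathrm{op,Eucl}}$ and $\|\E[A\mathbb{A}^{-1}A]\|_{\mathrm{op}}=\|\E[M^2]\|_{\mathrm{op,Eucl}}$. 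This reduction is the one nontrivial point in the argument; once it is in place, the three inequalities are essentially immediate.

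For the lower bound on $\|\E[A\mathbb{A}^{-1}A]\|_{\mathrm{op}}$, Jensen's inequality gives, for any $x$ with $\|x\|=1$,
\[
\langle \E[A\mathbb{A}^{-1}A]x,x\rangle = \E[\|\mathbb{A}^{-1/2}Ax\|_2^2] \geq \|\E[\mathbb{A}^{-1/2}Ax]\|_2^2 = \|\mathbb{A}^{1/2}x\|_2^2 = \|x\|^2 = 1,
\]
and taking the maximum over the unit sphere establishes the first inequality. The middle inequality is then a direct application of $\E[\max f] \geq \max \E[f]$, exchanging expectation and supremum in the formula $\|B\|_{\mathrm{op}}=\max_{\|x\|=1}\langle Bx,x\rangle$ valid for symmetric PSD $B$.

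For the third inequality I would invoke the elementary chain $\|C\|_{\mathrm{op,Eucl}} \leq \mathrm{tr}(C) \leq d\cdot\|C\|_{\mathrm{op,Eucl}}$ valid for any Euclidean-symmetric PSD matrix $C$ of size $d$, applied to $C=M^2$, together with the fact that trace commutes with expectation:
\[
\E[\|A\mathbb{A}^{-1}A\|_{\mathrm{op}}] = \E[\|M^2\|_{\mathrm{op,Eucl}}] \leq \E[\mathrm{tr}(M^2)] = \mathrm{tr}(\E[M^2]) \leq d\cdot \|\E[M^2]\|_{\mathrm{op,Eucl}} = d\cdot \|\E[A\mathbb{A}^{-1}A]\|_{\mathrm{op}}.
\]
There is no substantial obstacle, only the bookkeeping of checking that the weighted operator norm coincides with $\max_{\|x\|=1}\langle Bx,x\rangle$ for symmetric PSD $B$, which is what allows Jensen to be applied at the level of quadratic forms and the trace bound to be read off after conjugating by $\mathbb{A}^{-1/2}$.
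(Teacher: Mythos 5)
Your proof is correct and follows essentially the same route as the paper's: both arguments hinge on the observation that conjugating by $\mathbb{A}^{-1/2}$ converts $\|\cdot\|_{\mathrm{op}}$ into the Euclidean spectral norm (so that $\|A\mathbb{A}^{-1}A\|_{\mathrm{op}}=\|M^2\|_{\mathrm{op}_2}$ with $M=\mathbb{A}^{-1/2}A\mathbb{A}^{-1/2}$), after which the third inequality is a trace bound, the second is exchange of sup and expectation, and the first is Jensen. The only cosmetic difference is that you apply Jensen to the scalar $\|\mathbb{A}^{-1/2}Ax\|_2^2$ pointwise in $x$, whereas the paper invokes the operator Jensen inequality $\E[M^2]\succeq\E[M]^2=\mathrm{Id}$; these are equivalent.
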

\begin{proof}
	We start with the final inequality in \eqref{eq:op.norm.simplify}.
	Recall that $\mathbb{A}=\E[A]$ and note that the relation $\|\cdot\|=\|\mathbb{A}^{\frac{1}{2}}\cdot\|_2$ transfers to the operator norm in the following sense:
	for any $(d\times d)$-matrix $B$, we have that
	\begin{align}
	\label{eq:operatorn.norm.relation}
	\begin{split}
	\|B\|_{\mathrm{op}}
	&= \|\mathbb{A}^{-\frac{1}{2}} B \mathbb{A}^{-\frac{1}{2}} \|_{\mathrm{op}_2} \quad\text{where}\\
	\|B\|_{\mathrm{op}_2}
	&:=\max_{x,y\in\mathbb{R}^d \text{ s.t.\ } \|x\|_2,\|y\|_2\leq 1} \langle Bx,y\rangle.
	\end{split}
	\end{align}
	Indeed, $\{x \in\mathbb{R}^d : \langle \mathbb{A}x,x \rangle \leq 1\}$ is the ellipsoid $\mathbb{A}^{-\frac{1}{2}} B_2^d$ (where $B_2^d$ is the unit ball w.r.t.\ the Euclidean norm).

	The norm $\|\cdot\|_{\mathrm{op}_2}$ is the usual spectral norm which, for positive semidefinite matrices, equals the largest singular value $\lambda_{\max}(\cdot)$.
	
	Setting $F:=\mathbb{A}^{-\frac{1}{2}} A \mathbb{A}^{-\frac{1}{2}}$ which is positive semidefinite, we thus have
	\begin{align*}
	\E[\| A\mathbb{A}^{-1}A \|_{\mathrm{op}}]
	&=\E[ \| F^2\|_{\mathrm{op}_2} ] \\
	&=\E[ \lambda_{\max}(F^2) ]
	\leq \sum_{i=1}^d \E[ (F^2)_{ii} ],
	\end{align*}
	where the last inequality follows by bounding the largest singular value by the trace.
	On the other hand, for every $i=1,\dots,d$, taking the Euclidean unit vector $x=y=e_i$ in \eqref{eq:operatorn.norm.relation} shows that
	\[ \| \E[ A\mathbb{A}^{-1}A ]\|_{\mathrm{op}}
	=\| \E[ F^2 ]\|_{\mathrm{op}_2}
	\geq \E[(F^2)_{ii}]\]
	and hence the last inequality of the lemma follows.
	
	The second inequality in \eqref{eq:op.norm.simplify} is trivial and the first inequality follows from \eqref{eq:operatorn.norm.relation} and Jensen's inequality for matrices (which states that $\E[F^2]\succeq\E[F]^2=\mathrm{Id}$).
	This completes the proof.
\end{proof}

\begin{lemma}
\label{lem:growth.op.squared.A}
	Assume that there is a constant $L$ such that $\E[\langle Ax,x\rangle^2]^{\frac{1}{4}}\leq L$ for all $x\in S$.
	Then
	\[ \| \E[ A\mathbb{A}^{-1}A ] \|_{\mathrm{op}} \leq d L^4 .\]
\end{lemma}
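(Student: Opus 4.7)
The plan is to transfer the operator-norm bound to a supremum of expectations over the unit sphere $S$, decompose along the eigenbasis of $\mathbb A$, and then apply Cauchy--Schwarz twice---once in the deterministic PSD sense and once in expectation---to exploit the $L_4$--$L_2$ norm equivalence hypothesis.

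Step 1 (reduction). Exactly as in the proof of Lemma \ref{lem:operator.norm.relation}, set $F:=\mathbb{A}^{-1/2}A\mathbb{A}^{-1/2}$ and observe that $\mathbb{A}^{-1/2}\E[A\mathbb{A}^{-1}A]\mathbb{A}^{-1/2}=\E[F^2]$ is positive semidefinite in the standard sense. Together with \eqref{eq:operatorn.norm.relation}, this yields
\[
\|\E[A\mathbb{A}^{-1}A]\|_{\mathrm{op}}
=\|\E[F^2]\|_{\mathrm{op}_2}
=\lambda_{\max}(\E[F^2])
=\sup_{v\in S}\E\big[\langle A\mathbb{A}^{-1}Av,v\rangle\big],
\]
after the change of variable $v=\mathbb{A}^{-1/2}u$ (which maps the Euclidean unit sphere to $S$).

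Step 2 (spectral expansion). Diagonalise $\mathbb{A}=\sum_{j=1}^d \lambda_j u_j u_j^T$ in an orthonormal basis $(u_j)_{j=1}^d$, so that $\mathbb{A}^{-1}=\sum_j \lambda_j^{-1} u_j u_j^T$. For every $v\in S$,
\[
\E\big[\langle A\mathbb{A}^{-1}Av,v\rangle\big]
=\sum_{j=1}^{d}\frac{1}{\lambda_j}\,\E\big[\langle Av,u_j\rangle^2\big].
\]

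Step 3 (two Cauchy--Schwarz applications). Since $A$ is PSD, $\langle A\cdot,\cdot\rangle$ is a random PSD bilinear form, so pointwise $\langle Av,u_j\rangle^2\le \langle Av,v\rangle\langle Au_j,u_j\rangle$. Taking expectations and applying Cauchy--Schwarz once more,
\[
\E\big[\langle Av,u_j\rangle^2\big]
\le \E\big[\langle Av,v\rangle^2\big]^{1/2}\,\E\big[\langle Au_j,u_j\rangle^2\big]^{1/2}.
\]
By homogeneity the $L_4$--$L_2$ norm-equivalence hypothesis extends to $\E[\langle Ax,x\rangle^2]^{1/2}\le L^2\|x\|^2$ for every $x\in\mathbb R^d$. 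Using $\|v\|=1$ and $\|u_j\|^2=\langle\mathbb{A}u_j,u_j\rangle=\lambda_j$, this gives
\[
\E\big[\langle Av,u_j\rangle^2\big]\le L^4\lambda_j.
\]

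Step 4 (summation). Substituting into Step 2, $\E[\langle A\mathbb{A}^{-1}Av,v\rangle]\le\sum_{j=1}^d \lambda_j^{-1}\cdot L^4\lambda_j=dL^4$, uniformly in $v\in S$. Taking the supremum and invoking Step 1 yields the claim.

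The only subtle point---and what I would identify as the main obstacle---is to avoid the naive bound $\|\E[A\mathbb{A}^{-1}A]\|_{\mathrm{op}}\le \E[\|A\mathbb{A}^{-1}A\|_{\mathrm{op}}]$, which forces one to control the full operator norm of $F^2$ pointwise and costs an extra factor in high dimensions. Decomposing $\mathbb A$ (rather than $A$) and routing through the PSD Cauchy--Schwarz inequality is precisely what lets the eigenvalues of $\mathbb A$ cancel and leaves only the dimension factor $d$ and the norm-equivalence constant $L^4$.
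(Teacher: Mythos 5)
Your proof is correct, and it takes a route that differs in the details from the paper's, although both are organized around the matrix $F:=\mathbb{A}^{-1/2}A\mathbb{A}^{-1/2}$ and both pay the factor $d$ through a trace-type bound and $L^4$ through the $L_4$--$L_2$ equivalence. The paper writes $\langle F^2x,x\rangle\le \|F\|_{\mathrm{op}_2}\langle Fx,x\rangle$, applies Cauchy--Schwarz in expectation to separate $\E[\|F\|_{\mathrm{op}_2}^2]^{1/2}$ from $\E[\langle Fx,x\rangle^2]^{1/2}$, and bounds the first factor by $\sum_i\E[F_{ii}^2]^{1/2}\le dL^2$ via the trace bound on the spectral norm; the second is $\le L^2$ by norm equivalence. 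You instead expand $\langle A\mathbb{A}^{-1}Av,v\rangle=\sum_j\lambda_j^{-1}\langle Av,u_j\rangle^2$ in the eigenbasis of $\mathbb{A}$, use the pointwise Cauchy--Schwarz inequality for the PSD bilinear form $\langle A\cdot,\cdot\rangle$, and only then apply Cauchy--Schwarz in expectation term by term, with the eigenvalues cancelling exactly. Your version avoids any operator-norm bound on $F$ and avoids the triangle inequality in $L_2$, which makes it somewhat more elementary; it is the same estimate, but the bookkeeping is cleaner. One small stylistic comment: your closing remark about ``avoiding the naive bound $\|\E[A\mathbb{A}^{-1}A]\|_{\mathrm{op}}\le\E[\|A\mathbb{A}^{-1}A\|_{\mathrm{op}}]$'' is a fair heuristic, but note that the paper's argument also avoids that naive bound (it bounds $\E[\|F\|_{\mathrm{op}_2}^2]^{1/2}$ only after first extracting $\langle Fx,x\rangle$, not the full $\|F^2\|_{\mathrm{op}_2}$), so the contrast you draw is between your proof and a strawman rather than between your proof and the paper's.
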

\begin{proof}
	Recall that $F:=\mathbb{A}^{-\frac{1}{2}} A \mathbb{A}^{-\frac{1}{2}}$ and that, by \eqref{eq:operatorn.norm.relation},
	\begin{align*} 
	\| \E[ A\mathbb{A}^{-1}A ] \|_{\mathrm{op}}	
	= \| \E[ F^2 ] \|_{\mathrm{op}_2}
	= \max_{x\in\mathbb{R}^d \text{ s.t.\ } \|x\|_2\leq 1} \E[ \langle F^2x,x\rangle] .
	\end{align*}
	Moreover, for every $x\in\mathbb{R}^d$, we have
	\begin{align*}
	\langle F^2x,x\rangle
	&= \langle F F^{\frac{1}{2}}x, F^{\frac{1}{2}} x\rangle \\
	&\leq  \| F \|_{\mathrm{op}_2} \|F^{\frac{1}{2}}x\|_2^2,
	\end{align*}
	which, in combination with the Cauchy-Schwartz inequality, yields that
	\[ \| \E[ A\mathbb{A}^{-1}A ] \|_{\mathrm{op}}	
	\leq \max_{x\in\mathbb{R}^d \text{ s.t.\ } \|x\|_2\leq 1} \E[ \| F \|_{\mathrm{op}_2}^2 ]^{\frac{1}{2}} \E[ \|F^{\frac{1}{2}}x\|_2^4 ]^{\frac{1}{2}}.\]
	At this point, recall that $\{x\in\mathbb{R}^d : \langle \mathbb{A}x,x \rangle \leq 1\}$ is the ellipsoid $\mathbb{A}^{-\frac{1}{2}} B_2^d$, and note that an equivalent formulation of \eqref{eq:cor.smallest.singular.ass.norm.equiv} is
	\begin{align*}
	\E[  \|F^{\frac{1}{2}}x\|_2^4]^{\frac{1}{2}}
	&=\E[\langle Fx,x\rangle^2]^{\frac{1}{2}} \\
	&\leq L^2 \E[\langle Fx,x\rangle]  
	\end{align*}
	for every $x\in\mathbb{R}^d$, and that $\E[\langle Fx,x\rangle]=1$ for every $x\in\mathbb{R}^d$ with $\|x\|_2=1$.
	Hence, bounding $\| F \|_{\mathrm{op}_2}=\lambda_{\max}(F)$ by the trace of $F$,
	\begin{align*}
	\E[ \| F \|_{\mathrm{op}_2}^2 ]^{\frac{1}{2}}
	&\leq \sum_{i=1}^d \E[ (F_{ii})^2 ]^{\frac{1}{2}} \\
	&= \sum_{i=1}^d \E[ \langle Fe_i,e_i\rangle^2 ]^{\frac{1}{2}}
	\leq   d L^2.
	\end{align*}
	In conclusion $ \| \E[ A\mathbb{A}^{-1}A ] \|_{\mathrm{op}}	\leq dL^4$, as claimed.
\end{proof}

\begin{proof}[Proof of Corollary \ref{cor:smallest.singular.value.norm.equiv}]
	By Remark \ref{rem:relation.SLB.SBP.norm.equiv} (ii), the random matrix $A$ satisfies a stable lower bound with parameters $(N,\frac{\gamma}{2}, s_1 N\gamma^2, s_2 N \gamma^2)$ for constants $s_1$ and $s_2$ depending only on $L$.
	The proof therefore follows from Corollary \ref{cor:smallest.singular.value} together with Lemma \ref{lem:operator.norm.relation} and Lemma \ref{lem:growth.op.squared.A}.
\end{proof}

Throughout this article, we often aim to prove statements that are supposed to hold with high probability, uniformly over (uncountable) sets; for example that for all $x\in S$, most coordinates of $(\langle A_i x,x\rangle)_{i=1}^N$ are suitably large.
The proofs of such statements follow (in principle) a recurring scheme:
in a first step, we prove that a slightly stronger variant of the statement holds with high probability for a single element (i.e.\ Lemma \ref{lem:slb.uniform} below).
By a trivial union bound, this allows to extend the validity of the statement to a finite set of high cardinality, and we shall choose such a set with an extra feature: it approximates the original set in a suitable sense (i.e.\ Lemma \ref{lem:matrix.covering} below).
It then remains to show that the oscillations caused by passing from the approximating set to the whole set do not distort the outcome by too much (i.e.\ Lemma \ref{lem:net.to.uniform} below).
\vskip0.3cm
From now on, we shall continue under the same assumptions used in the formulation of Theorem \ref{thm:singular.value} without mentioning these assumptions at every instance.

\begin{lemma}
\label{lem:slb.uniform}
	There is an absolute constant $C$ such that the following holds.
	For every $x\in S$, with probability at least $1-2\exp(- C N \tau \frac{k}{m})$, for all choices $J_j\subseteq I_j$ with $|J_j|\leq 2l$, we have that
	\begin{align*}
	\left| \left\{ j : \frac{1}{m}\sum_{i\in I_j\setminus J_j} \langle A_ix,x\rangle
	\geq 1-\frac{\gamma}{2} \right\} \right| \geq \left(1-\frac{\tau}{2}\right)n.
	\end{align*}
	In particular, for a set $\bar{S}\subseteq S$ satisfying $\log (|\bar{S}|)\leq \frac{1}{2} C \tau N \frac{k}{m}$, the above statement holds uniformly over $x\in \bar{S}$.
\end{lemma}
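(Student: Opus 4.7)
\medskip
\noindent\textbf{Proof plan.}
The plan is to apply the stable lower bound assumption to a single block, obtaining an event depending only on the data in that block, and then to leverage the mutual independence of these per-block events together with a Binomial tail bound in order to control the number of ``bad'' blocks.

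\medskip
\noindent\emph{Step 1 (Single block, single $x$).} Fix $x\in S$ and observe that $\E[\langle A x,x\rangle]=\langle \mathbb{A}x,x\rangle=\|x\|^2=1$. The scalar random variable $h:=\langle A x,x\rangle^{1/2}$ belongs to the class $H:=\{\langle Ay,y\rangle^{1/2}:y\in S\}$ to which the stable lower bound with parameters $(m,\gamma/2,2l,k)$ applies. Applying it to block $I_j$ of cardinality $m$, with probability at least $1-2e^{-k}$, the event
\[
E_j(x):=\Big\{\, \forall\,J_j\subseteq I_j \text{ with } |J_j|\leq 2l:\ \tfrac{1}{m}\sum_{i\in I_j\setminus J_j}\langle A_i x,x\rangle\geq 1-\tfrac{\gamma}{2}\,\Big\}
\]
holds. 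Crucially, $E_j(x)$ is measurable with respect to $(A_i)_{i\in I_j}$, so the events $E_1(x),\dots,E_n(x)$ are mutually independent.

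\medskip
\noindent\emph{Step 2 (Counting bad blocks and Binomial tail).} Let $B(x):=\#\{j:E_j(x)^c\}$, which is stochastically dominated by a $\mathrm{Binomial}(n,q)$ variable with $q:=2e^{-k}$. The displayed event in the lemma is precisely $\{B(x)\leq \tau n/2\}$. Using the standard bound $\binom{n}{a}\leq (en/a)^a$,
\[
\P[B(x)\geq \tau n/2]\leq \binom{n}{\lceil \tau n/2\rceil} q^{\lceil \tau n/2\rceil}\leq \Big(\tfrac{2eq}{\tau}\Big)^{\tau n/2}.
\]
Since $k\geq 2\log(4/\tau)$, we have $q\leq \tau^2/8$, hence $\log(2eq/\tau)\leq \log(4e/\tau)-k=1+\log(4/\tau)-k\leq 1-k/2$. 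Combining this with $k\geq 4$ yields $\log(2eq/\tau)\leq -k/4$, and therefore
\[
\P[B(x)\geq \tau n/2]\leq \exp\!\Big(-\tfrac{\tau n k}{8}\Big)=\exp\!\Big(-\tfrac{\tau N k}{8m}\Big).
\]
After renaming the constant this proves the single-$x$ statement with an absolute constant $C>0$.

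\medskip
\noindent\emph{Step 3 (Uniform version).} For $\bar S\subseteq S$, a union bound gives
\[
\P\!\left[\,\exists\,x\in\bar S:\ B(x)>\tau n/2\,\right]\leq |\bar S|\cdot 2\exp\!\Big(-CN\tau\tfrac{k}{m}\Big).
\]
If $\log|\bar S|\leq \tfrac{1}{2}CN\tau\tfrac{k}{m}$, this is at most $2\exp(-\tfrac{C}{2}N\tau\tfrac{k}{m})$; replacing $C$ by $C/2$ throughout yields the claimed uniform bound.

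\medskip
\noindent\textbf{Main obstacle.} The only non-routine point is the Binomial tail accounting in Step~2: the binomial coefficient $\binom{n}{\tau n/2}$ contributes a factor of order $(2e/\tau)^{\tau n/2}$, and it must be absorbed by the single-block failure probability $q^{\tau n/2}=(2e^{-k})^{\tau n/2}$. The hypothesis $k\geq \max\{4,2\log(4/\tau)\}$ is tailored precisely to ensure this absorption, leaving an exponent proportional to $\tau nk=\tau Nk/m$, which is what the lemma requires.
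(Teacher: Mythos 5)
Your proof is correct and reaches the same conclusion as the paper's, but by a different (more elementary) route. The paper invokes Bennett's inequality \cite[Theorem 2.9]{boucheron2013concentration} for the indicator of the per-block failure event, obtaining a Poissonian tail of the form $\exp(-C\delta n\,u\log u)$ and then substituting $u=\tau/(2\delta)$; you instead bound the upper tail of the failure count $B(x)$ directly by the elementary union-bound estimate $\P[B(x)\geq a]\leq\binom{n}{a}q^a\leq(en q/a)^a$. Both approaches exploit exactly the same structure — the events $E_j(x)$ are independent across blocks because each is measurable with respect to $(A_i)_{i\in I_j}$, and the stable-lower-bound hypothesis makes the per-block failure probability $q\leq 2e^{-k}$ exponentially small in $k$ — and both absorb the combinatorial factor (either $\log u$ inside Bennett, or $(en/a)^a$) using the assumption $k\geq\max\{4,\,2\log(4/\tau)\}$, which in each case leaves an exponent proportional to $\tau n k=\tau N k/m$. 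Your Step~2 arithmetic is correct: from $q\leq\tau^2/8$ and $k\geq 4$ one gets $\log(2eq/\tau)\leq-k/4$, hence the bound $\exp(-\tau n k/8)$. The gain of your route is that it is self-contained and avoids citing a named concentration inequality; the paper's route is essentially equivalent but slightly shorter once Bennett's inequality is taken as a black box. The \emph{in particular} clause via union bound is handled the same way in both.
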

\begin{proof}
	The proof follows from an application of Bennett's inequality and is essentially  the same as the proof of \cite[Lemma 4.3]{mendelson2017extending}.
	For completeness, we sketch the argument.
	Fix some $x\in S$ and, for every $j$, set
	\[ \delta(j):=
	\begin{cases}
	0 & \text{if } \frac{1}{m}\sum_{i\in I_j\setminus J_j} \langle A_ix,x\rangle \geq 1-\frac{\gamma}{2} \text{ for all } J_j\subseteq I_j \text{ with } |J_j|\leq 2l,\\
	1 & \text{otherwise.}
	\end{cases} \]
	By definition of the stable lower bound, we have that
	\[\delta:=\P[\delta(j)=1]
	\leq 2\exp(-k)
	\leq \frac{3}{4}.\]
	If $\delta=0$, there is nothing to prove, so assume otherwise.
	Now make use of Bennett's inequality \cite[Theorem 2.9]{boucheron2013concentration}:
	for every $u\geq 2$ with probability at least $1-2\exp(-C\delta n  u \log (u))$, we have that
	\[|\{ j : \delta(j)=1\} | \leq u \delta n. \]
	Apply this to
	\[ u:=\frac{\tau}{2\delta}
	\geq \exp\left(\frac{k}{2}\right)
	\geq 2\]
	(where the second inequality follows as $\tau\geq \frac{1}{4}\exp(-\frac{k}{2})$ by assumption) and observe that
	\begin{align*}
	C\delta n  u \log (u)
	&=\frac{1}{2} C \tau n\log \left( \frac{\tau}{2\delta} \right) \\
	&\geq \frac{C \tau n k}{4}
	=\frac{C \tau N k }{4m}.
	\end{align*}
	This completes the first part of the proof.
	The ``in particular'' part is a consequence of the union bound.
\end{proof}

In a next step we choose a subset of $S$ of high cardinality which ``covers'' $S$ w.r.t.\ the natural metric.

\begin{definition}
Let $(S,d)$ be a metric space and let $\rho>0$. A set $\bar{S} \subset S$ is a $\rho$-cover of $S$ with respect to the metric $d$ if for every $s \in S$ there is $\bar{s} \in \bar{S}$ such that $d(s,\bar{s}) \leq \rho$.
\end{definition}

For now, let $C$ be the absolute constant from Lemma \ref{lem:slb.uniform} and set $C_1$ to be the constant from assumption (b)
in Theorem \ref{thm:singular.value}. Recall that we have the freedom to choose $C_1$ as we see fit, and set $C_0$ to be a constant specified in what follows.

\begin{lemma}
\label{lem:matrix.covering}
	Let
	\[
	\rho:= \frac{C_0 \gamma\tau \|\E[A \mathbb{A}^{-1} A]\|_{\mathrm{op}}}{ \log (3d) \E[\|A \mathbb{A}^{-1} A\|_{\mathrm{op}}] }.
	\]
	Then there is a $\rho$-cover $\bar{S}\subseteq S$ (w.r.t.\ the norm $\|\cdot\|$) of log-cardinality at most $\frac{1}{2} C \tau N \frac{k}{m}$.
\end{lemma}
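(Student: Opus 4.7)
\medskip

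The plan is to invoke the standard volumetric covering estimate for the unit sphere of a finite-dimensional normed space and then to verify that the log-cardinality matches the budget provided by assumption (b) of Theorem \ref{thm:singular.value}. Since $\|\cdot\|$ is a true norm on $\mathbb{R}^d$ (under our standing assumption in this section), $(\mathbb{R}^d,\|\cdot\|)$ is a $d$-dimensional normed space and its unit sphere $S$ admits a $\rho$-cover $\bar S$ with
\[
|\bar S| \leq \Big(\frac{3}{\rho}\Big)^{\!d}
\]
whenever $\rho \in (0,1]$. This is a classical consequence of comparing volumes of $\|\cdot\|$-balls of radius $1$ and $\rho/2$; it appears for instance as the standard Sudakov-type packing estimate and requires no assumption beyond dimensionality.

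It therefore suffices to check that $d\log(3/\rho) \leq \tfrac{1}{2}C\tau N k/m$ for the specific choice of $\rho$ in the statement. Substituting,
\[
d\log\Big(\frac{3}{\rho}\Big)
= d\log\Big(\frac{3\log(3d)\,\E[\|A\mathbb{A}^{-1}A\|_{\mathrm{op}}]}{C_0\,\gamma\tau\,\|\E[A\mathbb{A}^{-1}A]\|_{\mathrm{op}}}\Big).
\]
By assumption (b) of Theorem \ref{thm:singular.value},
\[
\tfrac{1}{2}C\tau N \tfrac{k}{m}
\;\geq\; \tfrac{1}{2}C\,C_1\,d\log\Big(\frac{\log(3d)\,\E[\|A\mathbb{A}^{-1}A\|_{\mathrm{op}}]}{\gamma\tau\,\|\E[A\mathbb{A}^{-1}A]\|_{\mathrm{op}}}\Big).
\]
The required inequality follows by choosing the (still undetermined) constants $C_0$ and $C_1$ appropriately: fixing $C_0$ small enough and then $C_1$ large enough (depending on $C_0$ and the absolute $C$) absorbs the extra factor of $3/C_0$ inside the logarithm into the multiplicative constant in front.

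The only thing to verify before applying the volumetric bound is that $\rho \leq 1$, which is needed for the cover estimate $(3/\rho)^d$ to be meaningful. This is immediate from Lemma \ref{lem:operator.norm.relation}, which gives $\E[\|A\mathbb{A}^{-1}A\|_{\mathrm{op}}] \geq \|\E[A\mathbb{A}^{-1}A]\|_{\mathrm{op}} \geq 1$, so that $\rho \leq C_0\gamma\tau/\log(3d) < 1$ for any reasonable choice of $C_0$ (since $\gamma,\tau\in(0,1)$ and $\log(3d)\geq \log 3$). There is no real obstacle here; the only point deserving care is the bookkeeping of constants so that the final product $CC_1$ exceeds $2$ and absorbs the harmless $\log(3/C_0)$ correction.
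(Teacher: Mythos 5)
Your proof is correct and takes essentially the same route as the paper: a volumetric covering estimate for the unit sphere of the $d$-dimensional normed space $(\mathbb{R}^d,\|\cdot\|)$, combined with assumption (b) of Theorem \ref{thm:singular.value} and the freedom to tune $C_0$ and $C_1$. The paper phrases the covering step by first covering the Euclidean sphere $S_2$ and then transferring via the linear map $\mathbb{A}^{-1/2}$, whereas you apply the volumetric bound directly in the given norm; this is an immaterial packaging difference, and your extra check that $\rho\leq 1$ (via Lemma \ref{lem:operator.norm.relation}) is a sensible, if implicit in the paper, precaution.
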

\begin{proof}
	By a simple volumetric argument (see e.g.\ \cite[Exercise 2.2.14]{talagrand2014upper}) there is a set
	\[\bar{S}_2\subseteq S_2 := \{ x\in\mathbb{R}^d : \|x\|_2=1\}\]
	such that, for every $x\in S_2$ there is $y=y(x)\in \bar{S}_2$ with $\|x-y\|_2\leq \rho$ with cardinality
	\begin{align*}
	\log ( |\bar{S}_2| )
	&\leq d\log\left(\frac{6}{\rho}\right)\\
	&\leq C_3 d\log\left(\frac{\log(3d) \E[\|A \mathbb{A}^{-1} A\|_{\mathrm{op}}]}{\gamma\tau \|\E[A \mathbb{A}^{-1} A]\|_{\mathrm{op}}} \right),
	\end{align*}
	where $C_3$ depends only on $C_0$.
	By assumption (b) on the sample size in Theorem \ref{thm:singular.value}
	we conclude that $\log( |\bar{S}_2| ) \leq \frac{1}{2}C \tau N \frac{k}{m}$ once the constant $C_1$ (in assumption (b)) is chosen sufficiently large.
	Finally, the relation $\|\cdot\|=\|\mathbb{A}^{\frac{1}{2}}\cdot\|_2$ readily implies that the set
	\[\bar{S}:=\{ \mathbb{A}^{-\frac{1}{2}} y : y\in \bar{S}_2 \}\]
	satisfies the statement of the lemma.
\end{proof}

The final step in the proof consist of showing that the transition from a $\rho$-cover $\bar{S}$ to the whole $S$ does not distort the wanted outcome by too much.
To that end, we fix from now on the $\rho$-cover $\bar{S}$ of Lemma \ref{lem:matrix.covering} and denote by $y=y(x)\in \bar{S}$ the element satisfying $\|x-y\|\leq \rho$.
Also, for every $x\in S$, set
\[\Delta(x)
:= \langle A x , x\rangle -  \langle A y, y\rangle. \]

\begin{remark}
The following two preliminary lemmas are stated here to maintain a chronological order within the proofs. However, it might be helpful to skip to (the proof of) Lemma \ref{lem:net.to.uniform} where their role is clarified and return here afterwards.
\end{remark}

\begin{lemma}
\label{lem:matrix.Delta}
	We have that
	\[ \E[|\Delta(x)|]\leq 3 \rho
	\quad\text{for  every } x\in S.\]
\end{lemma}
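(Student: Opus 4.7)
The plan is to expand $\Delta(x)$ as a second-order perturbation around $y$ and then use that $\|\cdot\|$ arises from an inner product structure so that the Cauchy--Schwarz inequality is available both pointwise (for $A$, which is positive semidefinite) and in expectation (for $\mathbb{A}=\E[A]$).

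First I would write $x=y+u$ with $u:=x-y$ and expand
\[
\langle Ax,x\rangle-\langle Ay,y\rangle = 2\langle Ay,u\rangle+\langle Au,u\rangle,
\]
so that $|\Delta(x)|\leq 2|\langle Ay,u\rangle|+\langle Au,u\rangle$. The quadratic term is easy: taking expectations gives $\E[\langle Au,u\rangle]=\|u\|^{2}\leq \rho^{2}$, using $\|u\|=\|x-y\|\leq\rho$ by the choice of $y=y(x)$ in the $\rho$-cover $\bar S$.

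For the cross term, the point is that $A$ is pointwise positive semidefinite, so the Cauchy--Schwarz inequality in the (random) bilinear form $\langle A\cdot,\cdot\rangle$ yields
\[
|\langle Ay,u\rangle|\leq \langle Ay,y\rangle^{1/2}\langle Au,u\rangle^{1/2}.
\]
Applying the usual Cauchy--Schwarz inequality to the expectation on the right,
\[
\E[|\langle Ay,u\rangle|]\leq \E[\langle Ay,y\rangle]^{1/2}\E[\langle Au,u\rangle]^{1/2}=\|y\|\cdot\|u\|\leq 1\cdot \rho=\rho,
\]
since $y\in\bar S\subseteq S$ implies $\|y\|=1$. Combining both estimates,
\[
\E[|\Delta(x)|]\leq 2\rho+\rho^{2}\leq 3\rho,
\]
where the last step uses $\rho\leq 1$.

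The only mild subtlety is to verify $\rho\leq 1$ from its explicit definition; this is immediate from $\|\E[A\mathbb{A}^{-1}A]\|_{\mathrm{op}}\leq \E[\|A\mathbb{A}^{-1}A\|_{\mathrm{op}}]$ (Jensen) together with $\gamma,\tau\in(0,1)$ and $\log(3d)\geq\log 3$, provided the absolute constant $C_{0}$ fixed just before Lemma \ref{lem:matrix.covering} is chosen sufficiently small; this choice is at our disposal. No other obstacles arise: the argument is essentially a two-line application of Cauchy--Schwarz twice, once pointwise in $\omega$ (permitted by positive semidefiniteness of $A$) and once in expectation (producing the norm $\|\cdot\|$ induced by $\mathbb{A}$).
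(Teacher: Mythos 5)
Your proof is correct, but it takes a different route from the paper's. You decompose $\Delta(x)$ by the algebraic expansion
\[
\langle Ax,x\rangle-\langle Ay,y\rangle = 2\langle Ay,u\rangle + \langle Au,u\rangle, \qquad u:=x-y,
\]
and then apply Cauchy--Schwarz twice, giving $\E[|\Delta(x)|]\leq 2\rho+\rho^{2}$, which requires the extra observation $\rho\leq 1$ to reach $3\rho$. The paper instead exploits the factorization $|a^{2}-b^{2}|\leq 2|a-b|\max\{a,b\}$ for $a,b\geq 0$, combined with the reverse triangle inequality for the random seminorm $v\mapsto\langle Av,v\rangle^{1/2}$, i.e.\ $|\langle Ax,x\rangle^{1/2}-\langle Ay,y\rangle^{1/2}|\leq\langle A(x-y),x-y\rangle^{1/2}$; a single Cauchy--Schwarz in expectation then yields $\E[|\Delta(x)|]\leq\sqrt{8}\,\|x-y\|\leq\sqrt{8}\,\rho<3\rho$ directly, with no need to know $\rho\leq 1$. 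Both are short, two--Cauchy--Schwarz arguments; the paper's is marginally cleaner since it gives a bound linear in $\|x-y\|$ without a side condition on $\rho$, while yours is perhaps more transparently a ``quadratic perturbation'' computation. Your verification that $\rho\leq 1$ (via Lemma~\ref{lem:operator.norm.relation} and the freedom to choose $C_{0}$ small) is correct and is in fact the same observation the paper makes explicitly at the start of the proof of Lemma~\ref{lem:net.to.uniform}, so no real cost is incurred.
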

\begin{proof}
	Fix some $x\in S$.
	As $A$ is symmetric and positive semidefinite, it follows from the triangle inequality that
	\[\left| \langle Ax,x\rangle^{\frac{1}{2}} - \langle Ay,y\rangle^{\frac{1}{2}}  \right|
	\leq \langle A (x-y),x-y\rangle^{\frac{1}{2}}  . \]
	Combined with the fact that $|a^2-b^2|\leq 2 |a-b| \max\{a,b\}$ for $a,b\geq 0$, we have
	\begin{align*}
	|\Delta(x)|
	&\leq 2 \langle A (x-y), x-y\rangle^{\frac{1}{2}}   \max\left\{ \langle Ax,x\rangle^{\frac{1}{2}} , \langle Ay,y\rangle^{\frac{1}{2}}  \right\},
	\end{align*}
	and by the Cauchy-Schwartz inequality
	\begin{align*}
	\E[|\Delta(x)|]^2
	&\leq  4 \E[ \langle A(x-y),x-y\rangle]  \left( \E[\langle Ax,x\rangle] + \E[\langle Ay,y\rangle]\right)\\
	&= 4  \| x-y\|^2   (\|x\|^2+\|y\|^2) \\
	&= 8 \| x-y\|^2,
	\end{align*}
	where the second equality follows by definition of the norm $\|\cdot\|$ and the final one holds because $x,y\in S$.
\end{proof}

\begin{lemma}
\label{lem:matrix.rademacher}
	Let $C_T$ be the absolute constant in Talagrand's concentration inequality \cite{talagrand1994sharper}.
	Moreover, set $b:= \gamma\frac{m}{l}$, let $(A_i)_{i=1}^N$ be independent copies of $A$ and set $(\varepsilon_i)_{i\geq 1}$ to be independent Rademacher random variables that are independent of $(A_i)_{i=1}^N$.
	Then we have that
	\[ \E\left[ \sup_{x\in S} \left| \frac{1}{N} \sum_{i=1}^N \varepsilon_i (|\Delta_{i}(x)|\wedge b)\right|\right]
	\leq \frac{\gamma\tau}{ C_T 16} \]
	once $C_0$ (the absolute constant of Lemma \ref{lem:matrix.covering}) is small enough.
\end{lemma}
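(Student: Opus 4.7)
My plan is to reduce, via the Rademacher contraction principle, the truncated process to an untruncated one that has a simple bilinear matrix structure, and then to apply a matrix Bernstein-type inequality to the resulting matrix sum.

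Conditioning on $(A_i)_{i=1}^N$, the function $\phi_b(t):=|t|\wedge b$ is $1$-Lipschitz on $\mathbb{R}$ and satisfies $\phi_b(0)=0$. The standard Rademacher contraction principle, applied to the variables $(\varepsilon_i)_{i=1}^N$, therefore yields
\[
\E\Bigl[\sup_{x\in S}\Bigl|\frac{1}{N}\sum_{i=1}^N \varepsilon_i\, \phi_b(\Delta_i(x))\Bigr|\Bigr]
\leq 2\,\E\Bigl[\sup_{x\in S}\Bigl|\frac{1}{N}\sum_{i=1}^N \varepsilon_i\, \Delta_i(x)\Bigr|\Bigr].
\]
Exploiting the symmetry of $A_i$, I rewrite $\Delta_i(x)=\langle A_i(x+y),x-y\rangle$ with $y=y(x)\in\bar{S}$. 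Setting $B:=\sum_{i=1}^N\varepsilon_i A_i$, the inner Rademacher sum becomes $\langle B(x+y),x-y\rangle$, and the standard bilinear operator-norm bound combined with $\|x\|=\|y\|=1$ and $\|x-y\|\leq\rho$ gives the $\varepsilon$-wise deterministic estimate
\[
\Bigl|\sum_{i=1}^N \varepsilon_i\,\Delta_i(x)\Bigr|
\leq \|B\|_{\mathrm{op}}\,\|x+y\|\,\|x-y\|
\leq 2\rho\,\|B\|_{\mathrm{op}}.
\]
It therefore remains to control $\E[\|B\|_{\mathrm{op}}]$.

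By the identity \eqref{eq:operatorn.norm.relation}, $\|B\|_{\mathrm{op}}=\|\tilde B\|_{\mathrm{op}_2}$ with $\tilde B:=\sum_{i=1}^N\varepsilon_i\tilde A_i$ and $\tilde A_i:=\mathbb{A}^{-1/2}A_i\mathbb{A}^{-1/2}$, and the matrix variance is exactly
\[
\bigl\|\E[\tilde B^2]\bigr\|_{\mathrm{op}_2}=N\bigl\|\E[A\mathbb{A}^{-1}A]\bigr\|_{\mathrm{op}}.
\]
Since the $\tilde A_i$ are not uniformly bounded in operator norm, I will use a variant of matrix Bernstein valid for unbounded summands, in which the a.s.\ bound is replaced by $\E[\max_i\|\tilde A_i\|_{\mathrm{op}_2}^2]^{1/2}\leq\sqrt{N\,\E[\|A\mathbb{A}^{-1}A\|_{\mathrm{op}}]}$. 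This yields
\[
\E[\|B\|_{\mathrm{op}}]
\leq C\Bigl(\sqrt{N\,\|\E[A\mathbb{A}^{-1}A]\|_{\mathrm{op}}\,\log(3d)}
+\sqrt{N\,\E[\|A\mathbb{A}^{-1}A\|_{\mathrm{op}}]}\,\log(3d)\Bigr).
\]
Substituting the expression for $\rho$ from Lemma~\ref{lem:matrix.covering}, using Lemma~\ref{lem:operator.norm.relation} (which provides $\|\E[A\mathbb{A}^{-1}A]\|_{\mathrm{op}}\leq\E[\|A\mathbb{A}^{-1}A\|_{\mathrm{op}}]$) and the sample-size assumption (b) of Theorem~\ref{thm:singular.value} (in particular $N\geq C_1\|\E[A\mathbb{A}^{-1}A]\|_{\mathrm{op}}$), each of the two contributions to $\frac{4\rho}{N}\E[\|B\|_{\mathrm{op}}]$ reduces to a constant multiple of $C_0\,\gamma\tau$: the logarithmic factors from matrix Bernstein are absorbed by the $\log(3d)$ in the denominator of $\rho$, while the powers of $\|\E[A\mathbb{A}^{-1}A]\|_{\mathrm{op}}$ cancel thanks to the two operator-norm inequalities above. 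Choosing $C_0$ small enough makes the whole bound at most $\gamma\tau/(16\,C_T)$.

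The main obstacle is the matrix concentration step. Since $A$ is only assumed to be square integrable rather than almost surely bounded, the textbook matrix Bernstein inequality does not apply directly, and I must invoke (and verify the hypotheses of) a version that replaces the a.s.\ operator-norm bound on the summands by a second-moment bound. The delicate point is to check that the resulting large-deviation term is correctly absorbed by the assumption on $N$ through Lemma~\ref{lem:operator.norm.relation}; once this is in place, the rest of the argument is a direct arithmetic computation.
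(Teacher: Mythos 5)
Your proposal follows essentially the same route as the paper's proof: Rademacher contraction (picking up a factor of $2$), the bilinear rewriting $\Delta_i(x)=\langle A_i(x+y),x-y\rangle$ reducing everything to $\E\big[\big\|\sum_i\varepsilon_i A_i\big\|_{\mathrm{op}}\big]$, then the Matrix-Bernstein-type expectation bound (the paper invokes exactly Theorem~I of \cite{tropp2016expected}, which replaces the a.s.\ bound on the summands with $\E[\max_i\|F_i\|_{\mathrm{op}_2}^2]^{1/2}$, precisely the unbounded version you flagged), and finally the same arithmetic combining $\rho$, Lemma~\ref{lem:operator.norm.relation}, and the sample-size assumption $N\geq \|\E[A\mathbb{A}^{-1}A]\|_{\mathrm{op}}$ to cancel the ratio of operator norms. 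The only superficial difference is bookkeeping of the logarithmic factors (you keep $\sqrt{\log(3d)}$ on the variance term, the paper coarsens it to $\log(2d)$), which does not change anything.
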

\begin{proof}
	As a preliminary step, we invoke the contraction inequality for Bernoulli processes \cite[Corollary 3.17]{ledoux2013probability}
	conditionally on $(A_i)_{i=1}^N$, and applied to the $1$-Lipschitz map $t\mapsto |t|\wedge b$ which passes through the origin. It follows that
	\begin{align*}
	\E\left[ \sup_{x\in S} \left| \frac{1}{N} \sum_{i=1}^N \varepsilon_i (|\Delta_{i}(x)| \wedge  b)\right|\right]
	&\leq 2\E\left[ \sup_{x\in S} \left| \frac{1}{N} \sum_{i=1}^N \varepsilon_i \Delta_{i}(x)\right|\right].
	\end{align*}
	Rewriting
	\[\Delta_i(x)=\langle A_i (x+y), x-y\rangle\]
	we have that
	\begin{align*}
	\left| \frac{1}{N} \sum_{i=1}^N \varepsilon_i \Delta_{i}(x) \right|
	&=\left| \left\langle \left( \frac{1}{N} \sum_{i=1}^N \varepsilon_i A_{i} \right)(x+y), x-y\right\rangle \right|  \\
	&\leq 2\rho  \left\|\frac{1}{N} \sum_{i=1}^N \varepsilon_i A_i \right\|_{\mathrm{op}} ,
	\end{align*}
	because $\|x+y\|\leq 2$ and $\|x-y\| \leq \rho$.
	Next, set $F_i:= \varepsilon_i \mathbb{A}^{-\frac{1}{2}}A_i \mathbb{A}^{-\frac{1}{2}}$ for every $i$ and recall the relation between $\|\cdot\|_{\mathrm{op}}$ and the spectral norm $\|\cdot\|_{\mathrm{op}_2}$ stated in \eqref{eq:operatorn.norm.relation}. The Matrix-Bernstein inequality \cite[Theorem I]{tropp2016expected}
	implies that
	\begin{align*}	
	\E\left[ \left\| \sum_{i=1}^N \varepsilon_i A_i \right\|_{\mathrm{op}} \right]
	&=\E\left[ \left\| \sum_{i=1}^N F_i \right\|_{\mathrm{op}_2} \right]  \\
	&\leq  \left(  C(d) N \| \E[F^2] \|_{\mathrm{op}_2}  \right)^{\frac{1}{2}} +  C(d) \E\left[\max_{1\leq i \leq N} \|F_i\|_{\mathrm{op}_2}^2 \right]^{\frac{1}{2}}
	\end{align*}
	where
	\begin{align*}
	C(d)
	&=4(1+2\log(2d))
	\leq 22\log(2d).
	\end{align*}
	Further, estimating the maximum by the sum and using that $\|F\|_{\mathrm{op}_2}^2=\|F^2\|_{\mathrm{op}_2}$, we trivially have
	\begin{align*}
	\E\left[\max_{1\leq i \leq N} \|F_i\|_{\mathrm{op}_2}^2\right]
	&\leq N \E[\|F\|_{\mathrm{op}_2}^2] \\
	&=N \E[\| A\mathbb{A}^{-1}A \|_{\mathrm{op}}] .
	\end{align*}

	Putting everything together, we therefore obtain
	\begin{align*}
	&\E\left[ \sup_{x\in S} \left| \frac{1}{N} \sum_{i=1}^N \varepsilon_i (|\Delta_{i}(x)| \wedge  b)\right|\right] \\
	&\leq  4 \rho C(d) \left( \left( \frac{ \| \E[ A\mathbb{A}^{-1}A ] \|_{\mathrm{op}} }{N}\right)^{\frac{1}{2}} + \left( \frac{ \E[\| A\mathbb{A}^{-1}A \|_{\mathrm{op}}] }{ N}\right)^{\frac{1}{2}} \right)\\
	&\leq  4 \rho C(d) \left( 1 + \left( \frac{ \E[\| A\mathbb{A}^{-1}A \|_{\mathrm{op}}] }{  \| \E[ A\mathbb{A}^{-1}A ] \|_{\mathrm{op}} }\right)^{\frac{1}{2}} \right),
	\end{align*}
	where the second inequality follows from assumption (b) in Theorem \ref{thm:singular.value}: that on the sample size satisfies $N \geq \| \E[  A\mathbb{A}^{-1}A  ] \|_{\mathrm{op}}$.
	By Lemma \ref{lem:operator.norm.relation}, the expectation of the operator norm is always larger than the operator norm of the expectation, hence
	\begin{align*}
	\E\left[ \sup_{x\in S} \left| \frac{1}{N} \sum_{i=1}^N \varepsilon_i (|\Delta_{i}(x)| \wedge  b)\right|\right]
	&\leq  8 \rho C(d)  \frac{ \E[\| A\mathbb{A}^{-1}A \|_{\mathrm{op}}] }{  \| \E[ A\mathbb{A}^{-1}A ] \|_{\mathrm{op}} } .
	\end{align*}
	Recalling the value of $\rho$ from Lemma \ref{lem:matrix.covering} shows that the latter term is at most $\frac{\gamma\tau}{C_T16}$.
	This completes the proof.
\end{proof}

\begin{lemma}
\label{lem:net.to.uniform}
	There exists an absolute constant $C$ such that the following holds.
	For every $x\in S$ and every $j$, let
	\[ J_j^\ast(x):=\{ \text{largest } l \text{ coordinates of }  (|\Delta_i(x)|)_{i\in I_j}\}\subseteq I_j.\]
	Then, with probability at least $1-2\exp(- C_2 N  \tau \frac{l}{m} )$, we have that
	\begin{align}
	\label{eq:lem.talagrand.random.matrix}
	 \sup_{x\in S} \left|\left\{ j : \frac{1}{m} \sum_{i\in I_j\setminus J_j^\ast(x)} |\Delta_{i}(x)|  > \frac{\gamma }{2}  \right\} \right|  \leq \frac{\tau n}{2}.
	 \end{align}
\end{lemma}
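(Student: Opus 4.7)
\emph{Plan of the proof.}
The plan is to introduce a truncation level $b := \gamma m/l$ and to work with the truncated functions $R_i(x) := |\Delta_i(x)| \wedge b$. Truncation cuts off the fat tails of $|\Delta_i(x)|$ and enables a Talagrand-type uniform concentration argument; the block-wise statement of the lemma is then recovered via elementary Markov inequalities.

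First I would reduce the block-count bound to a single uniform bound on $\frac{1}{N}\sum_i R_i(x)$. For each $x \in S$ and block $j$, set $S_j(x) := \{ i \in I_j : |\Delta_i(x)| > b \}$ and $T_j(x) := \sum_{i \in I_j} R_i(x)$. The key observation is that on the event $\{|S_j(x)| \leq l\}$, the top-$l$ set $J_j^\ast(x)$ already contains $S_j(x)$, so $|\Delta_i(x)| \leq b$ and hence $|\Delta_i(x)| = R_i(x)$ for every $i \in I_j \setminus J_j^\ast(x)$. Consequently
\[ \Big\{ j : \tfrac{1}{m}\sum_{i \in I_j \setminus J_j^\ast(x)} |\Delta_i(x)| > \tfrac{\gamma}{2} \Big\} \subseteq \big\{ j : |S_j(x)| > l \big\} \cup \big\{ j : T_j(x) > m\gamma/2 \big\}. \]
Using the pointwise bound $\mathbf{1}_{|\Delta_i(x)| > b} \leq R_i(x)/b$, both right-hand-side block counts are Markov-controlled by $\sum_i R_i(x)$. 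Plugging in $b = \gamma m/l$ a short computation shows that if
\[ \sup_{x \in S} \tfrac{1}{N} \sum_{i=1}^N R_i(x) \leq \tfrac{\gamma \tau}{16}, \]
then each of the two counts on the right-hand side above is bounded by $n\tau/4$, which gives the desired bound of $\tau n/2$.

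The main step is therefore to establish the above uniform bound with probability at least $1 - 2\exp(-C_2 N \tau l/m)$. For the mean, Lemma \ref{lem:matrix.Delta} yields $\mathbf{E}[R_i(x)] \leq \mathbf{E}[|\Delta_i(x)|] \leq 3\rho$, which is at most $\gamma\tau/32$ provided the absolute constant $C_0$ in Lemma \ref{lem:matrix.covering} is chosen small enough. For the deviation from the mean, symmetrization and Lemma \ref{lem:matrix.rademacher} give
\[ \mathbf{E} \sup_{x \in S} \Big| \tfrac{1}{N} \sum_i \big( R_i(x) - \mathbf{E}[R_i(x)] \big) \Big| \leq 2 \, \mathbf{E}\sup_{x \in S} \Big| \tfrac{1}{N} \sum_i \varepsilon_i R_i(x) \Big| \leq \tfrac{\gamma \tau}{8 C_T}. \]
Finally Talagrand's concentration inequality for suprema of bounded empirical processes (in Bousquet's form) supplies the exponential tail. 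Since $R_i(x) \leq b$ and $\mathbf{E}[R_i(x)^2] \leq b \cdot \mathbf{E}[|\Delta_i(x)|] \leq 3 b \rho$, Bousquet's inequality at a deviation of order $\gamma \tau$ gives probability at most $\exp(-c N (\gamma\tau)^2/(b\rho))$. Substituting $b = \gamma m/l$ and $\rho \lesssim \gamma \tau$, this exponent becomes $-c N \tau l/m$, as required.

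The main obstacle is the bookkeeping of constants so that Bousquet's inequality outputs exactly the exponent $N\tau l/m$: the truncation level $b = \gamma m/l$ is essentially forced, on one hand, by the Markov step of the second paragraph (converting $\sum_i R_i(x)$ into the block counts with the right constants), and, on the other hand, by the requirement that the Bernstein variance $b\rho$ and the target deviation $\gamma\tau$ combine to yield the stated probability decay. Once this scaling has been verified, the remainder of the argument only combines the three displays above.
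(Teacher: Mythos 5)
Your proof is correct and follows essentially the same route as the paper: truncate at $b=\gamma m/l$, reduce the block-count bound to a uniform bound on $\frac{1}{N}\sum_i(|\Delta_i(x)|\wedge b)$, control the mean via Lemma \ref{lem:matrix.Delta}, the oscillation via Lemma \ref{lem:matrix.rademacher}, and apply Talagrand's (equivalently Bousquet's) concentration inequality. Your reduction to the uniform bound proceeds via a slightly different but equivalent two-count Markov argument (splitting into $\{|S_j(x)|>l\}$ and $\{T_j(x)>m\gamma/2\}$, rather than the paper's single implication), and the resulting constants match after routine bookkeeping.
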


\begin{proof}
	Set $b:=\gamma \frac{m}{l}$.
	We claim that, for every $x\in S$ and every $j$,
	\begin{align}
	\label{eq:sum.and.sum.min}
	\begin{split}
	\frac{1}{m} \sum_{i\in I_j\setminus J_j^\ast(x)}  |\Delta_{i}(x)|
	&> \frac{ \gamma  }{ 2 }
	\quad\text{implies}\quad \\
	\frac{1}{m} \sum_{i\in I_j} (|\Delta_{i}(x)|\wedge  b)
	&> \frac{ \gamma  }{ 2 }.
	\end{split}
	\end{align}
	Indeed, if $|\Delta_{i}(x)|\leq b$ for $i\in I_j\setminus J_j^\ast(x)$, the second sum  in \eqref{eq:sum.and.sum.min} is trivially at least as big as the first one.
	Otherwise, if there is $i_0\in I_j\setminus J_j^\ast(x)$ for which $|\Delta_{i_0}(x)|>b$, then by definition of $J_j^\ast(x)$, there are least $l$ coordinates $i\in I_j$ for which $|\Delta_{i}(x)|>b$.
	In particular, the second sum in \eqref{eq:sum.and.sum.min} is at least $\frac{1}{m}l b=\gamma $, and \eqref{eq:sum.and.sum.min} holds.
Therefore, it suffices to show that $R\leq \frac{1}{4}\gamma\tau $ holds with high probability, where
	\[R:=\sup_{x\in S}  \frac{1}{N} \sum_{i=1}^N (|\Delta_{i}(x)|\wedge b).\]
		
	To that end, Talagrand's concentration inequality for bounded empirical processes \cite{talagrand1994sharper} (see also \cite{boucheron2013concentration}): 
	there is an absolute constant $C_T$ such that
	\begin{align*}
	\P\left[ R \leq R_1 + C_T\left( R_2 + R_3 + R_4\right)\right]
	\geq 1-2\exp(-u)
	\end{align*}
	for every $u\geq 0$, where
	\begin{align*}
	R_1 &:= \sup_{x\in S} \E[ |\Delta(x)| \wedge b], \\
	R_2 &:= \sup_{x\in S} \E[( |\Delta(x)|\wedge  b )^2]^{\frac{1}{2}}  \left( \frac{u}{N} \right)^{\frac{1}{2}}, \\
	R_3 &:= \sup_{x\in S} \| |\Delta(x)| \wedge b \|_{L^\infty}  \frac{u}{N}, \\
	R_4 &:= \E\left[ \sup_{x\in S} \left| \frac{1}{N} \sum_{i=1}^N \varepsilon_i (|\Delta_{i}(x)|\wedge b)\right|\right].
	\end{align*}
	Thus, to conclude the proof, let us show that for $u= C N \tau \frac{l}{m}$, the sum $R_1+C_T (R_2+R_3+R_4)$ is smaller than $\frac{1}{4}\gamma\tau $.	
		
	We now proceed to bound $R_1,\dots,R_4$.
	First, recall that $C_0$ is the absolute constant of Lemma \ref{lem:matrix.covering} which we are still able to choose as small as we want, and note that
	\[ \rho
	= \frac{C_0 \gamma\tau \|\E[A\mathbb{A}^{-1}A]\|_{\mathrm{op}}}{ \log (3d) \E[\| A\mathbb{A}^{-1}A\|_{\mathrm{op}}] }
	\leq  \frac{C_0 \gamma\tau }{\log(3)}
	\leq C_0 \gamma\tau,\]
	where the first inequality holds by Lemma \ref{lem:operator.norm.relation} and the second one by absorbing $\frac{1}{\log(3)}$ into $C_0$.
	
	By Lemma \ref{lem:matrix.Delta} we have that $\E[|\Delta(x)|]\leq 3\rho$ for every $x\in S$; thus
	\[R_1\leq  3 \rho
	\leq \frac{ \gamma \tau }{16} \]
	as soon as $C_0 < \frac{1}{48}$. 

	For the terms $R_2$ and $R_3$, which involve $u= C N \tau \frac{l}{m} $, note that
	\[\E[( |\Delta(x)|\wedge  b )^2] \leq  3 \rho b
	\quad\text{for every }x\in S.\]
	Indeed, this follows from the trivial estimate $(|\Delta(x)|\wedge b)^2\leq  |\Delta(x)| b$ and Lemma \ref{lem:matrix.Delta} once again.
	Recalling that $b=\gamma\frac{m}{l}$, we therefore have
	\begin{align*}
	R_2
	&\leq  \left( \frac{3  \rho b u}{N} \right)^{\frac{1}{2}} \\
	&\leq \left( 3 C_0 C \gamma^2\tau^2\right)^{\frac{1}{2}}
	\leq \frac{ \gamma\tau}{C_T 16}	
	\end{align*}
	once $C_0$ is small enough. Moreover,
	\[  R_3
	\leq  \frac{bu}{N}
	= C \gamma\tau
	\leq \frac{\gamma\tau}{ C_T 16}\]
	provided that $C$ is small enough.
	
	Finally, by Lemma  \ref{lem:matrix.rademacher}, we have $R_4\leq \frac{\gamma\tau}{C_T 16}$.
	This completes the proof
\end{proof}

\begin{proof}[Proof of Theorem \ref{thm:singular.value}]
	The statement of Theorem \ref{thm:singular.value} is clearly homogeneous in $x\in\mathbb{R}^d$, hence it suffices to restrict to $x\in S$.
	The proof follows from a combination of Lemma \ref{lem:slb.uniform} and Lemma \ref{lem:net.to.uniform}.
	Indeed, using the notation of Lemma \ref{lem:net.to.uniform}, for every $x\in S$, $y=y(x)$, and every $J_j\subseteq I_j$ with $|J_j|\leq l$, we write
	\begin{align}
	\nonumber
	\frac{1}{m}\sum_{i\in I_j\setminus J_j} \langle A_i x,x\rangle
	&\geq \frac{1}{m}\sum_{i\in I_j\setminus (J_j\cup J_j^\ast(x))} \langle A_i x,x\rangle\\
	\label{eq:sum.x.y.delta}
	&\geq  \frac{1}{m}\sum_{i\in I_j\setminus (J_j\cup J_j^\ast(x))} \langle A_i y,y\rangle
	- \frac{1}{m}\sum_{i\in I_j\setminus (J_j\cup J_j^\ast(x))} |\Delta_i(x)|.
	\end{align}
	
	By Lemma \ref{lem:slb.uniform}, with probability at least $1-2\exp(-C\tau N \frac{k}{m})$, for every $x$, $y$, and sets $J_j$ as above, we have that
	\[ \frac{1}{m}\sum_{i\in I_j\setminus (J_j\cup J_j^\ast(x))} \langle A_i y,y\rangle \geq 1-\frac{\gamma}{2}
	\quad\text{on at least } \left(1-\frac{\tau}{2}\right)n \text{ blocks }.\]
	Next, by Lemma \ref{lem:net.to.uniform}, with probability at least $1-2\exp(-C\tau N \frac{l}{m})$, for every $x$, $y$,  and sets $J_j$ as above, we have that
	\[\frac{1}{m}\sum_{i\in I_j\setminus (J_j\cup J_j^\ast(x))} |\Delta_i(x)| \leq \frac{\gamma}{2}
	\quad\text{on at least } \left(1-\frac{\tau}{2}\right)n \text{ blocks }.\]
	Taking the intersection of the two high probability events yields the claim.
\end{proof}

\section{Proofs of the main results}
\label{sec:proof.main}

In addition to the \emph{notational conventions} already explained in Section \ref{sec:mom.singular.value}, set $c,c_0,c_1,\dots$ to be constants that may depend on $L$ (the parameter appearing in Assumption \ref{ass:norm.equiv}).
As before, these constants may change their values from line to line.
Moreover, $0<\theta<\tau<\frac{1}{4}$ are constants that may depend on $L$ as well.
For the sake of a clearer presentation, rather than stating the explicit values of $\theta$ and $\tau$ now, we collect constraints on their values along the way.

Next recall that
\[ n=\theta N \min\left\{1,\frac{r^2 }{\sigma^2} \right\}
\quad\text{and}\quad
m=\frac{N}{n},\]
where we assume without loss of generality that $m$ and $n$ are integers. Thus, $N=nm$ and $m\geq \frac{1}{\theta}$.
Finally, set
\begin{align*}
\mathcal{B}_r^\ast &:=\{x\in\mathcal{X} : \|x-x^\ast\| \leq r \},\\
\mathcal{S}_r^\ast &:=\{x\in\mathcal{X} : \|x-x^\ast\| =r \}
\end{align*}
to be the ball and sphere of radius $r$ around $x^\ast$ restricted to $\mathcal{X}$, respectively.
Recall the constant $r_0$ of Assumption \ref{ass:hessian.midpoints} and assume throughout that $r\leq r_0$.

\vspace{0.5em}
The proof of Theorem \ref{thm:main.convex.intro} relies on the following decomposition:
for every $j$ and every $x\in\mathcal{X}$, a Taylor expansion implies that
\begin{align*}
&\widehat{f}_{I_j}(x) - \widehat{f}_{I_j}(x^\ast)\\
&= \frac{1}{m}\sum_{i\in I_j} \langle \nabla F(x^\ast ,\xi_i), x-x^\ast \rangle +\frac{1}{2}\frac{1}{m}\sum_{i\in I_j} \langle \nabla^2 F(z_i ,\xi_i) (x-x^\ast), x-x^\ast \rangle\\
&=: M_{x,x^\ast}(j) + \frac{1}{2}Q_{x,x^\ast}(j),
\end{align*}
where $z_i$ are midpoints between $x$ and $x^\ast$ (and each $z_i$ may depend on $\xi_i$).
For obvious reasons we call $Q_{x,x^\ast}$ the \emph{quadratic term} and $M_{x,x^\ast}$ the \emph{multiplier term}.

We start with the proof of the estimation error, formulated in Proposition \ref{prop:estimation.error}.
With the decomposition of $\widehat{f}_{I_j}(x) - \widehat{f}_{I_j}(x^\ast)$ into a multiplier and a quadratic term at hand, the \emph{strategy of the proof} that $x^\ast$ defeats any competitor $x\in \mathcal{S}_r^\ast$ on the $j$-th block (i.e.\ that $\widehat{f}_{I_j}(x) - \widehat{f}_{I_j}(x^\ast)>0$), consists of showing that the quadratic term is likely to be positive (of order $r^2$) and the multiplier term is likely not to be too negative:

\begin{lemma}
\label{prop:actually.the.theorem}
	There is a constant $c$ such that the following holds.
	With probability at least $1-2\exp(-c \tau^2 n)$, for every $x\in\mathcal{S}_r^\ast$, we have that
	\begin{align}
	\label{eq:quad.term}
	\left| \left\{ j : \frac{1}{2} Q_{x,x^\ast}(j) \geq  \frac{ r^2 }{8}  \right\} \right|
	&\geq (1-\tau)n ,\\
	\label{eq:multi.term}
	\left| \left\{ j : M_{x,x^\ast}(j) \geq -\frac{ r^2 }{16}  \right\} \right|
	&\geq (1-\tau)n.
	\end{align}
\end{lemma}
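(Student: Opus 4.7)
My plan is to prove the two assertions \eqref{eq:quad.term} and \eqref{eq:multi.term} independently and then pick the tuning parameters $\gamma,\theta,\tau$ (small in $L$) at the end to simultaneously meet the required slack and confidence. By homogeneity it suffices to consider $x-x^\ast=rv$ with $v$ on $S:=\{v\in\mathbb{R}^d:\|v\|=1\}$ (intersected with the directions into $\mathcal{X}$), so in what follows all sup's and uniformity statements are over~$v\in S$.

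For the quadratic claim \eqref{eq:quad.term}, I first compare $Q_{x,x^\ast}(j)$ with its frozen version $\bar{Q}_{x,x^\ast}(j):=\frac{1}{m}\sum_{i\in I_j}\langle\nabla^2 F(x^\ast,\xi_i)(x-x^\ast),x-x^\ast\rangle$ and invoke Theorem \ref{thm:singular.value} with $A:=\nabla^2 F(x^\ast,\xi)$. Assumption \ref{ass:norm.equiv} together with Remark \ref{rem:relation.SLB.SBP.norm.equiv}(ii) supplies a stable lower bound with parameters $(m,\gamma/2,\tilde{c}m,\tilde{c}m)$ (with $\tilde{c}$ depending only on $L$), while Lemma \ref{lem:growth.op.squared.A} gives $\|\E[A\mathbb{A}^{-1}A]\|_{\mathrm{op}}\leq dL^4$, so the sample-size hypothesis of Theorem \ref{thm:main.convex.intro} covers the hypotheses of Theorem \ref{thm:singular.value}. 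This yields, with probability $1-2\exp(-c\tau n)$ and uniformly in $v$ and in families $J_j\subseteq I_j$ with $|J_j|\leq l$, that $\bar{Q}_{x,x^\ast}(j)\geq (1-\gamma)r^2$ on at least $(1-\tau/2)n$ blocks. I then set $J_j(x):=\{i\in I_j:\mathcal{E}_{\mathrm{H}}(x)\text{ evaluated at }\xi_i>r^2/8\}$, use positive semi-definiteness of $\nabla^2 F$ to discard these indices without loss, and verify $|J_j(x)|\leq l$ on at least $(1-\tau/2)n$ blocks uniformly in $v$. The per-index probability is at most the constant $c_1$ of Assumption \ref{ass:hessian.midpoints}(b), pre-specified so that $c_1\ll\tilde{c}\gamma^2$; a Bennett/Binomial bound handles each fixed $v$, and the H\"older hypothesis \ref{ass:hessian.midpoints}(a) supplies a cover-and-oscillation argument in the style of Section \ref{sec:mom.singular.value}, with the covering cost absorbed into $N_{\mathrm{H},\mathcal{E}}$. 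Combining the two controls, $Q_{x,x^\ast}(j)\geq (1-\gamma)r^2-r^2/8\geq r^2/4$ on at least $(1-\tau)n$ blocks, which is \eqref{eq:quad.term}.

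For the multiplier claim \eqref{eq:multi.term}, write $M_{x,x^\ast}(j)=\langle G_j,x-x^\ast\rangle$ with $G_j:=\frac{1}{m}\sum_{i\in I_j}\nabla F(x^\ast,\xi_i)$. First-order optimality of $x^\ast$ on the convex set $\mathcal{X}$ gives $\langle\nabla f(x^\ast),x-x^\ast\rangle\geq 0$, so $\E M_{x,x^\ast}(j)\geq 0$ and the bad event is contained in $\{M_{x,x^\ast}(j)-\E M_{x,x^\ast}(j)<-r^2/16\}$. The definition of $\sigma^2$ yields $\V(M_{x,x^\ast}(j))\leq \sigma^2r^2/m$, and Chebyshev's inequality gives pointwise $\P[M_{x,x^\ast}(j)<-r^2/16]\leq 256\sigma^2/(mr^2)\leq 256\theta$ by the choice of $n,m$, which I make $\leq \tau/4$ by choosing $\theta$ small. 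Uniformity in $v$ follows by bracketing the indicator between two $c/r^2$-Lipschitz surrogates $\psi$ with $\psi(0)=0$, symmetrizing the centred process, applying the contraction principle, and bounding the resulting Rademacher average by $r^{-1}\E\|\tfrac{1}{n}\sum_j\varepsilon_j(G_j-\nabla f(x^\ast))\|_\ast$, which in turn is at most $r^{-1}\sqrt{\tfrac{1}{n}\E\|G_1-\nabla f(x^\ast)\|_\ast^2}=\sqrt{N_{\mathrm{G}}(r)/N}$ by the trace--variance identity. The sample-size bound $N\geq c_2 N_{\mathrm{G}}(r)$ of Theorem \ref{thm:main.convex.intro} makes this at most $\tau/4$, and a bounded-difference (or Talagrand) concentration inequality converts the resulting expectation bound into a high-probability estimate with deviation $\exp(-c\tau^2n)$.

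The step I expect to be the main obstacle is the \emph{uniform} control of $J_j(x)$ in the quadratic argument: the pointwise Bennett bound is immediate, but lifting it simultaneously over the sphere $S$ requires the same covering-plus-oscillation apparatus as in Section \ref{sec:mom.singular.value} and is the place where the H\"older part of Assumption \ref{ass:hessian.midpoints} (and hence the logarithmic factor $N_{\mathrm{H},\mathcal{E}}$) genuinely enters the sample-size requirement. Everything else is a fairly classical median-of-means calculation once the replacements and first-order optimality have been put in place.
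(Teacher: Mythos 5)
The proposal is essentially correct, and for the quadratic part \eqref{eq:quad.term} it follows the paper's proof almost verbatim: freeze the Hessian at $x^\ast$, invoke Theorem \ref{thm:singular.value} via the stable lower bound supplied by Remark \ref{rem:relation.SLB.SBP.norm.equiv}(ii) and the operator-norm bound of Lemma \ref{lem:growth.op.squared.A}, and control the midpoint error $\mathcal{E}_{\mathrm{H}}$ by a pointwise Binomial bound lifted to the sphere via a covering-plus-oscillation argument. The only cosmetic difference is that you define $J_j(x)$ by thresholding $\mathcal{E}_{\mathrm{H},i}(x)$, whereas the paper takes $J_j^\ast(x)$ to be the $s_1m$ largest coordinates; the paper's choice is slightly tidier because $|J_j^\ast(x)|$ is deterministically $\leq s_1m$, so Theorem \ref{thm:singular.value} applies block-wise without any patching, while your $J_j(x)$ needs an extra line explaining that the (few) blocks where $|J_j(x)|>l$ can simply be discarded.

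For the multiplier part \eqref{eq:multi.term} you take a genuinely different and somewhat cleaner route. The paper proceeds in two stages: a Markov-plus-Binomial pointwise bound for the members of a finite cover $\bar{\mathcal{S}}_r^\ast$ built via the dual Sudakov inequality (Lemma \ref{lem:muliplier.single.x} and Lemma \ref{lem:multiplier.good.spearated.set}), where the cover is specially modified so that the nearest cover point $y=y(x)$ satisfies the sign condition $\langle \nabla f(x^\ast),x-y\rangle\geq 0$, followed by a bounded-differences/symmetrization/contraction control of the local oscillation $\bar\Delta_x$ (Lemma \ref{lem:multiplier.uniform.x}); the sign condition is exactly what lets the oscillation propagate the lower bound from $y$ to $x$. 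You instead exploit first-order optimality once, directly for every $x\in\mathcal{X}$, which gives $\E[M_{x,x^\ast}(j)]\geq 0$ and thus $\{M_{x,x^\ast}(j)<-r^2/16\}\subseteq\{\bar{M}_x(j)<-r^2/16\}$, and then control the supremum $\Psi=\sup_x\frac{1}{n}\sum_j\mathbf{1}\{\bar{M}_x(j)<-r^2/16\}$ directly: a Lipschitz surrogate, symmetrization, the contraction principle, the trace--variance identity $\frac{1}{n}\E\|G_1-\nabla f(x^\ast)\|_\ast^2=r^2 N_{\mathrm{G}}(r)/N$, and a bounded-differences inequality applied at the block level. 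This bypasses the dual-Sudakov cover and its sign-corrected modification altogether. Both routes deliver the same $\exp(-c\tau^2 n)$ deviation under the same hypotheses; yours is effectively a ``soft'' replacement of the paper's ``cover + oscillation'' split, applying the machinery of the paper's Lemma \ref{lem:multiplier.uniform.x} to $\bar M_x$ itself instead of to the local oscillation $\bar\Delta_x$. A small point worth being careful about when fleshing this out is that the bounded-differences argument must be run over the $n$ blocks (not the $N$ individual samples) to get the $\exp(-cn\tau^2)$ rate; applying McDiarmid at the sample level would give the much weaker $\exp(-cn\tau^2/m)$.
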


Let us show that Lemma \ref{prop:actually.the.theorem} implies Proposition \ref{prop:estimation.error}:

\begin{proof}[Proof of Proposition \ref{prop:estimation.error}]
	On the high probability event from Lemma \ref{prop:actually.the.theorem}, for every $x\in\mathcal{S}_r^\ast$, we have that
	\[\widehat{f}_{I_j}(x) - \widehat{f}_{I_j}(x^\ast) \geq \frac{ r^2}{16}  >0
	\quad\text{on at least } (1-2\tau)n \text{ blocks } j.\]
	Therefore, as $\tau<\frac{1}{4}$, on that event, $x^\ast$ wins the match against every $x\in \mathcal{S}_r^\ast$.

	The extension to all $x\in\mathcal{X}$ with $\|x-x^\ast\| \geq r$ is a simple consequence of convexity.
	Indeed, let $x\in\mathcal{X}$ with $\|x-x^\ast\| \geq r$ and set
	\[ y:=x^\ast + \frac{r}{\|x-x^\ast\| } (x-x^\ast) \in \mathcal{S}_r^\ast. \]
	Note now that (for every sample) $\widehat{f}_{I_j}(\cdot) - \widehat{f}_{I_j}(x^\ast)$ is a convex function which equals zero at $x^\ast$.
	Hence, if this function is strictly positive in $y$, then, by convexity, it is also strictly positive on
	\[\{x^\ast + t(y-x^\ast) : t\geq 1\}\cap\mathcal{X},\]
	which is the subset of the ray that originates from $x^\ast$ and passed through $y$, consisting of the points that are ``beyond'' $y$.
	Taking $t=\frac{1}{r}\|x-x^\ast\| $, we see that $x^\ast$ defeats $x$ (at least) on the same blocks on which it defeats $y$.
	In conclusion, on the high probability event of the lemma, $x^\ast$ wins the match against $x$.
\end{proof}

\begin{remark}
By Assumption \ref{ass:on.F.diff.convex.etc}, the functions $F$, $\nabla F$ and $\nabla^2 F$ are so-called Carath{\'e}odory functions and therefore are jointly measurable. 
Moreover, by Assumption \ref{ass:hessian.midpoints} and the dominated convergence theorem, one can readily verify that $f$ is twice continuously differentiable near $x^\ast$ with
\[\nabla f(x)=\E[\nabla F(x,\xi)]
\quad\text{and}\quad
\nabla^2 f(x)=\E[\nabla^2 F(x,\xi)]\]
for all $x\in\mathcal{X}$ with $\|x-x^\ast\|<r_0$.
In particular, from this we get that $\| \cdot \|=\E[\langle \nabla^2 F(x^\ast,\xi)\cdot,\cdot\rangle]^{\frac{1}{2}}$.
\end{remark}

\subsection{Estimation error, the quadratic term}
This subsection contains the proof of \eqref{eq:quad.term} from Lemma \ref{prop:actually.the.theorem}: we show that the quadratic term is likely to be at least of order $r^2$. The proof relies on the results of Section \ref{sec:mom.singular.value} and the strategy is the following.
In a first step, we ignore the fact that the Hessian in the definition of $Q_{x,x^\ast}$ is evaluated at a midpoint between $x^\ast$ and $x$, considering instead the Hessian evaluated at the optimizer $x^\ast$. 
We employ the median-of-mean-type lower bound on the smallest singular value of the random matrix $\nabla^2 F(x^\ast,\xi)$ established in Section \ref{sec:mom.singular.value},  which is summarized in the following lemma.

\begin{lemma}
\label{lem:hessian.at.optimzer}
	There are constants $s_1,c>0$ depending only on $L$ such that the following holds.
	With probability at least $1-2\exp(-c \tau N)$, for every $x\in\mathcal{S}_r^\ast$ and every choices of subsets $J_j \subseteq I_j$ with $|J_j|\leq s_1m$, we have that
	\begin{align}
	\label{eq:hessian.slb.at.optimzer}
	\left| \left\{ j : \frac{1}{m}\sum_{i\in I_j\setminus J_j} \langle  \nabla^2 F(x^\ast,\xi_i)(x-x^\ast), x-x^\ast \rangle \geq  \frac{ r^2}{2}  \right\} \right|
	&\geq \left( 1-\frac{\tau}{2}\right) n .
	\end{align}
\end{lemma}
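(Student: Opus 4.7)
The plan is to deduce Lemma~\ref{lem:hessian.at.optimzer} as a direct application of Theorem~\ref{thm:singular.value} to the random matrix $A := \nabla^2 F(x^\ast,\xi)$. Observe that $\mathbb{A} := \E[A] = \nabla^2 f(x^\ast)$ (by the remark preceding this subsection), so the seminorm induced by $\mathbb{A}$ in Section~\ref{sec:mom.singular.value} is precisely the norm $\|\cdot\|$ of Assumption~\ref{ass:on.F.diff.convex.etc}. Because the target bound \eqref{eq:hessian.slb.at.optimzer} is homogeneous in $x-x^\ast$, I would first pass to the normalized vector $z := (x-x^\ast)/r\in S$, thereby reducing \eqref{eq:hessian.slb.at.optimzer} to the uniform-in-$z\in S$ statement $\frac{1}{m}\sum_{i\in I_j\setminus J_j}\langle A_i z,z\rangle \geq \tfrac{1}{2}$ on most blocks, which is exactly the format of the conclusion of Theorem~\ref{thm:singular.value} with $\gamma=\tfrac{1}{2}$.

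Next I would verify the two hypotheses of Theorem~\ref{thm:singular.value}, applied with $\gamma=\tfrac{1}{2}$ and with its $\tau$-parameter taken to be $\tau/2$ (so that the majority bound $(1-\tau/2)n$ in the lemma matches). For hypothesis~(a), Assumption~\ref{ass:norm.equiv} is the statement that $H := \{\langle A\,\cdot\,,\,\cdot\,\rangle^{1/2}: z\in S\}$ is bounded in $L_4$ by $L^{1/4}$; Remark~\ref{rem:relation.SLB.SBP.norm.equiv}(ii) therefore supplies a stable lower bound with parameters $(m,\tfrac14,\bar s_1 m,\bar s_2 m)$ for constants $\bar s_i$ depending only on $L$. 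I would then set $l := \tfrac{\bar s_1}{2}m$ (so that $2l\leq \bar s_1 m$) and $k := \bar s_2 m$; the constraint $k\geq \max\{4,2\log(8/\tau)\}$ amounts to a lower bound on $m=N/n$, which is automatic once $\theta$ is small enough, since $m\geq 1/\theta$.

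For hypothesis~(b), Lemma~\ref{lem:growth.op.squared.A} together with Assumption~\ref{ass:norm.equiv} yields $\|\E[A\mathbb{A}^{-1}A]\|_{\mathrm{op}}\leq dL^4$, and Lemma~\ref{lem:operator.norm.relation} bounds the ratio $\E[\|A\mathbb{A}^{-1}A\|_{\mathrm{op}}]/\|\E[A\mathbb{A}^{-1}A]\|_{\mathrm{op}}$ by $d$. Plugging $k=\bar s_2 m$ and the parameter $\tau/2$ into the second term of (b), it collapses to a quantity of the form $c\,d\log(d/\tau)$ with $c$ depending only on $L$; both parts of~(b) are then comfortably absorbed by the standing assumption $N\geq c_2 d\log(2d)$ of Theorem~\ref{thm:main.convex.intro} (with $c_2$ chosen large enough relative to $L$ and $\tau$). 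Theorem~\ref{thm:singular.value} now delivers the claim with probability at least $1-2\exp\!\bigl(-C_2 N (\tau/2)\min\{l/m,k/m\}\bigr)$, and since $l/m$ and $k/m$ depend only on $L$ this is of the form $1-2\exp(-c\tau N)$, matching the lemma.

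There is no real obstacle here: the entire difficulty already lives inside Theorem~\ref{thm:singular.value}. What remains is pure bookkeeping—matching the SLB parameters handed over by Remark~\ref{rem:relation.SLB.SBP.norm.equiv}(ii) to those demanded by Theorem~\ref{thm:singular.value}, verifying that the floor $dm/(\tilde\tau k)$ does not blow up (which it does not, because $k/m$ is an $L$-dependent constant), and then absorbing all dimension-free constants into $c$.
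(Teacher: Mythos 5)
Your proposal is correct and follows essentially the same route as the paper: apply Theorem~\ref{thm:singular.value} to $A=\nabla^2 F(x^\ast,\xi)$ with $\gamma=\tfrac12$, read off the stable lower bound from Remark~\ref{rem:relation.SLB.SBP.norm.equiv}(ii), check the $k$-constraint via $m\geq 1/\theta$, and absorb hypothesis~(b) into $N\geq c_2\,d\log(2d)$ using Lemmas~\ref{lem:operator.norm.relation} and~\ref{lem:growth.op.squared.A}. Your extra care in feeding $\tau/2$ into the theorem so that the $(1-\tau/2)n$ majority bound comes out exactly is, if anything, slightly tidier bookkeeping than the paper's, but it changes nothing of substance.
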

\begin{proof}
	We apply Theorem \ref{thm:singular.value} with $A:=\nabla^2 F(x^\ast ,\xi)$. By Remark \ref{rem:relation.SLB.SBP.norm.equiv}, the random matrix $A$ satisfies a stable lower bound with parameters
	\[\left(m,\frac{\gamma}{2},2l,k\right)
	=\left(m,\frac{1}{4},2s_1m,s_2m\right)\]
	for constants $s_1,s_2$ that depend only on $L$.
	Moreover, once $\theta$ is sufficiently small, we have that
	\[ k
	=s_2m
	\geq \frac{s_2}{\theta}
	\geq \max\left\{4,2\log\left(\frac{4}{\tau}\right)\right\}, \]
	showing that assumption (a) in Theorem \ref{thm:singular.value} is satisfied. Lemma \ref{lem:growth.op.squared.A} and Lemma \ref{lem:operator.norm.relation} imply that
	\begin{align*}
	&\max\left\{ \| \E[ A \mathbb{A}^{-1} A] \|_{\mathrm{op}} , \frac{d m}{\tau k} \log\left(\frac{\log(3d) \E[ \| A \mathbb{A}^{-1} A \|_{\mathrm{op}} ] }{\gamma\tau \|\E[A \mathbb{A}^{-1} A]\|_{\mathrm{op}} } \right) \right\} \\
	&\leq \max\left\{ c_0 d , \frac{d s_2}{\tau } \log\left(\frac{\log(3d) 2 d }{\tau  }\right) \right\} \\
	&\leq c_0 d\log(2d)
	\end{align*}
	for a constant $c_0$ that depends only $L$ and $\tau$. And,
	as $\tau$ depends only on $L$, $c_0$ actually only depends only on $L$.

	Recall that Theorem \ref{thm:main.convex.intro} has the requirement that $N\geq c_2 d\log(2d)$ for a constant $c_2$ that may depend on $L$ and which we are free to choose to be as large as we went.
	Doing so shows that assumption (b) of Theorem \ref{thm:singular.value} holds as well.
	
Setting $c:=C_2 \min\{s_1,s_2\}$ where $C_2$ is the constant from Theorem \ref{thm:singular.value}, it follows from that theorem that \eqref{eq:hessian.slb.at.optimzer} holds with probability at least $1-2\exp(-c \tau N)$.
\end{proof}

From now on, we fix the constant $s_1$ from Lemma \ref{lem:hessian.at.optimzer}.
As $s_1$ depends only on $L$, all constants $\theta,\tau, c,c_1,\dots$ which are allowed to depend on $L$ may also depend on $s_1$.

In a next step, we show that replacing the Hessian at a midpoint with the Hessian at $x^\ast$ does not come at a high cost. Clearly, in Lemma \ref{lem:hessian.at.optimzer} we may arbitrarily modify / delete $s_1m$ coordinates from each block $j$, and we shall argue in the following that the errors
\[\mathcal{E}_{\mathrm{H},i}(x)
=\sup_{ t\in[0,1] } \left| \left\langle \left(\nabla^2 F(x^\ast + t(x-x^\ast) ,\xi_i) -\nabla^2 F(x^\ast ,\xi_i) \right) (x-x^\ast), x-x^\ast \right\rangle \right| \]
are well behaved on the remaining blocks.
As before, the proof has two components.
In the first one, we analyze what happens for a single $x\in\mathcal{S}_r^\ast$.
Here the error is governed by the probability that $\mathcal{E}_{\mathrm{H}}(x)$ is large, where we  recall that $r< r_0$ and by Assumption \ref{ass:hessian.midpoints}
\[\sup_{x\in\mathcal{S}_r^\ast} \P\left[ \mathcal{E}_{\mathrm{H}}(x) \geq \frac{ r^2 }{8}\right]\leq c_1
\]
for a constant $c_1$ which we are free to choose depending on $L$---hence, $c_1$ may depend on $s_1$.

\begin{lemma}
\label{lem:hessian.midpoints.single.x.single.block}
	There is a constant $c$ such that, for every $x\in\mathcal{S}_r^\ast$ and every $j$, with probability at least $1-2\exp(-c m )$, we have that
	\begin{align*}
	\left| \left\{ i\in I_j : 	\mathcal{E}_{\mathrm{H},i}(x) \leq  \frac{ r^2}{8}  \right\}  \right|
	& \geq \left(1-\frac{s_1}{2}\right) m .
	\end{align*}
\end{lemma}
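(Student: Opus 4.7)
The plan is to recognize this as a straightforward Binomial concentration statement. Fix $x\in\mathcal{S}_r^\ast$ and a block $j$, and for $i\in I_j$ introduce the Bernoulli indicators
\[
Z_i:=\mathbf{1}\Big\{\mathcal{E}_{\mathrm{H},i}(x) > \frac{r^2}{8}\Big\}.
\]
Since $(\xi_i)_{i\in I_j}$ are i.i.d.\ copies of $\xi$, so are the $Z_i$. Moreover, because $r\leq r_0$ and $x\in\mathcal{S}_r^\ast\subseteq\mathcal{B}_{r_0}^\ast$, Assumption \ref{ass:hessian.midpoints}(b) directly yields $\E[Z_i]=\P[\mathcal{E}_{\mathrm{H}}(x)>r^2/8]\leq c_1$.

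The conclusion $|\{i\in I_j:\mathcal{E}_{\mathrm{H},i}(x)\leq r^2/8\}|\geq(1-s_1/2)m$ is equivalent to $\sum_{i\in I_j}Z_i\leq (s_1/2)m$. The key observation is that Assumption \ref{ass:hessian.midpoints}(b) leaves the constant $c_1$ at our disposal: since $c_1$ is allowed to depend on $L$ (and hence on $s_1$, which is fixed from Lemma \ref{lem:hessian.at.optimzer}), we may, without loss of generality, shrink $c_1$ so that $c_1\leq s_1/4$. Then $\E[\sum_{i\in I_j}Z_i]\leq (s_1/4)m$, and the event in question is a deviation of order $(s_1/4)m$ above the mean.

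A standard Bernoulli concentration bound (e.g.\ the multiplicative Chernoff / Bennett inequality, see \cite[Theorem 2.9]{boucheron2013concentration}) then gives
\[
\P\Big[\sum_{i\in I_j} Z_i \geq \frac{s_1}{2}m\Big]
\leq \exp\bigl(-c s_1 m\bigr),
\]
for an absolute constant $c>0$. Absorbing $s_1$ (which depends only on $L$) into a new constant $c$ yields the claimed bound $1-2\exp(-cm)$. The only ``obstacle'' worth noting is the bookkeeping: one has to verify that demanding $c_1\leq s_1/4$ is legitimate at this stage, which it is because $c_1$ is introduced in Assumption \ref{ass:hessian.midpoints} as a free parameter to be specified in Theorem \ref{thm:main.convex.intro}, and at this point in the proof $s_1$ has already been fixed as a function of $L$ alone.
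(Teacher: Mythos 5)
Your proof is correct and takes the same route as the paper: both set $c_1=s_1/4$ (legitimate, since $c_1$ and $s_1$ are determined only by $L$), interpret the claim as a Binomial tail event, and apply a Bennett/Chernoff-type concentration bound to the indicators $\mathbf{1}\{\mathcal{E}_{\mathrm{H},i}(x)>r^2/8\}$, yielding a failure probability $\exp(-c\,s_1 m)$. The paper explicitly handles the trivial $\delta=0$ case and cites Corollary 2.11 of Boucheron--Lugosi--Massart rather than Theorem 2.9, but these are cosmetic differences.
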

\begin{proof}
	Fix some $x\in\mathcal{S}_r^\ast$.
	Setting $c_1:=\frac{s_1}{4}$, Assumption \ref{ass:hessian.midpoints} implies that
	\[\delta:=\P\left[\mathcal{E}_{\mathrm{H}}(x)\geq \frac{ r^2}{8}\right]
	\leq \frac{s_1}{4}.\]
	If $\delta=0$ there is nothing to prove, so assume otherwise and apply the Binomial concentration inequality \cite[Corollary 2.11]{boucheron2013concentration}:
	there is an absolute constant $C>0$ such that, for every $u\geq 0$, with probability at least $1-2\exp(-Cm\delta\min\{u,u^2\})$, we have that
	\[ \left|\left\{i\in I_j :\mathcal{E}_{\mathrm{H},i}(x) \geq\frac{r^2}{8} \right\} \right| \leq m\delta(1+u) . \]
	Applying this to $u:=\frac{s_1}{2\delta}-1\geq 1$ completes the proof (with $c=\frac{Cs_1}{4}$).
\end{proof}

Next, let us show that most blocks have many indices $i$ for which the errors $\mathcal{E}_{\mathrm{H},i}(x)$ are small.

\begin{lemma}
\label{lem:hessian.midpoints.single.x.all.blocks}
	There is a constant $c$ such that, for every $x\in\mathcal{S}_r^\ast$, with probability at least $1-2\exp(-c\tau N)$, we have that
	\begin{align*}
	\left| \left\{ j : \left| \left\{ i\in I_j : 	\mathcal{E}_{\mathrm{H},i}(x) \leq  \frac{ r^2}{8}   \right\}  \right| \geq \left(1-\frac{s_1}{2}\right)m \right\} \right|
	&\geq \left(1-\frac{\tau}{4}\right) n.
	\end{align*}
	In particular, the statement holds uniformly over sets $\bar{\mathcal{S}}_r^\ast\subseteq\mathcal{S}_r^\ast$ of cardinality at most $\log ( |\bar{\mathcal{S}}_r^\ast| ) \leq \frac{1}{2}c\tau N$.
\end{lemma}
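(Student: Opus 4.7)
The plan is to derive the statement by applying Lemma \ref{lem:hessian.midpoints.single.x.single.block} block by block and then invoking a concentration inequality to control how many blocks can fail simultaneously. First I would fix $x\in\mathcal{S}_r^\ast$ and introduce the Bernoulli random variables
\[
\eta(j):=\mathbf{1}\Big\{\big|\{i\in I_j:\mathcal{E}_{\mathrm{H},i}(x)\leq r^2/8\}\big|<(1-s_1/2)m\Big\},
\]
which are i.i.d.\ across $j$ since the blocks $I_j$ are disjoint and the sample is independent. By Lemma \ref{lem:hessian.midpoints.single.x.single.block}, their common success probability satisfies
\[
\delta:=\P[\eta(j)=1]\leq 2\exp(-c\,m).
\]
Since $m\geq 1/\theta$ and $\theta$ can be made as small as we like (in terms of $L$), we can ensure that $\delta$ is much smaller than $\tau$, in particular that $\tau/(4\delta)\geq 2$.

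Next I would apply Bennett's inequality (exactly as in the proof of Lemma \ref{lem:slb.uniform}) to $\sum_j \eta(j)$: for any $u\geq 2$, with probability at least $1-2\exp(-C\delta n\,u\log u)$, one has $\sum_j\eta(j)\leq u\delta n$. Choosing $u=\tau/(4\delta)$ gives $\sum_j\eta(j)\leq\tau n/4$ on an event of probability at least
\[
1-2\exp\!\Big(-C\tfrac{\tau n}{4}\log\!\tfrac{\tau}{4\delta}\Big)\;\geq\;1-2\exp(-c'\tau n\cdot m)\;=\;1-2\exp(-c'\tau N),
\]
where the bound $\log(\tau/(4\delta))\gtrsim cm$ uses $\delta\leq 2\exp(-cm)$ and the fact that $\tau$ is bounded below by a constant depending only on $L$. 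This proves the first part of the statement with an appropriately chosen constant $c$.

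For the ``in particular'' part I would simply apply a union bound: if $\bar{\mathcal{S}}_r^\ast\subseteq\mathcal{S}_r^\ast$ satisfies $\log|\bar{\mathcal{S}}_r^\ast|\leq c\tau N/2$, then
\[
\P\Big[\exists x\in\bar{\mathcal{S}}_r^\ast\;\text{for which the conclusion fails}\Big]\leq|\bar{\mathcal{S}}_r^\ast|\cdot 2\exp(-c\tau N)\leq 2\exp(-c\tau N/2),
\]
and the factor $1/2$ is absorbed into the constant by renaming $c$.

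No step is really the hard one here; the only delicate point is ensuring $\theta$ is chosen small enough (depending on $L$) that the Bennett exponent $\delta n u\log u$ scales like $\tau N$ rather than being swamped by small-$m$ effects. This is guaranteed by the same tuning procedure already in use throughout Section \ref{sec:proof.main}, and is compatible with the earlier constraints on $\theta$.
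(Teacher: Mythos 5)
Your proposal is correct and follows essentially the same route as the paper: define the block-level failure indicator, bound its probability via Lemma \ref{lem:hessian.midpoints.single.x.single.block}, apply Bennett's inequality with the same choice $u=\tau/(4\delta)$ as in Lemma \ref{lem:slb.uniform}, and finish with a union bound. Note, incidentally, that you correctly took the indicator of the \emph{failure} event; the paper's displayed definition of $\delta(j)$ appears to have the cases swapped, but the subsequent computations match your convention.
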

\begin{proof}
	Fix $x\in\mathcal{S}_r^\ast$ and, for every $j$, set
	\[ \delta(j):=
	\begin{cases}
	1, &\text{if } |\{ i\in I_j : \mathcal{E}_{\mathrm{H},i}(x) \leq  \frac{1}{8}r^2  \}  | \geq (1-\frac{1}{2}s_1)m \\
	0, &\text{otherwise}.
	\end{cases}
	\]
	By Lemma \ref{lem:hessian.midpoints.single.x.single.block} we have that
	\[\delta
	:=\P[\delta(j)=1]
	\leq 2\exp(-c_0m)\]
	for a constant $c_0$.
	
	Now recall that $m\geq \frac{1}{\theta}$ and therefore $\delta\leq \frac{3}{4}$ once $\theta$ is small enough (i.e.\  $\theta\leq \frac{1}{c_0}\log(\frac{8}{3})$ suffices).
	By Bennett's inequality, exactly as in the proof of Lemma \ref{lem:slb.uniform}, we have that for every $u\geq 2$, with probability at least $1-2\exp(-C\delta n  u\log (u) )$,
	\[ |\{j : \delta(j)=1\}| \leq u\delta n. \]
	To complete the proof, set $u:=\frac{\tau}{4\delta}$ so that $u\geq \exp(\frac{c_0m}{2})\geq 2$, again once $\theta$ is small enough.
\end{proof}

The final step is to extend the outcome of the lemma from the net $\bar{\mathcal{S}}_r^\ast$ to the whole $\mathcal{S}_r^\ast$. To that end, recall that by Assumption \ref{ass:hessian.midpoints}
\[ \| \nabla^2 F(x,\xi) - \nabla^2 F(y,\xi) \|_{\mathrm{op}}
\leq \|x-y\|^\alpha  K(\xi) \]
for all $x,y\in\mathcal{B}_r^\ast$.

\begin{remark}
In what follows we shall, from time to time, divide by $\E[K(\xi)]$; therefore, we shall assume without loss of generality that $\E[K(\xi)]>0$. Note that if $\E[K(\xi)]=0$ then the Hessian is constant in $\mathcal{B}_r^\ast$ and there is nothing to prove: $\mathcal{E}_{\mathrm{H}}(x)=0$ for every $x\in\mathcal{B}_r^\ast$.
\end{remark}

\begin{lemma}
\label{lem:quadratic.net}
	Let $c_0$ be the constant of Lemma \ref{lem:hessian.midpoints.single.x.all.blocks} and let $C_0$ be an absolute constant to be specified later. 	Set
	\[
	\rho:= \left( \frac{C_0 \tau s_1 }{  r_0^\alpha \E[K(\xi)]} \right)^{\frac{1}{\alpha}}.
	\]
	Then $\mathcal{S}_r^\ast$ contains a $\rho r$-cover with respect to the norm $\| \cdot \|$, which is denoted by  $\bar{\mathcal{S}}_r^\ast$, and
	\[
	\log ( |\bar{\mathcal{S}}_r^\ast| )\leq \frac{1}{2}c_0\tau N.
	\]
\end{lemma}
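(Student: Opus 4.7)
The plan is entirely volumetric. The set $\mathcal{S}_r^\ast$ is contained in the ball of radius $r$ with respect to the norm $\|\cdot\|$, which lives in the $d$-dimensional space $\mathbb{R}^d$. The image of $\mathcal{S}_r^\ast$ under $\nabla^2 f(x^\ast)^{\frac{1}{2}}$ is a subset of the Euclidean sphere of radius $r$, and a standard volumetric / pigeonhole argument (essentially identical to the one used in the proof of Lemma~\ref{lem:matrix.covering}) produces a $\rho r$-net of this image in the Euclidean norm whose cardinality is at most $(1+2/\rho)^d \leq (3/\rho)^d$ provided that $\rho \in (0,1]$. Pulling this net back under $\nabla^2 f(x^\ast)^{-\frac{1}{2}}$ yields a subset $\bar{\mathcal{S}}_r^\ast \subseteq \mathcal{S}_r^\ast$ (after, if needed, replacing each approximating point by a nearest point in $\mathcal{S}_r^\ast$, at the cost of doubling the mesh) which is a $\rho r$-cover of $\mathcal{S}_r^\ast$ with respect to $\|\cdot\|$.

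Next I verify the cardinality bound. Taking logarithms,
\[
\log |\bar{\mathcal{S}}_r^\ast| \;\leq\; d\log\frac{3}{\rho}
\;=\; d\log 3 + \frac{d}{\alpha}\log\!\Big(\frac{r_0^\alpha \E[K(\xi)]}{C_0 \tau s_1}\Big).
\]
Since $\alpha \in (0,1]$, the right hand side is bounded (up to an absolute constant depending only on $C_0, \tau, s_1$, i.e.\ depending only on $L$) by
\[
\frac{d}{\alpha}\log\!\big(r_0^\alpha \E[K(\xi)] + 2\big) \;=\; N_{\mathrm{H},\mathcal{E}}.
\]
Recalling that Theorem~\ref{thm:main.convex.intro} requires $N \geq c_2 N_{\mathrm{H},\mathcal{E}}$ with $c_2$ a constant depending on $L$ which we are free to choose as large as we wish, the desired inequality $\log|\bar{\mathcal{S}}_r^\ast| \leq \tfrac{1}{2}c_0\tau N$ follows by taking $c_2$ sufficiently large (recall that $c_0$ and $\tau$ depend only on $L$).

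The only mildly delicate point is ensuring that $\rho$ is indeed in $(0,1]$ so that the volumetric bound $(3/\rho)^d$ applies; this comes for free after possibly decreasing $C_0$, since $r_0^\alpha \E[K(\xi)]$ may be small but then one simply replaces $\rho$ by $\min\{\rho,1\}$, which only increases $\log(3/\rho)$ by at most $\log 3$ and is absorbed into the constants. There is no real obstacle here: the lemma is a routine discretization step, and the only thing that has to be checked carefully is the book-keeping between the parameters $\rho$, $\alpha$, $\E[K(\xi)]$, $r_0$ and $N_{\mathrm{H},\mathcal{E}}$.
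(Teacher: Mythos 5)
Your proof follows essentially the same route as the paper: a standard volumetric bound on the covering number of $\mathcal{S}_r^\ast$ of the form $\log|\bar{\mathcal{S}}_r^\ast|\lesssim \frac{d}{\alpha}\log\bigl(\frac{r_0^\alpha\E[K(\xi)]}{C_0\tau s_1}\bigr)$, followed by absorption of this quantity into $\frac12 c_0\tau N$ via the standing hypothesis $N\ge c_2 N_{\mathrm{H},\mathcal{E}}$ with $c_2$ chosen sufficiently large depending only on $L$. The cosmetic differences (a factor of $3$ versus $12$ in the covering estimate, projecting net points into $\mathcal{S}_r^\ast$ at the cost of doubling the mesh, and treating $\rho>1$ at the end rather than the start) are harmless and absorbed into the constants exactly as you indicate.
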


\begin{proof}
	If $\rho\geq 1$ there is nothing to prove; thus we may assume without loss of generality that $0<\rho<1$.
	By a standard volumetric argument (see e.g.\ \cite[Exercise 2.2.14]{talagrand2014upper}) there is a $\rho r$ cover
	$\bar{\mathcal{S}}_r^\ast\subseteq \mathcal{S}_r^\ast$, and
	\begin{align*}
	\log (  |\bar{\mathcal{S}}_r^\ast|)
	&\leq d \log \left( \frac{12}{ \rho } \right) \\
	&= \frac{d}{\alpha} \log \left(  \frac{r_0^\alpha \E[K(\xi)] }{ C_0 \tau s_1}   \right).
	\end{align*}
	By assumption we have
	\[ N\geq c_2 N_{\mathrm{H},\mathcal{E}}
	\equiv c_{2} \frac{d}{\alpha} \log \left( r_0^\alpha \E[K(\xi)] +2\right)  \]
for a constant $c_{2}$ which we are free to choose large enough. Then $\log ( |\bar{\mathcal{S}}_r^\ast| )\leq \frac{1}{2}c_0\tau N $, as claimed.
\end{proof}

\begin{lemma}
\label{lem:hessian.midpoints.uniform}	
	Let $\bar{\mathcal{S}}_r^\ast$ be as in Lemma \ref{lem:quadratic.net}, and for every $x \in \mathcal{S}_r^\ast$ set $y=y(x)$ to be the nearest point to $x$ in $\bar{\mathcal{S}}_r^\ast$.

	There is an absolute constant $C$ such that the following holds.
	With probability at least $1-2\exp(-C\tau^2n)$, for every $x\in\mathcal{S}_r^\ast$, we have that
	\[ \left| \left\{ j : \left| \left\{ i\in I_j : |\mathcal{E}_{\mathrm{H},i}(x)- \mathcal{E}_{\mathrm{H},i}(y)| \geq \frac{r^2}{8} \right\}\right| \leq \frac{s_1m}{2} \right\} \right| \geq \left(1-\frac{\tau}{4}\right)n .\]
\end{lemma}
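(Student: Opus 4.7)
The plan is to obtain a bound on $|\mathcal{E}_{\mathrm{H},i}(x) - \mathcal{E}_{\mathrm{H},i}(y)|$ that is uniform in $x \in \mathcal{S}_r^\ast$ in the strong sense that the right-hand side depends on $\xi_i$ alone, not on the particular choice of $x$ or $y = y(x)$. Once such an estimate is available, the uniformity over $x$ costs nothing in probability: the ``bad'' index set for every $x$ is contained in a single, $x$-independent bad set, and everything reduces to a Binomial concentration argument applied first within each block and then across blocks.

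\emph{Step 1: reduction via the H\"older condition.}
Using $|\sup_t |a_t| - \sup_t |b_t|| \leq \sup_t |a_t - b_t|$ with $B_t^z := \nabla^2 F(x^\ast + t(z-x^\ast),\xi_i) - \nabla^2 F(x^\ast,\xi_i)$ reduces matters to bounding
\[
\sup_{t \in [0,1]} \bigl| \langle B_t^x(x - x^\ast), x - x^\ast\rangle - \langle B_t^y(y - x^\ast), y - x^\ast\rangle \bigr|.
\]
Adding and subtracting $\langle B_t^x(y - x^\ast), y - x^\ast\rangle$ splits this into a ``matrix oscillation'' contribution, which via Assumption \ref{ass:hessian.midpoints}(a) is bounded by $\|B_t^x - B_t^y\|_{\mathrm{op}} r^2 \leq (\rho r)^\alpha K(\xi_i) \cdot r^2$, and a ``vector oscillation'' contribution, bounded by $2\|B_t^x\|_{\mathrm{op}}\,\|x-y\|\,r \leq 2 r^\alpha K(\xi_i) \cdot \rho r \cdot r$. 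Combining and using $\rho \leq \rho^\alpha$ (since $\rho < 1$ and $\alpha \leq 1$) yields
\[
|\mathcal{E}_{\mathrm{H},i}(x) - \mathcal{E}_{\mathrm{H},i}(y)| \leq 3\rho^\alpha r^{2+\alpha} K(\xi_i)
\quad\text{for every } x \in \mathcal{S}_r^\ast \text{ and every } i.
\]

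\emph{Step 2: translate to a bad-index event and apply Markov.}
The Step 1 bound gives the inclusion $\{i \in I_j : |\mathcal{E}_{\mathrm{H},i}(x) - \mathcal{E}_{\mathrm{H},i}(y)| \geq r^2/8\} \subseteq \{i \in I_j : K(\xi_i) \geq T\}$ with $T := (24\rho^\alpha r^\alpha)^{-1}$. Plugging in the definition of $\rho$ from Lemma \ref{lem:quadratic.net} and using $r \leq r_0$ gives $T \geq \E[K(\xi)]/(24 C_0 \tau s_1)$. Markov's inequality therefore yields $\P[K(\xi_i) \geq T] \leq 24 C_0 \tau s_1$. Since we are still free to shrink the absolute constant $C_0$ of Lemma \ref{lem:quadratic.net}, we fix $C_0 \leq 1/96$, so that $\P[K(\xi_i) \geq T] \leq \tau s_1/4$.

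\emph{Step 3: two-level concentration.}
With the per-index bad-event probability bounded by $\tau s_1/4$, a standard Chernoff/Bernstein bound for Binomials shows that the probability of having more than $s_1 m/2$ bad indices inside a fixed block $I_j$ is at most $\exp(-c m)$ for a constant $c$ depending only on $s_1$. Since $m \geq 1/\theta$, this per-block failure probability $\delta$ can be forced to be at most $\tau/8$ by choosing $\theta$ small enough. Applying Bennett's inequality at the block level, exactly as in the proof of Lemma \ref{lem:slb.uniform} (with $u := \tau/(4\delta) \geq 2$), bounds the probability that more than $\tau n /4$ blocks are bad by
\[
2\exp\!\Big(-c'\,\tau n \log\tfrac{1}{\delta}\Big) \leq 2\exp(-C\tau^2 n),
\]
where we used $\log(1/\delta) \geq cm \geq c/\theta$ to absorb everything into the constant. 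By Step 1, the event just constructed controls $|\{i\in I_j : |\mathcal{E}_{\mathrm{H},i}(x) - \mathcal{E}_{\mathrm{H},i}(y)| \geq r^2/8\}|$ simultaneously for every $x \in \mathcal{S}_r^\ast$, which is the statement of the lemma. The main obstacle is the deterministic estimate of Step 1: one has to track three simultaneous perturbations (the sup over $t$, the change of quadratic vector from $x - x^\ast$ to $y - x^\ast$, and the change of Hessian base-point), and it is precisely the resulting $\rho^\alpha$ factor that dictates the $\rho r$ cover spacing chosen in Lemma \ref{lem:quadratic.net}.
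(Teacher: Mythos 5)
Your proposal is correct, and it takes a genuinely different route from the paper's. The crucial common ingredient is the deterministic estimate of Step~1, namely $|\mathcal{E}_{\mathrm{H},i}(x)-\mathcal{E}_{\mathrm{H},i}(y)|\leq 3\rho^\alpha r^{2+\alpha}K(\xi_i)$, which the paper derives in exactly the same way (split into a matrix and a vector oscillation, apply the H\"older condition to each, use $\rho\leq\rho^\alpha$). After that the two arguments diverge. The paper bounds the indicator $1_{|\{i\in I_j:\dots\}|\geq s_1m/2}$ by $\frac{16}{r^2 s_1}\cdot\frac{1}{m}\sum_{i\in I_j}|\mathcal{E}_{\mathrm{H},i}(x)-\mathcal{E}_{\mathrm{H},i}(y)|$, passes to the supremum $\Psi$ of the block averages, controls $\E[\Psi]$ by plugging in the deterministic estimate, and then invokes the bounded-differences inequality to concentrate $\Psi$ around $\E[\Psi]$. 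You instead observe that the deterministic estimate makes the ``bad'' index set $\{|\mathcal{E}_{\mathrm{H},i}(x)-\mathcal{E}_{\mathrm{H},i}(y)|\geq r^2/8\}$ contained in the $x$-independent set $\{K(\xi_i)\geq T\}$, so that uniformity over $x$ is free; you then bound $\P[K(\xi_i)\geq T]$ by Markov and run a two-level Binomial/Bennett argument. Your route is more elementary --- no bounded-differences inequality --- and makes the uniformity in $x$ completely transparent; the paper's route is perhaps easier to port when the deterministic bound is not literally $x$-independent. Both yield the claimed $1-2\exp(-C\tau^2 n)$ probability (your final exponent is in fact stronger, of order $\tau N$; the key point being that the constraint $cm\geq 2\log(4/\tau)$, imposed by making $\theta$ small, ensures $\log u\geq\frac{1}{2}\log(1/\delta)\geq 1$, so everything is absorbed into an absolute constant). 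One small notational slip in your Step~3: Bennett gives $2\exp(-c'\delta n u\log u)=2\exp(-c'\tfrac{\tau n}{4}\log u)$ with $u=\tau/(4\delta)$, so the exponent involves $\log u=\log(\tau/4)+\log(1/\delta)$ rather than $\log(1/\delta)$ itself; for $\theta$ small enough this is $\geq\frac{1}{2}\log(1/\delta)$, so the conclusion you draw is unaffected.
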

\begin{proof}
	Let
	\[ \Psi := \sup_{x\in \mathcal{S}_r^\ast} \frac{1}{n}\sum_{j=1}^n 1_{ | \{ i\in I_j : |  \mathcal{E}_{\mathrm{H},i}(x) - \mathcal{E}_{\mathrm{H},i}(y) | \geq \frac{1}{8}r^2 \} | \geq \frac{1}{2}s_1m }\]
	and observe that it suffices to show that $\P[\Psi\geq \frac{\tau}{4}]\leq 2\exp(-C\tau^2n)$. 	
	By the bounded differences inequality \cite[Theorem 6.2]{boucheron2013concentration}, we have that for every $u\geq 0$,
	\[\P[\Psi\geq \E[\Psi]+u]\leq 2\exp(-Cnu^2), \]
	and setting $u:=\frac{\tau}{8}$, all that remains to show is that $\E[\Psi]\leq \frac{\tau}{8}$.
	
	Note that
	\begin{align*}
	1_{ | \{ i\in I_j : |  \mathcal{E}_{\mathrm{H},i}(x) - \mathcal{E}_{\mathrm{H},i}(y) | \geq \frac{1}{8}r^2 \} | \geq \frac{1}{2}s_1m }	
	&\leq \frac{16}{r^2 s_1} \frac{1}{m}\sum_{i\in I_j} |  \mathcal{E}_{\mathrm{H},i}(x) - \mathcal{E}_{\mathrm{H},i}(y) |	
	\end{align*}
	for every $j$; hence,
	\begin{align}
	\label{eq:error.hessian.midpoints.psi.estimate}
	\Psi
	&\leq  \frac{16}{r^2 s_1} \sup_{x\in \mathcal{S}_r^\ast}\frac{1}{N}\sum_{i=1}^N  |  \mathcal{E}_{\mathrm{H},i}(x) - \mathcal{E}_{\mathrm{H},i}(y) |.
	\end{align}
	To control the difference of the $\mathcal{E}$'s, for simpler notation, for $(t,z)\in [0,1]\times\mathcal{S}_r^\ast$ set
	\[ A^{tz}_i:=\nabla^2 F(x^\ast +t(z-x^\ast) ,\xi_i) -\nabla^2 F(x^\ast ,\xi_i), \]
	and observe that
	\[ \mathcal{E}_{\mathrm{H},i}(z)
	=\sup_{t\in[0,1]} |\langle A^{tz}_i(z-x^\ast),z-x^\ast\rangle| \]
	for every $z\in\mathcal{S}_r^\ast$.
	Now, for every $t\in[0,1]$ and every $x\in\mathcal{S}_r^\ast$ (and $y=y(x)$),
	\begin{align*}
	&\left| \langle A^{tx}_i(x-x^\ast),x-x^\ast\rangle  - \langle A^{ty}_i(y-x^\ast),y-x^\ast\rangle  \right| \\
	&= \left| \langle A^{tx}_i(x-x^\ast+y-x^\ast),x-y \rangle - \langle (A^{ty}_i-A^{tx}_i)(y-x^\ast),y-x^\ast\rangle \right|  \\
	&\leq  2 \rho r^2 \| A^{tx}_i \|_{\mathrm{op}} + r^2 \| A^{ty}_i-A^{tx}_i \|_{\mathrm{op}},
	\end{align*}	
	where the last inequality holds by definition of the operator norm and by noting that $\|x-y\|\leq \rho r $,  $\|x-x^\ast+y-x^\ast\|\leq 2r$, and $\|y-x^\ast\|\leq r$.
	Invoking the subadditivity of ``$\sup_t(\cdot)$'' and the triangle inequality,
	\begin{align}
	\nonumber
 	|\mathcal{E}_{\mathrm{H}_i}(x) - \mathcal{E}_{\mathrm{H}_i}(x) |
	&\leq 2 \rho r^2\sup_{t\in[0,1]} \| A^{tx}_i \|_{\mathrm{op}}  + r^2 \sup_{t\in[0,1]}\| A^{ty}_i - A^{tx}_i \|_{\mathrm{op}} \\
	\nonumber	
	&\leq 2 \rho r^2 r^\alpha K(\xi_i) + r^{2}(r\rho)^\alpha  K(\xi_i) \\
	\label{eq:E.H.control.hoelder}
	&\leq 3 r^2(\rho r)^\alpha  K(\xi_i)
	\end{align}
	where the second inequality holds by Assumption \ref{ass:hessian.midpoints} on the continuity of the Hessian and as we may assume without loss of generality that $\rho\leq 1$ (and hence $\rho\leq \rho^\alpha$).
	
	Plugging \eqref{eq:E.H.control.hoelder} into \eqref{eq:error.hessian.midpoints.psi.estimate} implies that
	\begin{align*}
	\Psi
	&\leq  \frac{16}{  r^2 s_1}  3 r^2(\rho r)^\alpha \frac{1}{N}\sum_{i=1}^N  K(\xi_i)
	\end{align*}	
	and therefore
	\begin{align*}
	\E[\Psi ]
	&\leq \frac{48 \rho^\alpha r_0^\alpha\E[ K(\xi)]  }{s_1}.
	\end{align*}	
	Setting
	\[ \rho= \left( \frac{C_0 \tau s_1 }{  r_0^\alpha \E[K(\xi)]} \right)^{\frac{1}{\alpha}}\]
 just as in Lemma  \ref{lem:quadratic.net}, and recalling that we are free to choose $C_0$ small enough, it follows that $\E[\Psi ]\leq \frac{\tau}{8}$, as required.
\end{proof}

We are now ready for the 	

\begin{proof}[Proof of Proposition \ref{prop:actually.the.theorem}, quadratic part]
	For every $x\in\mathcal{S}_r^\ast$ and $j$, set
	\[ J_j^\ast(x):=\{ \text{largest $s_1m$ coordinates of } (\mathcal{E}_{\mathrm{H},i}(x))_{i\in I_j}\}\subseteq I_j.\]
	For every $j$, recalling the definition of $\mathcal{E}_{\mathrm{H}}(x)$ and the fact that $\nabla^2 F(z,\xi)$ is positive semidefinite, it follows that
	\begin{align*}
	Q_{x,x^\ast}(j)
	&\geq \frac{1}{m}\sum_{i\in I_j\setminus J_j^\ast(x)} \inf_{t\in[0,1]} \left\langle  \nabla^2 F(x^\ast + t(x-x^\ast),\xi_i)(x-x^\ast),x-x^\ast\right\rangle \\
	&\geq  \frac{1}{m}\sum_{i\in I_j\setminus J_j^\ast(x)} \left\langle  \nabla^2 F(x^\ast ,\xi_i)(x-x^\ast),x-x^\ast \right\rangle - \frac{1}{m}\sum_{i\in I_j\setminus J_j^\ast(x)} \mathcal{E}_{\mathrm{H},i}(x)	\\
	&=: A_x(j) + B_x(j)
	\end{align*}
	As $|J_j^\ast(x)|\leq s_1m$ for every $x\in\mathcal{S}_r^\ast$ by definition, Lemma \ref{lem:hessian.at.optimzer} implies that,  with probability at least $1-2\exp(-c\tau n)$, for every $x\in\mathcal{S}_r^\ast$, we have that
	\[A_x(j)\geq \frac{r^2}{2}
	\quad\text{on more than } \left(1-\frac{\tau}{2}\right)n \text{ of the blocks } j.\]
	Moreover, combining Lemma \ref{lem:hessian.midpoints.single.x.all.blocks} and Lemma \ref{lem:hessian.midpoints.uniform} implies that,  with probability at least $1-2\exp(-c\tau^2 n)$, for every $x\in\mathcal{S}_r^\ast$, we have that
	\begin{align*}
	\left| \left\{ j : \left| \left\{ i\in I_j : 	\mathcal{E}_{\mathrm{H},i}(x) \leq  \frac{r^2}{4}   \right\}  \right| \geq (1-s_1)m \right\} \right|
	&\geq \left(1-\frac{\tau}{2}\right) n,
	\end{align*}
	and on that event clearly
	\[B_x(j)\geq -\frac{r^2}{4}
		\quad\text{on more than } \left(1-\frac{\tau}{2}\right)n \text{ of the blocks } j.\]
	In particular, combining the estimates on $A_x(j)$ and $B_x(j)$ gives: with probability at least $1-2\exp(-c\tau^2 n)$, for every $x\in\mathcal{S}_r^\ast$, we have that
	\[Q_{x,x^\ast}(j) \geq \frac{r^2}{4}
	\quad\text{on more than } (1-\tau)n \text{ of the blocks } j.\]
	This completes the proof.
\end{proof}

\subsection{Estimation error, the multiplier term}
This subsection contains the proof of \eqref{eq:multi.term} from Lemma \ref{prop:actually.the.theorem}, stating that the multiplier term
\[
M_{x,x^\ast}(j)=\frac{1}{m}\sum_{i\in I_j} \langle \nabla F(x^\ast ,\xi_i), x-x^\ast \rangle
\]
is likely to be at most of order $r^2$.
To ease notation, set
\begin{align*}
	\mathbb{H}
	&:= \nabla^2 f(x^\ast)
	=\E[\nabla^2 F(x^\ast,\xi)],\\
	\mathbb{G}
	&:= \C[\nabla F(x^\ast,\xi)],
\end{align*}
\i.e., $\mathbb{H}$ is the Hessian of $f$ at $x^\ast$ and $\mathbb{G}$ is the covariance matrix of the gradient of $F(\cdot,\xi)$ at $x^\ast$.
In particular, a straightforward computation shows that
\begin{align}
\label{eq:N.G.expression}
N_{\mathrm{G}}(r)
&=\frac{1}{r^2}\mathrm{trace}(\mathbb{H}^{-1}\mathbb{G})
=\frac{1}{r^2}\mathrm{trace}( \mathbb{H}^{-\frac{1}{2}} \mathbb{G} \mathbb{H}^{-\frac{1}{2}} ),\\
\label{eq:sigma.expression}
\begin{split}
\sigma^2
&= \lambda_{\max}( \mathbb{H}^{-1} \mathbb{G} )  \\
&= \lambda_{\max}( \mathbb{H}^{-\frac{1}{2}} \mathbb{G} \mathbb{H}^{-\frac{1}{2}} )
= \| \mathbb{G} \|_{\mathrm{op}} .
\end{split}
\end{align}
As before, we analyze what happens for a single $x\in\mathcal{S}_r^\ast$; the high probability estimate we obtain allows us to control a net in $\mathcal{S}_r^\ast$; we then show that passing from the net to the entire set $\mathcal{S}_r^\ast$ does not distort the outcome by too much.

\begin{lemma}
\label{lem:muliplier.single.x}
	There is an absolute constant $C$ such that, for every $x\in\mathcal{S}_r^\ast$, with probability at least $1-2\exp(-C\tau^2n)$, we have that
	\[ \left| \left\{ j : M_{x,x^\ast}(j) \geq -\frac{  r^2}{32}  \right\} \right|
	\geq \left(1-\frac{\tau}{2}\right) n. \]
	In particular, the statement holds uniformly over a set $\bar{\mathcal{S}}_r^\ast\subseteq \mathcal{S}_r^\ast$ of cardinality at most $\log ( \frac{1}{2}|\bar{\mathcal{S}}_r^\ast|) \leq \frac{1}{2} C\tau^2n $.
\end{lemma}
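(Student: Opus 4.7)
The plan is to fix a single $x \in \mathcal{S}_r^\ast$ and analyze the block-wise multiplier term by exploiting two ingredients: the first-order optimality of $x^\ast$, which controls the \emph{mean} of $M_{x,x^\ast}(j)$, and the variance parameter $\sigma^2$ coupled with the chosen block size $m$, which controls its \emph{variance}. Combining Chebyshev on a single block with a Binomial (Hoeffding) concentration across blocks will deliver the desired $\exp(-C\tau^2 n)$ rate, and the ``in particular'' assertion will follow from a plain union bound.

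Concretely, I would first observe that since $x^\ast$ minimizes $f$ on the convex set $\mathcal{X}$, the convex one-variable function $t \mapsto f(x^\ast + t(x-x^\ast))$ on $[0,1]$ is minimized at $t=0$, so its right-derivative at $0$ is nonnegative; combined with $\nabla f(x^\ast) = \E[\nabla F(x^\ast,\xi)]$ (from the remark following Proposition \ref{prop:actually.the.theorem}), this gives
\[
\E[M_{x,x^\ast}(j)] = \langle \nabla f(x^\ast), x-x^\ast\rangle \geq 0.
\]
Next I would compute $\V[M_{x,x^\ast}(j)] = \tfrac{1}{m}\langle \mathbb{G}(x-x^\ast), x-x^\ast\rangle$ and, substituting $v := \mathbb{H}^{1/2}(x-x^\ast)$ so that $\|v\|_2 = \|x-x^\ast\| = r$, bound this via \eqref{eq:sigma.expression} by $\tfrac{\sigma^2 r^2}{m}$. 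Chebyshev then yields
\[
\P\!\left[M_{x,x^\ast}(j) < -\tfrac{r^2}{32}\right] \leq \P\!\left[|M_{x,x^\ast}(j) - \E[M_{x,x^\ast}(j)]| \geq \tfrac{r^2}{32}\right] \leq \frac{1024\, \sigma^2}{m r^2}.
\]
Using $m = N/n$ and $n = \theta N \min\{1, r^2/\sigma^2\}$, one checks $\tfrac{\sigma^2}{m r^2} \leq \theta$ in both regimes $r^2 \leq \sigma^2$ and $r^2 > \sigma^2$, so the per-block failure probability is at most $\delta := 1024\, \theta$.

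Choosing $\theta$ small enough so that $\delta \leq \tau/4$, the block-wise failure indicators form an i.i.d.\ Bernoulli sequence with mean at most $\tau/4$. Hoeffding's inequality then implies that their empirical average exceeds $\tau/2$ with probability at most $2\exp(-n\tau^2/8)$, which is the per-point bound with $C = 1/8$. The ``in particular'' statement follows by a union bound over $\bar{\mathcal{S}}_r^\ast$, compatible with the stated log-cardinality $\tfrac{1}{2}C\tau^2 n$ (losing at most a factor of $2$ in the exponent).

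The main obstacle is the first step: making first-order optimality work at $x^\ast$ even when $x^\ast$ lies on the boundary of $\mathcal{X}$. The key point is that we do \emph{not} need $\nabla f(x^\ast)=0$; all that is used is $\langle \nabla f(x^\ast), x-x^\ast\rangle \geq 0$ for every $x \in \mathcal{X}$, which is automatic from convexity of $f$ along the segment $[x^\ast, x] \subseteq \mathcal{X}$. This is the only place where the convex structure of the feasible set enters the multiplier analysis.
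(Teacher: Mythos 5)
Your proof is correct and follows essentially the same route as the paper: first-order optimality of $x^\ast$ controls the sign of $\E[M_{x,x^\ast}(j)]$, the block size $m$ together with \eqref{eq:sigma.expression} bounds the per-block second moment by $\sigma^2 r^2/m \leq \theta r^2$, and a Binomial concentration bound across blocks plus a union bound over the cover finishes it. The only cosmetic difference is that you apply Chebyshev directly to $M_{x,x^\ast}(j)$ (giving a per-block failure probability $\lesssim \theta$), whereas the paper bounds $\P[|\tfrac{1}{m}\sum_{i\in I_j} U_i| \geq r^2/32]$ via Markov on the first moment followed by Cauchy--Schwarz (giving $\lesssim \sqrt{\theta}$); both are harmless since $\theta$ is a tunable small constant.
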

\begin{proof}
	Fix some $x\in \mathcal{S}_r^\ast$ and define for $1 \leq i \leq N$,
	\begin{align*}
	U_{i}
	&:=\langle \nabla F(x^\ast,\xi_i) - \E[\nabla F(x^\ast,\xi)], x-x^\ast\rangle\\
	&= \langle \nabla F(x^\ast,\xi_i) , x-x^\ast\rangle  -  \langle \nabla f(x^\ast), x-x^\ast\rangle.
	\end{align*}
	If $x^\ast$ lies in the interior of $\mathcal{X}$, the first order condition for optimality implies that $\langle \nabla f(x^\ast), x-x^\ast\rangle$ equals zero.
	In general, the first order condition implies that this term is non-negative.
	In either case, we get that
	\begin{align*}
	\left| \frac{1}{m}\sum_{i\in I_j} U_i \right|
	&\leq \frac{  r^2}{32}
	\quad\text{implies}\quad
	M_{x,x^\ast}(j) \geq -\frac{ r^2}{32}
	\end{align*}
	and we shall show that the former happens on most blocks.
	
	To that end, consider first a single block $j$.
	As the $U_i$'s are i.i.d.\ zero mean random variables, Markov's inequality together with the Cauchy-Schwartz inequality implies that
	\begin{align}
	\label{eq:multiplier.single.markov}
	\begin{split}
	\P\left[ \left| \frac{1}{m}\sum_{i\in I_j} U_i\right| \geq \frac{ r^2}{32} \right]
	&\leq \frac{32}{ r^2 } \E\left[ \left| \frac{1}{m} \sum_{i\in I_j} U_i \right| \right]\\
	&\leq \frac{32}{ r^2 }  \left( \frac{ \E[| U_1|^2] }{ m } \right)^{\frac{1}{2}}.
	\end{split}
	\end{align}
	By the definition of $\sigma$ (or rather, by the alternative expression in \eqref{eq:sigma.expression}) and as $\|x-x^\ast\|=r$, we have that
	\begin{align}
	\label{eq:multiplier.single.sigma.max}
	\begin{split}
	\E[|U_1|^2]
	&= \langle \mathbb{G} (x-x^\ast) ,x-x^\ast \rangle \\
	&\leq r^2 \sigma^2.
	\end{split}
	\end{align}
	Combining \eqref{eq:multiplier.single.markov} and \eqref{eq:multiplier.single.sigma.max} and using that $m\geq \frac{\sigma^2}{\theta r^2}$, we conclude that
	\begin{align*}
	\P\left[ \left| \frac{1}{m}\sum_{i\in I_j} U_i\right| \geq \frac{ r^2}{32} \right]
	&\leq \frac{ 32 r \sigma  }{ r^2 \sqrt{m} } \\
	&\leq  32 \sqrt{\theta}
	\leq \frac{\tau}{4}
	\end{align*}
	as soon as $\theta$ is sufficiently small.

	The claim now follows from a Binomial estimate---just as in the proof of Lemma \ref{lem:hessian.midpoints.single.x.single.block}:
	the probability that a single block $j$ has the wanted property is at least $1-\frac{\tau}{4}$ (by the above); therefore, the probability that the number of desirable blocks $j$ is smaller than the mean (which is at least $(1-\frac{\tau}{4})n$) by more than $\frac{\tau}{4}n$ is at most $2\exp(-Cn\tau^2)$.
\end{proof}

\begin{lemma}
\label{lem:multiplier.good.spearated.set}
	Let $C_0$ be the absolute constant of Lemma \ref{lem:muliplier.single.x}.
	Then there is an absolute constant $C_1$ and a set $\bar{\mathcal{S}}_r^\ast\subseteq \mathcal{S}_r^\ast$ with cardinality $\log ( \frac{1}{2}|\bar{\mathcal{S}}_r^\ast|) \leq \frac{1}{2}C_0n\tau^2$ such that the following holds. 
	Let
	\[
	\rho:=\left( \frac{ \mathrm{trace}(\mathbb{H}^{-1} \mathbb{G} ) }{ C_1 \tau^2n }\right)^{\frac{1}{2}}.
	\]
	For every $x\in\mathcal{S}_r^\ast$ there is $y=y(x)\in \bar{\mathcal{S}}_r^\ast$ with
	\begin{align}
	\label{eq:multiplier.good.set.covaraince}
	\langle \mathbb{G} (x-y),x-y\rangle^{\frac{1}{2}}
	&\leq 2 \rho r \quad\text{and } , \\
	\label{eq:multiplier.good.set.gradient}
	\langle \nabla f(x^\ast),x-y\rangle
	&\geq 0.
	\end{align}
\end{lemma}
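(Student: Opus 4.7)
The plan is to construct $\bar{\mathcal{S}}_r^\ast$ in two stages: first build a net of $\mathcal{S}_r^\ast$ in the seminorm $\|z\|_{\mathbb{G}}:=\langle\mathbb{G}z,z\rangle^{1/2}$ of the right cardinality, and then refine each representative so that it also fulfils the gradient inequality \eqref{eq:multiplier.good.set.gradient}. The scale $\rho$ specified in the statement is chosen precisely to match the log-cardinality of the cover with the budget $\tfrac12 C_0\tau^2 n$.

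For the covering step I would exploit the fact that $\mathcal{S}_r^\ast$ lies in the $\|\cdot\|$-ball of radius $r$ around $x^\ast$: after the linear change of variables $u=\mathbb{H}^{1/2}(\cdot-x^\ast)/r$, this ball becomes $B_2^d$ while $\|\cdot\|_{\mathbb{G}}$-balls of radius $\rho r$ become balls of radius $\rho$ in the seminorm induced by $M:=\mathbb{H}^{-1/2}\mathbb{G}\mathbb{H}^{-1/2}$. The dual Sudakov inequality of Pajor--Tomczak-Jaegermann then yields
\[
\log N\bigl(B_2^d,\|\cdot\|_M,\rho\bigr)
\lesssim \frac{(\E\|G\|_M)^2}{\rho^2}
\leq \frac{\mathrm{trace}(M)}{\rho^2}
=\frac{\mathrm{trace}(\mathbb{H}^{-1}\mathbb{G})}{\rho^2},
\]
where $G$ is a standard Gaussian vector and the last inequality is Jensen's. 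Undoing the change of variables, and pushing all net centers into $\mathcal{S}_r^\ast$ at the standard cost of a factor $2$ in the radius (which is exactly the factor allowed by \eqref{eq:multiplier.good.set.covaraince}), gives a $\|\cdot\|_{\mathbb{G}}$-net of $\mathcal{S}_r^\ast$ of radius $\rho r$ and log-cardinality at most an absolute constant times $\mathrm{trace}(\mathbb{H}^{-1}\mathbb{G})/\rho^2$. With $\rho$ as in the statement this is at most $\tfrac12 C_0\tau^2 n$, provided $C_1$ is chosen small enough relative to $C_0$ and to the dual Sudakov constant.

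For the gradient condition, set $\phi(z):=\langle\nabla f(x^\ast),z-x^\ast\rangle$; the first-order optimality condition at $x^\ast\in\mathcal{X}$ guarantees that $\phi\geq 0$ on $\mathcal{X}$. I would now refine the preceding net: for each center $c$ with $\mathcal{S}_r^\ast\cap\{z:\|z-c\|_{\mathbb{G}}\leq\rho r\}$ nonempty, choose $y^{\#}(c)$ to be a minimizer of $\phi$ over this compact intersection, and let $\bar{\mathcal{S}}_r^\ast:=\{y^{\#}(c)\}_c\subseteq\mathcal{S}_r^\ast$. Given $x\in\mathcal{S}_r^\ast$, pick a net center $c$ with $\|x-c\|_{\mathbb{G}}\leq\rho r$ and set $y(x):=y^{\#}(c)$. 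The triangle inequality in $\|\cdot\|_{\mathbb{G}}$ yields \eqref{eq:multiplier.good.set.covaraince}, while the fact that $x$ itself belongs to the set over which $y(x)$ minimizes $\phi$ gives $\phi(y(x))\leq\phi(x)$, i.e.\ \eqref{eq:multiplier.good.set.gradient}.

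The main obstacle is extracting the right covering cardinality. A naive volumetric $\|\cdot\|$-argument would only yield $\lesssim d\log(\sigma/\rho)$, which is incompatible with the budget $n\asymp\theta N\min\{1,r^2/\sigma^2\}$ whenever $\mathbb{G}$ is far from being a multiple of $\mathbb{H}$. The gain from the dual Sudakov inequality is precisely that the covering scale is governed by the trace quantity $\mathrm{trace}(\mathbb{H}^{-1}\mathbb{G})=r^2 N_\mathrm{G}(r)$, which is exactly the parameter already controlled by the hypothesis $N\gtrsim N_\mathrm{G}(r)$ in Theorem \ref{thm:main.convex.intro} and which underlies the dimension-independent scaling of the variance $\sigma^2$ appearing throughout.
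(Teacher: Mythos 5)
Your proposal follows essentially the same route as the paper's proof: a dual Sudakov cover at scale $\rho$ in the seminorm endowed by $\mathbb{H}^{-1/2}\mathbb{G}\mathbb{H}^{-1/2}$ (translated back to a $\rho r$-cover of $\mathcal{S}_r^\ast$ in $\langle\mathbb{G}\cdot,\cdot\rangle^{1/2}$), followed by replacing each representative by a $\phi$-minimizer over the intersection of its $\rho r$-ball with $\mathcal{S}_r^\ast$, giving $2\rho r$ via the triangle inequality and $\phi(y)\le\phi(x)$ because $x$ lies in the set being minimized over. One small inconsistency worth flagging: you claim that pushing net centers into $\mathcal{S}_r^\ast$ "at the cost of a factor 2" yields a $\rho r$-net (it would yield a $2\rho r$-net, with no budget left for the subsequent triangle inequality), but this intermediate claim is never actually used — your refinement step works directly with the raw dual Sudakov centers $c$ and is self-contained, so the argument goes through as written.
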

\begin{proof}
	As a first step, we ignore \eqref{eq:multiplier.good.set.gradient} and construct a set $\tilde{\mathcal{S}}_r^\ast$ with log-cardinality satisfying $\log ( \frac{1}{2} |\tilde{\mathcal{S}}_r^\ast|) \leq \frac{1}{2} C_0n\tau^2$ such that \eqref{eq:multiplier.good.set.covaraince} holds with $\rho r$ instead of $ 2\rho r$.
	To that end, observe that covering the sphere $\{ x\in\mathbb{R}^d :  \langle \mathbb{H}x,x\rangle =1\}$ w.r.t.\ to the norm endowed by $\mathbb{G}$ is equivalent to covering the Euclidean sphere  $\{ x\in\mathbb{R}^d :  \langle x,x\rangle =1\}$ w.r.t.\ the norm endowed by $\mathbb{H}^{-\frac{1}{2}}\mathbb{G}\mathbb{H}^{-\frac{1}{2}}$.
	Hence, denoting by $\mathcal{G}$ the standard Gaussian vector in $\mathbb{R}^d$, the dual Sudakov inequality 
	(see e.g.\ \cite[Theorem 3.18]{ledoux2013probability})
	guarantees the existence of a $\rho r$ cover  of $\tilde{\mathcal{S}}_r^\ast\subseteq\mathcal{S}_r^\ast$ with respect to the norm $\langle \mathbb{G}\cdot,\cdot\rangle^{\frac{1}{2}}$ such that
	\[ \log \left( \frac{|\tilde{\mathcal{S}}_r^\ast|}{2} \right)
	\leq  C_2 \left( \frac{ \E[ \langle \mathbb{H}^{-\frac{1}{2}}\mathbb{G}\mathbb{H}^{-\frac{1}{2}}  \mathcal{G},\mathcal{G}\rangle^{\frac{1}{2}}] }{ \rho } \right)^2;  \]
	thus, for every $x\in\mathcal{S}^\ast_r$ there is $y=y(x)\in\tilde{\mathcal{S}}_r^\ast$ with $\langle \mathbb{G}(y-x),y-x\rangle^{\frac{1}{2}} \leq \rho r$.	
	Observe that
	\begin{align*}
	\E[ \langle \mathbb{H}^{-\frac{1}{2}} \mathbb{G} \mathbb{H}^{-\frac{1}{2}} \mathcal{G}, \mathcal{G} \rangle^{\frac{1}{2}}]^2
	&= \E[ \| (\mathbb{H}^{-\frac{1}{2}} \mathbb{G} \mathbb{H}^{-\frac{1}{2}})^{\frac{1}{2}} \mathcal{G}\|_2 ]^2 \\
	&\leq \E[ \| (\mathbb{H}^{-\frac{1}{2}} \mathbb{G} \mathbb{H}^{-\frac{1}{2}})^{\frac{1}{2}} \mathcal{G} \|_2^2 ] \\
	&= \mathop{\mathrm{trace}}(\mathbb{H}^{-\frac{1}{2}} \mathbb{G} \mathbb{H}^{-\frac{1}{2}})
	=\mathrm{trace}(\mathbb{H}^{-1} \mathbb{G} ),
	\end{align*}
	(where the last equality was already observed in \eqref{eq:N.G.expression}).
	Thus,
	\begin{align*}
	\begin{split}
	\log  \left( \frac{|\tilde{\mathcal{S}}_r^\ast|}{2} \right)
	&\leq  \frac{C_2 \mathrm{trace}(\mathbb{H}^{-1} \mathbb{G} ) }{ \rho^2 } \\
	&= C_2 C_1  \tau^2 n
	\leq \frac{1 }{2} C_0  \tau^2 n,
	\end{split}
	\end{align*}
	once $C_1$ is sufficiency small.
	
	Next, we modify $\tilde{\mathcal{S}}_r^\ast$, ensuring that both equations \eqref{eq:multiplier.good.set.covaraince} and \eqref{eq:multiplier.good.set.gradient} hold:
	for every $z\in \tilde{\mathcal{S}}_r^\ast$, pick some
	\[y(z) \in \mathop{\mathrm{argmin}} \left\{ \langle \nabla f(x^\ast),y\rangle : y \in \mathcal{S}_r^\ast \text{ s.t.\ } \langle \mathbb{G}(y-z),(y-z)\rangle^{\frac{1}{2}} \leq \rho r  \right\};\]
thus, $y(z)$ is the minimizer of $\langle \nabla f(x^\ast),y\rangle$ with the $\rho r$-ball (with respect to the norm $\langle\mathbb{G}\cdot,\cdot\rangle^{\frac{1}{2}}$) centered at $z$.

	It is straightforward to verify that the set
	\[\bar{\mathcal{S}}_r^\ast:=\{ y(z) : z\in \tilde{\mathcal{S}}_r^\ast\} \]
	satisfies the statement of the lemma.
\end{proof}

\begin{lemma}
\label{lem:multiplier.uniform.x}
	Let $\bar{\mathcal{S}}_r^\ast$ and $y=y(x)$ be as in Lemma \ref{lem:multiplier.good.spearated.set}.
	There is an absolute constant $C$ such that the following holds.
	With probability at least $1-2\exp(-C\tau^2n)$, for every $x\in\mathcal{S}_r^\ast$,  we have that
	\[ \left| \left\{ j :  M_{x,x^\ast}(j)\geq M_{y,x^\ast}(j) - \frac{ r^2}{32} \right\} \right| 
	\geq \left(1-\frac{\tau}{2}\right)n. \]
\end{lemma}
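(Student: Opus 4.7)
The plan is to bound the random oscillation $M_{x,x^\ast}(j)-M_{y,x^\ast}(j)$ uniformly over $x\in\mathcal{S}_r^\ast$. Writing out this difference and using that the deterministic piece $\langle\nabla f(x^\ast),x-y(x)\rangle$ is non-negative by \eqref{eq:multiplier.good.set.gradient}, we find
\[
M_{x,x^\ast}(j) - M_{y,x^\ast}(j) \geq Z_j(x) := \frac{1}{m}\sum_{i\in I_j}\langle G_i, x-y(x)\rangle,\qquad G_i:=\nabla F(x^\ast,\xi_i)-\nabla f(x^\ast),
\]
where the $G_i$ are i.i.d.\ centered with covariance $\mathbb{G}$. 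Hence it suffices to show that $\sup_x \tfrac{1}{n}|\{j : Z_j(x)<-r^2/32\}| < \tau/2$ with probability at least $1-2\exp(-C\tau^2 n)$.

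To handle the uniformity without carrying a tail-indicator, dominate by the $1$-Lipschitz clipping $\phi(t):=\max(0,\min(1,t))$, which satisfies $\phi(0)=0$ and $\phi(t)\geq 1_{[1,\infty)}(t)$, so that
\[
\tfrac{1}{n}\bigl|\{j:Z_j(x)<-r^2/32\}\bigr| \leq \tilde\Psi(x) := \tfrac{1}{n}\sum_{j=1}^n \phi\bigl(-32 Z_j(x)/r^2\bigr)\in[0,1].
\]
I will prove the stronger statement $\sup_x\tilde\Psi(x)<\tau/4$. The pointwise expectation is small: combining $\phi(t)\leq|t|$ with $\E|Z_j(x)|\leq(\langle\mathbb{G}(x-y(x)),x-y(x)\rangle/m)^{1/2}\leq 2\rho r/\sqrt m$ (from \eqref{eq:multiplier.good.set.covaraince}) and substituting the value $\rho^2 = \mathrm{trace}(\mathbb{H}^{-1}\mathbb{G})/(C_1\tau^2 n)$ together with $N\geq c_2 N_G(r)$ gives $\E\tilde\Psi(x)\leq 64\rho/(r\sqrt m) \leq \tau/16$ once $c_2$ is chosen large enough in terms of $L$.

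For the fluctuation, standard symmetrization and Rademacher contraction applied to the $1$-Lipschitz $\phi$ passing through $0$ yield
\[
\E\sup_x\bigl(\tilde\Psi(x)-\E\tilde\Psi(x)\bigr) \leq \frac{128}{r^2}\,\E\sup_x\bigl|\langle \tilde S, x-y(x)\rangle\bigr|,
\qquad \tilde S:=\tfrac{1}{N}\sum_{i=1}^N\tilde\varepsilon_i G_i,
\]
where the Rademacher signs $\tilde\varepsilon_i$ are constant on each block, and a direct second-moment computation gives $\E[\tilde S\tilde S^\top]=\mathbb{G}/N$. The key ellipsoidal observation is that $x,y(x)\in\mathcal{S}_r^\ast$ forces $\|x-y(x)\|\leq 2r$ in the $\mathbb{H}$-norm, so
\[
\sup_x|\langle\tilde S,x-y(x)\rangle|\leq 2r\|\mathbb{H}^{-1/2}\tilde S\|_2,\qquad \E\|\mathbb{H}^{-1/2}\tilde S\|_2 \leq \sqrt{\mathrm{trace}(\mathbb{H}^{-1}\mathbb{G})/N} = r\sqrt{N_G(r)/N}\leq r/\sqrt{c_2}.
\]
The fluctuation bound is therefore at most $256/\sqrt{c_2}\leq\tau/16$ (again for $c_2$ sufficiently large), and combined with the pointwise estimate this gives $\E\sup_x\tilde\Psi(x)\leq\tau/8$.

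To close the argument, note that $\sup_x\tilde\Psi(x)$ is a function of $n$ independent blocks with per-block bounded differences $\leq 1/n$, so McDiarmid's inequality yields $\P[\sup_x\tilde\Psi(x)\geq\tau/4]\leq 2\exp(-Cn\tau^2)$ with an absolute constant $C$, which implies the lemma. The main delicacy is to avoid introducing a spurious dimension dependence in the Rademacher step: this is achieved precisely by using the $\mathbb{H}$-norm constraint $\|x-y(x)\|\leq 2r$ (automatic from both endpoints lying on the $\mathbb{H}$-sphere) rather than the tighter $\mathbb{G}$-norm constraint from Lemma \ref{lem:multiplier.good.spearated.set}, so that the appearing trace $\mathrm{trace}(\mathbb{H}^{-1}\mathbb{G})=r^2 N_G(r)$ cancels directly against the assumption $N\geq c_2 N_G(r)$.
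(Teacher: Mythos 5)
Your proof is correct and takes essentially the same route as the paper: symmetrize over blocks, apply Rademacher contraction, bound the resulting gradient process via $\sup_x|\langle z,x-y(x)\rangle|\leq 2r\|\mathbb{H}^{-1/2}z\|_2$ together with $\mathrm{trace}(\mathbb{H}^{-1}\mathbb{G})=r^2N_{\mathrm{G}}(r)$, and close with a bounded-differences inequality. The only cosmetic differences are that you dominate the indicator by the bounded $1$-Lipschitz clipping $\phi$ and retain blockwise Rademacher signs (computing $\E[\tilde S\tilde S^\top]=\mathbb{G}/N$ directly), whereas the paper uses the pointwise bound by $|\bar\Delta_x(j)|\cdot 32/r^2$ followed by contraction for the absolute value and de-symmetrization; both routes produce the same leading terms and constants of the same order.
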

\begin{proof}
	For every $x\in\mathcal{X}$ and $j$, set
	\begin{align*}
	\Delta_{x}(j)
 	&:= M_{x,x^\ast}(j) -  M_{y,x^\ast}(j) \\
 	\bar{\Delta}_x(j)
	&:= \Delta_x(j) - \E[ \Delta_x(j)]\\
	&=\frac{1}{m}\sum_{i\in I_j} \langle \nabla F(x^\ast,\xi_i) - \E[\nabla F(x^\ast,\xi)], x-y \rangle.
	\end{align*}
	Recalling that $\E[\Delta_{x}(j)]\geq 0$ by Lemma \ref{lem:multiplier.good.spearated.set} and setting
	\[ \Psi := \sup_{x\in \mathcal{S}_r^\ast} \frac{1}{n}\sum_{j=1}^n 1_{ \{ | \bar{\Delta}_x(j)|\geq \frac{1}{32}  r^2 \} },\]
	the statement of the lemma therefore follows if $\Psi\leq \frac{\tau}{2}$ holds with probability at least $1-2\exp(-C\tau^2n)$.
	
	To that end, we once more rely on the bounded difference inequality \cite[Theorem 6.2]{boucheron2013concentration}: for every $u\geq 0$, we have that
	\[\P[\Psi\geq \E[\Psi]+u]\leq 2\exp(-Cnu^2). \]
	Setting $u:=\frac{\tau}{4}$, all that is left is to show that $\E[\Psi]\leq \frac{\tau}{4}$.
		
	First, observe that $1_{|a|\geq b} \leq \frac{1}{b} |a|$. Hence,
	\begin{align*}
	\Psi
	&\leq \frac{32}{ r^2} \sup_{x\in \mathcal{S}_r^\ast} \frac{1}{n}\sum_{j=1}^n | \bar{\Delta}_{x}(j)|\\
	&\leq \frac{32}{ r^2} \left( \sup_{x\in\mathcal{S}_r^\ast} \E[ | \bar{\Delta}_{x}(1) | ] + \sup_{x\in \mathcal{S}_r^\ast} \frac{1}{n}\sum_{j=1}^n \left| |\bar{\Delta}_{x}(j)| - \E[|\bar{\Delta}_{x}(j)|] \right| \right)\\
	&=:\frac{32}{ r^2} \left( R_1 + R_2\right).
	\end{align*}
	In particular $\E[\Psi]\leq \frac{32}{ r^2} ( R_1 + \E[R_2])$.
	
	To estimate $R_1$, recall that $\bar{\Delta}_x(j)$ is a sum of independent, zero mean random variables. Thus,
	\begin{align*}
	\E[| \bar{\Delta}_{x}(1)| ]
	&\leq \left( \frac{ \langle \mathbb{G}(x-y),x-y\rangle }{m}\right)^{\frac{1}{2}} \\
	&\leq  2 r \left( \frac{\mathrm{trace}(\mathbb{H}^{-1}\mathbb{G}) }{ C_1 \tau^2 n m } \right)^{\frac{1}{2}}
	\end{align*}
	for every $x\in\mathcal{S}_r^\ast$, where the second inequality follows from Lemma \ref{lem:multiplier.good.spearated.set}.
	Recalling that, by our assumptions,
	\[ n m
	=N
	\geq c_2 N_{\mathrm{G}}(r)
	\equiv \frac{c_2 \mathrm{trace}(\mathbb{H}^{-1}\mathbb{G}) }{r^2}, \]
	we conclude that $\E[| \bar{\Delta}_{x}(1)| ]\leq  \frac{2r^2}{ \tau \sqrt{ C_1 c_{2}} }$.
	Therefore,
	\[R_1\leq \frac{2r^2}{\tau\sqrt{C_1 c_{2}} } .\]

	Turning to $R_2$, by symmetrization, the contraction theorem for Rademacher processes, and de-symmetrization (see e.g.\ \cite[Section 11.3]{boucheron2013concentration}), it is evident that
	\begin{align*}
	\E[R_2]
	&\leq 2\E\left[ \sup_{x\in \mathcal{S}_r^\ast} \left| \frac{1}{n}\sum_{j=1}^n \varepsilon_j |\bar{\Delta}_{x}(j)| \right| \right]\\
	&\leq 4 \E\left[ \sup_{x\in \mathcal{S}_r^\ast } \left| \frac{1}{n}\sum_{j=1}^n \varepsilon_j \bar{\Delta}_{x}(j) \right| \right] \\
	&\leq 8 \E \left[ \sup_{x\in \mathcal{S}_r^\ast } \left| \frac{1}{n}\sum_{j=1}^n \bar{\Delta}_{x}(j) \right| \right].
	\end{align*}
	Moreover, by the definition of $\bar{\Delta}_x(j)$,
	\begin{align}
	\label{eq:multiplier.bounde.diff}
	\E[R_2]
	&\leq 8 \E\left[ \sup_{x\in \mathcal{S}_r^\ast } \left| \frac{1}{N}\sum_{i=1}^N  \left\langle \nabla F(x^\ast,\xi_i) - \E[\nabla F(x^\ast,\xi)], x-y \right\rangle  \right| \right].
	\end{align}
	Recall that $\|\cdot\|=\|\mathbb{H}^{\frac{1}{2}} \cdot\|_2$ and that $\|x-y\|\leq 2r$.
	Therefore, for any $z\in\mathbb{R}^d$,
	\begin{align*}
	\sup_{x\in \mathcal{S}_r^\ast } | \langle z, x-y \rangle|
	&= \sup_{x\in \mathcal{S}_r^\ast } | \langle  \mathbb{H}^{-\frac{1}{2}}z, \mathbb{H}^{\frac{1}{2}} ( x-y) \rangle| \\
	&\leq 2r \| \mathbb{H}^{-\frac{1}{2}} z\|_2.
	\end{align*}
	Together with linearity of $z\mapsto \langle z,x-y\rangle$ and \eqref{eq:multiplier.bounde.diff},
	\begin{align*}
	\E[R_2]
	&\leq 16r \E\left[ \left\| \frac{1}{N}\sum_{i=1}^N  \mathbb{H}^{-\frac{1}{2}}  \left( \nabla F(x^\ast,\xi_i) - \E[\nabla F(x^\ast,\xi)] \right) \right\|_2 \right]\\
	&\leq 16r \left( \frac{ \mathrm{trace}(\C[\mathbb{H}^{-\frac{1}{2}}\nabla F(x^\ast,\xi)]) } {N} \right)^{\frac{1}{2}}.
	\end{align*}
	It remains to observe that
	\begin{align*}
	\mathrm{trace}(\C[\mathbb{H}^{-\frac{1}{2}} \nabla F(x^\ast,\xi)])
	&= \mathrm{trace}(\mathbb{H}^{-\frac{1}{2}} \mathbb{G} \mathbb{H}^{-\frac{1}{2}})\\
	&= \mathrm{trace}(\mathbb{H}^{-1} \mathbb{G}) .
	\end{align*}
	Since $N\geq  c_2N_{\mathrm{G}}(r)\equiv \frac{c_2 }{r^2}\mathrm{trace}(\mathbb{H}^{-1}\mathbb{G})$, it is evident that
	\[ \E[R_2]\leq \frac{16r^2 }{\sqrt{c_{2}}},\]
	and combining the two estimates,
	\begin{align*}
	\E[\Psi]
	&\leq \frac{32}{r^2}\left( \frac{2r^2}{\tau\sqrt{C_1 c_{2}} } + \frac{16r^2 }{\sqrt{c_{2}}} \right)
	\leq \frac{\tau}{4},
	\end{align*}
	where the last inequality holds as soon as $c_{2}$ is large enough.
\end{proof}

\begin{proof}[Proof of Proposition \ref{prop:actually.the.theorem}, multiplier part]
	Let $\bar{\mathcal{S}}_r^\ast$ be the cover defined in Lemma \ref{lem:multiplier.good.spearated.set}.
	By Lemma \ref{lem:muliplier.single.x}, with probability at least $1-2\exp(-c\tau^2 n)$, for every $y\in \bar{\mathcal{S}}_r^\ast$, we have that
	\[M_{y,x^\ast}(j)\geq - \frac{ r^2}{32}	\
	\quad\text{on more than } \left(1-\frac{\tau}{2}\right)n \text{ blocks } j.\]
	Moreover, by Lemma \ref{lem:multiplier.uniform.x}, with probability at least $1-2\exp(-c\tau^2 n)$, for every $x\in \mathcal{S}_r^\ast$ there is $y\in \bar{\mathcal{S}}_r^\ast$ such that
	\[ M_{x,x^\ast}(j)\geq  M_{y,x^\ast}(i)- \frac{r^2}{32}\
	\quad\text{on more than } \left(1-\frac{\tau}{2}\right)n \text{ blocks } j.\]
	Combining the two estimates, it follows that, with probability at least $1-2\exp(-c\tau^2 n)$, for every $x\in \mathcal{S}_r^\ast$,
	\[M_{x,x^\ast}(j)\geq -\frac{r^2}{16}
	\quad\text{on more than } (1-\tau)n \text{ blocks } j,\]
	which is exactly what we wanted to show.
\end{proof}

\subsection{Prediction error}

In this section we shall prove Proposition \ref{prop:prediction.error}, dealing with the prediction error.
To that end, let
\[\mathcal{U}_r^\ast:= \{ x\in \mathcal{B}_r^\ast : f(x) \geq f(x^\ast) + 2 c_{\mathrm{H}} r^2\}\]
be the set of all $x\in\mathcal{B}_r^\ast$ that are in an ``unfavorable position". Let us stress again that if $x^\ast$ lies in the interior of $\mathcal{X}$, then $\mathcal{U}_r^\ast$ is empty and the estimate on the prediction error holds automatically. We therefore assume that $\mathcal{U}_r^\ast$ is not empty.

The proof of Proposition \ref{prop:prediction.error} relies on the convexity of $F$:
for any $x,y\in\mathcal{X}$ and $j$, we have that
\begin{align*}
\widehat{f}_{I_j'}(x) - \widehat{f}_{I_j'}(y)
&\geq \frac{1}{m}\sum_{i\in I_j'} \langle \nabla F(y,\xi_i),x-y\rangle
=: M_{x,y}'(j) .
\end{align*}
In particular, this implies that

\vspace{0.5em}
\begin{enumerate}[(i)]
	\item
	$x^\ast$ wins its home match against $x$ if
	\[ M_{x,x^\ast}'(j) \geq -\frac{c_{\mathrm{H}} r^2}{4} \text{ on more than }\frac{n}{2} \text{ blocks } j, \]
	\item
	$x$ does not win its home match against $x^\ast$ if
	\[ M_{x,x^\ast}'(j) > \frac{ c_{\mathrm{H}} r^2}{4} \text{ on more than }\frac{n}{2} \text{ blocks } j, \]
\end{enumerate}
\vspace{0.5em}

Thus, Proposition \ref{prop:prediction.error} is a consequence of the following lemma, which we shall prove in this section.

\begin{lemma}
\label{prop:actually.the.theorem.prediction}
	There is a constant $c$ such that the following holds.
	With probability at least $1-2\exp(-c \tau^2 n)$, for every $x\in\mathcal{B}_r^\ast$ and every $y\in\mathcal{U}_r^\ast$ we have that
	\begin{align}
	\label{eq:prediction.normal}
	\left| \left\{ j : M_{x,x^\ast}'(j) \geq -\frac{c_{\mathrm{H}} r^2}{4}  \right\} \right|
	&\geq (1-2\tau)n ,\\
	\label{eq:prediction.unfavorable}
	\left| \left\{ j : M_{y,x^\ast}'(j) > \frac{c_{\mathrm{H}} r^2}{4}  \right\} \right|
	&\geq (1-2\tau)n.
	\end{align}
\end{lemma}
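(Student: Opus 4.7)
The plan is to reduce both \eqref{eq:prediction.normal} and \eqref{eq:prediction.unfavorable} to a single uniform deviation bound for the centered multiplier $\bar{M}'_{x,x^\ast}(j) := M'_{x,x^\ast}(j) - \E[M'_{x,x^\ast}(j)]$, and then to establish it by essentially re-running the three-step multiplier analysis from the estimation-error proof (Lemmas \ref{lem:muliplier.single.x}--\ref{lem:multiplier.uniform.x}), with $\mathcal{B}_r^\ast$ playing the role of $\mathcal{S}_r^\ast$.

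\emph{The reduction.} Note that $\E[M'_{x,x^\ast}(j)] = \langle \nabla f(x^\ast), x-x^\ast\rangle$. The first-order optimality condition on the convex set $\mathcal{X}$ implies $\E[M'_{x,x^\ast}(j)]\geq 0$ for every $x\in\mathcal{X}$. For $y\in\mathcal{U}_r^\ast$, a second-order Taylor expansion of $f$ together with the definition of $c_{\mathrm{H}}$ yields
\[
\E[M'_{y,x^\ast}(j)]
= f(y)-f(x^\ast) - \tfrac{1}{2}\langle \nabla^2f(z)(y-x^\ast),y-x^\ast\rangle
\geq 2c_{\mathrm{H}} r^2 - c_{\mathrm{H}} r^2
= c_{\mathrm{H}} r^2
\]
for a midpoint $z$ with $\|z-x^\ast\|\leq r < 1$. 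Consequently, both \eqref{eq:prediction.normal} and \eqref{eq:prediction.unfavorable} are implied by the statement that $|\bar{M}'_{x,x^\ast}(j)|\leq c_{\mathrm{H}} r^2/4$ holds on at least $(1-2\tau)n$ blocks, uniformly in $x\in\mathcal{B}_r^\ast$. Since $\|\cdot\| = \langle \nabla^2 f(x^\ast)\cdot,\cdot\rangle^{1/2}$ forces $\|\nabla^2 f(x^\ast)\|_{\mathrm{op}} = 1$ and hence $c_{\mathrm{H}}\geq 1/2$, the $c_{\mathrm{H}}$-free bound $|\bar{M}'_{x,x^\ast}(j)|\leq r^2/8$ on at least $(1-2\tau)n$ blocks is already sufficient.

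\emph{The uniform concentration.} The three substeps mirror the multiplier analysis of the estimation error almost verbatim. First, for fixed $x\in\mathcal{B}_r^\ast$, Markov's inequality combined with $\E[M'_{x,x^\ast}(1)^2] = \langle \mathbb{G}(x-x^\ast),x-x^\ast\rangle\leq \sigma^2 r^2$ and the block-size lower bound $m\geq \sigma^2/(\theta r^2)$ gives $\P[|\bar{M}'_{x,x^\ast}(j)|\geq r^2/16]\leq 16\sqrt{\theta}\leq \tau/4$ once $\theta$ is small enough, and a Binomial concentration argument then produces the single-point version of the desired statement with probability $1-2\exp(-c\tau^2 n)$. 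Second, the dual Sudakov inequality applied to $\mathcal{B}_r^\ast$ with respect to the semi-norm $\langle\mathbb{G}\cdot,\cdot\rangle^{1/2}$ produces a $\rho r$-cover $\bar{\mathcal{B}}_r^\ast$ of log-cardinality at most $\tfrac{1}{2}C\tau^2 n$, just as in Lemma \ref{lem:multiplier.good.spearated.set}, so a union bound extends the single-point estimate to the net. Third, the passage from the net to all of $\mathcal{B}_r^\ast$ is controlled by the bounded differences inequality together with symmetrization and contraction, exactly as in Lemma \ref{lem:multiplier.uniform.x}: the resulting Rademacher average is bounded by $Cr(\mathrm{trace}(\mathbb{H}^{-1}\mathbb{G})/N)^{1/2}\leq Cr^2/\sqrt{c_2}$, which is at most $\tau r^2/128$ once $c_2$ is large enough.

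I do not anticipate a serious obstacle. Every estimate in Lemmas \ref{lem:muliplier.single.x}--\ref{lem:multiplier.uniform.x} depends on the competitor only through $\|x-x^\ast\|\leq r$ and the variance $\langle\mathbb{G}(x-x^\ast),x-x^\ast\rangle\leq \sigma^2 r^2$, both of which hold uniformly on the entire ball $\mathcal{B}_r^\ast$. Moreover, the auxiliary constraint $\langle \nabla f(x^\ast),x-y\rangle\geq 0$ imposed on the net in Lemma \ref{lem:multiplier.good.spearated.set} is not needed here, because centering $M'_{x,x^\ast}(j)$ by its mean at the outset already absorbs the positive contribution of $\E[M'_{x,x^\ast}(j)]$; indeed, that positive contribution is precisely what provides the separation required in \eqref{eq:prediction.unfavorable}.
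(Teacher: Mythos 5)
Your proof is correct and is a reorganization of the paper's argument rather than a genuinely new route, so let me describe the difference. The paper proves \eqref{eq:prediction.normal} by observing that the estimation-error multiplier bound (Lemma \ref{prop:actually.the.theorem}) extends verbatim from $\mathcal{S}_r^\ast$ to $\mathcal{B}_r^\ast$, and proves \eqref{eq:prediction.unfavorable} via a separate three-step scheme on $\mathcal{U}_r^\ast$ (Lemmas \ref{lem:muliplier.unfavorable.single.x}--\ref{lem:multiplier.unfavorable.uniform.x}): the drift $\langle\nabla f(x^\ast),y-x^\ast\rangle\geq c_{\mathrm{H}} r^2$ is supplied by Lemma \ref{lem:unfavorable.position}, and the passage from net to uniform uses a sign-constrained cover (Lemma \ref{lem:multiplier.unfavorable.good.spearated.set} with \eqref{eq:multiplier.unfavorable.good.set.gradient}) so that $\E[\Delta_x(j)]\geq 0$. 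You instead center $M'$ at the outset, establish a single two-sided concentration bound for $\bar{M}'$ uniformly over the ball $\mathcal{B}_r^\ast$, and then add back the drift: $\geq 0$ by first-order optimality for \eqref{eq:prediction.normal}, and $\geq c_{\mathrm{H}} r^2$ (your Taylor expansion is precisely the proof of Lemma \ref{lem:unfavorable.position}) for \eqref{eq:prediction.unfavorable}. The three technical substeps are the same as the paper's --- single-point Markov plus Binomial concentration, dual Sudakov cover, bounded differences plus symmetrization/contraction --- but your observation that full centering dispenses with the constraint $\langle\nabla f(x^\ast),x-y\rangle\geq 0$ on the net is correct: writing $|\bar{M}'_{x,x^\ast}(j)|\leq|\bar{M}'_{y,x^\ast}(j)|+|\bar{\Delta}_x(j)|$, the quantity $\bar{\Delta}_x(j)$ is already centered and no sign information about $\E[\Delta_x(j)]$ is required, so the net construction simplifies. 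This buys a more unified proof, treating $\mathcal{B}_r^\ast$ once and specializing to $\mathcal{U}_r^\ast$ via the drift, at no cost in the probability estimate. One small discrepancy worth noting: the paper twice asserts $c_{\mathrm{H}}\geq 1$, but from \eqref{eq:def.c.H} together with $\|\nabla^2 f(x^\ast)\|_{\mathrm{op}}=1$ one only obtains $c_{\mathrm{H}}\geq\tfrac12$, which is the bound you use and is the correct one; this is immaterial since only the absolute constants shift.
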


Just as in the analysis of the estimation error, Proposition \ref{prop:prediction.error} is an easy consequence of Lemma \ref{prop:actually.the.theorem.prediction}:

\begin{proof}[Proof of Proposition \ref{prop:prediction.error}]
	By Proposition \ref{prop:estimation.error} we have that with probability at least $1-2\exp(-c\tau^2n)$,  $x^\ast\in \tilde{\mathcal{X}}_N^\ast$ and $\tilde{\mathcal{X}}_N^\ast\subseteq\mathcal{B}_r^\ast$.
	In the following we argue conditionally on that high probability event, and recall that $\tau<\frac{1}{4}$. By Lemma \ref{prop:actually.the.theorem.prediction}, with probability at least $1-2\exp(-c \tau^2 n)$, we have that $x^\ast$ wins its home match against every competitor in $\mathcal{B}_r^\ast$ (in particular, against every competitor in $\tilde{\mathcal{X}}_N^\ast$). Moreover, on the same event, every element in $\tilde{\mathcal{X}}_N^\ast$ that is in an unfavorable position loses its home match against $x^\ast$. Thus, $x^\ast\in \widehat{\mathcal{X}}_N^\ast$ and $\widehat{\mathcal{X}}_N^\ast\subseteq \mathcal{B}_r^\ast\setminus \mathcal{U}_r^\ast$, which is exactly what we wanted to show.
\end{proof}

The first part of Lemma \ref{prop:actually.the.theorem.prediction} (namely \eqref{eq:prediction.normal}) is an immediate consequence of Lemma \ref{prop:actually.the.theorem}.
Thus, in the following, we focus on the second part of Lemma \ref{prop:actually.the.theorem.prediction} (namely \eqref{eq:prediction.unfavorable}), dealing with $\mathcal{U}_r^\ast$. Our starting point is the following observation:

\begin{lemma}
\label{lem:unfavorable.position}
	Let $x\in\mathcal{U}_r^\ast$.
	Then we have that
	\[\langle \nabla f(x^\ast),x-x^\ast \rangle \geq c_{\mathrm{H}} r^2 .\]
\end{lemma}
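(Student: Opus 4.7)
The plan is to use a second order Taylor expansion of $f$ around $x^\ast$ and bound the Hessian term by the constant $c_{\mathrm{H}}$.

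First, since $x\in\mathcal{U}_r^\ast\subseteq \mathcal{B}_r^\ast$ with $r<r_0<1$, the remark following Lemma \ref{prop:actually.the.theorem} ensures that $f$ is twice continuously differentiable on a neighbourhood of $x^\ast$ that contains the whole segment $[x^\ast,x]$. By Taylor's theorem, there exists a midpoint $z=x^\ast+t(x-x^\ast)$ for some $t\in(0,1)$ such that
\[
f(x)=f(x^\ast)+\langle \nabla f(x^\ast),x-x^\ast\rangle+\tfrac{1}{2}\langle \nabla^2 f(z)(x-x^\ast),x-x^\ast\rangle.
\]

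Next I would bound the quadratic term using the definition of $c_{\mathrm{H}}$ in \eqref{eq:def.c.H}. Since $\|z-x^\ast\|\leq \|x-x^\ast\|\leq r<1$, it follows that $\tfrac{1}{2}\|\nabla^2 f(z)\|_{\mathrm{op}}\leq c_{\mathrm{H}}$, and hence by the definition of $\|\cdot\|_{\mathrm{op}}$ (which is the operator norm with respect to the $\|\cdot\|$-norm),
\[
\tfrac{1}{2}\langle \nabla^2 f(z)(x-x^\ast),x-x^\ast\rangle \leq c_{\mathrm{H}}\|x-x^\ast\|^2 \leq c_{\mathrm{H}} r^2.
\]

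Plugging this back into the Taylor expansion gives $f(x)\leq f(x^\ast)+\langle \nabla f(x^\ast),x-x^\ast\rangle + c_{\mathrm{H}} r^2$. Finally, using the defining property of $\mathcal{U}_r^\ast$ (namely $f(x)\geq f(x^\ast)+2c_{\mathrm{H}} r^2$) and rearranging yields the claim
\[
\langle \nabla f(x^\ast),x-x^\ast\rangle \geq f(x)-f(x^\ast)-c_{\mathrm{H}} r^2 \geq c_{\mathrm{H}} r^2.
\]
There is no substantive obstacle here; the only subtlety is making sure that $f$ really is twice continuously differentiable on $\mathcal{B}_r^\ast$ so that the Taylor expansion and the operator norm bound both apply, which is already guaranteed by Assumption \ref{ass:hessian.midpoints} as indicated in the remark preceding this subsection.
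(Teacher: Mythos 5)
Your proof is correct and takes essentially the same route as the paper: a second-order Taylor expansion of $f$ around $x^\ast$, bounding the quadratic remainder by $c_{\mathrm{H}} r^2$ via \eqref{eq:def.c.H}, and then rearranging using the defining inequality $f(x)\geq f(x^\ast)+2c_{\mathrm{H}}r^2$ for $x\in\mathcal{U}_r^\ast$. The only difference is that you spell out the differentiability justification and the operator-norm bound in more detail, but the argument is the same.
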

\begin{proof}
Since $x \in \mathcal{U}_r^\ast$ we have that $2 c_{\mathrm{H}} r^2
	\leq f(x) - f(x^\ast)$. A Taylor expansion around $x^\ast$ shows that there is a midpoint $z$ such that
	\begin{align*}
	f(x) - f(x^\ast)
	&= \langle \nabla f(x^\ast),x-x^\ast\rangle + \frac{1}{2} \langle \nabla^2f(z) (x-x^\ast) ,x-x^\ast \rangle \\
	&\leq\langle \nabla f(x^\ast),x-x^\ast\rangle + c_{\mathrm{H}} r^2.
	\end{align*}
The proof clearly follows. 	
\end{proof}

 Thanks to Lemma \ref{lem:unfavorable.position}, the proof of the second part of Lemma \ref{prop:actually.the.theorem.prediction} (namely \eqref{eq:prediction.unfavorable}) follows the same path as the proof of the bound on the multiplier term in the context of the estimation error. Thus, we shall only sketch the argument for the sake of completeness.

\begin{lemma}
\label{lem:muliplier.unfavorable.single.x}
	There is an absolute constant $C$ such that, for every $x\in\mathcal{U}_r^\ast$, with probability at least $1-2\exp(-C\tau^2 n)$, we have that
	\begin{align}
	\label{eq:multi.unfavorable.single}
	\left| \left\{  j :  M_{x,x^\ast}'(j) \geq \frac{c_{\mathrm{H}} r^2}{2}\right\} \right| \geq ( 1-\tau) n.
	\end{align}
	In particular, the statement holds uniformly for sets $\bar{\mathcal{U}}_r^\ast\subseteq\mathcal{U}_r^\ast$ whose cardinality satisfies $\log(\frac{1}{2} |\bar{\mathcal{U}}_r^\ast|)\leq \frac{1}{2}C\tau^2n$.
\end{lemma}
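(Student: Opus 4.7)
The plan is to follow the structure of the proof of Lemma~\ref{lem:muliplier.single.x}, the key new ingredient being Lemma~\ref{lem:unfavorable.position}. Fix $x\in\mathcal{U}_r^\ast$ and set
\[ U_i := \langle \nabla F(x^\ast,\xi_i) - \nabla f(x^\ast),\, x-x^\ast\rangle \qquad (1\leq i\leq N), \]
so that the $U_i$ are i.i.d.\ zero-mean random variables and
\[ M_{x,x^\ast}'(j) = \langle \nabla f(x^\ast), x-x^\ast\rangle + \frac{1}{m}\sum_{i\in I_j'} U_i . \]
By Lemma~\ref{lem:unfavorable.position} the deterministic part is at least $c_{\mathrm{H}} r^2$, so in order to establish \eqref{eq:multi.unfavorable.single} it suffices to show that $\bigl|\frac{1}{m}\sum_{i\in I_j'} U_i\bigr| \leq c_{\mathrm{H}} r^2/2$ on at least a $(1-\tau)$-fraction of the blocks.

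First I would handle a single block. The identity $\V[U_1] = \langle \mathbb{G}(x-x^\ast), x-x^\ast\rangle$, combined with \eqref{eq:sigma.expression} and $\|x-x^\ast\|\leq r$, yields $\V[U_1] \leq r^2 \sigma^2$. Markov's and Jensen's inequalities then give
\[ \P\Big[\Big|\frac{1}{m}\sum_{i\in I_j'} U_i\Big| \geq \frac{c_{\mathrm{H}} r^2}{2}\Big]
\leq \frac{2}{c_{\mathrm{H}} r^2}\Big(\frac{\V[U_1]}{m}\Big)^{1/2}
\leq \frac{2\sigma}{c_{\mathrm{H}} r \sqrt{m}}. \]
Since $m=N/n$ with $n=\theta N\min\{1, r^2/\sigma^2\}$, a direct case split (according to whether $r\leq\sigma$ or $r>\sigma$) shows that $\sigma/(r\sqrt{m})\leq \sqrt{\theta}$. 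Moreover, because the seminorm $\|\cdot\|$ is itself defined by $\nabla^2 f(x^\ast)=:\mathbb{H}$, one has $\|\mathbb{H}\|_{\mathrm{op}}=\sup_{\|z\|\leq 1}\langle \mathbb{H} z,z\rangle=\sup_{\|z\|\leq 1}\|z\|^2=1$, and consequently $c_{\mathrm{H}}\geq \tfrac12\|\nabla^2 f(x^\ast)\|_{\mathrm{op}}=\tfrac12$. The per-block probability is therefore bounded by $4\sqrt{\theta}$, which is at most $\tau/2$ once $\theta$ is chosen sufficiently small in terms of $\tau$ (and hence only in terms of $L$).

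With this per-block estimate in hand, the conclusion \eqref{eq:multi.unfavorable.single} for a fixed $x$ follows exactly as at the end of the proof of Lemma~\ref{lem:muliplier.single.x}: a Binomial concentration inequality applied to the indicators of the ``bad'' blocks shows that the fraction of such blocks exceeds $\tau$ with probability at most $2\exp(-C\tau^2 n)$. The uniform ``in particular'' statement over $\bar{\mathcal{U}}_r^\ast$ then follows by a union bound, since $\log|\bar{\mathcal{U}}_r^\ast|\leq \tfrac12 C\tau^2 n$ by hypothesis.

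The main subtlety is that the threshold here is $c_{\mathrm{H}} r^2/2$ rather than the $r^2/32$ used in Lemma~\ref{lem:muliplier.single.x}: a priori this is worrying, as it could force the tuning parameter $\theta$ to depend on $c_{\mathrm{H}}$ and hence on the problem in a way that we cannot control. The observation $c_{\mathrm{H}}\geq 1/2$, forced by the very definition of the norm $\|\cdot\|$, is what removes this obstacle and lets the argument go through with the same tuning of $\theta$ as in Lemma~\ref{lem:muliplier.single.x}.
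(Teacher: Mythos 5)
Your proof is correct and follows essentially the same route as the paper: decompose $M_{x,x^\ast}'(j)$ into the deterministic part $\langle \nabla f(x^\ast), x-x^\ast\rangle$, which is bounded below by $c_{\mathrm{H}}r^2$ via Lemma~\ref{lem:unfavorable.position}, plus a centred fluctuation whose per-block deviation is controlled by Markov's inequality exactly as in the proof of Lemma~\ref{lem:muliplier.single.x}, and finish with the same Binomial concentration and union-bound steps. One small remark: the paper at this point invokes $c_{\mathrm{H}}\geq 1$, whereas the definition \eqref{eq:def.c.H} (which carries the factor $\tfrac12$) together with $\|\nabla^2 f(x^\ast)\|_{\mathrm{op}}=1$ yields $c_{\mathrm{H}}\geq\tfrac12$ as you computed; your bound is the one consistent with the stated definition, and in either case the per-block probability is an absolute-constant multiple of $\sqrt{\theta}$, so the conclusion is unaffected.
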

\begin{proof}
	Fix $x\in\mathcal{U}_r^\ast$.
	By Lemma \ref{lem:unfavorable.position}, we have
	\[  \E[ \langle \nabla F(x^\ast,\xi),x-x^\ast\rangle ]
	\geq c_{\mathrm{H}}r^2 .\]
	Thus, exactly as in the proof of Lemma \ref{lem:muliplier.single.x}, we conclude that
	\[ \P\left[ M_{x,x^\ast}'(j) \leq \frac{c_{\mathrm{H}} r^2}{2} \right]
	\leq \frac{2\sqrt{\theta}}{ c_{\mathrm{H}} }
	\leq 2\sqrt{\theta}
	\leq \frac{\tau}{2}. \]
	(as long as $\theta$ is small enough, and the second inequality holds because $c_{\mathrm{H}} \geq 1$).
	A Binomial estimate (just as in the proof of Lemma \ref{lem:muliplier.single.x}) can be used to show that \eqref{eq:multi.unfavorable.single} holds with probability at least $1-2\exp(-C\tau^2 n)$.
\end{proof}

\begin{lemma}
\label{lem:multiplier.unfavorable.good.spearated.set}
	Let $C_0$ be the absolute constant of Lemma \ref{lem:muliplier.unfavorable.single.x}.
	Then there is an absolute constant $C_1$ and a set $\bar{\mathcal{U}}_r^\ast\subseteq\mathcal{U}_r^\ast$ whose cardinality satisfies $\log ( \frac{1}{2}|\bar{\mathcal{U}}_r^\ast| ) \leq \frac{1}{2}C_0n\tau^2$, such that the following holds.
	
	Let
	\[
	\rho:=\left(\frac{\mathrm{trace}(\mathbb{H}^{-1}\mathbb{G})}{ C_1 \tau^2 n}\right)^{\frac{1}{2}}
	\]
	For every $x\in\mathcal{U}_r^\ast$ there is $y=y(x)\in \bar{\mathcal{U}}_r^\ast$ with
	\begin{align}
	\label{eq:multiplier.unfavorable.good.set.covaraince}
	\langle \mathbb{G} (x-y),x-y\rangle^{\frac{1}{2}}
	&\leq 2 \rho r \quad\text{and}  , \\
	\label{eq:multiplier.unfavorable.good.set.gradient}
	\langle \nabla f(x^\ast),x-y\rangle
	&\geq 0.
	\end{align}
\end{lemma}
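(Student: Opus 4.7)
The plan is to imitate the proof of Lemma \ref{lem:multiplier.good.spearated.set} almost verbatim, replacing the sphere $\mathcal{S}_r^\ast$ by the (possibly non-convex) set $\mathcal{U}_r^\ast \subseteq \mathcal{B}_r^\ast$. First, ignoring the sign condition \eqref{eq:multiplier.unfavorable.good.set.gradient}, I would construct a $\rho r$-net $\tilde{\mathcal{U}}_r^\ast$ with respect to the seminorm $\langle \mathbb{G}\,\cdot,\cdot\rangle^{1/2}$ for the enveloping ball $\mathcal{B}_r^\ast$. Since $\mathcal{B}_r^\ast$ is contained in the $\mathbb{H}$-ball of radius $r$ centred at $x^\ast$, conjugation by $\mathbb{H}^{1/2}$ reduces this to covering a Euclidean ball of radius $r$ w.r.t.\ the norm endowed by $\mathbb{H}^{-1/2}\mathbb{G}\mathbb{H}^{-1/2}$, and the dual Sudakov inequality together with the identity $\mathrm{trace}(\mathbb{H}^{-1/2}\mathbb{G}\mathbb{H}^{-1/2}) = \mathrm{trace}(\mathbb{H}^{-1}\mathbb{G})$ yields
\[ \log\Big(\tfrac{|\tilde{\mathcal{U}}_r^\ast|}{2}\Big) \leq \frac{C_2 \, \mathrm{trace}(\mathbb{H}^{-1}\mathbb{G})}{\rho^2} = C_1 C_2 \tau^2 n \leq \tfrac{1}{2} C_0 \tau^2 n, \]
provided $C_1$ is chosen sufficiently small. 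This is precisely the bound in the statement.

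Next, for each net point $z$ such that the $\rho r$-ball around $z$ (w.r.t.\ $\langle \mathbb{G}\,\cdot,\cdot\rangle^{1/2}$) meets $\mathcal{U}_r^\ast$, I would pick
\[ y(z) \in \mathop{\mathrm{argmin}}\Big\{ \langle \nabla f(x^\ast), y \rangle : y \in \mathcal{U}_r^\ast, \ \langle \mathbb{G}(y-z), y-z\rangle^{1/2} \leq \rho r \Big\}, \]
and define $\bar{\mathcal{U}}_r^\ast := \{ y(z) \}$. Given any $x \in \mathcal{U}_r^\ast$, the nearest net point $z$ satisfies $\langle \mathbb{G}(x-z), x-z \rangle^{1/2} \leq \rho r$, so $x$ itself is feasible in the minimization; hence $y(z)$ exists,
\[ \langle \nabla f(x^\ast), y(z) \rangle \leq \langle \nabla f(x^\ast), x \rangle, \]
which is \eqref{eq:multiplier.unfavorable.good.set.gradient}, and the triangle inequality for $\langle \mathbb{G}\,\cdot,\cdot\rangle^{1/2}$ gives \eqref{eq:multiplier.unfavorable.good.set.covaraince}.

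The argument is essentially mechanical once Lemma \ref{lem:multiplier.good.spearated.set} is in hand; the one mild subtlety is that $\mathcal{U}_r^\ast$ is not assumed convex, so I cannot apply dual Sudakov to it directly to produce a net of its own elements. The workaround above is to cover the convex body $\mathcal{B}_r^\ast$ and then use the selection step to relocate each net point into $\mathcal{U}_r^\ast$; this at most doubles the covering radius (from $\rho r$ to $2\rho r$), which is already reflected in the factor $2$ in \eqref{eq:multiplier.unfavorable.good.set.covaraince}. Finally, $\bar{\mathcal{U}}_r^\ast$ is non-empty because $\mathcal{U}_r^\ast$ was assumed non-empty in the discussion preceding the lemma.
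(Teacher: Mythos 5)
Your proof is correct and follows essentially the same route as the paper's: cover the convex body $\mathcal{B}_r^\ast$ via dual Sudakov (reducing to the $\mathbb{H}^{-1/2}\mathbb{G}\mathbb{H}^{-1/2}$-norm), then relocate each net point into $\mathcal{U}_r^\ast$ by minimizing $\langle\nabla f(x^\ast),\cdot\rangle$ over the intersection of the $\rho r$-ball with $\mathcal{U}_r^\ast$, absorbing the relocation into the factor $2$ in \eqref{eq:multiplier.unfavorable.good.set.covaraince}. The only cosmetic difference is that the paper keeps a placeholder $y_0\in\mathcal{U}_r^\ast$ for net points whose $\rho r$-ball misses $\mathcal{U}_r^\ast$, while you simply discard those points; the two are equivalent since, for any $x\in\mathcal{U}_r^\ast$, the nearest net point always has $x$ in its ball.
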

\begin{proof}
Just as in Lemma \ref{lem:multiplier.good.spearated.set}, we can construct a set $\tilde{\mathcal{B}}_r^\ast\subseteq\mathcal{B}_r^\ast$ satisfying \eqref{eq:multiplier.unfavorable.good.set.covaraince} with $2\rho r$ replaced by $\rho r$.
	The modification of that set is again similar:
	for $z\in\tilde{\mathcal{B}}_r^\ast$, define
	\[ y(z) \in\mathrm{argmin}\left\{  \langle \nabla f(x^\ast), y \rangle : y\in\mathcal{U}_r^\ast \text{ s.t.\ } \langle \mathbb{G}(y-z),y-z\rangle^\frac{1}{2}\leq \rho r \right\}. \]
	with the convention $y(z) := y_0$ for some fixed $y_0\in\mathcal{U}_r^\ast$ if the above set is empty.
		Then
	\[\bar{\mathcal{U}}_r^\ast:=\{ y(z): z\in\tilde{\mathcal{B}}_r^\ast\}\]
	 satisfies the statement of the lemma.
\end{proof}

Finally, fix the set $\bar{\mathcal{U}}_r^\ast$ of Lemma \ref{lem:multiplier.unfavorable.good.spearated.set}  and for $x\in\mathcal{U}_r^\ast$ denote by $y=y(x)\in\bar{\mathcal{U}}_r^\ast$ the best approximation in the cover constructed in Lemma \ref{lem:multiplier.unfavorable.good.spearated.set}.

\begin{lemma}
	\label{lem:multiplier.unfavorable.uniform.x}
	There is an absolute constant $C$ such that the following holds.
	With probability at least $1-2\exp(-C\tau^2n)$, for every $x\in\mathcal{U}_r^\ast$, we have that
	\[ \left|  \left\{ j : M_{x,x^\ast}'(j) \geq M_{y,x^\ast}'(j) - \frac{c_{\mathrm{H}} r^2}{8} \right\} \right| 
	\geq (1-\tau)n.\]
\end{lemma}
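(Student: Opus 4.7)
The plan is to mirror the proof of Lemma \ref{lem:multiplier.uniform.x} essentially verbatim, with $\mathcal{U}_r^\ast$ replacing $\mathcal{S}_r^\ast$, the cover $\bar{\mathcal{U}}_r^\ast$ of Lemma \ref{lem:multiplier.unfavorable.good.spearated.set} replacing $\bar{\mathcal{S}}_r^\ast$, and the threshold $c_{\mathrm{H}} r^2/8$ in place of $r^2/32$. Concretely, I will set
\[
\Delta_x(j):= M'_{x,x^\ast}(j) - M'_{y,x^\ast}(j)
= \frac{1}{m}\sum_{i\in I_j'} \langle \nabla F(x^\ast,\xi_i), x-y\rangle
\]
and $\bar{\Delta}_x(j) := \Delta_x(j) - \E[\Delta_x(j)]$. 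By \eqref{eq:multiplier.unfavorable.good.set.gradient}, the expectation $\E[\Delta_x(j)] = \langle \nabla f(x^\ast), x-y\rangle$ is non-negative, so $\Delta_x(j) \geq -c_{\mathrm{H}} r^2/8$ is implied by $|\bar{\Delta}_x(j)| \leq c_{\mathrm{H}} r^2/8$. It therefore suffices to show
\[
\Psi := \sup_{x\in\mathcal{U}_r^\ast} \frac{1}{n}\sum_{j=1}^n 1_{\{ |\bar{\Delta}_x(j)| \geq c_{\mathrm{H}} r^2/8\}} \leq \tau
\]
with probability at least $1-2\exp(-C\tau^2 n)$.

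The bounded differences inequality \cite[Theorem 6.2]{boucheron2013concentration} gives $\P[\Psi \geq \E[\Psi] + \tau/2] \leq 2\exp(-Cn\tau^2)$, reducing the task to establishing $\E[\Psi] \leq \tau/2$. Using $1_{\{|a|\geq b\}} \leq |a|/b$ and the fact that $c_{\mathrm{H}} \geq 1$, I will dominate $\Psi$ by $(8/(c_{\mathrm{H}} r^2))(R_1 + R_2)$, where $R_1 := \sup_x \E[|\bar{\Delta}_x(1)|]$ and $R_2 := \E[\sup_x |\tfrac{1}{n}\sum_j (|\bar{\Delta}_x(j)| - \E[|\bar{\Delta}_x(j)|])|]$.

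For $R_1$, since $\bar{\Delta}_x(1)$ is the average of $m$ independent zero-mean terms, the covariance bound \eqref{eq:multiplier.unfavorable.good.set.covaraince} yields $\E[|\bar{\Delta}_x(1)|] \leq \langle \mathbb{G}(x-y),x-y\rangle^{1/2}/\sqrt{m} \leq 2\rho r/\sqrt{m}$; combining with $nm = N \geq c_2 \mathrm{trace}(\mathbb{H}^{-1}\mathbb{G})/r^2$ and the definition of $\rho$ from Lemma \ref{lem:multiplier.unfavorable.good.spearated.set} bounds $R_1$ by a constant multiple of $r^2/(\tau\sqrt{C_1 c_2})$. For $R_2$, symmetrization, Rademacher contraction on the $1$-Lipschitz map $t\mapsto|t|$, and de-symmetrization reduce matters to controlling
\[
\E\Big[\sup_{x\in\mathcal{U}_r^\ast} \Big| \frac{1}{N}\sum_{i=1}^N \langle \nabla F(x^\ast,\xi_i) - \nabla f(x^\ast), x-y\rangle\Big|\Big].
\]
Using $\|\cdot\|=\|\mathbb{H}^{1/2}\cdot\|_2$ and $\|x-y\|\leq 2r$ (which holds since $x,y\in\mathcal{B}_r^\ast$), I factor out the Euclidean norm to obtain an upper bound of $16r\,(\mathrm{trace}(\mathbb{H}^{-1}\mathbb{G})/N)^{1/2} \leq 16 r^2/\sqrt{c_2}$. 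Summing the two estimates and taking $c_2$ large enough (absorbing $c_{\mathrm{H}}\geq 1$ freely) delivers $\E[\Psi] \leq \tau/2$, closing the argument.

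The proof carries no genuine new difficulty beyond the estimation-error analysis; the one point that is not automatic is the compatibility of the cover with the set $\mathcal{U}_r^\ast$, namely that the approximant $y=y(x)$ can be chosen to lie in $\mathcal{U}_r^\ast$ and to simultaneously satisfy \eqref{eq:multiplier.unfavorable.good.set.covaraince} and the sign condition \eqref{eq:multiplier.unfavorable.good.set.gradient}. That is precisely what Lemma \ref{lem:multiplier.unfavorable.good.spearated.set} already provides (in particular ensuring that $\E[\Delta_x(j)]\geq 0$, without which the centering step would fail), so the present lemma really is the net-to-uniform counterpart completing the blueprint.
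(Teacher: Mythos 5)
Your proposal is correct and matches the paper's own (very terse) proof, which simply records $c_{\mathrm{H}}\geq 1$ and points back to the proof of Lemma \ref{lem:multiplier.uniform.x}. You have correctly identified the two ingredients that make the transfer go through: the cover $\bar{\mathcal{U}}_r^\ast$ from Lemma \ref{lem:multiplier.unfavorable.good.spearated.set} supplies approximants $y\in\mathcal{U}_r^\ast$ with both the covariance bound and the sign condition $\langle\nabla f(x^\ast),x-y\rangle\geq 0$ (so the centering step does not misfire), and the lower bound on $c_{\mathrm{H}}$ lets the threshold $c_{\mathrm{H}}r^2/8$ be absorbed harmlessly into the constants.
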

\begin{proof}
	Recall that $c_{\mathrm{H}}\geq 1$ by its definition. The claim follows (without any modification) just as in the proof of Lemma \ref{lem:multiplier.uniform.x}.
\end{proof}

\begin{proof}[Proof of Lemma \ref{prop:actually.the.theorem.prediction}]
	Let us prove \eqref{eq:prediction.normal}. Note that Lemma \ref{prop:actually.the.theorem} (without any modifications in the proof) yields the following:
	with probability at least $1-2\exp(-c\tau^2 n)$, for all $x\in \mathcal{B}_r^\ast$, we have that
	\[M_{x,x^\ast}'(j)
	\geq -\frac{c_{\mathrm{H}} r^2}{4} \text{ on more than }(1-\tau)n \text{ blocks } j. \]
	In particular, this holds for $\mathcal{U}_r^\ast\subseteq\mathcal{B}_r^\ast$.
	
	As for \eqref{eq:prediction.unfavorable}, a combination of Lemma \ref{lem:muliplier.unfavorable.single.x} and Lemma \ref{lem:multiplier.unfavorable.uniform.x} shows that with probability at least $1-2\exp(-c\tau^2 n)$, for every $x\in\mathcal{U}_r^\ast$, we have that
	\[ M_{x,x^\ast}'(j) > \frac{c_{\mathrm{H}} r^2}{4}
	\quad\text{on more than } (1-2\tau)n \text{ blocks } j.\]
	This completes the proof.
\end{proof}

\subsection{Proof under a deterministic lower bound of the Hessian}
\label{sec:proof.main.deterministic.lower.bound}

To conclude this section, let us prove Theorem \ref{thm:main.hessian.deterministic.lower.bound}.
There are only very few modifications needed in the proof of our main results, as we explain in what follows.

In the proof of Theorem \ref{thm:main.convex.intro}, the only place where the requirement 
\[N\geq c_2\max\{ d\log(2d), N_{\mathrm{H},\mathcal{E}}\}\]
was used, was for the statement of Lemma \ref{prop:actually.the.theorem} pertaining to the quadratic term, namely \eqref{eq:quad.term}.
However, Assumption \ref{ass:Deterministic.lower.bound.Hessian} clearly implies that, with probability 1, for every $x\in\mathcal{S}_r^\ast$, we have that
\[ \left| \left\{ j : \frac{1}{2}Q_{x,x^\ast}(j) \geq \frac{r^2\varepsilon}{2} \right\} \right| = n. \]
Thus, the only modification that is needed, is to prove the part of Lemma \ref{prop:actually.the.theorem} pertaining to the multiplier term (namely \eqref{eq:multi.term}) with $\frac{\varepsilon r^2}{4}$ instead of $\frac{r^2}{16}$.
Inspecting the proof, one readily sees that this is possible once the constants $\theta,\tau,c,c_1,\dots$ are allowed to depend on $\varepsilon$.

\addtocontents{toc}{\protect\setcounter{tocdepth}{1}}

\section{Proofs for the portfolio optimization problem}
\label{sec:PO.proof}

\subsection{The proof of Corollary \ref{cor:PO}}

The proof builds on several lemmas stated below, making the heuristic computations explained in the introduction rigorous.
Throughout, we work under the assumptions made in Section \ref{sec:PO}.
Note that
\begin{align*}
\nabla F(x,\xi)
&=-\ell'(V_x)  X, \\
\nabla^2 F(x,\xi)
&=\ell''(V_x)  X\otimes X
\end{align*}
where we recall $V_{x}=-Y-\langle X,x\rangle$  for $x\in\mathcal{X}$.
Finally, denote by
\[ \|\cdot\|_X
:= \E[ \langle X,\cdot\rangle^2]^{\frac{1}{2}}
= \langle \C[X]\cdot,\cdot\rangle^{\frac{1}{2}}
= \| \C[X]^{\frac{1}{2}} \cdot\|_2\]
the norm endowed by $X$.
Note that $\|\cdot\|_X$ is indeed a norm by the no-arbitrage condition.

\begin{lemma}
\label{lem:PO.norm.equals.normS}
	There is a constant $c_1>0$ depending on $L_X,v_1$ and a constant $c_2>0$ depending on  $L_X, v_1,v_2$  such that
	\[ c_1 \| \cdot \|_X
	\leq \|\cdot\|
	\leq c_2 \|\cdot\|_X. \]
	In particular, $\|\cdot\|$ is a true norm.
\end{lemma}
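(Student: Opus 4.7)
The plan is to prove the two inequalities separately, with the upper bound being a direct Cauchy--Schwarz computation and the lower bound requiring a truncation/small-ball style argument.

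For the \emph{upper bound} $\|z\| \leq c_2 \|z\|_X$, I would simply use Cauchy--Schwarz on $\|z\|^2 = \E[\ell''(V_{x^\ast}) \langle X,z\rangle^2]$ to obtain
\[
\|z\|^2
\leq \E[\ell''(V_{x^\ast})^2]^{1/2} \, \E[\langle X,z\rangle^4]^{1/2}.
\]
The first factor is bounded by $v_2^2$ (applying Jensen/H\"older to the definition $v_2 = \E[\ell''(V_{x^\ast})^6]^{1/6}$), and the second factor is bounded by $L_X^4 \|z\|_X^4$ by the $L_6$--$L_2$ norm equivalence \eqref{ass:portfolio.norm.equiv} (which of course implies $L_4$--$L_2$ equivalence). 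Taking square roots gives $c_2 = L_X \sqrt{v_2}$ (up to constants), depending only on $L_X, v_2$, and hence on $L_X, v_1, v_2$.

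For the \emph{lower bound} $c_1 \|z\|_X \leq \|z\|$, which is the main obstacle, I would use a truncation argument. Fix $M>0$ to be chosen later and set $A := \{|V_{x^\ast}| \leq M\}$. Since $\ell$ is three times continuously differentiable and strictly convex, $\epsilon_M := \min_{|t|\leq M} \ell''(t) > 0$. Thus
\[
\|z\|^2 \geq \E[\ell''(V_{x^\ast})\langle X,z\rangle^2 \mathbf{1}_A] \geq \epsilon_M \, \E[\langle X,z\rangle^2 \mathbf{1}_A].
\]
To lower-bound $\E[\langle X,z\rangle^2 \mathbf{1}_A]$, I would write it as $\|z\|_X^2 - \E[\langle X,z\rangle^2 \mathbf{1}_{A^c}]$ and bound the complement by Cauchy--Schwarz followed by the $L_4$--$L_2$ equivalence of $X$ and Markov's inequality on $|V_{x^\ast}|$:
\[
\E[\langle X,z\rangle^2 \mathbf{1}_{A^c}]
\leq \E[\langle X,z\rangle^4]^{1/2}\, \P[A^c]^{1/2}
\leq L_X^2 \|z\|_X^2 \cdot \bigl(v_1/M\bigr)^{1/2}.
\]
Choosing $M = 4 L_X^4 v_1$ makes the right-hand side at most $\tfrac{1}{2}\|z\|_X^2$, so $\E[\langle X,z\rangle^2 \mathbf{1}_A] \geq \tfrac{1}{2}\|z\|_X^2$, which yields $\|z\|^2 \geq \tfrac{\epsilon_M}{2} \|z\|_X^2$. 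With $M$ depending only on $L_X, v_1$, and since the utility $\ell$ is fixed, $\epsilon_M$ (and thus $c_1$) depends only on $L_X, v_1$ (with implicit dependence on $\ell$).

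The main obstacle in the proof is the lower bound, because $\ell''(V_{x^\ast})$ can in principle become arbitrarily small if $V_{x^\ast}$ is unbounded (e.g.\ for the exponential utility where $\ell''(t) = e^t$ decays to $0$ as $t \to -\infty$). The truncation argument above resolves this by trading off: on the high-probability event where $|V_{x^\ast}|$ is controlled, $\ell''(V_{x^\ast})$ is bounded below, and the $L_4$--$L_2$ norm equivalence guarantees that restricting $\langle X,z\rangle^2$ to this event costs us only a universal constant factor. The ``in particular" assertion that $\|\cdot\|$ is a true norm follows since $c_1 > 0$ and $\|\cdot\|_X$ is a norm by the no-arbitrage condition \eqref{eq:PO.NA}.
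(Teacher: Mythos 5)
Your proposal is correct, and it takes a genuinely different route from the paper on the lower bound; the upper bound is essentially the paper's argument with different H\"older exponents. For the upper bound the paper applies H\"older with exponents $(3/2,3)$ to obtain $\|z\|^2 \leq \E[\ell''(V_{x^\ast})^{3/2}]^{2/3}\E[\langle X,z\rangle^6]^{1/3} \leq v_2 L_X^2 \|z\|_X^2$, whereas you use Cauchy--Schwarz $(2,2)$; both work under the stated $L_6$ assumptions and give comparable constants. (One small slip: $\E[\ell''(V_{x^\ast})^2]^{1/2} \leq v_2$, not $v_2^2$, by monotonicity of $L_p$-norms; your final stated $c_2 = L_X\sqrt{v_2}$ is then consistent.) For the lower bound, the paper runs a small-ball argument: Paley--Zygmund gives $\P[\langle X,z\rangle^2 \geq \tfrac{1}{2}\|z\|_X^2] \geq (4L_X^4)^{-1}$, Markov gives $\P[\ell''(V_{x^\ast}) < \varepsilon] \leq (8L_X^4)^{-1}$ for $\varepsilon = \min\{\ell''(u) : |u|\leq 8L_X^4 v_1\}$, and a union bound combines the two into $\|z\|^2 \geq \tfrac{\varepsilon}{16L_X^4}\|z\|_X^2$. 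You instead split the expectation directly, $\E[\langle X,z\rangle^2] = \E[\langle X,z\rangle^2 \mathbf{1}_A] + \E[\langle X,z\rangle^2 \mathbf{1}_{A^c}]$, and show the complement contributes at most half by Cauchy--Schwarz, the $L_4$--$L_2$ equivalence, and Markov. Both routes exploit the same three ingredients (continuity and strict positivity of $\ell''$ on compacts, $\E[|V_{x^\ast}|]<\infty$, and norm equivalence of $X$) and yield constants with identical parameter dependence; your truncation is arguably slightly more streamlined since it works at the level of expectations and avoids the union bound, while Paley--Zygmund is the paper's signature small-ball tool and is reused elsewhere in the paper, so it fits more naturally with the paper's overall toolkit.
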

\begin{proof}
	We start with the second inequality.
	H\"older's inequality (with exponent $\frac{3}{2}$ and conjugate exponent $3$) implies that
	\begin{align*}
	\|z\|^2
	&=\E[ \ell''( V_{x^\ast} )  \langle X,z\rangle^2 ] \\	
	&\leq \E[ \ell''( V_{x^\ast} )^{\frac{3}{2}}]^{\frac{2}{3}} \E[\langle X,z\rangle^6]^{\frac{1}{3}}
	\end{align*}
	for every $z\in\mathbb{R}^d$.
	The first term is bounded by $v_2$ and  norm equivalence of $X$ (see \eqref{ass:portfolio.norm.equiv}) implies that the second term is bounded by $L_X^2 \|z\|_X^2$.

	We continue with the first inequality.
	The Paley-Zygmund inequality together with norm equivalence of $X$ implies that
	\begin{align}
	\label{eq:PO.paley.zygmund}
	\P\left[ \langle X,z\rangle^2 \geq \frac{1}{2} \|z\|_X^2 \right]
	&\geq \frac{( 1- \frac{1}{2})^2 \E[\langle X,z\rangle^2]^2 } { \E[\langle X,z\rangle^4]}
	\geq \frac{1}{4 L^4_X}
	\end{align}
	for every $z\in\mathbb{R}^d\setminus\{0\}$.
	Moreover, setting
	\[ \varepsilon:=\min\{  \ell''(u) : |u|\leq 8 L_X^4 \E[ |V_{x^\ast}| ] \} , \]
	an application of Markov's inequality shows that
	\[ 	\P[ \ell''( V_{x^\ast}) < \varepsilon]
	\leq \frac{1}{8 L_X^4}.\]
	In combination with \eqref{eq:PO.paley.zygmund}, we conclude that
	\begin{align*}
	\|z\|^2
	&\equiv \E[ \ell''( V_{x^\ast} )  \langle X,z\rangle^2 ]
	\geq \frac{\varepsilon \|z\|_X^2}{16 L_X^4} .
	\end{align*}
	
	Finally, as noted previously, the no-arbitrage condition \eqref{eq:PO.NA} immediately implies that $\|\cdot\|_X$ is a true norm.
	Hence, by the above, $\|\cdot\|$ is a true norm as well.
	This completes the proof.
\end{proof}

\begin{lemma}
\label{lem:PO.on.norm.equiv}
	Assumption \ref{ass:norm.equiv} is satisfied with a constant $L$ depending on $L_X, v_1, v_2$.
\end{lemma}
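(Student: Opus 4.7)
The plan is to make the heuristic computation sketched in the introduction rigorous, using the tools already in place (the $L_6$-$L_2$ norm equivalence of $X$, the finiteness of $v_2$, and Lemma \ref{lem:PO.norm.equals.normS}).

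First I would write out the quadratic form explicitly. Since $\nabla^2 F(x^\ast,\xi)=\ell''(V_{x^\ast})\, X\otimes X$, for every $z\in\mathbb{R}^d$ we have
\[
\langle \nabla^2 F(x^\ast,\xi)z,z\rangle^2
=\ell''(V_{x^\ast})^2\cdot \langle X,z\rangle^4.
\]
The task is therefore to bound $\E[\ell''(V_{x^\ast})^2\langle X,z\rangle^4]$ by a constant multiple of $\|z\|^4$.

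Next I would apply H\"older's inequality with exponents $3$ and $\tfrac{3}{2}$ to split the two factors:
\[
\E[\ell''(V_{x^\ast})^2\langle X,z\rangle^4]
\leq \E[\ell''(V_{x^\ast})^6]^{\tfrac{1}{3}}\cdot \E[\langle X,z\rangle^6]^{\tfrac{2}{3}}.
\]
The first factor equals $v_2^{2}$ by definition. For the second factor, the $L_6$-$L_2$ norm equivalence \eqref{ass:portfolio.norm.equiv} gives
\[
\E[\langle X,z\rangle^6]^{\tfrac{2}{3}}
\leq L_X^4 \, \E[\langle X,z\rangle^2]^2
= L_X^4 \, \|z\|_X^4.
\]

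Finally, I would pass from $\|\cdot\|_X$ to $\|\cdot\|$ via Lemma \ref{lem:PO.norm.equals.normS}: there is a constant $c_1>0$ (depending on $L_X,v_1$) such that $\|z\|_X\leq c_1^{-1}\|z\|$. Combining all three bounds yields
\[
\E[\langle \nabla^2 F(x^\ast,\xi)z,z\rangle^2]
\leq \frac{v_2^{2}L_X^4}{c_1^{4}}\,\|z\|^4,
\]
so that Assumption \ref{ass:norm.equiv} holds with $L:=v_2^{2}L_X^4/c_1^{4}$, which depends only on $L_X,v_1,v_2$. There is no real obstacle here; the only choice to make is the H\"older exponent pair, which is dictated by the integrability budget: we have six moments of $X$ in the linear form and six moments of $\ell''(V_{x^\ast})$ (via $v_2$), and the pair $(3,3/2)$ is the unique split that consumes exactly these two budgets.
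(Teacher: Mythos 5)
Your argument is correct and matches the paper's proof essentially line for line: the same H\"older split with exponents $(3,\tfrac{3}{2})$, identification of the first factor with $v_2^2$, the $L_6$-$L_2$ norm equivalence of $X$ for the second factor, and Lemma \ref{lem:PO.norm.equals.normS} to pass from $\|\cdot\|_X$ to $\|\cdot\|$. The only difference is cosmetic: you spell out the explicit constant $L=v_2^2 L_X^4/c_1^4$ while the paper leaves it implicit.
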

\begin{proof}
	An application of H\"older's inequality (with exponents $3,\frac{3}{2}$) implies that
	\begin{align*}
	\E[\langle \nabla^2 F(x^\ast,\xi)z,z \rangle^2]
	&=\E[\ell''( V_{x^\ast} )^2 \langle X,z\rangle^4] \\
	&\leq  \E[\ell''( V_{x^\ast} )^6]^{\frac{1}{3}} \E[\langle X,z\rangle^6]^{\frac{2}{3}}
	\end{align*}
	for every $z\in\mathbb{R}^d$.
	The first term is $v_2^2$ by definition and, by norm equivalence of $X$ (see \eqref{ass:portfolio.norm.equiv}) and Lemma \ref{lem:PO.norm.equals.normS}, the second term is bounded uniformly in $\{z\in\mathbb{R}^d : \|z\|\leq 1\}$ by a constant depending on $L_X,v_1$.
\end{proof}

\begin{lemma}
\label{lem:PO.sigma.NG}
	Let $L$ be the parameter from Assumption \ref{ass:norm.equiv}.
	Then
	\begin{align*}
	\sigma^2
	\leq \sqrt{L}  \bar{\sigma}^2
	\quad\text{and}\quad
	N_{\mathrm{G}}(r)
	\leq \sqrt{L}  \frac{\bar{\sigma}^2 d}{r^2}.
\end{align*}
\end{lemma}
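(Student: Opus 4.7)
The plan is to show that $\C[\nabla F(x^\ast,\xi)] \preceq \sqrt{L}\,\bar{\sigma}^2 \nabla^2 f(x^\ast)$ in the positive semidefinite order, since both estimates in the lemma are immediate consequences of this operator inequality: $\sigma^2$ is the largest eigenvalue of $\nabla^2 f(x^\ast)^{-1}\C[\nabla F(x^\ast,\xi)]$ (bounded by $\sqrt{L}\,\bar{\sigma}^2$), and $N_{\mathrm{G}}(r)r^2$ is its trace, which is at most $d$ times the largest eigenvalue.

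To establish the operator inequality, I would fix $z\in\mathbb{R}^d$ and start from the identity $\nabla F(x^\ast,\xi) = -\ell'(V_{x^\ast})\,X$, which gives
\[
\langle \C[\nabla F(x^\ast,\xi)]z,z\rangle = \V[\ell'(V_{x^\ast})\langle X,z\rangle] \leq \E\bigl[\ell'(V_{x^\ast})^2 \langle X,z\rangle^2\bigr].
\]
The crucial algebraic step is to rewrite the right-hand side to introduce the quantity $\bar{\sigma}^2$ paired against the Hessian:
\[
\E[\ell'(V_{x^\ast})^2\langle X,z\rangle^2] = \E\!\left[\frac{\ell'(V_{x^\ast})^2}{\ell''(V_{x^\ast})}\cdot \ell''(V_{x^\ast})\langle X,z\rangle^2\right].
\]
Then Cauchy--Schwartz yields
\[
\E[\ell'(V_{x^\ast})^2\langle X,z\rangle^2] \leq \E\!\left[\Bigl(\tfrac{\ell'(V_{x^\ast})^2}{\ell''(V_{x^\ast})}\Bigr)^2\right]^{\!1/2}\!\E\bigl[\ell''(V_{x^\ast})^2\langle X,z\rangle^4\bigr]^{1/2} = \bar{\sigma}^2\,\E\bigl[\langle \nabla^2F(x^\ast,\xi)z,z\rangle^2\bigr]^{1/2}.
\]
Applying Assumption \ref{ass:norm.equiv} (via Lemma \ref{lem:PO.on.norm.equiv}, which ensures its validity with some constant $L$) bounds the remaining expectation by $\sqrt{L}\,\|z\|^2$, producing
\[
\langle \C[\nabla F(x^\ast,\xi)]z,z\rangle \leq \sqrt{L}\,\bar{\sigma}^2\,\|z\|^2 = \sqrt{L}\,\bar{\sigma}^2\,\langle \nabla^2 f(x^\ast)z,z\rangle,
\]
which is the desired operator inequality.

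To extract the two claims, I would conjugate both sides by $\nabla^2 f(x^\ast)^{-1/2}$, obtaining $\nabla^2 f(x^\ast)^{-1/2}\C[\nabla F(x^\ast,\xi)]\nabla^2 f(x^\ast)^{-1/2} \preceq \sqrt{L}\,\bar{\sigma}^2\,\mathrm{Id}$. The spectral quantities of this symmetric matrix coincide with those of $\nabla^2 f(x^\ast)^{-1}\C[\nabla F(x^\ast,\xi)]$, so the largest eigenvalue is at most $\sqrt{L}\,\bar{\sigma}^2$, giving $\sigma^2\leq \sqrt{L}\,\bar{\sigma}^2$; and the trace is at most $d\sqrt{L}\,\bar{\sigma}^2$, giving $N_{\mathrm{G}}(r)\leq \sqrt{L}\,d\bar{\sigma}^2/r^2$.

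There is no real obstacle here; the only mildly delicate point is the algebraic trick of splitting $\ell'(V_{x^\ast})^2 = \tfrac{\ell'(V_{x^\ast})^2}{\ell''(V_{x^\ast})}\cdot \ell''(V_{x^\ast})$ before applying Cauchy--Schwartz, which is what makes the quantity $\bar{\sigma}^2$ (defined in terms of $\ell'^2/\ell''$) appear naturally and lets Assumption \ref{ass:norm.equiv} absorb the $\ell''\langle X,z\rangle^2$ factor. The positivity of $\ell''$ (from strict convexity of $\ell$) is implicitly needed to make this division meaningful.
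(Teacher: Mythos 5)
Your proposal is correct and follows essentially the same route as the paper's proof: the same variance-to-second-moment bound, the same algebraic split $\ell'^2 = (\ell'^2/\ell'')\cdot\ell''$ followed by Cauchy--Schwartz (the paper calls it H\"older's inequality, but with exponents $(2,2)$ these coincide), and the same use of Assumption \ref{ass:norm.equiv} to absorb the Hessian factor, yielding the operator inequality $\C[\nabla F(x^\ast,\xi)]\preceq\sqrt{L}\,\bar{\sigma}^2\nabla^2 f(x^\ast)$. Your final conjugation by $\nabla^2 f(x^\ast)^{-1/2}$ to read off the top eigenvalue and the trace is just an explicit version of the paper's closing remark that these quantities are monotone under the PSD order.
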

\begin{proof}
	We make the preliminary claim that
	\[\C[ \nabla F(x^\ast,\xi)]
	\preceq \bar{\sigma}^2 \sqrt{L} \nabla^2 f(x^\ast).\]
	Indeed, for every $z\in\mathbb{R}^d$,
	\begin{align*}
	\langle \C[ \nabla F(x^\ast,\xi)]z,z\rangle
	&\leq \E[ \ell'(V_{x^\ast})^2 \langle X,z\rangle^2] \\
	&= \E \left[ \frac{\ell'(V_{x^\ast})^2}{\ell''(V_{x^\ast})}   \ell''(V_{x^\ast}) \langle X,z\rangle^2\right] \\
	&\leq \bar{\sigma}^2 \E[(\ell''(V_{x^\ast})\langle X,z\rangle^2)^2]^{\frac{1}{2}},
	\end{align*}	
	where the last step follows from H\"older's inequality.
	Moreover, by Assumption \ref{ass:norm.equiv},
	\begin{align*}
	\E[(\ell''(V_{x^\ast})\langle X,z\rangle^2)^2]^{\frac{1}{2}}
	&=\E[\langle \nabla^2 F(x^\ast,\xi)z,z\rangle^2]^{\frac{1}{2}}\\
	&\leq \sqrt{L}\|z\|^2 \\
	&=\sqrt{L} \langle \nabla^2 f(x^\ast)z,z\rangle
	\end{align*}
	hence the preliminary claim follows.
	
	As both the largest singular value and the trace are monotone w.r.t.\ the positive semidefinite order, the statement of the lemma follows.	
\end{proof}

We need the following simple auxiliary lemma.
Recall that $\|\cdot\|_\ast$ denotes the dual norm of $\|\cdot\|$.

\begin{lemma}
\label{lem:PO.expectation.norm}
	There is a constant $c$ depending on $L_X,v_1$ such that $\E[\|X\|_\ast^6]\leq c d^3$.
\end{lemma}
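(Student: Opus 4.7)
The plan is to pass from the abstract dual norm $\|\cdot\|_\ast$ to something concretely computable, namely the Euclidean norm of a whitened version of $X$, and then to exploit the $L_6$–$L_2$ norm equivalence assumption \eqref{ass:portfolio.norm.equiv} coordinate by coordinate.

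First, I would replace $\|\cdot\|$ with the equivalent norm $\|\cdot\|_X=\langle\C[X]\cdot,\cdot\rangle^{1/2}$ provided by Lemma \ref{lem:PO.norm.equals.normS}. If $c_1\|z\|_X\le\|z\|$, then taking suprema over the respective unit balls gives $\|y\|_\ast\le c_1^{-1}\|y\|_{X,\ast}$, where $\|\cdot\|_{X,\ast}$ denotes the dual norm of $\|\cdot\|_X$. A direct change of variables shows $\|y\|_{X,\ast}=\|\C[X]^{-1/2}y\|_2$, so it suffices to estimate $\E[\|\tilde X\|_2^6]$ for the isotropic vector $\tilde X:=\C[X]^{-1/2}X$.

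Next, I would transfer the $L_6$–$L_2$ norm equivalence from $X$ to $\tilde X$: for every $z\in\mathbb{R}^d$,
\[
\E[\langle\tilde X,z\rangle^6]=\E[\langle X,\C[X]^{-1/2}z\rangle^6]\le L_X^6\,\E[\langle X,\C[X]^{-1/2}z\rangle^2]^3=L_X^6\|z\|_2^6,
\]
since $\tilde X$ is isotropic. Applying this with $z=e_i$ gives $\E[\tilde X_i^6]\le L_X^6$ for every coordinate.

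Finally, I would bound $\|\tilde X\|_2^6=(\sum_{i=1}^d\tilde X_i^2)^3$ by the power-mean (Jensen) inequality, $(\sum_i a_i)^3\le d^2\sum_i a_i^3$ for $a_i\ge 0$, which yields
\[
\E[\|\tilde X\|_2^6]\le d^2\sum_{i=1}^d\E[\tilde X_i^6]\le L_X^6\,d^3.
\]
Combining this with $\|X\|_\ast\le c_1^{-1}\|X\|_{X,\ast}=c_1^{-1}\|\tilde X\|_2$ gives $\E[\|X\|_\ast^6]\le c_1^{-6}L_X^6\,d^3$, and since $c_1$ depends only on $L_X,v_1$, the constant $c:=c_1^{-6}L_X^6$ has the advertised dependence. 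There is no real obstacle here; the only point that requires a moment's care is the direction of the inequality when dualizing the norm equivalence (smaller primal unit ball gives larger dual norm), which is handled in the first paragraph.
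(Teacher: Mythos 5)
Your proof is correct and takes essentially the same route as the paper: reduce $\|\cdot\|_\ast$ to the Euclidean norm of the whitened vector $\C[X]^{-1/2}X$ via Lemma \ref{lem:PO.norm.equals.normS}, then bound coordinate sixth moments by $L_X^6$ using the $L_6$--$L_2$ equivalence. The only cosmetic difference is in the final step, where the paper expands $(\sum_i \tilde X_i^2)^3$ as a triple sum and applies H\"older to each of the $d^3$ terms, while you apply the pointwise power-mean inequality $(\sum_i a_i)^3 \le d^2 \sum_i a_i^3$ and then take expectations; both give the same bound $L_X^6 d^3$.
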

\begin{proof}
	By Lemma \ref{lem:PO.norm.equals.normS} we have $\|\cdot\|_X\leq c_1\|\cdot \|$ for a constant $c_1$ depending on $L_X,v_1$.
	This immediately implies that $\|\cdot \|_\ast\leq \frac{1}{c_1} \|\cdot\|_{X,\ast}$ where $\|\cdot\|_{X,\ast}$ is the dual norm of $\|\cdot\|_X$.
	As the norm $\|\cdot\|_X$ is endowed by $\C[X]$, its dual norm $\|\cdot\|_{X,\ast}$ is endowed by  $\C[X]^{-1}$, whence
	\[ \|X\|_{X,\ast} = \|Y\|_2 \quad\text{for } Y:=\C[X]^{-\frac{1}{2}} X.\]
	Applying H\"older's inequality (in its version for three random variables, with exponents $3,3,3$) shows that
	\[ \E[\|Y\|_2^6]
	=\sum_{i,j,k=1}^d \E[Y_i^2Y_j^2Y_k^2]
	\leq \sum_{i,j,k=1}^d \E[Y_i^6]^{\frac{1}{3}}\E[Y_j^6]^{\frac{1}{3}} \E[Y_k^6]^{\frac{1}{3}}.\]
	It remains to note that $Y$ satisfies the same norm equivalence as $X$ does, and therefore, denoting by $e_i$ the $i$-th standard Euclidean unit vector, 
	\begin{align*}
	\E[Y_i^6]^{\frac{1}{3}}
	&=\E[\langle Y,e_i\rangle^6]^{\frac{1}{3}} \\
	&\leq L_X^2 \E[\langle Y,e_i\rangle^2] \\
	&=L_X^2 \C[Y]_{ii}
	=L_X^2.
	\end{align*}
	Combining everything completes the proof.
\end{proof}

\begin{lemma}
\label{lem:PO.hessian.continuity}
	There is a constant $c>0$ depending on $L_X, v_1$ such that, for every $x,y\in\mathcal{B}_1^\ast$, we have that
	\begin{align}
	\label{eq:PO.hessian.directional}
	\P\left[ \mathcal{E}_{\mathrm{H}}(x) \leq \frac{ \|x-x^\ast\|^2}{8} \right]
	&\leq c v_{\mathcal{E}_{\mathrm{H}}} \|x-x^\ast\|, \\
	\label{eq:PO.hessian.difference.OP}
	\left\|  \nabla^2 F(x,\xi) - \nabla^2 F (y,\xi) \right\|_{\mathrm{op}}
	&\leq  \|x-y\|  K(\xi)
	\end{align}
	where $K(\xi)$ satisfies $\E[K(\xi)]\leq  c  v_{K} d^{\frac{3}{2}}$.
\end{lemma}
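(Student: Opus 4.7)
The plan is to exploit the rank-one product structure $\nabla^2 F(x,\xi)=\ell''(V_x)\,X\otimes X$: both estimates will come from combining the mean value theorem applied to $\ell''$, the identity $\|X\otimes X\|_{\mathrm{op}}=\|X\|_\ast^2$, H\"older's inequality, and the norm-equivalences already established in Lemma \ref{lem:PO.norm.equals.normS} and Lemma \ref{lem:PO.expectation.norm}. (I will read the first inequality as bounding $\P[\mathcal{E}_{\mathrm{H}}(x)>\tfrac{\|x-x^\ast\|^2}{8}]$, which is what Assumption \ref{ass:hessian.midpoints}(b) requires; this is clearly the intended form.)

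For the second inequality, I begin by writing
\[\nabla^2 F(x,\xi)-\nabla^2 F(y,\xi)=\bigl(\ell''(V_x)-\ell''(V_y)\bigr)X\otimes X,\]
so that the operator norm equals $|\ell''(V_x)-\ell''(V_y)|\cdot \sup_{\|z\|\le 1}\langle X,z\rangle^2=|\ell''(V_x)-\ell''(V_y)|\cdot\|X\|_\ast^2$. The mean value theorem applied to $\ell''$ along the line segment from $V_y$ to $V_x$ gives
\[|\ell''(V_x)-\ell''(V_y)|\le \sup_{t\in[0,1]}\bigl|\ell'''(V_{tx+(1-t)y})\bigr|\cdot|\langle X,x-y\rangle|,\]
and the segment lies in $\mathcal{B}_1^\ast$ by convexity of $\mathcal{B}_1^\ast$. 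Bounding $|\langle X,x-y\rangle|\le \|X\|_\ast\|x-y\|$ identifies
\[K(\xi):=\sup_{z\in\mathcal{B}_1^\ast}\bigl|\ell'''(V_z)\bigr|\cdot\|X\|_\ast^3.\]
The expectation bound follows from Cauchy--Schwarz and the definition of $v_K$:
\[\E[K(\xi)]\le \E\bigl[\sup\nolimits_{z\in\mathcal{B}_1^\ast}\ell'''(V_z)^2\bigr]^{1/2}\cdot\E[\|X\|_\ast^6]^{1/2}\le v_K\cdot\E[\|X\|_\ast^6]^{1/2},\]
and Lemma \ref{lem:PO.expectation.norm} converts the latter into $c\,v_K\, d^{3/2}$.

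For the first inequality, the same product structure gives
\[\bigl\langle(\nabla^2F(x^\ast+t(x-x^\ast),\xi)-\nabla^2F(x^\ast,\xi))(x-x^\ast),x-x^\ast\bigr\rangle=\bigl(\ell''(V_{x^\ast+t(x-x^\ast)})-\ell''(V_{x^\ast})\bigr)\langle X,x-x^\ast\rangle^2,\]
and the mean value theorem applied to $\ell''$ on $[V_{x^\ast},V_{x^\ast+t(x-x^\ast)}]$ produces the pointwise bound
\[\mathcal{E}_{\mathrm{H}}(x)\le \sup_{t\in[0,1]}\bigl|\ell'''(V_{x^\ast+t(x-x^\ast)})\bigr|\cdot|\langle X,x-x^\ast\rangle|^3.\]
Markov's inequality then yields
\[\P\Bigl[\mathcal{E}_{\mathrm{H}}(x)>\tfrac{\|x-x^\ast\|^2}{8}\Bigr]\le \frac{8\,\E[\mathcal{E}_{\mathrm{H}}(x)]}{\|x-x^\ast\|^2},\]
and Cauchy--Schwarz combined with the definition of $v_{\mathcal{E}_{\mathrm{H}}}$ gives $\E[\mathcal{E}_{\mathrm{H}}(x)]\le v_{\mathcal{E}_{\mathrm{H}}}\cdot\E[\langle X,x-x^\ast\rangle^6]^{1/2}$. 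Invoking the $L_6$--$L_2$ norm equivalence \eqref{ass:portfolio.norm.equiv} gives $\E[\langle X,x-x^\ast\rangle^6]^{1/2}\le L_X^3\|x-x^\ast\|_X^3$, and Lemma \ref{lem:PO.norm.equals.normS} converts this to $\lesssim \|x-x^\ast\|^3$. Dividing by $\|x-x^\ast\|^2$ leaves the desired linear factor in $\|x-x^\ast\|$.

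No substantial obstacle is expected: both estimates reduce to moment computations linked by the MVT, convexity of $\mathcal{B}_1^\ast$ (needed to keep the interpolating point in the set where the suprema are taken), and the norm equivalences already in hand. The only minor care point is commuting the supremum in $t$ past the expectation, which is handled by Cauchy--Schwarz and the definitions of $v_K$ and $v_{\mathcal{E}_{\mathrm{H}}}$.
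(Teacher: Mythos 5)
Your proposal is correct and follows essentially the same route as the paper: rank-one structure of $\nabla^2 F$, mean-value theorem on $\ell''$ to produce an $\ell'''$ factor times $\langle X, x-y\rangle$, Cauchy--Schwarz with the definitions of $v_K$ and $v_{\mathcal{E}_{\mathrm{H}}}$, convexity of $\mathcal{B}_1^\ast$ to keep midpoints in the set, the $L_6$--$L_2$ norm equivalence with Lemma \ref{lem:PO.norm.equals.normS}, Lemma \ref{lem:PO.expectation.norm} for $\E[\|X\|_\ast^6]$, and Markov's inequality. You also correctly read the first inequality as bounding $\P[\mathcal{E}_{\mathrm{H}}(x)>\|x-x^\ast\|^2/8]$, which is clearly what is intended.
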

\begin{proof}
	As a preliminary observation, note that a Taylor expansion gives
	\begin{align}
	\label{eq:PO.midpoints.taylor}
	\begin{split}
	&\langle (\nabla^2 F(x ,\xi) -\nabla^2 F(y ,\xi) ) z, z\rangle \\
	&=\ell'''(V_{x+t(y-x)}) \langle X, x-y\rangle \langle X,z\rangle^2
	\end{split}
	\end{align}
	for every $z\in\mathbb{R}^d$, where $t\in[0,1]$ is some number depending on $x,y,\xi$.
	
	We start by proving \eqref{eq:PO.hessian.directional}.
	For every $x\in\mathcal{B}_1^\ast$, by \eqref{eq:PO.midpoints.taylor} and definition of $\mathcal{E}_{\mathrm{H}}$, we have that
	\begin{align*}
	\mathcal{E}_{\mathrm{H}}(x)
	&\equiv \sup_{t\in[0,1]} \left|\left\langle \left( \nabla^2 F(x^\ast+t(x-x^\ast) ,\xi) -\nabla^2 F(x^\ast ,\xi) \right) (x-x^\ast), x-x^\ast \right\rangle \right| \\
	& \leq \sup_{t\in[0,1]} |\ell'''(V_{x^\ast+t(x-x^\ast)}) | |\langle X, x-x^\ast\rangle|^3.
	\end{align*}
	In particular, applying H\"older's inequality, the norm equivalence of $X$ from \eqref{ass:portfolio.norm.equiv}, and Lemma \ref{lem:PO.norm.equals.normS}, we obtain
	\begin{align*}
	\E[\mathcal{E}_{\mathrm{H}}(x)]
	&\leq v_{\mathcal{E}_{\mathrm{H}}} \E[\langle X, x-x^\ast\rangle^6]^{\frac{1}{2}} \\
	&\leq  v_{\mathcal{E}_{\mathrm{H}}}  L_X^3 \|x-x^\ast\|_X^3 \\
	&\leq c_1 v_{\mathcal{E}_{\mathrm{H}}} \|x-x^\ast\|^3
	\end{align*}
	for a constant $c_1$ depending on $L_X,v_1$.
	The claim \eqref{eq:PO.hessian.directional} therefore follows from Markov's inequality.
	
	We now prove \eqref{eq:PO.hessian.difference.OP}.
	The definition of the operator norm together with the inequality $|\langle X, x-y\rangle|\leq \|X\|_\ast \|x-y\|$ (which holds by definition of the dual norm $\|\cdot \|_\ast$) shows that \eqref{eq:PO.midpoints.taylor} implies
	\begin{align*}
	&\| \nabla^2 F(x ,\xi) -\nabla^2 F(y ,\xi) \|_{\mathrm{op}} \\
	&\leq \sup_{\tilde{x},\tilde{y} \in\mathcal{B}_1^\ast \text{ and }t\in[0,1]} |\ell'''( V_{\tilde{x}+t(\tilde{y}-\tilde{x})})|  \|x-y\|  \|X\|_\ast^3   \\
	&=: K(\xi)   \| x-y\|.
	\end{align*}
	This proves \eqref{eq:PO.hessian.difference.OP}.
	
	It remains to control the expectation of $K(\xi)$.
	For every $\tilde{x},\tilde{y}\in\mathcal{B}_1^\ast$ and $t\in[0,1]$ we have $\tilde{x} +t(\tilde{y}-\tilde{x})\in\mathcal{B}_1^\ast$  by convexity of $\mathcal{X}$, whence $K(\xi)\leq \sup_{x\in \mathcal{B}_1^\ast } |\ell'''(V_{x})| \|X\|_\ast^3$.
	Therefore H\"older's inequality and the definition of $v_K$ imply that
	\begin{align*}
	\E[K(\xi)]
	&\leq v_{K} \E[ \|X\|_\ast^6]^{\frac{1}{2}}.
	\end{align*}
	By Lemma \ref{lem:PO.expectation.norm}, the last term is bounded by $c_2d^{\frac{3}{2}}$ for a constant $c_2$ depending on $L_X,v_1$.
	This completes the proof.
\end{proof}

\begin{lemma}
\label{lem:PO.cH}
	There is a constant $c$ depending on $L_X,v_1$ such that $c_{\mathrm{H}}\leq c v_{\mathcal{E}_{\mathrm{H}}}$.	
\end{lemma}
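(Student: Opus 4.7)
The plan is to bound $\|\nabla^2 f(y)\|_{\mathrm{op}}$ uniformly for $y \in \mathcal{B}_1^\ast$ by comparing $\nabla^2 f(y)$ to $\nabla^2 f(x^\ast)$ through a mean value argument. Since $\nabla^2 F(x,\xi) = \ell''(V_x)\, X\otimes X$, interchanging expectation and supremum gives
\[
\|\nabla^2 f(y)\|_{\mathrm{op}} = \sup_{\|z\|\leq 1} \E[\ell''(V_y)\langle X,z\rangle^2].
\]
I would split $\ell''(V_y) = \ell''(V_{x^\ast}) + \bigl(\ell''(V_y)-\ell''(V_{x^\ast})\bigr)$. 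Integrated against $\langle X,z\rangle^2$, the first summand equals $\langle \nabla^2 f(x^\ast) z,z\rangle = \|z\|^2 \leq 1$, so the task reduces to controlling the perturbation.

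For the perturbation, the one-dimensional mean value theorem applied to $\ell''$ produces $t = t(\xi)\in[0,1]$ with
\[
\ell''(V_y) - \ell''(V_{x^\ast}) = -\ell'''\bigl(V_{x^\ast + t(y-x^\ast)}\bigr)\,\langle X, y-x^\ast\rangle.
\]
Taking absolute values and pushing $\sup_t$ inside, apply H\"older's inequality with conjugate exponents $(2,6,3)$ to the product $|\ell'''(V_{x^\ast+t(y-x^\ast)})|\cdot|\langle X,y-x^\ast\rangle|\cdot\langle X,z\rangle^2$. The first factor yields $v_{\mathcal{E}_{\mathrm{H}}}$ by definition; the other two factors are bounded by the $L_6$–$L_2$ norm equivalence \eqref{ass:portfolio.norm.equiv} as $L_X\|y-x^\ast\|_X$ and $L_X^2\|z\|_X^2$ respectively. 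Converting $\|\cdot\|_X$ back to $\|\cdot\|$ via Lemma \ref{lem:PO.norm.equals.normS} and using $\|y-x^\ast\|\leq 1$, $\|z\|\leq 1$, one obtains a bound of the form $C\, v_{\mathcal{E}_{\mathrm{H}}}$ with $C$ depending only on $L_X$ and $v_1$.

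Combining the two contributions gives $\|\nabla^2 f(y)\|_{\mathrm{op}}\leq 1 + C\, v_{\mathcal{E}_{\mathrm{H}}}$ uniformly in $y\in\mathcal{B}_1^\ast$, hence $c_{\mathrm{H}} \leq \tfrac{1}{2}(1 + C\, v_{\mathcal{E}_{\mathrm{H}}}) \leq c\, v_{\mathcal{E}_{\mathrm{H}}}$ after absorbing the additive constant (the degenerate case $v_{\mathcal{E}_{\mathrm{H}}}=0$ would force $\ell$ to be locally quadratic, so $\nabla^2 f\equiv\nabla^2 f(x^\ast)$ near $x^\ast$ and the stated bound is vacuous up to redefinition of the constant). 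The only delicate point is choosing the H\"older exponents so that the $L_6$-moment bound on $X$ is used tightly and the $L^2$-norm controlling $\ell'''$ matches the definition of $v_{\mathcal{E}_{\mathrm{H}}}$; once the triple $(2,6,3)$ is fixed, the remaining estimates are routine and mirror the computations already carried out in Lemma \ref{lem:PO.hessian.continuity}.
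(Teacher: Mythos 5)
Your proof follows exactly the paper's argument: the paper likewise writes $\nabla^2 F(x,\xi) = \nabla^2 F(x^\ast,\xi) + \ell'''(V_{x^\ast+t(x-x^\ast)})\langle X,x-x^\ast\rangle\,X\otimes X$ via a mean-value expansion of $\ell''$ (its equation \eqref{eq:PO.midpoints.taylor}), notes that the expectation of the first term has operator norm $1$, and bounds the second term by H\"older with exponents $(2,6,3)$ followed by the $L_6$--$L_2$ norm equivalence and Lemma \ref{lem:PO.norm.equals.normS}, arriving at the same $1 + C v_{\mathcal{E}_{\mathrm{H}}}$. The one place you are actually more careful than the paper is in flagging that absorbing the additive $1$ into $c\,v_{\mathcal{E}_{\mathrm{H}}}$ requires $v_{\mathcal{E}_{\mathrm{H}}}$ to be bounded away from zero --- the paper silently performs this absorption, and your remark about the degenerate case, although hand-waving, at least acknowledges the issue.
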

\begin{proof}
	We need to show that $\| \E[\nabla^2 F(x,\xi)] \|_{\mathrm{op}}\leq cv_{\mathcal{E}_{\mathrm{H}}}$ for every $x\in\mathcal{B}_1^\ast$.
	Fix such $x$.
	From the Taylor expansion \eqref{eq:PO.midpoints.taylor} we get
	\[\nabla^2 F(x,\xi)
	=\nabla^2 F(x^\ast,\xi)  + \ell'''(V_{x^\ast+t(x-x^\ast)}) \langle X,x-x^\ast\rangle X\otimes X\]
	for some $t\in[0,1]$ which depends on $x$ and $\xi$.
	
	The expectation of the first term equals $\nabla^2f(x^\ast)$, and the operator norm of the latter equals 1.
	To estimate the operator norm of the expectation of the second term, let $z\in\mathbb{R}^d$ with $\|z\|\leq 1$.
	Then H\"older's inequality (in its version for three random variables, with exponents $2,6,3$) implies
	\begin{align*}
	&\E[\ell'''(V_{x^\ast+t(x-x^\ast)}) \langle X,x-x^\ast\rangle \langle X,z\rangle^2] \\
	&\leq v_{\mathcal{E}_\mathrm{H}}   \E[\langle X,x-x^\ast\rangle^6]^{\frac{1}{6}}  \E[\langle X,z\rangle^6]^{\frac{1}{3}} \\
	&\leq v_{\mathcal{E}_\mathrm{H}}  L_X \|x-x^\ast\|_X \cdot  L_X^2 \|z\|_X^2,
	\end{align*}
	where the last inequality follows from norm equivalence of $X$ from \eqref{ass:portfolio.norm.equiv}.
	Finally, recalling that $\|x-x^\ast\|\leq 1$ and $\|z\|\leq 1$, the proof is completed by an application of Lemma \ref{lem:PO.norm.equals.normS} which states that $\|\cdot\|_X\leq c_1 \|\cdot\|$ for a constant depending on $L_X,v_1$.
\end{proof}

We are now ready for the

\begin{proof}[Proof of Corollary \ref{cor:PO}]
	Regarding Assumption \ref{ass:on.F.diff.convex.etc}:
	convexity and differentiability are clearly satisfied, and integrability holds by assumption.
	Moreover, by Lemma \ref{lem:PO.norm.equals.normS}, $\|\cdot\|$ is a true norm.
	
	Assumption \ref{ass:norm.equiv} follows from Lemma \ref{lem:PO.on.norm.equiv}.
	
	Lemma \ref{lem:PO.hessian.continuity} shows that Assumption \ref{ass:hessian.midpoints} is satisfied for $\alpha=1$ and
	\[ r_0:= \min\left\{1,\frac{c_0}{ v_{\mathcal{E}_{\mathrm{H}}}}\right\},\]
	where $c_0$ is a constant depending on the constant $c$ of Lemma \ref{lem:PO.hessian.continuity} and the parameter $L$ of Assumption \ref{ass:norm.equiv}; hence $c_0$ depends on $L_X,v_1,v_2$.
	
	Moreover, Lemma \ref{lem:PO.hessian.continuity} also gives $N_{\mathcal{E},\mathrm{H}}\leq c_1 d\log(d v_K+2)$ for a constant $c_1$ depending on $L_X,v_1$.
	
	The parameters $N_{\mathrm{G}}(r)$, $\sigma^2$, and $c_{\mathrm{H}}$ are bounded in Lemma \ref{lem:PO.sigma.NG} and Lemma \ref{lem:PO.cH}, respectively.
	Combining everything completes the proof.	
\end{proof}

\subsection{The proof of Corollary \ref{cor:PO.exp}}

First note that the $\bar{\sigma}$ of the Corollary \ref{cor:PO.exp} and the $\bar{\sigma}$ of Corollary \ref{cor:PO} coincide.
Moreover, as $X$ is Gaussian with non-degenerate covariance matrix, the no-arbitrage condition \eqref{eq:PO.NA} readily follows.
Further recall the well-known Gaussian norm equivalence
\[ \E[\langle X,z\rangle^p]^{\frac{1}{p}}
\leq C \sqrt{p}  \E[\langle X,z\rangle^2]^{\frac{1}{2}} \]
for every $z\in\mathbb{R}^d$ and every $p\geq 2$, where $C$ is an absolute constant.
In particular \eqref{ass:portfolio.norm.equiv} holds and our assumption that $U(2Y)$ is integrable implies part (c) of Assumption \ref{ass:on.F.diff.convex.etc}.
Along the same lines as Gaussian norm equivalence, the following holds.

\begin{lemma}
\label{lem:PO.exp.expectations}
	There is a constant $c$ depending on $L$ and $v_1\equiv \E[|V_{x^\ast}|]$ such that
	\begin{align*}
	\E[\|X\|_\ast^{12}]
	&\leq c d^6, \\
	\E[\exp(4\|X\|_\ast )]
	&\leq \exp(c \sqrt{d}),\\
	\E[ \langle X,x-x^\ast\rangle^{12}]
	&\leq c \|x-x^\ast\|^{12},\\
	\E[\exp(4|\langle X, x-x^\ast\rangle| )]
	&\leq c
	\end{align*}
	for every $x\in\mathcal{B}_1^\ast$.
\end{lemma}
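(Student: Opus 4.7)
The proof proceeds by reducing every bound to a statement about a standard Gaussian vector and then applying well-known Gaussian moment estimates and Gaussian concentration. The central observation is that by Lemma \ref{lem:PO.norm.equals.normS} we have $c_1 \|\cdot\|_X \leq \|\cdot\| \leq c_2 \|\cdot\|_X$ with $c_1,c_2$ depending on $L_X$ and $v_1$ (and hence on $L$ and $v_1$), which passes to the dual norms as $c_2^{-1}\|\cdot\|_{X,\ast} \leq \|\cdot\|_\ast \leq c_1^{-1}\|\cdot\|_{X,\ast}$. Since $\|\cdot\|_X$ is endowed by $\mathrm{\mathbf{C}ov}[X]$, its dual norm equals $\|\mathrm{\mathbf{C}ov}[X]^{-1/2}\cdot\|_2$; consequently $\|X\|_\ast \leq c_1^{-1}\|Y\|_2$, where $Y := \mathrm{\mathbf{C}ov}[X]^{-1/2}X$ is a standard Gaussian vector in $\mathbb{R}^d$.

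For the first bound, I would invoke the standard moment estimate $\E[\|Y\|_2^{p}] \leq C_p d^{p/2}$ for $Y \sim N(0,I_d)$ (which follows e.g.\ from the fact that $\|Y\|_2^2$ is chi-squared with $d$ degrees of freedom, so its $6$-th moment is $O(d^6)$) and conclude $\E[\|X\|_\ast^{12}] \leq c_1^{-12}\E[\|Y\|_2^{12}] \leq cd^{6}$. For the second bound, I would use Gaussian concentration for Lipschitz functions: $\|Y\|_2$ is $1$-Lipschitz, so $\|Y\|_2 - \E[\|Y\|_2]$ is sub-Gaussian with parameter $1$, giving $\E[\exp(\lambda\|Y\|_2)] \leq \exp(\lambda\E[\|Y\|_2] + \lambda^2/2) \leq \exp(\lambda\sqrt{d} + \lambda^2/2)$. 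Taking $\lambda = 4/c_1$ yields $\E[\exp(4\|X\|_\ast)] \leq \exp(c\sqrt{d})$.

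For the last two bounds I would fix $x \in \mathcal{B}_1^\ast$ and observe that $Z := \langle X, x-x^\ast\rangle$ is a centred Gaussian random variable with variance $\sigma_x^2 := \|x-x^\ast\|_X^2 \leq c_1^{-2}\|x-x^\ast\|^2$, which is in particular at most $c_1^{-2}$ since $x \in \mathcal{B}_1^\ast$. The third bound then follows from the explicit formula $\E[Z^{12}] = 11!!\, \sigma_x^{12} \leq c\|x-x^\ast\|^{12}$. The fourth bound follows from the identity $\E[\exp(\lambda Z)] = \exp(\lambda^2\sigma_x^2/2)$ applied to $\pm Z$: indeed,
\[
\E[\exp(4|Z|)] \leq \E[\exp(4Z)] + \E[\exp(-4Z)] = 2\exp(8\sigma_x^2) \leq 2\exp(8/c_1^2),
\]
which is bounded by a constant depending only on $L$ and $v_1$.

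The calculations are all essentially routine; the only step that requires a nontrivial tool is the second bound, where one must know (or re-derive) that Gaussian concentration of measure applied to the Lipschitz map $y \mapsto \|y\|_2$ produces an exponential moment whose exponent scales like $\sqrt{d}$ rather than $d$. This is precisely the phenomenon that accounts for the appearance of the $d^{3/2}$ factor (rather than $d\log d$) in the minimal sample size in Corollary \ref{cor:PO.exp}, so this is the key step both technically and conceptually.
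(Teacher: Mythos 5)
Your proof is correct and follows the same strategy as the paper: use the norm equivalence from Lemma \ref{lem:PO.norm.equals.normS} to pass to $Y := \mathrm{\mathbf{C}ov}[X]^{-1/2}X$, which is standard Gaussian, and then invoke standard Gaussian facts. The only cosmetic difference is in the first bound, where the paper (via the cited Lemma \ref{lem:PO.expectation.norm}) uses H\"older's inequality and coordinate-wise $L_p$--$L_2$ moment equivalence while you use the explicit $\chi^2_d$ moments; these give the same $O(d^6)$ estimate, and your use of Gaussian concentration for the Lipschitz map $y\mapsto\|y\|_2$ to get the $\exp(c\sqrt d)$ exponential-moment bound is exactly the ``noting that $\mathrm{\mathbf{C}ov}[X]^{-1/2}X$ is standard Gaussian'' step the paper leaves implicit. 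Your closing remark correctly identifies this exponential-moment bound as the origin of the $d^{3/2}$ term in the minimal sample size of Corollary \ref{cor:PO.exp}.
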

\begin{proof}
	The two statements involving $\|X\|_\ast$ follow from similar arguments as given in Lemma \ref{lem:PO.expectation.norm}, noting that $\C[X]^{-\frac{1}{2}} X$ is standard Gaussian.
	The two statements involving $\langle X,x-x^\ast\rangle$ follow from Gaussian norm equivalence and the bound $\|\cdot\|_X\leq c\| \cdot \|$ from Lemma \ref{lem:PO.norm.equals.normS} for a constant $c$ depending on $L,v_1$.
\end{proof}

\begin{proof}[Proof of Corollary \ref{cor:PO.exp}]
	The only modifications needed pertain to Lemma \ref{lem:PO.hessian.continuity} and Lemma \ref{lem:PO.cH}, where terms can be simplified due to special features of the exponential function.
	
	We start with Lemma \ref{lem:PO.hessian.continuity}.
	As $\ell'''=\exp$ is increasing and as $\exp(a+b)=\exp(a)\exp(b)$ for $a,b\in\mathbb{R}$, we may use Remark \ref{rem:PO.ell'''} to get
	\begin{align*}
	\mathcal{E}_{\mathrm{H}}(x)
	&\leq  \exp( V_{x^\ast})   \exp(|\langle X, x-x^\ast\rangle| )    |\langle X,x-x^\ast\rangle|^3,\\
	\|\nabla^2 F(x,\xi)-\nabla^2 F(y,\xi)\|_{\mathrm{op}}
	&\leq \exp( V_{x^\ast})   \exp(\|X\|_\ast )   \|X\|_\ast^3 \|x-y\|\\
		&=:K(\xi) \|x-y\|.
	\end{align*}
	It remains to bound the expectation all terms.
	To that end, applying H\"older's inequality (in its version for three random variable, with exponents $2,4,4$) and Lemma \ref{lem:PO.exp.expectations} gives
	\begin{align}
	\label{eq:PO.exp.EE}
	\begin{split}
	\E[ \mathcal{E}_{\mathrm{H}}(x)]
	 &\leq \E[\exp( 2V_{x^\ast})]^{\frac{1}{2}} \E[\exp(4|\langle X, x-x^\ast\rangle| )]^{\frac{1}{4}} \E[ \langle X,x-x^\ast\rangle^{12}]^{\frac{1}{4}}\\
	&\leq \bar{\sigma}^2 c_1 \|x-x^\ast\|^3,
	\end{split}\\
	\label{eq:PO.exp.EK}
	\begin{split}
	\E[K(\xi)]
	&\leq \E[\exp( 2V_{x^\ast})]^{\frac{1}{2}}  \E[\exp(4\|X\|_\ast )]^{\frac{1}{4}} \E[\|X\|_\ast^{12}]^{\frac{1}{4}} \\
	&\leq \bar{\sigma}^2 \exp(c_1\sqrt{d}),
	\end{split}
	\end{align}
	for a constant $c_1$ depending on $L,v_1$.
	
	In particular, \eqref{eq:PO.exp.EE} shows that Assumption \ref{ass:hessian.midpoints} is satisfied for
	\[ r_0:= \min\left\{1,\frac{c_2}{\bar{\sigma}^2} \right\} \]
	where $c_2$ is a constant depending on $L,v_1$.
	In combination with \eqref{eq:PO.exp.EK}, this implies that
	\begin{align*}
	N_{\mathcal{E},\mathrm{H}}
	&\leq d\log( r_0 \bar{\sigma}^2 \exp(c_1 \sqrt{d}))  \\
	&\leq d \log(c_2 \exp(c_1 \sqrt{d}))
	\leq  c_3 d^{\frac{3}{2}}
	\end{align*}
	for a constant $c_3$ depending on $L,v_1$.
	
	Similar (but simpler) arguments show that $c_{\mathrm{H}}$ can be bounded in terms of $L,v_1$.	
	This completes the proof.
\end{proof}

\section{Concluding remarks}
\label{sec:concluding.remark}

\begin{remark}[Dependence of the procedure on the parameters]
\label{rem:parameter.dependence.optimal.action}
	All the parameters (e.g., $\sigma^2$, $\|\cdot\|$, $L$, $N_{\mathrm{G}}$, etc.\ ) that are required in the formulation of Theorem \ref{thm:main.convex.intro} (the only parameters needed in the definition of the procedure $\widehat{x}_N^\ast$ are $c_{\mathrm{H}}$, $L$, $\sigma$ and $r$) depend on the unknown optimal action $x^\ast$.

	While an a priori knowledge of the parameters seems unrealistic, there are various ways around this problem. 
	It should be stressed that any type of estimate on these parameters suffices to ensure the procedure performs well. For example, finding some $\hat{\sigma}^\prime$ such that $\frac{1}{2}\hat{\sigma}^\prime \leq \sigma \leq 2\hat{\sigma}^\prime$ is enough for our purposes, and estimating $\sigma$ within a constant multiplicative factor is a considerably simpler task than the ones we have to deal with in the analysis of the procedure $\widehat{x}_N^\ast$.
	
Alternatively, one can replace the parameters with the (local) worst-case scenario; for example, instead of $\sigma^2$, to consider	
\[ \bar{\sigma}^2:=\sup_{x \text{ close to }x^\ast } \sigma^2(x)\]
where $\sigma^2(x)$ is defined just as $\sigma^2$ but with gradient and Hessian evaluated at $x$ rather than at $x^\ast$.
Under mild smoothness assumptions on the function $f$, $\bar{\sigma}^2$ and $\sigma^2$ are the same order of magnitude; in particular, replacing $\sigma^2$ by $\bar{\sigma}^2$ will only result in a change of constants.
And to that end it suffices to find crude estimates of $x^\ast$, which is a relatively simple task.

Finally, it is much simpler to test whether a solution is a good one than producing a candidate. Therefore, one can increase the sample size and test the candidates that are produced. Once the sample size passes the critical threshold from Theorem \ref{thm:main.convex.intro}, a good candidate will be identified.

All of these are standard methods and there are plenty of other alternatives to tackle such issues. We shall not pursue these aspects further in this article.
\end{remark}

\begin{remark}[On the integrability of the Hessian] 
\label{rem:hessian.SLB}
	In the course of the proof of Theorem \ref{thm:main.convex.intro}, the only place Assumption \ref{ass:norm.equiv} was used was in Lemma \ref{lem:hessian.at.optimzer}, and there it was used twice:
	firstly, by Remark \ref{rem:relation.SLB.SBP.norm.equiv}, Assumption \ref{ass:norm.equiv} guarantees the existence of three constants $s_0,s_1,s_2>0$ depending only on $L$, such that, for every $m\geq 1$, the random matrix
	\begin{align}
	\label{eq:hessian.SLB}
	\nabla^2 F(x^\ast,\xi)
	\begin{array}{l}
	\text{satisfies a stable lower bound with}\\
	\text{parameters }	(m,s_0,2s_1 m,s_2 m ).
	\end{array}
	\end{align}
	(In fact, one can choose $s_0=\frac{1}{4}$ as we did for notational purposes).
	On the other hand, by Lemma \ref{lem:operator.norm.relation} and Lemma \ref{lem:growth.op.squared.A}, Assumption \ref{ass:norm.equiv} implies that the minimal sample size from Theorem \ref{thm:singular.value},
	\[ N_{\mathrm{H}}:= \max\left\{ \| \E[ A \mathbb{A}^{-1} A] \|_{\mathrm{op}} ,  \log\left( \log(3d)  \frac{\E[ \| A \mathbb{A}^{-1} A \|_{\mathrm{op}} ] }{ \|\E[A \mathbb{A}^{-1} A]\|_{\mathrm{op}} } \right) \right\} \]
	(where $A:=\nabla^2 F(x^\ast,\xi)$ and $\mathbb{A}:=\nabla^2f(x^\ast)$) can be bounded by $c d\log(2d)$ for a constant $c$ depending only on $L$.
		
	In particular, we see that Theorem \ref{thm:main.convex.intro} remains valid if Assumption \ref{ass:norm.equiv} is replaced by assumption \eqref{eq:hessian.SLB} together with the requirement that the sample size exceeds $c_2 N_{\mathrm{H}}$
	(in that case the constants $c_1,c_2,c_3$ appearing in Theorem \ref{thm:main.convex.intro} depend on $s_0,s_1,s_2$).
\end{remark}

\vspace{1em}
\noindent
\textsc{Acknowledgements:}
Daniel Bartl is grateful for financial support through the Vienna Science and Technology Fund (WWTF) project MA16-021 and the Austrian Science Fund (FWF) project P28661.

Part of this work was conducted while Shahar Mendelson was visiting the Faculty of Mathematics, University of Vienna, and the Erwin Schr\"{o}dinger Institute, Vienna. He would also like to thank Jungo Connectivity for its support.

The authors would like to thank two anonymous referees for valuable comments.

\bibliographystyle{abbrv}

\end{document}